\theoremstyle{plain}
\newtheorem{theorem}{Theorem}[section]
\newtheorem{conjecture}[theorem]{Conjecture}
\newtheorem{heuristic}[theorem]{Heuristic}
\newtheorem{proposition}[theorem]{Proposition}
\newtheorem{lemma}[theorem]{Lemma}
\theoremstyle{definition}
\newtheorem{example}[theorem]{Example}
\newtheorem{definition}[theorem]{Definition}
\theoremstyle{remark}
\newtheorem{remark}[theorem]{Remark}
\definecolor{ourPurple}{RGB}{128,0,128}
\newcommand{\R}{\mathbb{R}}
\newcommand{\C}{\mathbb{C}}
\newcommand{\Z}{\mathbb{Z}}
\newcommand{\PP}{\mathbb{P}}
\renewcommand{\P}{\mathbb{P}}
\newcommand{\Sing}{\mathrm{Sing}\,}
\newcommand{\ol}{\overline}
 \DeclareMathOperator{\Tr}{Tr}
    \DeclareMathOperator{\Res}{Res}
\title{Adjoints and Canonical Forms of Polypols}
\author{Kathl\'en Kohn, Ragni Piene, Kristian Ranestad,  Felix Rydell, \\ Boris Shapiro, Rainer Sinn,  Miruna-Stefana Sorea, Simon Telen}
\begin{document}

\maketitle

\begin{abstract}
Polypols are natural generalizations of polytopes, with boundaries  given by nonlinear algebraic hypersurfaces. We describe polypols in the plane and in 3-space that admit a unique adjoint hypersurface and  study them from an algebro-geometric perspective. We relate planar polypols to positive geometries introduced originally in particle physics, and identify the adjoint curve of a planar polypol with  the numerator of the canonical differential form associated with the positive geometry. We settle several cases of a conjecture by Wachspress claiming that the adjoint curve of a regular planar polypol does not intersect its interior. In particular, we provide a complete characterization of the real topology of the adjoint curve for arbitrary convex polygons. Finally, we determine all types of planar polypols such that the rational map sending a polypol to its adjoint is finite, and explore connections of our topic with algebraic statistics.
\end{abstract}

\tableofcontents

\section{Introduction}\label{Introduction}
Polytopes are very familiar geometric objects, with boundaries given by linear equations. Their beautiful and important properties have been extensively studied from different perspectives and have numerous applications. The present paper studies several more general classes of real domains/shapes  with non-linear algebraic boundaries, known as \textit{polypols}, \textit{polycons}, \textit{polypoldra}, \textit{positive geometries}, etc., in the existing literature. They share some of their properties with polytopes, but, in general, are quite different from the latter.  They find applications 
for example in finite element methods, quantum physics, and algebraic statistics. Unlike the case of polytopes,  mathematical properties of such objects are still relatively unexplored, and a rigorous theoretical framework is currently largely missing. This paper summarizes the recent efforts of our 
 reading group,  and its  main goal is to establish  polypols  as a separate topic of study, from the perspective of complex and real algebraic geometry. 
 We focus on polypols in the plane and in  3-space.  
 Motivated by applications, we mainly study the existence, uniqueness, and real topology of \textit{adjoint hypersurfaces} associated with polypols, as well as formulas for their \textit{canonical differential forms}. But obviously, we have barely even scratched the surface of a large terra incognita. 

\subsection{Previous studies}

In the 1970's, E.~Wachspress  introduced  \emph{polypols} as  bounded semialgebraic subsets of $\R^n$ that generalize polytopes~\cite{MR0426460}, \cite{Wachspress16}. 
He aimed to generalize barycentric coordinates from simplices to arbitrary polytopes and further to polypols.
Wachspress's work mainly focused on planar polypols with rational boundary curves.
To define barycentric coordinates on such a \emph{rational polypol} $P$ in $\R^2$, he introduced the \emph{adjoint curve} $A_P$ as the minimal degree curve that passes through both the singular points of the boundary curves and their intersection points that are ``outside'' of $P$; see Figure~\ref{fig:polypolsAdjoints} for examples. As he mentions in the introduction to his early book  \cite{MR0426460},   a more ambitious goal of his study was to extend the finite element method (which at that time and ever since  has been  extremely popular in numerical methods)  by using  both arbitrary polytopes and polypols as basic approximating elements for multi-dimensional domains.     

\begin{figure}[htb]
    \centering
    \includegraphics[height=2cm]{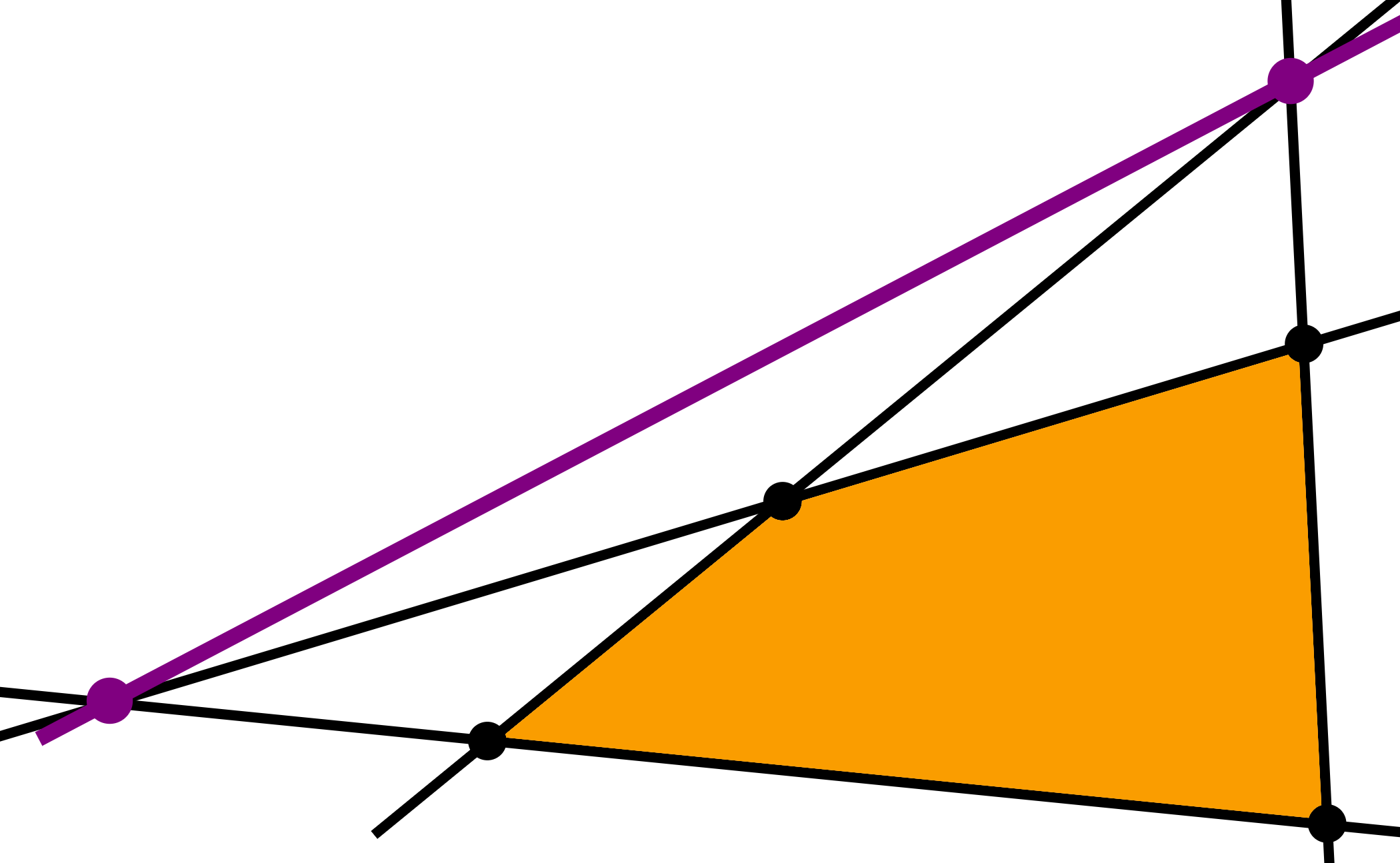} \hfill
    \includegraphics[height=3cm,width=4cm]{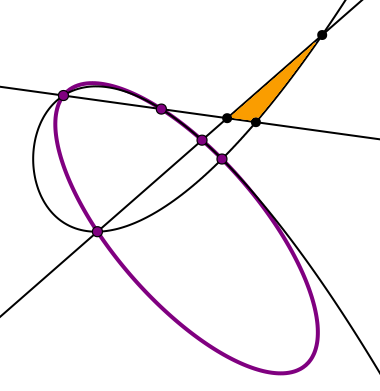} \hfill
    \includegraphics[height=3cm,width=4cm]{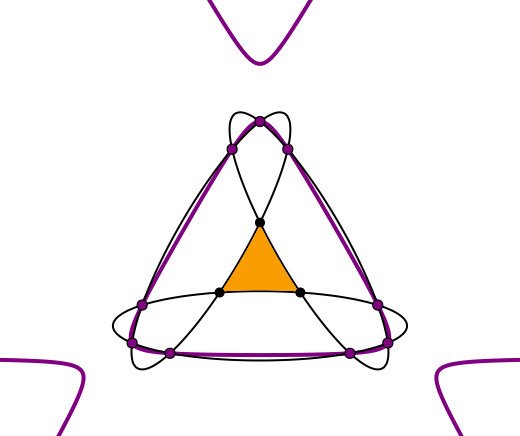}
    \caption{Three polypols (shaded in orange) and their adjoints (purple). The boundary curves and the vertices of the polypols are shown in black. The singular points and ``outside'' intersection points of the boundary curves are purple.}
    \label{fig:polypolsAdjoints}
\end{figure}

Several years ago, N.~Arkani-Hamed, Yu.~Bai, and T.~Lam~\cite{arkani2017positive}  introduced \emph{positive geometries} in their studies of scattering amplitudes in particle physics. These are given by a real semi-algebraic subset $P$ of an $n$-dimensional complex variety together with a unique rational \emph{canonical $n$-form} $\Omega(P)$ that is recursively defined via the boundary components of $P$. Important motivating examples that led to the introduction of positive geometries include both the \emph{amplituhedron} \cite{arkani2015positive} and the \emph{ABHY associahedron} \cite{arkani2018scattering}.
The \emph{push-forward formula} expresses their canonical form as a global residue over the solutions to a system of rational function equations \cite[Sec.~7]{arkani2021stringy}. Recently, these equations were identified as the \emph{likelihood equations} for \emph{positive statistical models}, relating positive geometries to algebraic statistics \cite[Sec.~6]{sturmfels2021likelihood}.

Usual polytopes are the easiest examples of both polypols and positive geometries.
The adjoint hypersurface $A_P$ and canonical form $\Omega(P)$ of a polytope $P$ are known to be unique;
see~\cite{kohn2019projective} resp.~\cite{arkani2017positive}.
In particular, unpublished lecture notes by C.~Gaetz show that the rational canonical form of a polytope $P$ has its poles along the boundary $\partial P$ and its zeros along the adjoint hypersurface $A_P$. 

Observe that in complex algebraic geometry, there is a classical notion of adjoints of a hypersurface.  Namely, the holomorphic $(n-1)$-forms on a nonsingular hypersurface $X$ in $\P^n$ are all residues of rational $n$-forms on $\P^n$ with simple poles along $X$ and zeros along an adjoint hypersurface $A_X$ to $X$; see the adjunction formula in \cite[p.~147]{GH}. When $X$ has degree $d$, the adjoint hypersurfaces are all hypersurfaces of degree $d-n-1$.  When $X$ is singular, then the holomorphic $(n-1)$-forms on a resolution of singularities $X'\to X$ are obtained from the rational $n$-forms  on $\P^n$ with zeroes along adjoint hypersurfaces that pass through  singularities of $X$. 
The adjoints we discuss in the present paper are slightly different; they pass through some, but not  all, singularities of the boundary hypersurface.

\subsection{Main results and outline}

We focus on the basic definitions and general properties of rational polypols. 
In Section \ref{Unique Adjoints and Canonical Forms}, we introduce \textit{quasi-regular} rational polypols in $\R^2$ and show that they are examples of planar positive geometries.  
We formally define and establish the uniqueness of both the adjoint curve $A_P$ and the canonical form $\Omega(P)$ of such a polypol $P\subset \R^2$.
Moreover, we provide an explicit formula for $\Omega(P)$ in terms of defining equations of $A_P$ and the boundary curves of the polypol $P$ (Theorem \ref{thm:rationalPolypolsArePosGeometries}):
As in the polytopal case, the rational canonical form $\Omega(P)$ has its poles along the boundary $\partial P$ and its zeros along the adjoint curve $A_P$.

E.~Wachspress used the adjoint curve $A_P$ of a \emph{regular} rational polypol $P$ in $\R^2$ to define barycentric coordinates on $P$.
These coordinates should be rational functions that are positive on the interior of $P$ and have poles on the adjoint curve $A_P$. For several simple polypols, such as convex polygons, the rational functions suggested by E.~Wachspress indeed enjoy   these properties, but for his coordinates to make sense for regular rational polypols $P$ in general, he conjectured that the adjoint curve $A_P$  does not pass through the interior of $P$ (Conjecture \ref{conj:Wachs}). 
This intriguing claim is  at present widely open. 
In Section~\ref{Outside Adjoints}, we define regular polypols and discuss Wachspress's conjecture. In particular, for the case of convex polygons, we show that the adjoint curve is hyperbolic and provide an explicit description of its nested ovals (Theorem \ref{thm:hyperbolicadjoint}). Further, for polypols defined by three ellipses, which is the first unsolved case of Wachspress's conjecture, we identify all possible topological types of triples of ellipses and prove Wachspress's  conjecture for 33 out of all 44 of them (Theorem \ref{thm:wachspressellipses}). 
These 33 cases include all types of maximal real intersection, i.e., where the ellipses meet pairwise in four real points.
The proof, which is deferred to Appendix \ref{sec:appendix}, requires careful cataloging of these 44 configurations.

 In Section \ref{Finite Adjoint Correspondences}, we study the map that associates to a given planar polypol $P$ its  adjoint curve $A_P$. 
We identify all types of planar polypols for which this map is finite (Theorem \ref{th: finite adjoint}).
Using numerical monodromy computations, we calculate the degree of the adjoint map in the case of heptagons, answering a question posted in~\cite{kohn2018moment}:
our computations indicate that a general quartic curve is the adjoint of 864 distinct heptagons (Proposition \ref{prop: hept}).

In Section \ref{Statistics and Pushforward}, we recall the connection between positive geometries and positive models in algebraic statistics. An essential role is played by the push-forward formula for canonical forms, see Heuristic \ref{heur:pushfwd}. For the toric models studied in \cite{amendola2019maximum}, this gives an explicit rational expression for a global residue over all the critical points of the log-likelihood function (Proposition \ref{prop:toricamplitude}). In Section \ref{subsec:CFlikelihood}, we propose to use this as a numerical \emph{trace test} to verify the completeness of a set of numerically obtained approximate critical points. Moreover, Example \ref{ex:toricsquare3} illustrates how the explicit formula from Theorem \ref{thm:rationalPolypolsArePosGeometries} can be used to design a family of trace tests for any 2-dimensional toric model. 
 
 In Section \ref{3D-Polypols}, we discuss three-dimensional polypols.
We provide general criteria for a complex algebraic surface to be the algebraic boundary of a polypol $P\subset \R^3$, and give a number of examples where all boundary components are quadric surfaces such that these criteria are sufficient to have a unique adjoint surface $A_P$.

\section{Adjoints and Canonical Forms of Planar Polypols}
\label{Unique Adjoints and Canonical Forms}

In Section~\ref{ssec:adjoints}, we start by defining rational polypols in the complex projective plane and showing that they have unique adjoint curves.
In Section~\ref{ssec:positiveGeometries}, we define positive geometries and their canonical forms and compare planar positive geometries with rational polypols. 
In Section~\ref{ssec:canonicalForm}, we show that \emph{quasi-regular} rational polypols are always positive geometries and 
we provide an explicit formula for the canonical form of such a polypol in terms of its adjoint and boundary curves.
In Section~\ref{ssec:additivity}, we see that unions and differences of quasi-regular rational polypols provide examples of positive geometries with a unique canonical form, but no unique adjoint curve. For this, we introduce pseudo-positive geometries, following~\cite{arkani2017positive}, and  show the additivity of their canonical forms in dimensions one and two.

\subsection{The adjoint curve of a rational polypol}
\label{ssec:adjoints}
Let $C \subset \P^2$ be a complex plane curve with $k \geq 2$ irreducible components $C_1,\dots,C_k$. 
Assume there are $k$ points $v_{12}\in C_1\cap C_2$, \ldots, $v_{k1}\in C_k\cap C_1$ such that $v_{ij}$ is nonsingular on $C_i$ and $C_j$, and that $C_i$ and $C_j$ intersect transversally at $v_{ij}$.
Then we say that the irreducible curves $C_i$ and the points $v_{ij}$ form a \emph{polypol}~$P$.
The set of points $V(P):=\{v_{ij}\}$ is called the \emph{vertices} of $P$, and the complement $R(P):=\Sing C \setminus V(P)$ of the vertices in the singular locus of $C$ is called the set of \emph{residual points} of $C$.
We say that the polypol $P$ is \emph{rational} if the curves $C_1, \ldots, C_k$ are rational.
All examples in Figure~\ref{fig:polypolsAdjoints} are rational polypols such that all vertices (black) and all residual points (purple) are real.  

We consider the \emph{partial normalization} $\nu:Z\to C$ of $C$, obtained by desingularizing all the residual points $R(P)\subset C$. One can construct $Z$ as the strict transform of $C$ under the sequence of blow-ups  $X\to \P^2$ of all points in, and infinitely near, $R(P)$. 
Let $\omega_Z$ (resp. $\omega_C$) denote the dualizing sheaf of $Z$ (resp. $C$).
It follows from \cite{piene1978ideals}  that the trace map $\Tr_\nu:\nu_*\omega_Z\to \omega_C$ induces an isomorphism
\[\nu_*\omega_Z \overset{\sim}\rightarrow \mathcal C_\nu \omega_C,\] where the sheaf of ideals
$\mathcal C_\nu$ is the conductor of $\nu$.
This result goes back to D.~Gorenstein and M.~Rosenlicht  \cite{rosenlicht1952}, in
the case when $\nu: Z\to C$ is the normalization map.

\begin{definition}
Set $d_i:=\deg C_i$ and $d:=\sum_{i=1}^k d_i$. 
An \emph{adjoint curve} $A_P$ of the polypol~$P$ is a curve defined by a polynomial $\alpha_P \in H^0( \P^2, \mathcal O_{\P^2}(d-3))$ which maps to a non-zero element in $H^0(C,\mathcal C_\nu\omega_C)\subseteq H^0(C,\omega_C)$ under the restriction map 
\[\rho: H^0(\P^2,\Omega^2_{\P^2}\otimes \mathcal O_{\P^2}(C))=H^0(\P^2,\mathcal O_{\P^2}(d-3))\to H^0(C,\omega_C),\]
obtained from the short exact sequence
$0\to \Omega^2_{\P^2} \to \Omega^2_{\P^2}\otimes \mathcal O_{\P^2}(C)\to \omega_C \to 0$.
\end{definition}

Note that an element in $H^0( \P^2, \mathcal O_{\P^2}(d-3))$ which maps to an element in $H^0(C,\mathcal C_\nu\omega_C)$ defines a curve of degree $d-3$ which contains the 0-dimensional subscheme of $C$ defined by $\mathcal C_\nu \subset \mathcal O_C$. 
In particular, if all residual points are nodes on $C$, then 
$\mathcal C_\nu$ is just the product of the maximal ideals 
$\mathfrak m_{C,p} \subset \mathcal O_{C,p}$ for $p\in R(P)$, and therefore an adjoint curve is a curve passing through all the residual points of $P$.

\begin{proposition}
\label{prop:uniqueAdjoint}
A rational polypol has precisely one adjoint curve. Moreover, the adjoint curve does not contain any of the boundary curves and does not pass through any of the vertices.
\end{proposition}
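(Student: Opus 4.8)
The plan is to translate everything to the partial normalization $\nu\colon Z\to C$, whose structure for a rational polypol is very simple. Since each $C_i$ is rational and $\nu$ desingularizes exactly the residual points, $Z$ is obtained by gluing the $k$ smooth rational curves $\widetilde C_i\cong\PP^1$ cyclically, with one node at (the point over) each vertex $v_{i,i+1}$ and no other singularity; the other intersection points of consecutive components, and all intersection points of non-consecutive components, are residual and hence resolved. Thus $Z$ is a connected nodal curve with $k$ components and $k$ nodes, its dual graph is a $k$-cycle, so $p_a(Z)=1$ and, by Serre duality, $h^0(Z,\omega_Z)=h^1(Z,\mathcal O_Z)=p_a(Z)=1$.

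First I would show that $\rho$ is an isomorphism. The short exact sequence in the definition reads $0\to\mathcal O_{\PP^2_\C}(-3)\to\mathcal O_{\PP^2_\C}(d-3)\to\omega_C\to 0$, and since $H^0(\PP^2_\C,\mathcal O(-3))=H^1(\PP^2_\C,\mathcal O(-3))=0$, the long exact cohomology sequence shows that $\rho$ is injective with image all of $H^0(C,\omega_C)$. The trace isomorphism $\nu_*\omega_Z\cong\mathcal C_\nu\omega_C$ quoted above, together with finiteness of $\nu$, gives $H^0(C,\mathcal C_\nu\omega_C)\cong H^0(Z,\omega_Z)$, which is one-dimensional by the previous paragraph. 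Hence the preimage $\rho^{-1}\bigl(H^0(C,\mathcal C_\nu\omega_C)\bigr)$ is a one-dimensional subspace of $H^0(\PP^2_\C,\mathcal O_{\PP^2_\C}(d-3))$ on which $\rho$ is injective, and its nonzero elements are precisely the adjoint polynomials of $P$. This already proves that there is exactly one adjoint curve $A_P$.

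For the two geometric statements I would study the unique-up-to-scalar nonzero section $s\in H^0(Z,\omega_Z)$ corresponding to $\alpha_P$. Restricted to a component $\widetilde C_i\cong\PP^1$, the section $s$ lies in $H^0\bigl(\PP^1,\omega_{\PP^1}(q+q')\bigr)$, where $q,q'$ are the two nodes of $Z$ on $\widetilde C_i$; this line bundle has degree $0$, so $s|_{\widetilde C_i}$ is either identically zero or nowhere vanishing with a simple pole at each of $q,q'$. If $s|_{\widetilde C_i}=0$ for some $i$, the residue relation at a node forces the branch of $s$ on the adjacent component to have vanishing residue there, hence to be a regular $1$-form on $\PP^1$, hence zero; going once around the cycle gives $s\equiv 0$, a contradiction. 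Therefore $s|_{\widetilde C_i}$ is nowhere zero with a genuine simple pole at each node, for every $i$. Now, if some boundary curve $C_i$ were contained in $A_P$, then $\alpha_P$ would be divisible by a defining equation of $C_i$, so $\rho(\alpha_P)$ would vanish identically on $C_i$ and $s|_{\widetilde C_i}$ would be $0$ — impossible. And if $A_P$ passed through a vertex $v=v_{i,i+1}$, then in local coordinates at the node $v$ with $C_i=\{y=0\}$, $C_{i+1}=\{x=0\}$ and $F=xy\cdot(\text{unit})$, the Poincar\'e residue of $\alpha_P\,dx\wedge dy/F$ along $\{y=0\}$ equals a unit times $\alpha_P(x,0)\,dx/x$, which is regular at $v$ whenever $\alpha_P(v)=0$; then $s|_{\widetilde C_i}$ would have at most one pole and hence vanish — again impossible.

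I expect the third step to be where the actual work lies: one must pin down $\omega_Z$ on the nodal cycle, run the residue-propagation argument carefully, and carry out the local residue computation at a vertex. One should also check at the outset that the vertices are honest nodes of $C$ — each lying on exactly its two prescribed components, and the $k$ of them pairwise distinct — so that $\nu$ is a local isomorphism near each vertex and $Z$ genuinely has the cycle structure above; a degenerate configuration (a vertex on a third component, or two vertices coinciding) would have to be excluded or treated separately.
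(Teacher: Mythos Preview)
Your proposal is correct and follows essentially the same approach as the paper: both identify $H^0(C,\mathcal C_\nu\omega_C)$ with $H^0(Z,\omega_Z)$ via the trace map, compute $p_a(Z)=1$ from the cycle-of-$\PP^1$'s structure to get uniqueness, and deduce the geometric statements from the fact that the nonzero section of $\omega_Z$ is nowhere vanishing. The only difference is emphasis: the paper asserts ``any nonzero section of $\omega_Z$ is nowhere vanishing'' in one line (implicitly using that $\omega_Z$ restricts to $\mathcal O_{\PP^1}$ on each component), whereas you unpack this via the residue-propagation argument around the cycle and an explicit local Poincar\'e residue computation at the vertices---more work, but perfectly valid and arguably more transparent.
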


\begin{proof}
Since $H^0( \P^2,\Omega^2_{\P^2})=H^1(\P^2,\Omega^2_{\P^2})=0$, the restriction map $\rho$ is an isomorphism. Hence, to show the first statement, it suffices to show that $h^0(C,\mathcal C_\nu\omega_C)=1$, since then $\dim \rho^{-1}H^0(C,\mathcal C_\nu\omega_C)=1$. We have $Z=\cup_{i=1}^k \widetilde C_i$, where the $\widetilde C_i\to C_i$ are the normalization maps, and where the only singularities of $Z$ are the points above $V(P)$. By the arithmetic genus formula for the reducible curve $Z$ \cite[Thm.~3, p.~190]{hironaka1957on}, we have
 \[\textstyle g_a(Z)=\sum_{i=1}^k g_a(\widetilde C_i) +\#V(P) -(k-1)=0+k-k+1=1.\]
So $Z$ has arithmetic genus 1; hence we have $h^1(Z,\mathcal O_Z)=1$ and, by duality, $h^0(Z,\omega_Z)=1$. Since 
$H^0(C,\mathcal C_\nu\omega_C)=H^0(C,\nu_*\omega_Z)$ and 
$H^0(C,\nu_*\omega_Z)=H^0(Z,\omega_Z)$, we conclude that
$h^0(C,\mathcal C_\nu\omega_C)=1$.

To show the second statement, we observe that since the arithmetic genus of the curve $Z$ is 1, $h^0(Z,\omega_Z)=1$ and any non-zero section $\sigma$ of $\omega_Z$ is nowhere vanishing. Since $H^0(Z,\omega_Z)=H^0(C,\mathcal C_\nu\omega_C) \subset H^0(C,\omega_C)$, the lift $\rho^*\sigma \in H^0(\P^2,\mathcal{O}_{\P^2}(d-3))$ defines the adjoint curve. Since $\sigma$ was nowhere vanishing on $Z$, $\rho^*\sigma$ cannot vanish on any boundary curve $C_i$. Similarly, if $\rho^*\sigma$ had a zero at a vertex, then $\sigma$ would vanish at the point of $Z$ mapping to that vertex.
\qedhere
\end{proof}

\begin{example}\label{ex:adjoint}
Here is a more direct proof of the \emph{existence} of an adjoint curve, under 
the assumption that all the residual points are nodes. Namely, assume 
that the boundary curves $C_i$ are rational nodal curves that intersect transversally. 
Then an adjoint $A_P$ is a curve of degree $d-3$ that passes through the residual points $R(P)$.

There are two types of residual points:
the singularities (nodes) of each boundary curve $C_i$,
and the intersection points of the boundary curves that are not one of the $k$ vertices of the polypol~$P$.
Since $C_i$ is rational, it has $\binom{d_i-1}2$ nodes. The total number of intersection points of the curves $C_i$ is $\sum_{1\le i < j\le k} d_id_j$.  
Hence, the number of residual points of $P$  is
\begin{align*}
\textstyle   \# R(P) =  \sum_{i=1}^k\binom{d_i-1}2+\sum_{1\le i < j\le k} d_id_j-k =  
 \frac{1}2 \sum_{i=1}^k(d_i^2-3d_i)+\sum_{1\le i < j\le k} d_id_j.
\end{align*}

This is the same as the dimension of the space of curves of degree $d-3$:
\begin{align*}
  \textstyle h^0(\P^2,\mathcal O_{\P^2}(d-3))-1 
=\binom{d-1}2 -1 = \frac{1}{2}(d^2 - 3d) =  \frac{1}2 \bigl((\sum_{i=1}^k d_i)^2-3\sum_{i=1}^k d_i\bigr) \\
\textstyle =\frac{1}2\bigl(\sum_{i=1}^k d_i^2+2\sum_{1\le i < j\le k} d_id_j\bigr)-\frac{3}2 \sum_{i=1}^k d_i 
\textstyle = \frac{1}2 \sum_{i=1}^k(d_i^2-3d_i)+\sum_{1\le i < j\le k} d_id_j.
\end{align*}
 
Since the condition to pass through a point is  linear, this shows that there exists at least one adjoint curve. 
\end{example}

\begin{remark}
E.~Wachspress gave a more explicit and intuitive, but less formal, construction of the adjoint in comparison to the above proof of Proposition~\ref{prop:uniqueAdjoint}.
For rational polypols with boundary curves that intersect non-transversally or have more complicated singularities than nodes, he required  the adjoint curve to have appropriate multiplicities at the resulting residual points~\cite{Wachspress16}.
\end{remark}

\begin{remark}
\label{rem:singleBoundary}
A more general notion than polypols, as we defined them above, include semi-algebraic subsets $P$ of the plane with an irreducible boundary curve $C$. 
If $C$ is a rational nodal curve and $P$ has exactly one of the nodes on its Euclidean boundary, then
there is a unique adjoint curve of degree $\deg(C)-3$ passing through the remaining nodes.
For example, for the ampersand curve in Figure \ref{fig:ampersand}, if we let $P$ be one of the two regions with exactly one of the three nodes on its boundary, then the adjoint is the line passing through the two other nodes.
As in our discussion above, there is also a unique adjoint curve if the rational curve $C$ has more complicated singularities. 
\begin{figure}[hbt]
    \centering
    \includegraphics[scale=2,trim={0.15cm 0.14cm 0.14cm 0.15cm},clip]{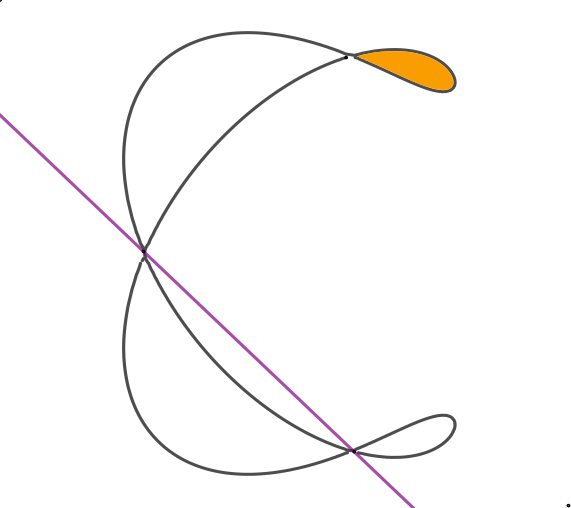}
    \caption{The ampersand curve given by $(y^2- x^2)(x - 1)(2x - 3) - 4(x^2 + y^2 - 2x)^2 = 0$ bounds a semi-algebraic set with a unique adjoint line.
    }
    \label{fig:ampersand}
\end{figure}
\end{remark}

\begin{remark}
\label{rem:triangleadjoint}
One might hope that the adjoint curve is nonsingular as long as the boundary curves intersect transversally and have only nodes as singularities.  However, this is \emph{not} true, as the following example demonstrates:

A rational polypol with boundary consisting of three conics has a cubic adjoint curve passing through the nine singular points that are not vertices of the polypol.  Figure \ref{fig:conicTriangle} shows an example with vertices $A,B,C$, where the adjoint curve is the union of three lines.  
To think of the polypol as a real semi-algebraic set, one needs to specify on each conic which segment is part of the boundary. With a choice of two segments for each conic, there are 8 different real polypols with vertices $A,B,C$ and boundary on the three conics. In particular, this example shows that cubics passing through $9$-tuples of intersection points of $3$ conics behave differently than cubics passing through $9$-tuples of general points. 
\begin{figure}[hbt] 
    \centering
    \includegraphics[width=0.5\textwidth,height=6cm,trim={0.15cm 0.14cm 0.14cm 0.15cm},clip]{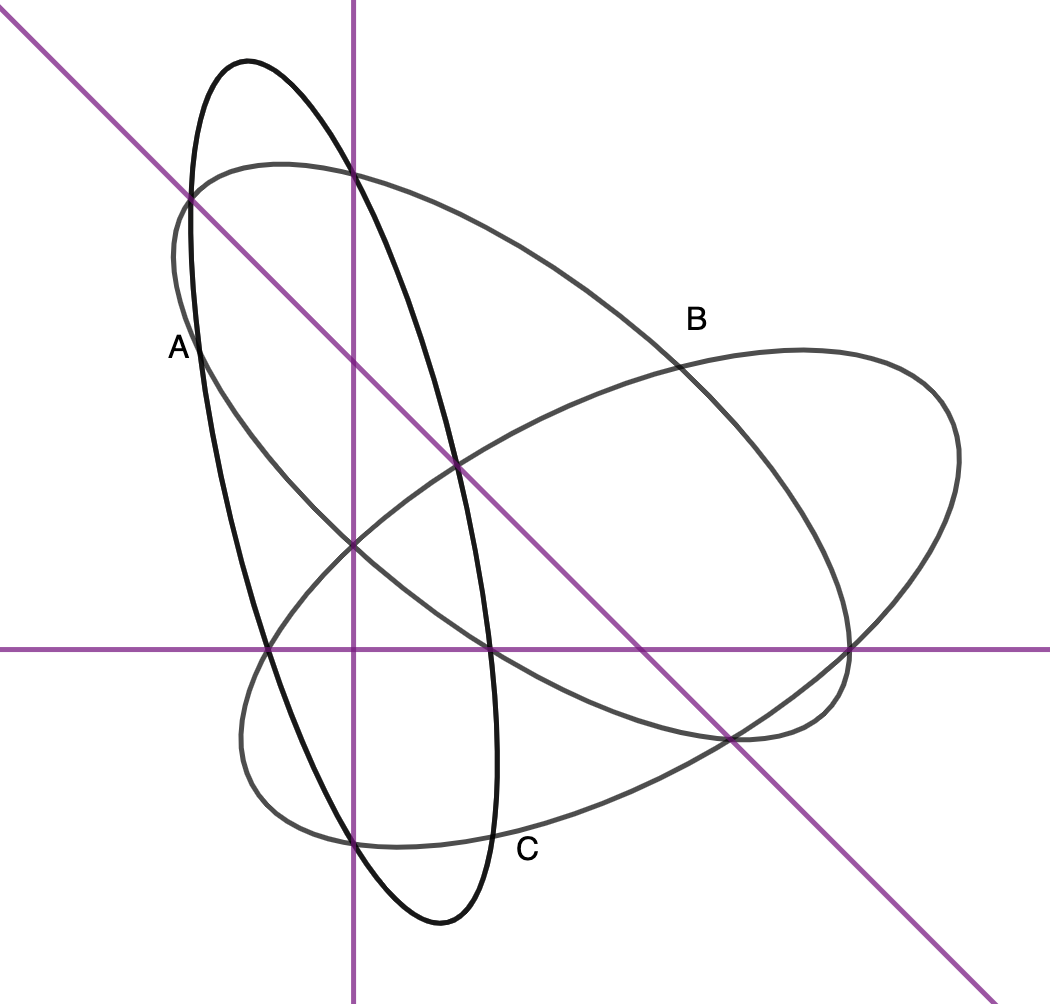}
    \caption{Polypol defined by three conics and vertices $A$, $B$, $C$. 
    The adjoint curve is the union of three lines.
    }
    \label{fig:conicTriangle}
\end{figure}
\end{remark}

\subsection{Positive geometries and their canonical forms}
\label{ssec:positiveGeometries}

We begin by defining positive geometries and their canonical forms, following~\cite{arkani2017positive}.
Let $X$ be a projective, complex, irreducible, $n$-dimensional variety, $n \geq 0$, defined over $\R$. 
Moreover, let $X_{\geq 0} \subset X(\R)$ be a non-empty closed semi-algebraic subset of the real part of $X$ such that
its Euclidean interior $X_{>0}$ is an open oriented $n$-dimensional manifold whose Euclidean closure is again $X_{\geq 0}$.
We also assume that $X_{>0}$ is contained in the nonsingular part of $X$, but we do not assume that $X$ is normal.
We write $\partial X_{\geq 0} := X_{\geq 0} \setminus X_{>0}$ for the Euclidean boundary of $X_{\geq 0}$
and denote by $\partial X$ the Zariski closure of $\partial X_{\geq 0}$ in $X$.
Let $C_1, C_2, \ldots, C_k$ be the irreducible components of $\partial X$.
For each component $C_i$, we denote by $C_{i,\geq 0}$
the Euclidean closure of the interior of $C_i \cap X_{\geq 0}$ in the subspace topology on $C_i(\R)$.

\begin{definition}
\label{def:posGeometry}
$(X,X_{\geq 0})$ is a \emph{positive geometry} if there is a unique non-zero rational $n$-form $\Omega(X,X_{\geq 0})$, called its \emph{canonical form}, satisfying the following recursive axioms:
\begin{enumerate}
    \item[(a)] If $n=0$, then $X = X_{\geq 0}$ is a point and we define $\Omega(X,X_{\geq 0}) := \pm 1$, depending on the orientation of the point.
    \item[(b)] If $n>0$, we require for every $i = 1, 2, \ldots, k$ that the boundary component $(C_i, C_{i,\geq 0})$ is a positive geometry whose canonical form is the residue of $\Omega(X,X_{\geq 0})$ along $C_i$:
    \begin{align*}
        \mathrm{Res}_{C_i} \Omega(X,X_{\geq 0}) = \Omega(C_i,C_{i,\geq 0}),
    \end{align*}
    and that $\Omega(X,X_{\geq 0})$ is holomorphic on $X\setminus \cup_{i=1}^k C_i$.
\end{enumerate}
\end{definition}

In particular, for a positive geometry $(X,X_{\geq 0})$, the variety $X$ cannot have non-zero holomorphic $n$-forms, because otherwise $\Omega(X,X_{\geq 0})$ could not be unique.
Conversely, the uniqueness of the canonical form follows immediately if $X$ has no non-zero holomorphic differential forms: if $\Omega$ and $\Omega'$ are canonical forms, then their difference $\Omega-\Omega'$ is a holomorphic form on $X$, and hence it must vanish identically. 

\begin{example}
\label{ex:posGeomOneDim}
Let $n=1$ and let $(X,X_{\geq 0})$ be a one-dimensional positive geometry.
Then $X$ must be a rational curve as these are the only projective complex curves $X$ without non-zero holomorphic $1$-forms. 
Moreover, $X_{\geq 0}$ can neither be the empty set nor the whole $X(\R)$, as otherwise $\partial X = \emptyset$, but we do not allow the empty set to be a positive geometry.
Hence, $X_{\geq 0}$ must be a finite disjoint union of closed segments in $X(\R)$, where each open segment in $X_{> 0}$ is nonsingular.

Conversely, any finite disjoint union of closed, real segments in a rational curve $X$, such that the open segments are nonsingular,  is a positive geometry. Any such segment can be rationally and birationally parameterized by a closed interval $[a,b] := \{ (1:t) \in \P^1(\R) \mid a \leq t \leq b \}$. Its canonical form can be identified with the
canonical form of the interval $[a,b] \subset \mathbb P^1(\R)$, which is equal to
\begin{align}
\label{eq:canonicalFormInterval}
    \Omega(\P^1, [a,b]) = \frac{1}{t-a} dt - \frac{1}{t-b} dt = \frac{b-a}{(t-a)(b-t)} \, dt,
\end{align}
where $t$ is the coordinate on the affine chart $\{ (1:t) \} \subset \P^1(\R)$. 
This holds because we have $\Res_{a} \Omega(\P^1, [a,b])=\frac{b-a}{b-t}|_{t=a}=1$ and 
$\Res_{b} \Omega(\P^1, [a,b])  =-\frac{b-a}{t-a}|_{t=b}=-1$. Here the interval $[a,b]$ is oriented along the increasing $t$-direction.
The canonical form of a disjoint union of closed intervals is the sum of the individual canonical forms (see Lemma~\ref{add1}).
\end{example}

We now 
move to dimension two and study
positive geometries $(X,X_{\geq 0})$ in the plane $X = \P^2$.
By Example~\ref{ex:posGeomOneDim}, each boundary component $(C_i, C_{i, \geq 0})$ is a union of closed (real) segments on a rational curve $C_i$ such that each open segment (in $C_{i,> 0}$) is nonsingular.
We call the endpoints of the closed segments the \emph{vertices} of the positive geometry.

\begin{definition}
\label{def:quasi-regular}
A \emph{real polypol} $P$ is a polypol with real boundary curves $C_i$, real vertices $v_{i-1,i}\in C_{i-1}\cap C_i$, and a given choice of segments 
connecting $v_{i-1,i}$ to $v_{i,i+1}$ in $C_i(\R)$, called the \emph{sides} of the polypol, and a closed semi-algebraic set $P_{\ge 0}$ 
such that each connected component of its interior is simply connected 
and its boundary is exactly the union of the sides of the polypol. 
A \emph{quasi-regular polypol} is a real polypol whose sides are contained in the nonsingular locus of $C_i$.
\end{definition}

\begin{example}
There are eight real polypols in \Cref{fig:conicTriangle} as discussed in \Cref{rem:triangleadjoint}, all of which are quasi-regular.
\end{example}

\begin{proposition}
\label{prop:posGeometriesAreRationalPolypols}
If $(\P^2, X_{\geq 0})$ is a positive geometry 
with $k \geq 2$ 
boundary components $(C_i, C_{i, \geq 0})$, $C_i \neq C_j$ for $i \neq j$,
and $k$ vertices $v_{12}\in C_1\cap C_2$, \ldots, $v_{k1}\in C_k\cap C_1$ such that $C_i$ and $C_j$ intersect transversally at $v_{ij}$,
then the curves $C_i$ and vertices $v_{ij}$ define a quasi-regular rational polypol with sides $C_{i, \geq 0}$.
\end{proposition}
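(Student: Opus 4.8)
The plan is to read off the conditions in the definitions of rational polypol (Section~\ref{ssec:adjoints}) and quasi-regular polypol (Definition~\ref{def:quasi-regular}) from the positive-geometry axioms of Definition~\ref{def:posGeometry}, from Example~\ref{ex:posGeomOneDim}, and from the transversality hypothesis. Much of the ``polypol'' data is already supplied by the hypotheses: $k \ge 2$ distinct irreducible components $C_i$, and cyclically labelled points $v_{i,i+1} \in C_i \cap C_{i+1}$ that are transversal, hence nonsingular, on $C_i$ and $C_{i+1}$. So the actual work is to show that the $C_i$ are rational and real, and that the sets $C_{i,\geq 0}$ are honest sides, i.e., single nonsingular arcs joining consecutive vertices and bounding a region whose components are simply connected.

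First I would treat rationality and reality. By axiom~(b) each $(C_i, C_{i,\geq 0})$ is a positive geometry, and it is one-dimensional since $C_i$ is an irreducible component of the curve $\partial X$. Example~\ref{ex:posGeomOneDim} then says that $C_i$ is a rational curve and that $C_{i,\geq 0}$ is a nonempty finite disjoint union of closed real segments whose relative interiors are nonsingular on $C_i$. As $C_i(\R)$ contains a segment it is infinite, so $C_i$ is stable under complex conjugation, i.e., defined over $\R$. Combined with the hypotheses listed above, this shows that $C := \bigcup_i C_i$ with vertex set $V(P) = \{v_{i,i+1}\}$ is a rational polypol $P$, and since its vertices lie in $X(\R)$ they are real.

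Next I would pin down the sides. The claim is that $C_{i,\geq 0}$ is a single closed arc $\sigma_i$ with endpoints exactly $v_{i-1,i}$ and $v_{i,i+1}$. The endpoints of the segments comprising $C_{i,\geq 0}$ are, by definition, vertices of the positive geometry, hence lie among the $k$ prescribed points, and being on $C_i$ they can only be $v_{i-1,i}$ or $v_{i,i+1}$. To see that there is exactly one segment on each $C_i$ running between these two points, I would carry out a local analysis at each vertex $v_{i,i+1}$, using that $C_i$ and $C_{i+1}$ meet transversally there and that $v_{i,i+1}$ is by assumption a genuine vertex: this should force $\partial X$ to consist locally of precisely the two smooth branches $C_i, C_{i+1}$ and $X_{\ge 0}$ to be the closure of a single sector between them, so that $v_{i,i+1}$ is an endpoint of exactly one segment on $C_i$ and one on $C_{i+1}$, and of none on any other component. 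Counting (vertex, segment-endpoint) incidences then yields exactly $2k$; since each of the $k$ components carries at least one segment, and each segment contributes two endpoints, each component carries exactly one segment. Hence $\sigma_i$ joins $v_{i-1,i}$ to $v_{i,i+1}$ and the boundary forms the cycle $v_{12}\to v_{23}\to\cdots\to v_{k1}\to v_{12}$.

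Finally, taking $P_{\ge 0} := X_{\ge 0}$, its topological boundary equals $\bigcup_i \sigma_i$ by construction; each $\sigma_i$ is nonsingular on $C_i$ because its relative interior is by Example~\ref{ex:posGeomOneDim} and its endpoints $v_{i-1,i}, v_{i,i+1}$ are by transversality, so $P$ is quasi-regular; and since every vertex meets exactly two sides, $\partial X_{\ge 0}$ is a disjoint union of simple closed curves, so by the Jordan curve theorem $X_{>0}$ is a disjoint union of Jordan domains, hence a union of simply connected open sets, as Definition~\ref{def:quasi-regular} requires. I expect the main obstacle to be the third step: the local topological analysis at the vertices needed to exclude degenerate sector configurations and to force exactly one side per component. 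By contrast, rationality, reality and nonsingularity of the sides follow almost immediately from Example~\ref{ex:posGeomOneDim} and the transversality hypothesis.
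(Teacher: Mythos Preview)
Your argument tracks the paper's closely in spirit: rationality and nonsingularity of the sides come from Example~\ref{ex:posGeomOneDim}, and the polypol data is then read off from the hypotheses. Your counting argument for ``one segment per $C_i$'' is in fact more careful than the paper, which passes over this point in one sentence.

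The one substantive difference, and the one place where your proposal has a gap, is the argument for simple connectedness. You conclude by invoking the Jordan curve theorem once you know $\partial X_{\ge 0}$ is a disjoint union of simple closed curves. But we are in $\P^2_\R$, not $\R^2$: a simple closed curve in $\P^2_\R$ can be a pseudoline, and then neither side of it is simply connected (the complement of a pseudoline is a M\"obius band). So the Jordan curve theorem, as stated, does not apply. The paper instead uses the hypothesis that $X_{>0}$ is an \emph{oriented} manifold: since $\P^2_\R$ is non-orientable, an orientable open subset cannot contain a loop representing the nontrivial class in $\pi_1(\P^2_\R)$, which forces each component of $X_{>0}$ to be a disk. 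This is exactly the missing ingredient in your last paragraph; once you insert orientability to exclude the pseudoline case, your Jordan-type conclusion goes through and the two proofs agree.
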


\begin{proof}
The assumption that $X_{> 0}$ is orientable implies in $\P^2(\R)$ that it is a union of disks, i.e., simply connected sets.
Since the boundary components $(C_i, C_{i, \geq 0})$ are positive geometries themselves, each boundary curve $C_i$ is rational by Example~\ref{ex:posGeomOneDim}, and the open segments between consecutive vertices are nonsingular.  Since the curves $C_i$ are pairwise distinct, the curve $\cup_i C_i$ and the vertices $v_{ij}$ form a quasi-regular rational polypol with sides $C_{i, \geq 0}$. 
\end{proof}

\begin{remark}
\label{rem:specialPosGeometries}
There are more positive geometries $(\P^2, X_{\geq 0})$ than  quasi-regular rational polypols.
Such other positive geometries can be as follows: 
\begin{enumerate}
    \item there is a single rational boundary curve $C$ as in Remark~\ref{rem:singleBoundary}, or
    \item some vertices are either singular on their boundary curves (see Figure \ref{fig:rmk2-11} for an example) or they are non-transversal intersections of the boundary curves, or
   \item  at least one rational boundary curve $C_i$ contributes to several parts of the boundary, i.e.,         $C_{i,> 0}$ is a union of at least two disjoint nonsingular intervals, as in Example~\ref{ex:conicQuadrangle}. 
\end{enumerate}
A positive geometry may also be the union of these, see \Cref{ssec:additivity}.
\begin{figure}[h!]
        \centering
        \input{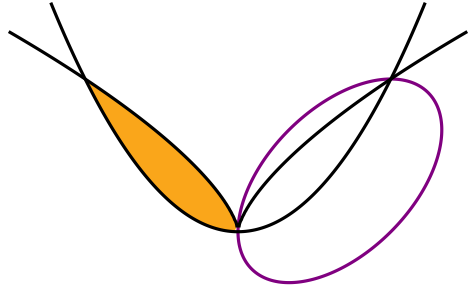}
        \caption{A positive geometry 
        bounded by a parabola and a cubic with a cusp which is a vertex.
        The canonical form is equal to $\frac{x^2+xy+y^2+x}{(y-x^2)(x^2-y^3)}\,\,dx\wedge dy$. Note that the adjoint curve is tangent to the cusp.}
        \label{fig:rmk2-11}
    \end{figure}
\end{remark}

\begin{example}
\label{ex:conicQuadrangle}
Let us consider two real irreducible conics $C_1, C_2$ in the plane that intersect in four real points $v_1, \ldots, v_4$, and let $X_{\geq 0}$ be the simply connected semi-algebraic subset of $\P^2(\R)$ that has all four points $v_1, \ldots, v_4$ on its Euclidean boundary; see Figure~\ref{fig:conicQuadrangle}.
We will see in Example~\ref{ex:conicQuadrangle2} that $(\P^2, X_{\geq 0})$ is a positive geometry with canonical form 
\begin{align}
\label{eq:conicQuadrangle}
    \frac{\alpha_Q}{f_1 f_2} \, dx \wedge dy,
\end{align}
where $f_i$ is a defining equation of the conic $C_i$ and $\alpha_Q$ is a defining equation of the adjoint line $A_Q$ of the quadrangle $Q$ that is the convex hull of $v_1, \ldots, v_4$.
\begin{figure}[hbt]
    \centering
    \includegraphics[width=0.7\textwidth]{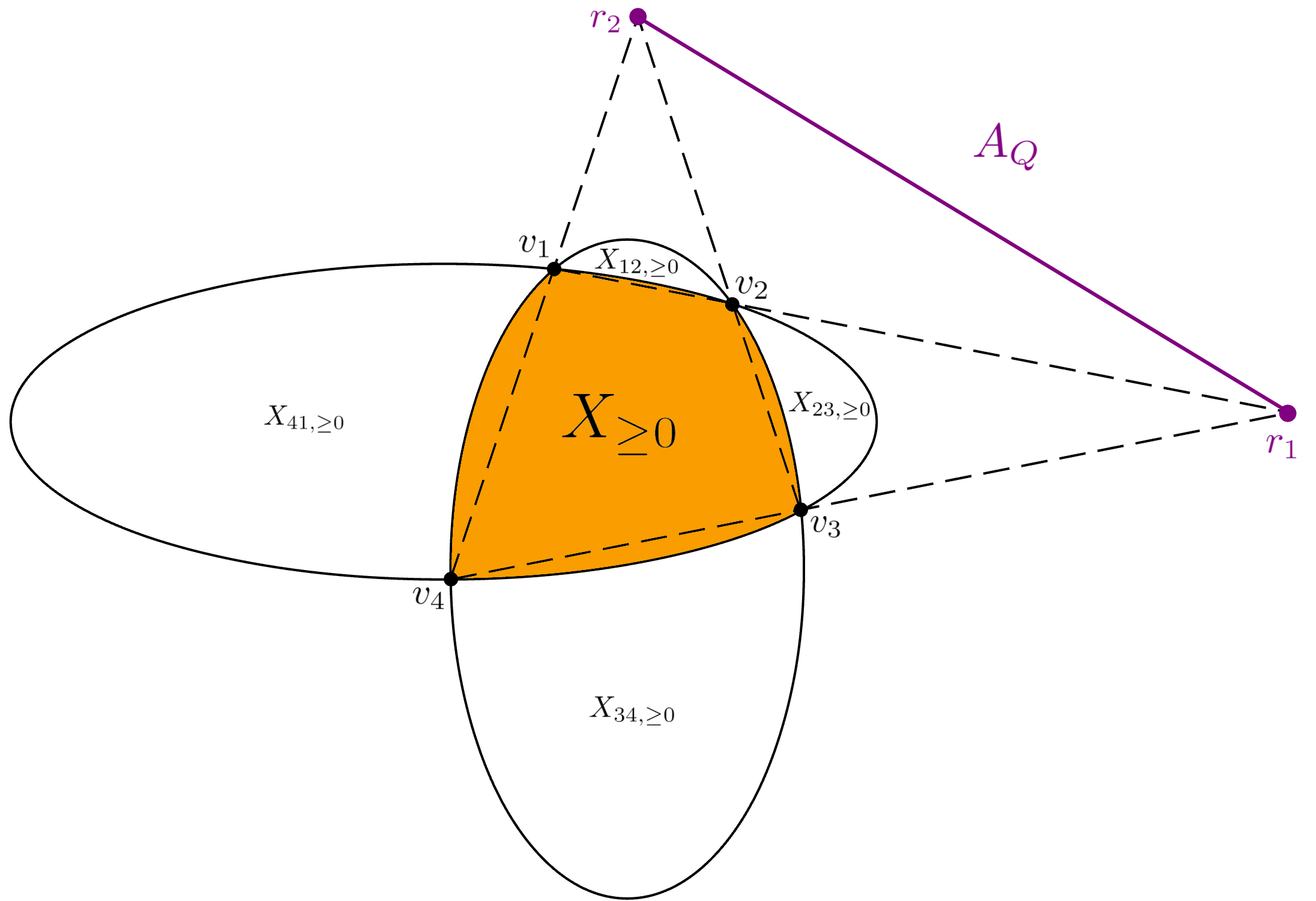}
    \caption{Positive geometry with 2 boundary curves and 4 vertices.}
    \label{fig:conicQuadrangle}
\end{figure}
Notice that $C_1\cup C_2$ together with the four vertices $v_i$ do not form a rational polypol since each curve contains more than two vertices.  Since $C_1\cup C_2$ has no other singularities than the vertices, there is no unique adjoint curve in the sense that there is no unique line passing through the empty set.
The numerator of the unique rational canonical $2$-form, however, is an adjoint whose unicity follows from the residue condition at the vertices.  In this particular example, it may also be characterized as the unique adjoint of the convex hull of the four vertices.
\end{example}

The numerator of the canonical form of a positive geometry is in fact always an adjoint curve in the sense that it passes through the singular points on the boundary curve that are not vertices (but it is not necessarily the only such curve):
\begin{proposition} 
\label{prop:canonicalFormNumeratorIsAdjoint}
Let $(\P^2,X_{\geq 0})$ be a positive geometry in the plane, with boundary curve $\partial X\subset \P^2$. Assume all singularities of $\partial X$ are nodes. Let $\Omega(\P^2, X_{\geq 0})$ be its rational canonical form.  It has poles along $\partial X$ and zeros along some curve $A$.
Then $A$ is an adjoint curve to $\partial X$, i.e., $\deg A = \deg \partial X-3$,
and $A$ passes through the singular points of $\partial X$
that are not vertices of the positive geometry.
\end{proposition}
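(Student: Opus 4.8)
The plan is to put $\Omega := \Omega(\PP^2_\C, X_{\geq 0})$ into a normal form, read off its degree, and then verify passage through the non-vertex nodes by a local residue computation. Write $\partial X = C_1 \cup \dots \cup C_k$ with $C_i = V(f_i)$, $\deg f_i = d_i$, and $d := \sum_i d_i = \deg \partial X$. As a canonical form, $\Omega$ is a nonzero rational $2$-form, holomorphic on $\PP^2_\C \setminus \partial X$, and it must have a pole along \emph{every} component $C_i$: otherwise $\Res_{C_i}\Omega = 0$ would contradict the fact that the canonical form $\Omega(C_i, C_{i,\geq 0})$ is nonzero. Moreover these poles are simple --- this is the standard logarithmic-pole property of canonical forms of positive geometries \cite{arkani2017positive}, and in any case a higher-order pole along $C_i$ could not have a well-defined rational $1$-form on $C_i$ as its residue. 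Hence $\Omega$ is a global section of $\Omega^2_{\PP^2_\C} \otimes \mathcal{O}_{\PP^2}(\partial X) \cong \mathcal{O}_{\PP^2_\C}(d-3)$, so $\Omega = \tfrac{g}{f_1\cdots f_k}\, dx\wedge dy$ with $\deg g \le d-3$, and the zero divisor $A$ of $\Omega$ is the vanishing locus of the corresponding section of $\mathcal{O}(d-3)$; in particular $\deg A = d-3 = \deg\partial X - 3$ (with the line at infinity entering $A$ with multiplicity $d-3-\deg g$). Since all singularities of $\partial X$ are nodes, it then remains only to show that $g$ vanishes at every node $p$ of $\partial X$ that is not a vertex; this is exactly what it means for the degree-$(d-3)$ curve $A$ (whose affine part is $V(g)$) to pass through all non-vertex singular points, hence to be an adjoint curve to $\partial X$.

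The core step is a local analysis near such a point. Fix a node $p$ of $\partial X$ that is not a vertex. By hypothesis the two local branches of $\partial X$ at $p$ are smooth and meet transversally, so there are analytic coordinates $(u,v)$ centered at $p$ with $\partial X = \{uv = 0\}$ locally, say with $\{u=0\}$ on $C_i$ and $\{v=0\}$ on $C_j$ (possibly $i=j$, a self-node of $C_i$). Near $p$ the product of the local equations of the components through $p$ is a unit times $uv$, every remaining $f_\ell$ is a unit, and $dx\wedge dy$ is a unit times $du\wedge dv$; therefore $\Omega = \tfrac{h}{uv}\,du\wedge dv$ with $h$ holomorphic at $p$ and $h = (\text{unit})\cdot g$, so $h(p)=0$ if and only if $g(p)=0$. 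Now I would apply Definition~\ref{def:posGeometry}(b) along the branch $\{u=0\}$: its Leray residue is $\Res_{\{u=0\}}\Omega = \tfrac{h(0,v)}{v}\,dv$, and the axiom identifies this, pulled back to the branch of the normalization of $C_i$ over $p$, with the restriction of $\Omega(C_i, C_{i,\geq 0})$ there. By Example~\ref{ex:posGeomOneDim}, $\Omega(C_i, C_{i,\geq 0})$ is a rational $1$-form on the rational curve $C_i$ whose only poles are the vertices lying on $C_i$; since $p$ is not a vertex, this form is holomorphic at the point over $p$, so $\tfrac{h(0,v)}{v}\,dv$ is holomorphic at $v=0$, forcing $h(0,0)=0$ and hence $g(p)=0$. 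As $p$ was an arbitrary non-vertex node, $A$ passes through all of them, which together with the degree count establishes the proposition.

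I expect two points to need care. The main one is the simple-pole claim for $\Omega$, since this is what pins the degree of $A$ down to exactly $\deg\partial X-3$; if one prefers not to quote it as a general feature of canonical forms, one should rule out higher-order poles directly, the cleanest route being that the ``residue'' of a pole of order $\ge 2$ along $C_i$ is not a rational $1$-form on $C_i$, contradicting the axiom. The second is the bookkeeping when $p$ is a self-node of a single component $C_i$: one must read $\Res_{C_i}$ in Definition~\ref{def:posGeometry} as a residue computed on the normalization of $C_i$, i.e.\ branch by branch, which is precisely the local picture used above, so that no additional argument is required. The remaining ingredients --- the unit factors in the local computation, and the identification $\Omega^2_{\PP^2_\C}\otimes\mathcal{O}_{\PP^2}(\partial X)\cong\mathcal{O}_{\PP^2_\C}(d-3)$ --- are routine.
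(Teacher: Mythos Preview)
Your argument is correct, and the node-passage half is essentially the paper's own: both observe that $\Res_{C_i}\Omega = \Omega(C_i,C_{i,\geq 0})$ has poles only at the vertices on $C_i$, so at a non-vertex node $p$ the local residue $\tfrac{h(0,v)}{v}\,dv$ must be regular, forcing $h(p)=0$. Your explicit local coordinates make this slightly more transparent than the paper's phrasing, but it is the same idea.

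The degree count is where you genuinely diverge. You invoke the simple-pole property to place $\Omega$ directly in $H^0(\Omega^2_{\PP^2_\C}(\partial X)) \cong H^0(\mathcal{O}_{\PP^2_\C}(d-3))$ and read off $\deg A = d-3$ at once. The paper instead picks one rational boundary component $C$, parameterizes it by $t \mapsto (r(t)/h(t),\,s(t)/h(t))$, and balances the exponents of $h(t)$ in the pulled-back residue $1$-form $\Res_C\Omega$: since this form has no zeros or poles over the line at infinity, the exponents $a+2$ and $d-1$ must agree, yielding $a = d-3$. Your route is shorter and more conceptual, and does not use rationality of the boundary components at all; the paper's route is more hands-on and reuses the machinery already set up for Theorem~\ref{thm:rationalPolypolsArePosGeometries}. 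Both implicitly rest on the poles being simple (the paper's exponent count presupposes the denominator is $g_1g_2$, not a higher power), so your explicit acknowledgment of this point, together with the observation that a higher-order pole would not admit a rational $1$-form as residue, is a virtue rather than a weakness.
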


We defer the proof to the end of Section \ref{ssec:canonicalForm}.
\subsection{The canonical form of a quasi-regular rational polypol}
\label{ssec:canonicalForm}

As a converse to Proposition~\ref{prop:posGeometriesAreRationalPolypols}, we show now that every quasi-regular  rational polypol is a positive geometry, and we provide an explicit formula for its canonical form. 
Let $P$ be a quasi-regular rational polypol, defined by real rational curves $C=\cup_{i=1}^k C_i$ and real vertices $V(P)=\{v_{12}, \dots, v_{k1}\}$, and with sides being chosen
real segments of $C_i(\mathbb R)$ connecting $v_{i-1,i}$ to $v_{i,i+1}$ for each $i$ and bounding a closed semi-algebraic set $P_{\ge 0}$.

To work in an affine chart, we fix a line $L_\infty$ that does not contain any of the vertices $v_{i-1,i}$ and intersects $C$ transversally.
Let  $(x,y)$ be affine coordinates on $\C^2=\mathbb P^2\setminus L_\infty$. 
We denote by $f_i\in \mathbb R[x,y]$ a defining polynomial for $C_i\cap \C^2$ and by $\alpha_P\in \C[x,y]$ a defining polynomial for the adjoint $A_P \cap \C^2$ of the polypol  $P$. 

Consider the meromorphic differential form
\[
\Omega(P) := 
\frac{\alpha_P}{f_1\cdots f_k}  \, dx\wedge dy.
\]
Note that $\Omega(P)$ is uniquely defined (up to multiplication by a non-zero constant).
Write $f_{i \bullet} = \partial f_i/ \partial \bullet$.
We may assume the coordinates are chosen such that $f_{iy}$ is nonzero. Then
we have $dx\wedge df_i=dx\wedge (f_{ix}dx+f_{iy}dy)=f_{iy}dx\wedge dy$, so that
\[\Omega(P) = \frac{\alpha_P}{f_1\cdots \hat f_i \cdots f_k f_{iy}}\, dx\wedge \frac{df_i}{f_i}.\]
Hence, its \emph{residue}  along $C_i$ is equal to the restriction  of
\[\frac{\alpha_P }{f_1\cdots \hat f_i \cdots f_k f_{iy}} \,dx\]
to $C_i$. 
Moreover, we have 
\[df_{i+1}\wedge df_i =-(f_{ix}dx+f_{iy}dy)\wedge (f_{i+1,x}dx+f_{i+1,y}dy)=-J_{f_i,f_{i+1}}~dx\wedge dy,\]
where $J_{f_i,f_{i+1}} =f_{ix}f_{i+1,y}-f_{i+1,x}f_{iy}$ is the Jacobian of $f_i,f_{i+1}$.
Hence, the residue of $\Res_{C_i}\Omega(P)$ at the vertex $v_{i,i+1}$ is equal to the constant
\[c_{i,i+1}:=\frac{-\alpha_P(v_{i,i+1})}{f_1(v_{i,i+1})\cdots \hat f_i \hat f_{i+1}\cdots f_k(v_{i,i+1}) J_{f_i,f_{i+1}}(v_{i,i+1})}.\]
To fix the multiplicative scaling of $\Omega(P)$, let us now replace $\alpha_P$ by $- c_{1,2}^{-1}\,\alpha_P$, so that the residue at $v_{1,2}$, considered as the last vertex of the first side,  becomes $-1$.

\begin{theorem}
\label{thm:rationalPolypolsArePosGeometries}
Let $P$ be a quasi-regular  rational polypol.
Then $(\P^2, P_{\geq 0})$ is a positive geometry, and its canonical form 
is the meromorphic 2-form 
$\Omega(P)$ defined above: its numerator is an equation of the adjoint curve, its denominator is an equation of the curve $C$, and its residues at the vertices of $P$ are $\pm 1$.
\end{theorem}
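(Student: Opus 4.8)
The plan is to first dispose of uniqueness, then verify the two recursive axioms of Definition~\ref{def:posGeometry} for the explicit form $\Omega(P)$. Uniqueness is immediate: since $H^0(\PP^2_\C,\Omega^2_{\PP^2_\C})=0$, any two canonical forms differ by a global holomorphic $2$-form, which vanishes, so once we exhibit one canonical form it is \emph{the} canonical form. Axiom~(a) is vacuous here because $n=2$. Thus everything reduces to checking that $\Omega(P)$ is holomorphic on $\PP^2_\C\setminus\bigcup_i C_i$ and that, for each $i$, $(C_i,C_{i,\ge 0})$ is a positive geometry whose canonical form is $\Res_{C_i}\Omega(P)$.

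The first step is a divisor computation. I would first enlarge the genericity hypothesis on the auxiliary line $L_\infty$: in addition to being transversal to $C$ and missing the vertices, require $L_\infty$ not to be a component of the (fixed) adjoint curve $A_P$, so that $\deg\alpha_P=d-3$ exactly. Viewing $\Omega(P)=\frac{\alpha_P}{f_1\cdots f_k}\,dx\wedge dy$ as a rational $2$-form on $\PP^2_\C$ and using $\operatorname{div}(dx\wedge dy)=-3L_\infty$, the coefficients of $L_\infty$ cancel — this is exactly the degree bookkeeping already carried out in Example~\ref{ex:adjoint} — and one gets $\operatorname{div}(\Omega(P))=A_P-\sum_{i=1}^k C_i$. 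Hence $\Omega(P)$ has simple poles precisely along $\bigcup_i C_i$, is holomorphic on the complement, and has zero locus $A_P$. This settles the holomorphicity clause of axiom~(b) and the last two assertions of the theorem (numerator cuts out $A_P$, denominator cuts out $C$).

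Next come the residues. The Poincar\'e residue of $\Omega(P)$ along $C$ is the section $\rho(\alpha_P)\in H^0(C,\omega_C)$; since $\alpha_P$ defines the adjoint, $\rho(\alpha_P)\in H^0(C,\mathcal C_\nu\omega_C)=H^0(C,\nu_*\omega_Z)=H^0(Z,\omega_Z)$, and by the arithmetic-genus-$1$ computation in the proof of Proposition~\ref{prop:uniqueAdjoint} this lift $\sigma$ is the unique (up to scalar) global section of $\omega_Z$, and it is nowhere vanishing. Restrict $\sigma$ to a component $\widetilde C_i\cong\PP^1$ of $Z=\bigcup_i\widetilde C_i$; since $\widetilde C_i$ meets the rest of $Z$ only in the two nodes lying over $v_{i-1,i}$ and $v_{i,i+1}$, we have $\omega_Z|_{\widetilde C_i}=\omega_{\widetilde C_i}(q_{i-1}+q_i)\cong\mathcal O_{\PP^1}$, so $\sigma|_{\widetilde C_i}$ is a rational $1$-form on $\PP^1$ with at worst simple poles at $q_{i-1},q_i$, nonzero, hence with simple poles exactly there; by the residue theorem on $\PP^1$ its residues are $a_i$ and $-a_i$. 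The compatibility condition defining $\omega_Z$ at each node (the residues of the two branches sum to zero) forces $a_{i+1}=a_i$ for all $i$, so all these constants coincide. A direct local computation of the residue at $v_{i,i+1}$ — precisely the Jacobian computation preceding the theorem, producing the constant $c_{i,i+1}$ — shows that after the prescribed rescaling $\alpha_P\mapsto -c_{1,2}^{-1}\alpha_P$ this common value equals $\pm 1$, namely $+1$ at the first and $-1$ at the last vertex of every side. Therefore $\Res_{C_i}\Omega(P)=\sigma|_{\widetilde C_i}$ is exactly the canonical form of the interval $[v_{i-1,i},v_{i,i+1}]$ from \eqref{eq:canonicalFormInterval}; since quasi-regularity means the side is nonsingular, $(C_i,C_{i,\ge 0})$ is the $1$-dimensional positive geometry of Example~\ref{ex:posGeomOneDim} and $\Res_{C_i}\Omega(P)=\Omega(C_i,C_{i,\ge 0})$. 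This verifies axiom~(b), and $\Omega(P)$ is the canonical form.

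The step I expect to be the main obstacle is the identification in the previous paragraph of the \emph{naive} residue $\bigl(\alpha_P/(f_1\cdots\widehat{f_i}\cdots f_k\,f_{iy})\bigr)\,dx\big|_{C_i}$, pulled back to $\widetilde C_i$, with the restriction of the global section $\sigma\in H^0(Z,\omega_Z)$ — equivalently, controlling the residue form at the residual points that lie on $C_i$, most delicately at the nodes of $C_i$ itself, where $f_{iy}$ vanishes and the naive expression is a priori singular. This is exactly where the adjoint hypothesis (that $\alpha_P$ maps into $\mathcal C_\nu\omega_C$) must be used, via the trace-map and conductor formalism of \cite{piene1978ideals}, \cite{rosenlicht1952}, rather than a hands-on computation. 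A secondary, bookkeeping-type point is matching the boundary orientation induced on $C_{i,\ge 0}$ from $P_{\ge 0}$ with the sign in \eqref{eq:canonicalFormInterval}; this is pinned down by the consistency of the $\pm 1$ pattern around the cycle of $k$ sides, together with orientability of $P_{>0}\subset\R^2$.
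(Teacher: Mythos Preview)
Your proof is correct and takes a genuinely different route from the paper's. The paper argues by choosing an explicit rational parameterization $t\mapsto(x(t),y(t))$ of each $C_i$, writing $\Res_{C_i}\Omega(P)$ as a rational $1$-form $\Omega_i(t)$, counting degrees of numerator and denominator in $t$, and then invoking the local identity (conductor)$\cdot$(ramification) $=$ (Jacobian) from \cite{piene1978polar} to cancel all zeros and poles of $\Omega_i(t)$ except at $t=a_i,b_i$; the matching of the constants $\gamma_i$ around the cycle is done by computing each vertex residue twice, once from each adjacent side. Your approach bypasses the parameterization entirely: once $\rho(\alpha_P)\in H^0(Z,\omega_Z)$ and $\omega_Z|_{\widetilde C_i}=\omega_{\widetilde C_i}(q_{i-1}+q_i)$, the pole structure on $\widetilde C_i$ is automatic, and the cycle compatibility of the residues is precisely the node condition characterizing sections of $\omega_Z$. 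This is more conceptual and reuses the machinery already set up for Proposition~\ref{prop:uniqueAdjoint}; the paper's version is more hands-on and makes the role of the conductor visible in coordinates.

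One remark: the step you flag as the main obstacle --- identifying $\nu_i^*(\Res_{C_i}\Omega(P))$ with $\sigma|_{\widetilde C_i}$ --- is easier than you suggest. Both are meromorphic $1$-forms on the irreducible curve $\widetilde C_i\cong\PP^1_\C$, and on the dense open where $C_i$ is smooth and disjoint from the other $C_j$ the map $\nu$ is an isomorphism, $\Res_{C_i}\Omega(P)$ coincides with $\Res_C\Omega(P)=\rho(\alpha_P)$, and the trace map identifies $\rho(\alpha_P)$ with $\sigma$ there. So the two forms agree generically, hence everywhere. The substantive fact --- that this form is regular over the residual points on $C_i$ --- is then a \emph{consequence} of the identification, not something to be checked separately: it holds because $\sigma$ is a global section of $\omega_Z$ and the residual points are smooth on $Z$. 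In other words, you have correctly located where the adjoint hypothesis enters, but it enters once (in placing $\rho(\alpha_P)$ inside $H^0(Z,\omega_Z)$), after which the identification and the regularity both follow formally.
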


\begin{proof}
We must show that for each $i$,  $\Res_{C_i} \Omega(P)$ is the canonical form on the side $C_i$ (with vertices $v_{i-1,i}$ and $v_{i,i+1}$). 

The adjoint curve $A_P$ is a curve of degree $d-3$ that passes through all singular points of $C$ other than the vertices $V(P)$; more precisely, the adjoint curve contains the fat points, defined by the conductor ideal at each singular point of $C$, except the vertices of $P$. Let $\P^1\to C_i\subset C\subset \P^2$ be a (birational) parameterization of  $C_i$:
Using affine coordinates, 
we write $x(t)=\frac{r(t)}{h(t)}$ and $y(t)=\frac{s(t)}{h(t)}$, where $r,s,h$ are polynomials of degree $d_i$. We can then write the restriction of $\Res_{C_i}\Omega(P)$ to $C_i$ as
\[\Omega_i(t):=\frac{\alpha_P(x(t),y(t)) \, x'(t)}{f_1(x(t),y(t))\cdots \hat f_i \cdots f_k(x(t),y(t)) \, f_{iy}(x(t),y(t))}\, dt.\]
In the numerator, $h(t)^{-1}$ appears to the power $d-3$ in $\alpha_P$ and to the power $2$ in $x'(t)$. In the denominator, it appears to the power $d-d_i+d_i-1$. So the $h(t)$'s cancel, and we are left with polynomials in $t$  both in the numerator and in the denominator.

Let $a_i,b_i\in \R$ be such that $v_{i-1,i}=(x(a_i),y(a_i))$ and $v_{i,i+1}=(x(b_i),y(b_i))$. The canonical form of the positive geometry $(\P^1,[a_i,b_i])$ is given in~\eqref{eq:canonicalFormInterval}.
So we must show that $\Omega_i(t)=\Omega(\P^1,[a_i,b_i])$.

We observe that the numerator of $x'(t)=\frac{h(t)r'(t)-h'(t)r(t)}{h(t)^2}$ is a polynomial of degree $2d_i-2$, since the highest terms of the two products cancel. It follows that, after cancellation of the powers of $h(t)$, the numerator of $\Omega_i(t)$ is a polynomial in $t$ of degree $d_i(d-3)+2d_i-2=d_i(d-1)-2$, whereas the denominator has degree $d_i(d-d_i)+d_i(d_i-1)=d_i(d-1)$. Moreover, the denominator contains $(t-a_i)(b_i-t)$ as a factor since $t=a_i,b_i$ are roots of $f_{i-1}(x(t),y(t))$ and $f_{i+1}(x(t),y(t))$, respectively.
Hence, we can write
\[\Omega_i(t)=\frac{F(t)}{G(t)(t-a_i)(b_i-t)}\,dt,\]
where $F$ and $G$ both have degree $d_i(d-1)-2$.
We need to show that $F$ and $G$ have the same roots, with the same multiplicities.

Indeed, assume that $p=(x(c),y(c))\in C_i$ is a singular point of $C$. Let $C_{i_1},\dots, C_{i_r}$ be the other components of $C$ that contain $p$. The Jacobian ideal of $C$ at $p$ is  \[ \left \langle (f_i \cdot g )_x, (f_i \cdot g)_y \right \rangle =\left \langle f_{ix} \cdot g +f_i \cdot g_x, f_{iy}\cdot g +f_i \cdot g_y\right \rangle ,\] 
where $g: = \prod_{j=1}^r f_{i_j}$. Restricted to $C_i$ this ideal is generated by $f_{ix} \cdot g $ and $ f_{iy} \cdot g$. We may assume (by switching $x$ and $y$ if necessary) that 
$ f_{iy}\cdot g$ generates the pullback of the Jacobian ideal in the local ring $\C[t]_{(t-c)}$. We know that $\alpha_P(x(t),y(t))$  pulls back to generate the conductor ideal in $\C[t]_{(t-c)}$ and $x'(t)$ to the ramification ideal of the parameterization. Hence the product is the pullback of the Jacobian ideal of the singularity \cite[p.~261]{piene1978polar}, which is the same as the pullback of the ideal generated by $f_{iy} \cdot g $. Hence the zeros and poles of $\Omega_i(t)$ cancel above this point.

Hence the roots of $F$ and $G$ are the same, with the same multiplicities, and so $\frac{F(t)}{G(t)}$ is a constant. Therefore we can write
$\Omega_i(t)=\gamma_i \, \Omega(\P^1,[a_i,b_i])$, for some constant $\gamma_i$.
Since $\Res_{b_i}\Omega(\P^1,[a_i,b_i])=-1$, we have that $\Res_{b_i} \Omega_i(t) = -\gamma_i$.
Moreover, because of $\Res_{a_i}\Omega(\P^1,[a_i,b_i])=1$, we have $\Res_{a_i} \Omega_i(t) =\gamma_i$.
We now claim that the constant $\gamma_i$ is the same for all $i = 1, \ldots, k$.
This can be seen as follows:

The vertex $v_{i,i+1}$ can be regarded as the last vertex of the $i$th side of $P$ and as the first vertex of the $(i+1)$th side of $P$. The value of $\Res_{v_{i,i+1}} \Omega(P)=\Res_{b_i} \Omega_i(t)$  given above corresponds to first taking the residue along $C_{i}$ and then the residue at the last vertex $v_{i,i+1}$ of the $i$th side. If we instead first take the residue along $C_{i+1}$ and then the residue at the first vertex $v_{i,i+1}$  of the $(i+1)$th side (i.e., we switch the roles of $C_i$ and $C_{i+1}$), we get 
$-\Res_{b_i} \Omega_i(t)=\Res_{a_{i+1}} \Omega_{i+1}(t) = \gamma_{i+1}$, hence $\gamma_{i+1}=-(-\gamma_i)=\gamma_i$. 
Continuing along all sides of $P$, we see that the constant $\gamma_i$ is the same for all $i = 1, \ldots, k$.

As we scaled $\alpha_P$ so that $\Res_{b_{1}} \Omega_1(t)=-1$,
we get $\gamma_i=1$ and thus $\Omega_{i}(t) = \Omega(\P^1,[a_i,b_i])$ for all $i = 1, \ldots, k$. 
This shows that $\Omega(P)$ is the canonical form of $P$. 
The uniqueness of $\Omega(P)$ follows from the fact that $\P^2$ does not admit non-zero holomorphic 2-forms.
\end{proof}

\begin{proof}[Proof of Proposition \ref{prop:canonicalFormNumeratorIsAdjoint}] We follow the argument suggested in \cite [Section 7.1]{arkani2017positive}. 
Let $q\in \partial X$ be a singular point that is not a vertex and assume  that  $C\subset \partial X$ is an irreducible component containing $q$. 
Then the residue 
$\Res_C\Omega(\P^2, X_{\geq 0})$ has poles only at the vertices on $C$, so 
 $\Omega(\P^2, X_{\geq 0})$ can have a pole of order at most one at $q$.
But locally,  
\[\textstyle \Omega(\P^2, X_{\geq 0})=\frac {f}{g} dx\wedge dy,\] 
with coprime polynomials $f$ and $g$ such that $C\subset \{g=0\}$ and $\{g=0\}$ has multiplicity at least two at $q$, so $A=\{f=0\}$ must have multiplicity at least one for the rational function $\frac {f}{g}$ to have a pole of order at most one at $q$.

To show that  $\deg A = \deg \partial X -3$
we reverse the argument in the proof of Theorem \ref{thm:rationalPolypolsArePosGeometries}.  Assume that $A=\{f=0\}$, $a=\deg A$ and $\partial X=\{g_1g_2=0\}$, where $\deg g_1=b, \deg g_2=c$ and $C = \{g_2=0\}$ is an irreducible rational curve parameterized by $x(t)=r(t)/h(t),y(t)=s(t)/h(t)$ where $r(t),s(t),h(t)$ are polynomials of degree $c$ in $t$ with no common zeros.  The points $\{h(t)=0\}$ are mapped to the line at infinity in the affine plane. We may choose a parameterization such that $\Res_C\Omega(\P^2, X_{\geq 0})$ has no poles or zeros in these points. 
Substituting the parameterization as in the proof of Theorem \ref{thm:rationalPolypolsArePosGeometries}, $h$ appears with exponent $b+c-1$ in the numerator and exponent $a+2$ in the denominator of the $1$-form
$\Res_C\Omega(\P^2, X_{\geq 0})$.  Since this form has no zeros or poles on $\{h(t)=0\}$, the factor $h$ must cancel, in particular $a+2=b+c-1$, i.e., $a=b+c-3$ or $\deg A = \deg \partial X -3$.
\end{proof}

\subsection{Additivity of the canonical form}
\label{ssec:additivity}
The set of positive geometries has a certain additivity property that allows construction of new ones. This additivity is discussed in general terms in \cite[Section 3]{arkani2015positive}, and the key observation is additivity of canonical forms when two positive geometries are disjoint or share part of their boundary.
Wachspress's arguments  \cite[Thm.~5.1, p.~98]{MR0426460} show that the canonical form of rational polypols with boundary curves of degree $\le 2$ are additive. The argument uses M.~ Noether's fundamental theorem 
and can be extended  to the case of boundary curves of arbitrary degree. 
Here, we use only the uniqueness of the canonical form to show  additivity in more general cases in dimensions one and two.

First, let us show additivity in dimension one.
Let $X$ be a rational curve with points $v_1,v_2,v_3,v_4\in X(\R)$.
Assume that the segments $X_{1,\geq 0}$ of $X(\R)$ between $v_1$ and $v_2$ and $X_{2,\geq 0}$ between $v_3$ and $v_4$ are nonsingular. 
As in Example~\ref{ex:posGeomOneDim}, these segments give positive geometries $(X, X_{1,\geq 0})$ and $(X, X_{2,\geq 0})$.
If $\varphi: \P^1(\R) \dashrightarrow X(\R)$ is a rational, birational parameterization such that $\varphi(a)=v_1, \varphi(b)=v_2, \varphi(c)=v_3,\varphi(d)=v_4$ for $a< b\leq c< d$, then the canonical forms $\Omega(X, X_{1,\geq 0})$ and $\Omega(X, X_{2,\geq 0})$ are as in \eqref{eq:canonicalFormInterval}. 

\begin{lemma}\label{add1}
Let  $X_{\geq 0}=X_{1,\geq 0}\cup X_{2,\geq 0}$ be the union of the two segments on $X$. If either $v_2\neq v_3$, or $v_2=v_3$ and the open segment from $v_1$ to $v_4$ is nonsingular,  then $(X, X_{\geq 0})$ is a positive geometry with canonical form
\[\Omega(X,X_{\geq 0})=\Omega(X,X_{1,\geq 0})+\Omega(X,X_{2,\geq 0}).\]
\end{lemma}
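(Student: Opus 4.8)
The strategy is to use the uniqueness criterion established right after Definition~\ref{def:posGeometry}: on $X=\PP^1_\C$ (the only projective curve involved, up to birational equivalence) there are no nonzero holomorphic $1$-forms, so a positive geometry is determined by verifying the two recursive axioms for a candidate form. I will therefore take $\Omega := \Omega(X,X_{1,\geq 0})+\Omega(X,X_{2,\geq 0})$ as the candidate canonical form for $(X,X_{\geq 0})$ and check (a) that $\Omega$ is nonzero, (b) that it is holomorphic away from the finite set of endpoints that constitute $\partial X_{\geq 0}$, and (c) that at each point of that boundary set the residue of $\Omega$ equals the canonical form $\pm 1$ of the corresponding $0$-dimensional boundary piece, with the correct sign dictated by orientation.

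First I would set up coordinates: pull everything back along $\varphi$ to $\PP^1_\R$ with parameter $t$, so that by \eqref{eq:canonicalFormInterval} we have $\Omega(X,X_{1,\geq 0}) = \bigl(\tfrac{1}{t-a}-\tfrac{1}{t-b}\bigr)\,dt$ and $\Omega(X,X_{2,\geq 0}) = \bigl(\tfrac{1}{t-c}-\tfrac{1}{t-d}\bigr)\,dt$, with $a<b\le c<d$. Case 1 is $v_2\ne v_3$, i.e.\ $b<c$: then $\Omega$ has simple poles exactly at the four distinct points $a,b,c,d$, which are the endpoints of the two segments and hence the Euclidean boundary of $X_{\geq 0}$; it is holomorphic elsewhere; and $\Res_a\Omega=1$, $\Res_b\Omega=-1$, $\Res_c\Omega=1$, $\Res_d\Omega=-1$, matching the orientations (each segment oriented by increasing $t$), so axiom (b) holds and $\Omega\ne 0$. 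Case 2 is $b=c$ (and the open segment from $v_1$ to $v_4$ nonsingular): here the two simple poles at $t=b$ and $t=c$ have residues $-1$ and $+1$ which cancel, so $\Omega$ extends holomorphically across $t=b$; its only poles are simple poles at $a$ and $d$, which are precisely the two endpoints of the merged segment $X_{\geq 0}$ (whose interior is nonsingular by hypothesis, so it is genuinely a positive geometry), with residues $+1$ and $-1$. In both cases the recursive axioms are satisfied, and uniqueness of the canonical form (no holomorphic $1$-forms on $\PP^1_\C$) finishes the argument.

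The one point requiring a little care — and what I expect to be the main subtlety rather than a serious obstacle — is the bookkeeping in Case 2: one must confirm that after the residues at the shared point cancel, the resulting $\Omega$ is not only holomorphic there but that $X_{\geq 0}=X_{1,\geq 0}\cup X_{2,\geq 0}$ really is an admissible semialgebraic set in the sense of Definition~\ref{def:posGeometry} (its interior the union of the two open segments is an open oriented $1$-manifold whose closure is $X_{\geq 0}$), and that the orientations of $X_{1,\geq 0}$ and $X_{2,\geq 0}$ glue consistently along $v_2=v_3$ — which is exactly what the sign pattern ($-1$ then $+1$ at the junction) encodes. The nonsingularity assumption on the open segment from $v_1$ to $v_4$ is what guarantees the interior of $X_{\geq 0}$ is a manifold at the junction point, so it cannot be dropped. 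Everything else is the routine partial-fractions computation already carried out in Example~\ref{ex:posGeomOneDim}.
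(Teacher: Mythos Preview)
Your proposal is correct and follows essentially the same approach as the paper's proof: write the sum of the two canonical forms in the partial-fractions form of \eqref{eq:canonicalFormInterval}, and in each case verify directly that the poles and residues are exactly those required. The paper's version is terser---it simply displays the identity $\tfrac{b-a}{(t-a)(b-t)}\,dt+\tfrac{d-c}{(t-c)(d-t)}\,dt=\tfrac{d-a}{(t-a)(d-t)}\,dt$ when $b=c$ and notes the residues---whereas you spell out more carefully the uniqueness argument and the role of the nonsingularity hypothesis, but the mathematical content is the same.
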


\begin{proof}
If $v_2\neq v_3$, then  $X_{\geq 0}$ is the union of two disjoint segments on $X$.
The sum of the canonical forms 
\[ \Omega(X,X_{1,\geq 0})+\Omega(X,X_{2,\geq 0})
=\frac{b-a}{(t-a)(b-t)}\, dt+\frac{d-c}{(t-c)(d-t)}\ dt \]
has poles with residues $\pm 1$ at the endpoints of these segments.

If $v_2=v_3$ and the open segment from $v_1$ to $v_4$ is nonsingular, then $b=c$ and $X_{\geq 0}$ is a nonsingular segment on $X$.
The sum of the canonical forms is given by
\[ \Omega(X,X_{1,\geq 0})+\Omega(X,X_{2,\geq 0})
=\frac{b-a}{(t-a)(b-t)}\, dt+\frac{d-c}{(t-c)(d-t)}\, dt
=\frac{d-a}{(t-a)(d-t)}\, dt. \]
It has poles only at the endpoints of $X_{\geq 0}$ with residues that coincide with the residues of $\Omega(X,X_{1,\geq 0})$ and $\Omega(X,X_{2,\geq 0})$, respectively. 
In both cases, the assertion follows.
\end{proof}

Now, to show additivity in the plane, we introduce pseudo-positive geometries, as in~\cite{arkani2017positive}.
Let $(X, X_{\geq 0})$ be a pair as described in the beginning of Section~\ref{ssec:positiveGeometries}, except that we allow $X_{\geq 0}$ to be empty. 
The main difference between positive and pseudo-positive geometries is that we now allow zero canonical forms:
\begin{definition}
$(X, X_{\geq 0})$ is a \emph{pseudo-positive geometry} if there is a unique rational $n$-form $\Omega(X, X_{\geq 0})$, called its \emph{canonical form}, such that
$\Omega(X, X_{\geq 0}) = 0$ if $X_{\geq 0}=\emptyset$
and that otherwise satisfies the recursive axioms Definition~\ref{def:posGeometry} (a) and (b), 
except that we only require in (b) that the boundary components are pseudo-positive geometries. 
\end{definition}

 For positive geometries in the plane,  the uniqueness of the canonical form together with Lemma \ref{add1} allow an additivity
 property 
 that we now describe.  By Theorem \ref{thm:rationalPolypolsArePosGeometries}, quasi-regular
 rational polypols are positive geometries, so it follows that 
by taking unions and differences, we can use quasi-regular rational polypols as building blocks to construct new positive or pseudo-positive geometries. Two illustrative examples are given in Examples \ref{ex:twotriangles} and \ref{ex:conicQuadrangle2}. 
 
 Let $(\P^2,X_{\geq 0})$ and $(\P^2,Y_{\geq 0})$ be positive geometries in the plane with a fixed orientation. 
 We write the Euclidean boundary of $X_{\ge 0}$ as 
 $$\textstyle \partial X_{\geq 0}=\bigcup_{(i,j)\in I_X}C_{i,j},$$ where the $C_{i,j}$ are disjoint 
 segments on the rational boundary curve $C_i$ and 
 $I_X$ is the index set such that for each $i$, the set $\{C_{i,j} \mid (i,j)\in I_X\}$ consists of the segments of $C_i$ that are in $ C_{i,\geq 0}$.  
 Analogously, we write $\partial Y_{\geq 0}=\bigcup_{(i,j)\in I_Y}D_{i,j}$.
 Moreover, we define $I_X(Y):=\{(i,j)\, |\, \exists \, (l,m)\, \text{ s.t. } \, C_{i,j}\subseteq D_{l,m}\}$, and similarly for $I_Y(X).$
 By definition of positive geometries, the interiors of all segments $C_{i,j}$ and $D_{i,j}$ are nonsingular. 
 The canonical 1-form 
 $$\Omega(C_i, C_{i,\geq 0})=\Res_{C_i}\Omega(X, X_{\geq 0})$$
 is the sum of the canonical forms of the disjoint segments $C_{i,j}$, 
 with poles at the two vertices of each $C_{i,j}$. 

 \begin{proposition}\label{add/subtract}
  Let $(\P^2,X_{\geq 0})$ and $(\P^2,Y_{\geq 0})$ be positive geometries. 
  We consider the following three cases:
  \begin{enumerate}\label{cases}
\item[{\rm (1)}] $X_{\geq 0}\cap Y_{\geq 0}=\emptyset$,
\item[{\rm (2)}] $X_{\geq 0}\cap Y_{\geq 0}= \partial X_{\geq 0}\cap \partial Y_{\geq 0}=\bigcup_{(i,j)\in I_X(Y)}C_{i,j}=\bigcup_{(l,m)\in I_Y(X)}D_{l,m}$, 
\item[{\rm (3)}] $X_{\geq 0}\subset Y_{\geq 0}$ and $\partial X_{\geq 0}\cap \partial Y_{\geq 0}=\bigcup_{(i,j)\in I_X(Y)}C_{i,j}.$  Additionally, if  $C_{i,j}$ and $D_{l,m}$ are boundary segments of $X_{\geq 0}$ and $Y_{\geq 0}$, respectively, on the same boundary curve $C_i=D_l$, then either $C_{i,j}\subseteq D_{l,m}$ or $C_{i,j}\cap D_{l,m}=\emptyset$.
\end{enumerate}
  We have the following:
\begin{enumerate}
    \item[{\rm (1)}] $(\P^2,Z_{\geq 0})$ with $Z_{\geq 0}:=X_{\geq 0}\cup Y_{\geq 0}$   
    is a positive geometry with canonical form \linebreak[4] $\Omega(\P^2,Z_{\geq 0})=\Omega(\P^2,X_{\geq 0})+\Omega(\P^2,Y_{\geq 0})$.
    \item[{\rm (2)}] $(\P^2,Z_{\geq 0})$ with $Z_{\geq 0}:=X_{\geq 0}\cup Y_{\geq 0}$ 
    is a pseudo-positive geometry with canonical form $\Omega(\P^2,Z_{\geq 0})=\Omega(\P^2,X_{\geq 0})+\Omega(\P^2,Y_{\geq 0})$.
     \item[{\rm (3)}] $(\P^2,Z_{\geq 0})$ with $Z_{\geq 0}:=
     \overline{Y_{\geq 0}\setminus X_{\geq 0}}$ 
    is a pseudo-positive geometry with canonical form $\Omega(\P^2,Z_{\geq 0})=\Omega(\P^2,Y_{\geq 0})-\Omega(\P^2,X_{\geq 0})$.
\end{enumerate}
 \end{proposition}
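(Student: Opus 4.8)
The plan is to handle the three cases uniformly. In each case I would set
$\Omega := \Omega(\P_\C^2, X_{\geq 0}) + \varepsilon\,\Omega(\P_\C^2, Y_{\geq 0})$, with $\varepsilon = 1$ in cases (1) and (2) and $\varepsilon = -1$ in case (3). Because $\P_\C^2$ carries no nonzero holomorphic $2$-form, the (pseudo-)canonical form of $(\P_\C^2, Z_{\geq 0})$ is unique as soon as it exists, so it is enough to check that this particular rational $2$-form $\Omega$ satisfies the recursive axioms of Definition~\ref{def:posGeometry} for the region $Z_{\geq 0}$: that $\Omega$ is holomorphic on $\P_\C^2 \setminus \partial Z$, and that for every irreducible component $E$ of $\partial Z$ the pair $(E, E^Z_{\geq 0})$ is a (pseudo-)positive geometry with $\Res_E \Omega = \Omega(E, E^Z_{\geq 0})$, where $E^Z_{\geq 0}$ is the Euclidean closure of the relative interior of $E \cap Z_{\geq 0}$ in $E(\R)$. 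In the degenerate sub-case of (3) where $X_{\geq 0} = Y_{\geq 0}$ one gets $Z_{\geq 0} = \emptyset$ and $\Omega = 0$, which is exactly the required pseudo-canonical form.

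I would then reduce both axioms to the one-dimensional additivity already established. The polar divisor of $\Omega$ is contained in $\partial X \cup \partial Y$, and since $\Res_E$ is $\C$-linear, for any curve $E$ appearing among the components of $\partial X$ or $\partial Y$ one has
\[
  \Res_E \Omega \;=\; \Omega(E, E^X_{\geq 0}) \,+\, \varepsilon\,\Omega(E, E^Y_{\geq 0}),
\]
with the convention that a term is $0$ when $E$ is not a boundary curve of the corresponding region. By Example~\ref{ex:posGeomOneDim}, each summand is a finite sum of canonical forms of closed intervals on the rational curve $E$. Hence holomorphicity off $\partial Z$ and the residue identity both follow once one knows the matching one-dimensional statement: the combination $\Omega(E, E^X_{\geq 0}) + \varepsilon\,\Omega(E, E^Y_{\geq 0})$ equals $\Omega(E, E^Z_{\geq 0})$ and has poles exactly at the endpoints of the segments that make up $E^Z_{\geq 0}$. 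For $\varepsilon = 1$ this is Lemma~\ref{add1} applied segment by segment --- disjoint segments add, and two segments sharing an endpoint with nonsingular join merge, the pole at the shared point disappearing. For $\varepsilon = -1$ it follows from the same lemma together with the observation that reversing a segment's orientation negates its canonical form, the nesting hypothesis of case (3) guaranteeing that on each curve $E^Z_{\geq 0}$ is obtained from $E^Y_{\geq 0}$ by excising the subsegments coming from $E^X_{\geq 0}$, whose oppositely oriented contributions cancel, so that $\Omega(E, E^Y_{\geq 0}) - \Omega(E, E^X_{\geq 0}) = \Omega(E, E^Z_{\geq 0})$.

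What remains is to describe $\partial Z_{\geq 0}$ and every $E^Z_{\geq 0}$ explicitly from the hypotheses, so that the segment decompositions fed into Lemma~\ref{add1} are the correct ones. In case (1), $X_{\geq 0} \cap Y_{\geq 0} = \emptyset$ forces $\partial X_{\geq 0}$ and $\partial Y_{\geq 0}$ to be disjoint and $Z_{>0} = X_{>0} \sqcup Y_{>0}$, so on every boundary curve the two regions contribute disjoint segments, only the disjoint-union form of Lemma~\ref{add1} is needed, and $(\P_\C^2, Z_{\geq 0})$ comes out as a genuine positive geometry. In case (2), each shared segment $C_{i,j}$, $(i,j)\in I_X(Y)$, passes into the interior of $Z_{\geq 0}$; since $Z_{>0}$ must be an oriented manifold, $X_{\geq 0}$ and $Y_{\geq 0}$ lie on opposite sides of it and their fixed orientations induce opposite orientations on it, so the two one-dimensional forms there cancel and the pole disappears, leaving $\partial Z_{\geq 0}$ equal to $\partial X_{\geq 0} \cup \partial Y_{\geq 0}$ with the interiors of the shared segments deleted; a whole curve may be consumed this way, which is why $Z$ is in general only pseudo-positive. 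In case (3), $\partial Z_{\geq 0}$ consists of the part of $\partial Y_{\geq 0}$ avoiding the interior of $X_{\geq 0}$ together with the part of $\partial X_{\geq 0}$ lying in the interior of $Y_{\geq 0}$, the latter being the inner boundary of $\overline{Y_{\geq 0} \setminus X_{\geq 0}}$ and hence carrying the reversed orientation; the hypothesis that boundary segments on a common curve are pairwise nested or disjoint is exactly what turns this, curve by curve, into a clean union of segments to which the one-dimensional statement applies.

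The main obstacle I anticipate is precisely this last, point-set part: verifying from the set-theoretic hypotheses and the requirement that $Z_{>0}$ be an oriented $2$-manifold that $\partial Z_{\geq 0}$ and each $E^Z_{\geq 0}$ really are the unions of segments described above, and that along every shared segment the two induced orientations are opposite so the residues genuinely cancel --- including the bookkeeping at the finitely many points where an inner boundary piece meets an outer one and becomes a vertex of $Z$. Once that is in place, the $\C$-linearity of residues, Lemma~\ref{add1}, and the uniqueness of canonical forms on $\P_\C^2$ complete the argument in all three cases.
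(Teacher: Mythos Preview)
Your approach is essentially the paper's: both arguments reduce to the $\C$-linearity of $\Res_E$, invoke Lemma~\ref{add1} segment by segment on each boundary curve, use the orientation reversal along shared boundary segments to cancel poles in case~(2) and along the inner boundary in case~(3), and appeal to the absence of holomorphic $2$-forms on $\P_\C^2$ for uniqueness. The paper also leaves the point-set verification of $\partial Z_{\geq 0}$ at roughly the level of detail you anticipate.

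One slip: in your uniform setup you write $\Omega = \Omega(X_{\geq 0}) + \varepsilon\,\Omega(Y_{\geq 0})$ with $\varepsilon=-1$ in case~(3), which gives $\Omega(X_{\geq 0})-\Omega(Y_{\geq 0})$, the negative of the claimed form $\Omega(Y_{\geq 0})-\Omega(X_{\geq 0})$. Your later displayed identity $\Omega(E,E^Y_{\geq 0})-\Omega(E,E^X_{\geq 0})=\Omega(E,E^Z_{\geq 0})$ is the correct one, so the substance is right; just swap the roles of $X$ and $Y$ (or the sign of $\varepsilon$) in the uniform setup so the two match.
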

 \begin{proof}
 In case (1) it suffices to note that $\Omega(\P^2,X_{\geq 0})+\Omega(\P^2,Y_{\geq 0})$ has poles along $\partial X\cup \partial Y$ whose residues along segments $C_{i,j}$ or $D_{i,j}$ coincides with those of $\Omega(\P^2,X_{\geq 0}) $ and $\Omega(\P^2,Y_{\geq 0})$, respectively.
 
 In case (2) we note that $Z_{\geq 0}$ is semialgebraic, that its boundary $\partial Z_{\ge 0} = (\partial X_{\ge 0} \cup \partial Y_{\ge 0}) \setminus (\partial X_{\ge 0} \cap \partial Y_{\ge 0})$ is the connected union of rational curve segments, and that the rational form $\Omega(\P^2,X_{\geq 0})+\Omega(\P^2,Y_{\geq 0})$ has poles along each of these boundary segments.  Furthermore, its residue along each of these segments coincides with the residues of $\Omega(\P^2,X_{\geq 0})$ or $\Omega(\P^2,Y_{\geq 0})$ depending on whether the segment is in $\partial X_{\ge 0}$ or $\partial Y_{\ge 0}$.  
 
 For each segment $C_{i,j}=D_{l,m}\subset \partial X_{\geq 0}\cap \partial Y_{\geq 0}$, the induced orientations from $X_{>0}$ and $Y_{>0}$ have different signs.
 Therefore, the residue of $\Omega(\P^2,X_{\geq 0})$ along $C_{i,j}$ and the residue of $\Omega(\P^2,Y_{\geq 0})$ along $D_{l,m}$ are 1-forms with residues that sum to zero at the vertices of the segment.  So the sum of 1-forms has no poles on the segment.  By Lemma \ref{add1}, if we consider the rational curve $C_i=D_l$, then the residue $\Omega(C_i,C_{i,\geq 0})$ of $\Omega(\P^2,X_{\geq 0})$ along $C_i$ is a sum of canonical 1-forms with poles at endpoints of the segments $C_{i,j}$:
 $$\textstyle \Omega(C_i,C_{i,\geq 0})=\sum_j\Omega(C_{i},C_{i,j}).$$
 Similarly, $\Omega(D_l,D_{l,\geq 0})=\sum_m\Omega(D_{l},D_{l,m})$. 
 The sum of these forms has no poles at the vertices of common segments $C_{i,j}, \; (i,j)\in I_X(Y)$, but poles at the vertices of the remaining segments:
 \[\textstyle \Omega(C_i,C_{i,\geq 0})+\Omega(D_l,D_{l,\geq 0})=\sum_{(i,j)\in I_X \setminus I_X(Y) }\Omega(C_{i},C_{i,j})+\sum_{(l,m)\in I_Y \setminus I_Y(X)}\Omega(D_{l},D_{l,m}),\]
 where the two summations are over segments in $C_i$ and $D_l$, respectively, that are not in the intersection $\partial X_{\geq 0}\cap \partial Y_{\geq 0}.$  
 If there are no such segments, then $(C_i,C_{i,\geq 0})$ and $(D_l,D_{l,\geq 0})$ coincide as positive geometries with opposite signs for their canonical form.
 In this case the 2-form $\Omega(\P^2,X_{\geq 0})+\Omega(\P^2,Y_{\geq 0})$  has no poles along $C_i$. 
We conclude that the 2-form $\Omega(\P^2,X_{\geq 0})+\Omega(\P^2,Y_{\geq 0})$ has poles along the union $\partial X\cup \partial Y$, except along the curves $C_i=D_l$ for which  $C_{i,\ge 0}=D_{l,\ge 0}.$
It has residues along each boundary segment coinciding with the residue of $\Omega(\P^2,X_{\geq 0})$ or $\Omega(\P^2,Y_{\geq 0})$, when the segment is on the boundary $\partial X_{\geq 0}$ or $\partial Y_{\geq 0}$, respectively.
In conclusion, $(\P^2, Z_{\geq 0})$ with canonical form $\Omega(\P^2,Z_{\geq 0})=\Omega(\P^2,X_{\geq 0})+\Omega(\P^2,Y_{\geq 0})$,
is a pseudo-positive geometry.
 
Case (3) is similar to case (2), with set difference instead of union between $X_{\geq 0}$ and $Y_{\geq 0}$, but the underlying geometry needs more attention. 
The segments of $\partial Z_{\geq 0}$ are of three kinds. The two first kinds are the segments of  $\partial Y_{\geq 0}$ that do not contain segments of  $\partial X_{\geq 0}$ and the segments of  $\partial X_{\geq 0}$ that are not contained in segments of  $\partial Y_{\geq 0}$.  
The third kind are segments in $D_{l,m}\setminus C_{i,j}$, whenever $C_{i,j} \subsetneq D_{l,m}$.    
Each endpoint of a segment of the third kind is an endpoint of $D_{l,m}$  or an endpoint of a segment $C_{i,j}$ contained in $D_{l,m}$ but not both,  by the additional condition on segments $C_{i,j}$ and $D_{l,m}$.  Of course, the vertices of $Z_{\geq 0}$ are simply the set of endpoints of these three kinds of segments.  

 Clearly $\Omega(\P^2,Y_{\geq 0})-\Omega(\P^2,X_{\geq 0})$ has poles along any curve in $\partial Y\setminus \partial X$ and $\partial X\setminus \partial Y$.  
 Note that since $X_{\geq 0} \subset Y_{\geq 0}$, their orientations coincide on $X_{> 0}$. In particular, along any segment in the common boundary $\partial Y\cap \partial X$, the orientation coincides. 
 The difference between the canonical forms $\Omega(\P^2,Y_{\geq 0})-\Omega(\P^2,X_{\geq 0})$ will therefore have vanishing residue at vertices that are vertices of both $X_{\geq 0}$ and $Y_{\geq 0}$, similar to the additive case (2). 
 For segments of the third kind, only one of the canonical forms has a non-zero residue at each vertex, so the difference has residue $\pm 1$.  In particular, $\Omega(\P^2,Y_{\geq 0})-\Omega(\P^2,X_{\geq 0})$ would have poles along a common boundary curve only if it contains segments of the third kind.
    In conclusion, $(\P^2, Z_{\geq 0})$ with canonical form $\Omega(\P^2,Z_{\geq 0})=\Omega(\P^2,Y_{\geq 0})-\Omega(\P^2,X_{\geq 0})$,
is a pseudo-positive geometry.
 \end{proof}
 
In the next examples, we use the additivity property described in the proposition to find positive geometries in the plane whose boundary curve, unlike the rational polypol case, does not have a unique adjoint through its residual points.  
The numerator of the canonical form  
is still an adjoint to the boundary curve in this sense (see \Cref{prop:canonicalFormNumeratorIsAdjoint}), but when there is more than one adjoint, the numerator cannot be characterized only by the condition of being a curve passing through the residual points. 
Non-unique adjoints appear when there are fewer boundary components than vertices, or when the boundary forms several cycles and not one as in the rational polypol case.

\begin{example}
\label{ex:twotriangles}
Let  $(\P^2,X_{\geq 0})$ and $(\P^2,Y_{\geq 0})$ be positive geometries, where $X_{\geq 0}$ and $Y_{\geq 0}$ are disjoint triangles in the plane.  Then $(\P^2,Z_{\geq 0})$, where $Z_{\geq 0}=X_{\geq 0}\cup Y_{\geq 0}$, is a positive geometry with canonical form
\begin{align*}
\Omega(\P^2,Z_{\geq 0}) = 
     \Omega(\P^2,X_{\geq 0})+\Omega(\P^2,Y_{\geq 0}) = \frac{f_1+f_2}{f_1 f_2} \, dx \wedge dy,
\end{align*}
where $f_1$ and $f_2$ are cubic forms that define the triangles $\partial X$ and $\partial Y$.  The cubic curve $\{f_1+f_2=0\}$ lies in the pencil of curves generated by the two triangles.
The sextic curve $\partial X \cup \partial Y$ has $9 = 3 \cdot 3$ residual points, namely the pairwise intersections of a line on $\partial X$ with a line on $\partial Y$. 
Hence, all cubic curves in the pencil generated by the two triangles are adjoints to the sextic curve.
\end{example}

\begin{example}
\label{ex:conicQuadrangle2}
We now return to Example~\ref{ex:conicQuadrangle} and show that the canonical form of $(\P^2, X_{\geq 0})$ in Figure~\ref{fig:conicQuadrangle} is as claimed in~\eqref{eq:conicQuadrangle}.
As in Figure~\ref{fig:conicQuadrangle}, we denote by $X_{ij, \geq 0}$ the (simply connected region of the) polypol with vertices $v_i$ and $v_j$. 
Its adjoint is the line $L_{lm}$ spanned by $v_l$ and $v_m$ where $\{ i,j,l,m\} = \{1,2,3,4\}$.
By Theorem~\ref{thm:rationalPolypolsArePosGeometries}, $(\P^2, X_{ij,\geq 0})$ is a positive geometry with canonical form 
$\tfrac{\alpha_{lm}}{f_1f_2} \, dx \wedge dy$,
where $f_i$ is a defining equation of the conic $C_i$
and $\alpha_{lm}$ is a defining equation of the adjoint line $L_{lm}$.

Consider the rational polypol $Y_{ij,\geq 0}=X_{\geq0}\cup X_{ij,\geq 0}$ with vertices $v_l$ and $v_m$. Its unique adjoint is the line $L_{ij}$.  
By Theorem~\ref{thm:rationalPolypolsArePosGeometries}, $(\P^2, Y_{ij,\geq 0})$ is a positive geometry with canonical form $\tfrac{\alpha_{ij}}{f_1f_2} \, dx \wedge dy$.
By Proposition \ref{add/subtract}(3),
\begin{align*}
\Omega(\P^2, X_{\geq 0}) = 
     \Omega(\P^2, Y_{12,\geq 0})- \Omega(\P^2, X_{12,\geq 0})
     =\frac{\alpha_{12}-\alpha_{34}}{f_1 f_2} \, dx \wedge dy,
\end{align*}
so the numerator defines a line in the pencil generated by $L_{12}$ and $L_{34}$, i.e., a line that passes through their intersection point $r_1$; see Figure~\ref{fig:conicQuadrangle}.
Similarly, 
$$\Omega(\P^2, X_{\geq 0}) = \frac{\alpha_{14}-\alpha_{23}}{f_1 f_2} \, dx \wedge dy,$$
so the numerator defines a line that is also in the pencil generated by $L_{14}$ and $L_{23}$, i.e., that passes through their intersection point $r_2$. 
Hence, the unique line in both pencils is the adjoint line to the quadrangle that is the convex hull of the four vertices.

Note that in this case, there are no residual points. 
Therefore, any line is an adjoint curve, passing through the empty set of residual points.  An ad hoc argument characterizes the numerator of the canonical form as a particular adjoint curve.

\end{example}

\section{Real Topology of Adjoint Curves}
\label{Outside Adjoints}

In finite element computation, one seeks basis functions for the elements which achieve a certain degree of approximation within each element while maintaining global continuity. This was a main motivation for E.~Wachspress to prove that the adjoint curve of a real rational polypol $P$ is a common denominator for a rational basis on $P$, which can be generated to achieve any specified degree of approximation within $P$ while maintaining global continuity \cite{MR0426460}. 

At present, except for the case of polygonal/polytopal elements, the latter rational elements have limited applicability. This is primarily due to the complexity of integrations required for generation of finite element equations, see for instance \cite[page 33]{MR0426460}. Additionally, their practical value depends upon the validity of \emph{Wachspress's conjecture} (see \Cref{conj:Wachs}), which is only settled in very few cases. It was originally stated for \em polycons\em, i.e., polypols with lines and conics for boundary curves.

In Subsection \ref{subsec:regpolypols}, we present his extension of the conjecture to the case of polypols, see \cite{Wachspress1980}. 
The statement is known to hold in the case of convex polygons. We give a complete characterization of the real topology of the adjoint in this case in Subsection \ref{subs:polygons}. In particular, we prove that the adjoint is strictly hyperbolic and show that an analogous statement fails to hold in higher dimensions. Finally, in Subsection \ref{subsec:threeconics}, we consider polycons defined by three ellipses, which is the first unsolved case of Wachspress’s conjecture. We prove the conjecture for 33 out of 44 topological classes of configurations, including all cases of maximal real intersection, that is to say all cases where the ellipses meet pairwise in four real points.

\subsection{Regular polypols and Wachspress's conjecture} \label{subsec:regpolypols}

Let $P$ be a quasi-regular  rational polypol defined by real curves $C_i$, real vertices $v_{ij}$, and given sides. Recall that the sides of $P$ are 
segments of $C_i(\R)$ going from $v_{i-1,i}$ to $v_{i,i+1}$ that bound a closed semi-algebraic region $P_{\ge 0}$ in $\P^2(\R)$. 

\begin{definition}\label{def:regularpolypol}
We say that a quasi-regular polypol $P$ is \emph{regular} if 
all points on the sides of $P$ except the vertices are nonsingular on $C=\bigcup_{i=1}^k C_i$ and $C$ does not intersect the interior of $P_{\ge 0}$.
\end{definition}

This definition generalizes Wachspress's notion of well-set polycons from \cite[p.~9]{MR0426460} and captures his notion of a regular algebraic element in \cite{Wachspress1980}. It implies that a regular polypol has no residual points contained in $P_{\ge 0}$.
Moreover, the union of the sides is homeomorphic to the circle $S^1$, so the complement has a simply connected component which is the interior of the set $P_{\ge 0}$.

\begin{example}
    A polygon is a regular polypol if and only if it is convex. 
\end{example}

\begin{example}
All three polypols in Figure~\ref{fig:polypolsAdjoints} are regular.
Of the eight quasi-regular polypols in \Cref{fig:conicTriangle} (discussed in \Cref{rem:triangleadjoint}), none are regular.
However, there are four choices of triples of vertices and sides in Figure~\ref{fig:conicTriangle} that define a regular polypol, 
namely the four ``triangles'' (one adjacent to each $A$, $B$, $C$, and one in the middle).
\end{example}

\begin{conjecture}[{see \cite[p.~153-154]{MR0426460}}, \cite{Wachspress1980}]
\label{conj:Wachs}
The adjoint curve of a regular rational polypol $P$ does not intersect the interior of $P_{\ge 0}$. 
\end{conjecture}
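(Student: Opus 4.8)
The plan is to reduce Conjecture~\ref{conj:Wachs} to a non-vanishing statement for the numerator of the canonical form, to dispose of the boundary part of that statement with the tools developed above, and then to attack the remaining interior statement by a decomposition or degeneration argument. Since $P$ is quasi-regular, Theorem~\ref{thm:rationalPolypolsArePosGeometries} gives the canonical form $\Omega(P)=\frac{\alpha_P}{f_1\cdots f_k}\,dx\wedge dy$, whose numerator $\alpha_P$ cuts out the adjoint $A_P$. Because $P$ is regular, $C=\bigcup_i C_i$ misses the connected, simply connected interior $P_{>0}$, so every $f_i$ is nowhere zero and of constant sign on $P_{>0}$; hence $\alpha_P$ vanishes somewhere in $P_{>0}$ if and only if $\Omega(P)$ fails to be a sign-definite, nowhere-zero volume form on $P_{>0}$. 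Thus the conjecture is equivalent to the assertion that $\Omega(P)$ is, up to an overall sign, a positive volume form on the interior of $P_{\ge 0}$.

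First I would show that $A_P$ avoids the boundary entirely. Along the side of $P$ lying on $C_i$, the residue $\Res_{C_i}\Omega(P)$ equals, after the scaling fixed before Theorem~\ref{thm:rationalPolypolsArePosGeometries}, the canonical form $\frac{b_i-a_i}{(t-a_i)(b_i-t)}\,dt$ of the interval $[a_i,b_i]$, which has simple poles exactly at the two endpoints and no zeros on the open side. If $p$ were an interior point of this side with $\alpha_P(p)=0$, then, since regularity forces $f_j(p)\neq 0$ for $j\neq i$ and the standard differential $\frac{dx}{f_{iy}}=-\frac{dy}{f_{ix}}$ restricts to a regular form at the smooth point $p$, the residue $\Res_{C_i}\Omega(P)$ would vanish at $p$ --- a contradiction. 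Together with the fact that $\alpha_P$ does not vanish at the vertices (Proposition~\ref{prop:uniqueAdjoint}), this yields $\alpha_P\neq 0$ on all of $\partial P_{\ge 0}$. As the union of the sides is homeomorphic to $S^1$ and bounds the simply connected, bounded region $P_{>0}$, it follows that $A_P(\R)\cap P_{\ge 0}$ is a compact subset of the open set $P_{>0}$; so the conjecture reduces to excluding any ``interior oval'' of the adjoint lying inside $P_{>0}$.

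For the interior part I would try two complementary routes. The first is additivity: cut $P_{\ge 0}$ along auxiliary real arcs into quasi-regular rational polypols $P^{(1)}_{\ge 0},\dots,P^{(N)}_{\ge 0}$ so that iterated use of Proposition~\ref{add/subtract} gives $\Omega(P)=\sum_{j}\Omega(P^{(j)})$, and so that each $\Omega(P^{(j)})$ is already known to be a positive volume form on $P^{(j)}_{>0}$ for a fixed orientation of $\P^2_\R$; the poles along the cutting arcs then cancel, so $\Omega(P)$ is positive on the dense open subset $\bigcup_j P^{(j)}_{>0}$ of $P_{>0}$ and, being regular there, positive on all of $P_{>0}$. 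Triangles furnish a trivial base case, since there the adjoint has degree $0$, and into the induction one would feed the families already settled below, namely convex polygons (Theorem~\ref{thm:hyperbolicadjoint}) and triples of ellipses (Theorem~\ref{thm:wachspressellipses}). The second route is a degeneration argument: in the space of regular polypols of a fixed combinatorial type, the boundary analysis of the previous paragraph shows that an interior oval can never escape through $\partial P_{\ge 0}$ under deformation, so failure of the conjecture is confined to events in which $\alpha_P$ acquires a semidefinite (double) zero inside $P_{>0}$; one would try to rule these out and to connect every stratum of the configuration space to a polytopal base configuration where positivity is known.

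The main obstacle is exactly this interior step. For a general regular polypol with high-degree curved boundary there is no evident way to decompose $P_{\ge 0}$ into pieces with manifestly positive canonical forms, and no obvious stratification of the configuration space joining it to a polytopal base case; this is precisely the gap that keeps Conjecture~\ref{conj:Wachs} open, and the reason we can establish it below only for convex polygons and for $33$ of the $44$ configuration types of three ellipses, where the positivity of $\Omega(P)$ on the interior is accessible by a direct but delicate analysis.
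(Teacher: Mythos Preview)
This statement is a \emph{conjecture}, not a theorem: the paper explicitly calls it ``at present widely open'' and offers no proof of the general claim. You correctly recognize this in your final paragraph, so your proposal is an outline of possible attacks together with an honest acknowledgment that they do not close the gap. In that sense there is no ``paper's proof'' to compare against; I can only compare what you sketch to what the paper actually does around the conjecture.

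For the boundary part your residue argument is correct and genuinely different from the paper's. The paper proves that $A_P$ avoids the sides via Lemma~\ref{lemma:transversalResidual}: it counts intersection multiplicities of $A_P$ with each $C_i$ using $\delta$-invariants and B\'ezout to conclude there is no room for extra intersection points. Your route --- observing that $\Res_{C_i}\Omega(P)$ is the canonical form of an interval, hence nowhere zero on the open side, and deducing $\alpha_P\neq 0$ there --- exploits the canonical-form machinery of Theorem~\ref{thm:rationalPolypolsArePosGeometries} instead. Both arguments are valid; yours is softer and more conceptual, the paper's is elementary and avoids the analytic setup.

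For the interior part your two proposed routes (additivity via Proposition~\ref{add/subtract}, and degeneration through the configuration space) are not what the paper pursues. The paper does not attempt a general strategy at all; it settles concrete families directly. For convex polygons (Theorem~\ref{thm:hyperbolicadjoint}) it proves the adjoint is strictly hyperbolic by a sign analysis along the boundary lines and an oval count, which is stronger than non-intersection. For three ellipses (Theorem~\ref{thm:wachspressellipses}) it catalogs all $44$ admissible topological configurations and, for $33$ of them, forces the adjoint oval outside by a sign-tracking argument; the remaining cases are handled by more delicate topological reasoning or left open. Your degeneration idea is in spirit compatible with this, but the paper never attempts to connect strata or rule out the semidefinite events you describe. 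So the honest summary is: your boundary step is a clean alternative to Lemma~\ref{lemma:transversalResidual}; your interior proposals remain speculative, and the paper's actual progress comes from case-specific real-topology arguments rather than from either route you sketch.
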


The regularity assumption is needed in this conjecture, which we can already see in the case of linear boundary. Indeed, a non-convex polygon is a quasi-regular polypol but its adjoint can intersect the interior, see Figure~\ref{fig:AdjointInterior}.

\begin{figure}[hbt]
    \centering
    \includegraphics[width=0.5\textwidth]{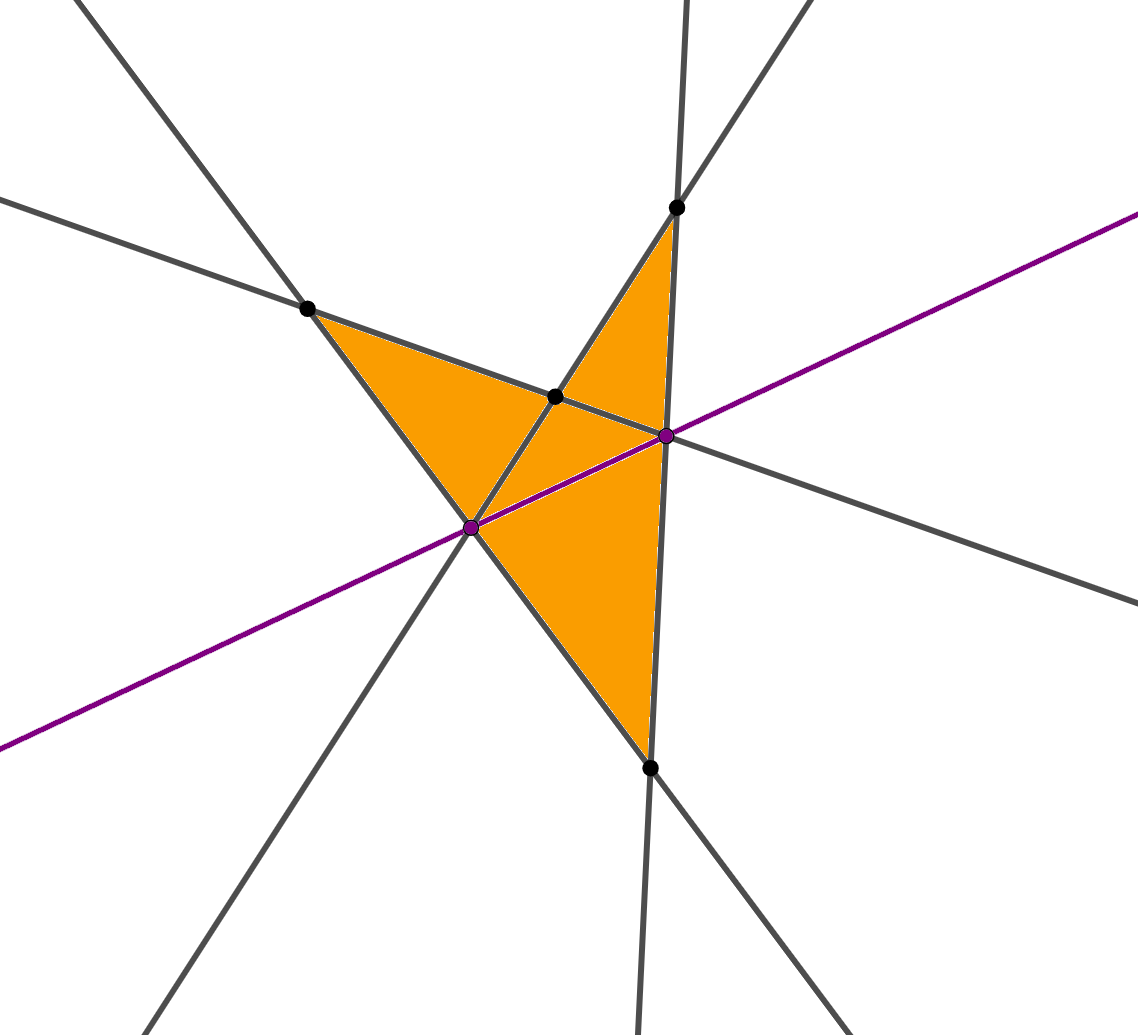}
    \caption{A non-convex polygon for which the adjoint curve intersects the interior.}
    \label{fig:AdjointInterior}
\end{figure}

Wachspress himself claims that it is easy to show that  the adjoint curve cannot intersect the sides of a regular rational polypol $P$, see \cite[page 396]{Wachspress1980}. However he presents no supporting arguments. For the sake of completeness, we prove this statement below.

Let $p\in C$ be a singular point on a plane curve $C$. Recall that the $\delta$-invariant (or genus discrepancy) of $C$ at $p$ is defined as $\delta_p:=\dim {\mathcal O}'/{\mathcal O}$, where $\mathcal O:={\mathcal O}_{C,p}$ is the local ring of $C$ at $p$ and ${\mathcal O}'$ its normalization. The difference between the arithmetic genus and geometric genus of $C$ is equal to $\sum_{p\in \Sing C}\delta_p$.

\begin{lemma}\label{lemma:transversalResidual}
Let $P$ be a rational polypol defined by boundary curves $C_1,\ldots, C_k$ that intersect transversely. Then the adjoint curve $A_P$ intersects $C_i$ only at the residual points, with intersection multiplicity equal to $2\delta_p$ at each singular point $p\in C_i$ and with intersection multiplicity one at each of the remaining residual points. In particular, if $P$ is regular, then the adjoint curve does not contain any points on the sides of $P$.
\end{lemma}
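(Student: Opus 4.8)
The plan is to prove this by a Bézout count: compare the total intersection number $A_P\cdot C_i$ with a sum of local lower bounds coming from the defining property of the adjoint, and observe that the two coincide, so that equality must hold at every residual point and no intersection can occur elsewhere.

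First I would identify the residual points lying on $C_i$. Write $\Sing C_i$ for the singular locus of the curve $C_i$ alone, and let $T_i$ be the set of points where $C_i$ meets some $C_j$ with $j\neq i$, other than the two vertices $v_{i-1,i},v_{i,i+1}$. Interpreting ``transversely'' as in Example~\ref{ex:adjoint} --- so that a singular point of $C_i$ lies on no other component, and distinct $C_j$'s meet in ordinary nodes of $C$ --- one obtains a disjoint decomposition $R(P)\cap C_i=\Sing C_i\sqcup T_i$.

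Next come the local bounds. Since $A_P$ contains no component of $C$ by Proposition~\ref{prop:uniqueAdjoint}, Bézout gives $\sum_p I_p(A_P,C_i)=(\deg A_P)(\deg C_i)=(d-3)d_i$, where $I_p$ denotes the local intersection multiplicity. At $p\in T_i$ the conductor condition forces $\alpha_P(p)=0$, and since $C_i$ is smooth at $p$ this yields $I_p(A_P,C_i)\geq 1$. At $p\in\Sing C_i$ the local ring of $C$ at $p$ is that of $C_i$, the conductor $\mathcal C_\nu$ coincides near $p$ with the conductor $\mathfrak c_p$ of the normalization $\widetilde C_i\to C_i$, and the defining property of the adjoint says $\alpha_P\in\mathfrak c_p\subseteq\mathcal O_{C_i,p}$. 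I would then invoke two standard facts: (i) intersection length is preserved under normalization, so $I_p(A_P,C_i)=\dim_\C\widetilde{\mathcal O}_p/(\alpha_P)$, where $\widetilde{\mathcal O}_p$ is the semilocal ring of $\widetilde C_i$ over $p$ (snake lemma applied to $0\to\mathcal O_{C_i,p}\to\widetilde{\mathcal O}_p\to\widetilde{\mathcal O}_p/\mathcal O_{C_i,p}\to 0$ and multiplication by the nonzerodivisor $\alpha_P$, using that the kernel and cokernel of an endomorphism of a finite-length module have equal length); and (ii) plane curve singularities are Gorenstein, so $\dim_\C\widetilde{\mathcal O}_p/\mathfrak c_p=2\delta_p$. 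Since $\widetilde{\mathcal O}_p$ is a product of discrete valuation rings and $\alpha_P\in\mathfrak c_p$, we have $(\alpha_P)\subseteq\mathfrak c_p$ in $\widetilde{\mathcal O}_p$, hence $I_p(A_P,C_i)\geq 2\delta_p$.

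Finally the bookkeeping. Because $C_i$ is rational, $\sum_{p\in\Sing C_i}\delta_p=p_a(C_i)=\binom{d_i-1}{2}$, so the singular points of $C_i$ contribute at least $(d_i-1)(d_i-2)$ to the intersection; and $\#T_i=\sum_{j\neq i}d_id_j-2=d_i(d-d_i)-2$ by Bézout applied to the pairs $C_i,C_j$. The total of the lower bounds is $(d_i-1)(d_i-2)+d_i(d-d_i)-2=(d-3)d_i$, which is exactly $A_P\cdot C_i$. Hence every inequality above is an equality, $A_P\cap C_i\subseteq R(P)\cap C_i$, and the claimed multiplicities follow. For the last assertion: if $P$ is regular then, by definition, every interior point of a side of $P$ is nonsingular on $C$ and hence not in $R(P)$, while Proposition~\ref{prop:uniqueAdjoint} guarantees that $A_P$ avoids the vertices of $P$; combined with $A_P\cap C_i\subseteq R(P)$, this shows $A_P$ meets no side of $P$. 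The main obstacle is the local analysis at worse-than-nodal singularities of $C_i$: turning the mere membership $\alpha_P\in\mathfrak c_p$ into the sharp bound $I_p(A_P,C_i)\geq 2\delta_p$ is precisely where the two commutative-algebra inputs --- normalization preserves intersection length, and the Gorenstein symmetry $\dim_\C\widetilde{\mathcal O}_p/\mathfrak c_p=2\delta_p$ --- do the real work; a secondary point requiring care is pinning down the meaning of ``transversely'' so that $R(P)\cap C_i$ decomposes as claimed.
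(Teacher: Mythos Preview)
Your proof is correct and follows essentially the same B\'ezout-counting strategy as the paper: both arguments note that $A_P$ contains no $C_i$, establish the local lower bound $I_p(A_P,C_i)\geq 2\delta_p$ at singular points of $C_i$ together with $I_p\geq 1$ at the remaining residual points, and then check that these lower bounds sum to exactly $(d-3)d_i=A_P\cdot C_i$. The only difference is that where the paper simply cites \cite[Cor.~4.7.3]{casas2000} for the local bound at singular points, you unpack it via the Gorenstein symmetry $\dim_\C\widetilde{\mathcal O}_p/\mathfrak c_p=2\delta_p$ and the snake-lemma argument that normalization preserves intersection length---a more self-contained treatment of the same fact.
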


\begin{proof}
Let $d_i=\deg C_i$ and $d=d_1+\cdots+d_k$. There are $d_i(d-d_i)-2$ residual points on $C_i$ that are intersection points with the other boundary curves. The adjoint curve $A_P$ has to intersect $C_i$ at these points with multiplicity at least one. 
Note that by Proposition \ref{prop:uniqueAdjoint}, no $C_i$ is a component of $A_P$.
Since the genus of $C_i$ is zero, we have $\sum_{p\in \Sing C_i}\delta_p=\binom{d_i-1}{2}$. By \cite[Cor.~4.7.3]{casas2000}, the intersection multiplicity of $C_i$ and $A_P$ at $p \in \Sing C_i$ is equal to $2\delta_p$. 
Therefore, the total intersection number of $A_P$ and $C_i$ is at least 
\begin{align*} \label{eq:adjoint_meets_curve}
\textstyle    d_i(d-d_i)-2+2\binom{d_i-1}{2}=d_i(d-3) = d_i \cdot \deg A_P.
\end{align*}  
By B\'ezout's theorem, there cannot be any further intersection points.
\end{proof}

\begin{remark}\label{rem:genResidual}
The lemma holds without the assumption that the boundary curves intersect transversally: The adjoint curve $A_P$ intersects the total boundary curve $C$ only at the residual points, with multiplicity $2 \delta_p$ at each such point $p$. Indeed, by \cite[Thm.~2, p.190]{hironaka1957on}, 
\[\textstyle g_a(C)=\sum g(C_i)+\sum_{p\in R(P)} \delta_p+k-(k-1)=\sum_{p\in R(P)}\delta_p+1,\]
hence $2\sum_{p\in R(P)}\delta_p=2g_a(C)-2=d(d-3).$ 
By \cite[Cor.~4.7.3]{casas2000}, the total intersection number of $A_P$ and $C$ is at least $2\sum_{p\in R(P)}\delta_p$, so by B\'ezout's theorem, there cannot be any further intersection points.
\end{remark}

Lemma \ref{lemma:transversalResidual} is insufficient to show Wachspress's conjecture (i.e., that the adjoint is outside of a regular rational polypol) since the adjoint curve might have an oval (or a singular oval) contained strictly inside $P_{\ge0}$. However, it is enough for polypols of total degree at most $5$, as E.~Wachspress observed already, see \cite[Section 5.3]{Wachspress16}.

\begin{proposition}\label{wachspress5}
Wachspress's conjecture holds for polypols of total degree at most $5$.
\end{proposition}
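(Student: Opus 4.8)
The plan is to reduce the statement to the containment of Lemma~\ref{lemma:transversalResidual} together with elementary topology of $\P^2_\R$, exploiting that $\deg A_P = d - 3 \le 2$ when the total degree $d$ of $P$ is at most $5$.

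First I would assemble the inputs. Since $P$ is regular it is in particular a quasi-regular rational polypol, so by Proposition~\ref{prop:uniqueAdjoint} its adjoint curve is unique; as complex conjugation permutes the data $C$, $V(P)$, $R(P)$ defining it, it fixes $A_P$, so $A_P$ is a real curve. By Lemma~\ref{lemma:transversalResidual}, or Remark~\ref{rem:genResidual} if the boundary curves are not transverse, $A_P$ meets $C$ only at residual points; since a residual point is a singular point of $C$ that is not a vertex, it lies neither on the sides of $P$ nor, by regularity, in the open set $U := \operatorname{int} P_{\ge 0}$. Hence every residual point lies in $M := \P^2_\R \setminus \overline{P_{\ge 0}}$, and $A_P$ is disjoint from the Euclidean boundary $\partial P_{\ge 0}$. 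Next, as recorded after Definition~\ref{def:regularpolypol}, $\partial P_{\ge 0}$ is a Jordan curve bounding the simply connected open set $U$; being two-sided it separates $\P^2_\R$ into precisely two pieces, $U$ and a connected open M\"obius band $M$. Thus any connected subset of $\P^2_\R$ disjoint from $\partial P_{\ge 0}$ lies entirely in $U$ or entirely in $M$; and a real projective line, not being null-homotopic in $\P^2_\R$, is never contained in the simply connected set $U$.

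Now I would split on $\deg A_P$. If $d = 3$ there are no residual points and $\deg A_P = 0$, so $A_P = \emptyset$ and there is nothing to prove. If $d = 4$, then $A_P$ is a real line; it is connected and disjoint from $\partial P_{\ge 0}$, hence lies in $U$ or in $M$, and it cannot lie in $U$, so $A_P \cap \operatorname{int} P_{\ge 0} = \emptyset$. If $d = 5$, then $A_P$ is a real conic, and $A_P(\R)$ is one of: the empty set, a single point, an oval, a pair of real lines, or a double line --- in particular, if nonempty, it is connected. If $A_P(\R)$ contains a real line, then, as in the case $d = 4$, $A_P(\R) \subseteq M$; if it is empty there is nothing to prove; in the two remaining cases $A_P(\R)$ is connected and disjoint from $\partial P_{\ge 0}$, so it lies entirely in $U$ or entirely in $M$, and since $A_P$ passes through a real residual point, which lies in $M$, it must lie in $M$. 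In every case $A_P \cap \operatorname{int} P_{\ge 0} = \emptyset$, which is Wachspress's conjecture for $P$.

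The step that requires genuine, if routine, work --- and the main obstacle --- is showing that in the degree-$5$ case there is always a real residual point. Running over the partitions $5 = d_1 + \dots + d_k$ with $k \ge 2$ and $d_i \ge 1$: when some $d_i \in \{3, 4\}$ (the types $(4,1)$, $(3,2)$, $(3,1,1)$) the corresponding rational component has total $\delta$-invariant $\binom{d_i - 1}{2}$, which is odd, and since non-real singular points occur in conjugate pairs of equal $\delta$-invariant the component must have a real singular point, hence a real residual point. When all $d_i \le 2$: for the pentagon every intersection of two of the five lines, hence every residual point, is real; for $(2,2,1)$ the unique line is adjacent in the vertex cycle to a conic, and since a real line meets a real conic in $0$ or $2$ real points, the real vertex on that intersection forces the remaining intersection point, which is residual, to be real; for $(2,1,1,1)$ one of the two ``diagonal'' pairs of the $4$-cycle of boundary curves consists of two lines, whose (non-vertex) intersection point is a real residual point. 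One must additionally verify that degenerate positions of the residual points create no exceptions: when residual points collide, $C$ acquires a singular point of $\delta$-invariant at least $2$, through which $A_P$ is forced to pass with enough multiplicity (by Lemma~\ref{lemma:transversalResidual}/Remark~\ref{rem:genResidual}) that it must contain a line or can still be treated by the above dichotomy --- routine, but needing care. Apart from this bookkeeping, the only external facts used are the classification of real conics in $\P^2_\R$ and the fact that a two-sided Jordan curve separates $\P^2_\R$ into a disk and a M\"obius band.
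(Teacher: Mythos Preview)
Your proposal is correct and follows essentially the same route as the paper: disjointness of $A_P$ from the sides via Lemma~\ref{lemma:transversalResidual}, the observation that a line (or a conic containing a line) cannot sit in the simply connected interior, and, for the irreducible-conic case, the existence of a real residual point to pin the connected real locus of $A_P$ outside $P_{\ge 0}$. The only visible differences are cosmetic: you justify the real singularity on a rational cubic or quartic by parity of the total $\delta$-invariant (the paper simply asserts it), and for type $(2,1,1,1)$ you use the line--line diagonal rather than the paper's line--conic vertex argument; your final worry about collisions of residual points is unnecessary, since the argument only uses connectedness of $A_P(\R)$ and the location of one real residual point, both of which survive such degenerations.
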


\begin{proof}
If the total degree of the polypol $P$ is $4$, the degree of the adjoint is $1$, so it is a real line in the projective plane. In particular, its real locus is $\P^1(\R)$. 
However, the region $P_{\ge 0}$ cannot contain a real line that does not intersect the bounding sides and therefore the adjoint cannot pass through the polypol by \Cref{lemma:transversalResidual}.

If the total degree of the polypol $P$ is $5$, the degree of the adjoint is $2$ and hence its real locus is always connected. It can either be an acnode or it is connected of dimension $1$.
Using \Cref{lemma:transversalResidual}, the conjecture follows by showing that there is always a real residual point, which is outside of $P_{\ge 0}$ by regularity, so that there is a real point of the adjoint outside of the polypol.

The possible degrees of the bounding curves are the following: $(1,4)$, $(2,3)$, $(1,1,3)$, $(1,2,2)$, $(1,1,1,2)$, and $(1,1,1,1,1)$. In the first case, the rational real quartic curve has at least one real singularity, which is a residual point. The second and third case are the same because the real rational cubic also has a real singularity. In the next two cases with a conic, one of the lines intersects the conic in a vertex, so a real point, which means that the other intersection point is a real residual point. The last case is a convex pentagon with five real residual points.
\end{proof}

\subsection{Convex polygons}\label{subs:polygons}
In the case of convex polygons, which are precisely regular polypols with lines as boundary curves, Wachspress proved that the adjoint polynomial does not vanish within the polygon; see \cite[p.~96, 147 and 154]{MR0426460}.

Let $P_{\geq 0}\subset \R^2$ be a convex $k$-gon for $k\in\mathbb{N}, k\geq 4$. We label its edges $E_1,\ldots,E_k$ cyclically around the boundary of $P_{\ge 0}$ and write  $C_i:=\ol{E_i}$ for their Zariski closure in $\P^2$. Our main result in this section is a complete description of the real topology of the adjoint curve of a convex polygon. 

\begin{theorem}\label{thm:hyperbolicadjoint}
The adjoint curve $A_P$ of a convex $k$-gon $P_{\ge 0}$ is hyperbolic with respect to every point $e\in P_{\ge 0}$.
Moreover, it is  strictly hyperbolic, i.e., it does not have any real singularities.

More precisely, $A_P$ has $\lfloor\frac {k-3}{2}\rfloor$ disjoint nested ovals. If the total degree $k$ is even, there is additionally a pseudoline contained in the region in the complement of the ovals that is not simply connected. 
In this case, the residual intersection point of $C_i$ and $C_{i+k/2}$ lies on the pseudoline component (where the index should be read modulo $k$). In general, for $k$ even or odd, the residual intersection point of $C_i$ and $C_{i+1+m}$ for a positive integer $m < \frac{k}{2}-1$ lies on the $m$-th oval counting from the inside (that is to say from $P_{\ge 0}$ outwards).
\end{theorem}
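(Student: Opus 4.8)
The plan is to proceed by induction on $k$, using the additivity of canonical forms (Proposition~\ref{add/subtract}) to peel off one vertex at a time, and to extract the topological information from the explicit formula $\Omega(P)=\frac{\alpha_P}{\ell_1\cdots\ell_k}\,dx\wedge dy$ in Theorem~\ref{thm:rationalPolypolsArePosGeometries}, where $\ell_i$ is a linear form vanishing on $C_i$. The base cases $k=3,4,5$ are either trivial (no ovals, no pseudoline for $k=3$; a conic through the unique residual point for $k=4$; etc.) or follow from Proposition~\ref{wachspress5}. For the inductive step, fix a convex $k$-gon $P$ with edges $E_1,\dots,E_k$ and let $P'$ be the convex $(k-1)$-gon obtained by deleting the vertex $v_{k,1}=C_k\cap C_1$, i.e.\ replacing the two edges $C_{k},C_1$ by the single line $C'$ through $v_{k-1,k}$ and $v_{1,2}$. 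Then $P\subset P'_{\ge 0}$ (or vice versa, depending on convexity of the cut — one gets a genuine triangle $T$ with sides on $C_{k},C_1,C'$ so that $P_{\ge 0}$ and $T$ are disjoint and $P'_{\ge 0}=P_{\ge 0}\cup T$), and Proposition~\ref{add/subtract}(1) gives
\[
\alpha_{P'}\cdot \ell_k\ell_1 \;=\; \lambda\,\alpha_P\cdot \ell' \;+\; \mu\, (\text{cubic defining }T)\cdot (\text{product of the remaining }\ell_i),
\]
more precisely $\frac{\alpha_{P'}}{\ell'\ell_2\cdots\ell_{k-1}} = \frac{\alpha_P}{\ell_1\cdots\ell_k} + \frac{\ell_k\ell_1\ell'}{\ell_k\ell_1\ell'\,\ell_2\cdots\ell_{k-1}}\cdot(\text{scaling})$, i.e.\ $\alpha_{P'}\cdot \ell_k\ell_1 = \alpha_P\cdot \ell' + c\,\ell_k\ell_1\ell'$ up to constants. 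The upshot is a recursion expressing $\alpha_P$ in terms of $\alpha_{P'}$, $\ell'$, and the three lines bounding the cut-off triangle.

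Next I would translate this into the hyperbolicity statement. Recall a curve of degree $d$ is hyperbolic with respect to $e$ iff every real line through $e$ meets it in $d$ real points; equivalently (Helton--Vinnikov / the classical picture) its real locus consists of $\lfloor d/2\rfloor$ nested ovals around $e$, plus a pseudoline if $d$ is odd. For convex polygons $d=k$, so we expect $\lfloor (k-3)/2\rfloor$ ovals from the adjoint of degree $k-3$, plus a pseudoline when $k-3$ is odd, i.e.\ when $k$ is even — matching the statement. To run the induction I would pick the interior point $e$ and a generic real line $L$ through $e$; parametrize $L$ by $t$; then $\Omega(P)|_L$ is (by the residue computation in the proof of Theorem~\ref{thm:rationalPolypolsArePosGeometries}, or directly) a rational $1$-form $\frac{\alpha_P|_L(t)}{\prod_i \ell_i|_L(t)}\,dt$ whose poles are the $k$ real points where $L$ crosses the lines $C_i$, all with residue $\pm 1$; a partial-fractions / sign-counting argument (the residues alternate in sign as $t$ increases, because $e$ is inside the polygon so $L$ enters and exits $P_{\ge 0}$ crossing edges) forces $\alpha_P|_L$ to have exactly $k-3$ real roots, interlaced with the poles in the prescribed pattern. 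This simultaneously gives hyperbolicity with respect to $e$ (hence with respect to every point of $P_{\ge 0}$, since hyperbolicity is an open and, along connected hyperbolicity regions, constant condition, and $P_{\ge 0}$ is connected and contains no point of $A_P$ by Lemma~\ref{lemma:transversalResidual}), strict hyperbolicity (the $k-3$ roots on a generic line are simple, and a real singular point of $A_P$ would force a non-generic line through it with a double real root lying in a region it shouldn't — I'd rule this out using that the residual points are the only intersections of $A_P$ with $C$, Lemma~\ref{lemma:transversalResidual}, together with the nesting), and the count of ovals/pseudoline.

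Finally, for the location of residual points: the residual point $r_{i,j}=C_i\cap C_j$ of two non-adjacent edges lies on $A_P$ by definition; to say which oval it is on, I would again use the interlacing on the pencil of lines through an interior point $e$. Sliding a line $L_\theta$ through $e$ so that it passes through $r_{i,j}$, the two poles of $\Omega(P)|_{L_\theta}$ coming from $C_i$ and $C_j$ collide, and the root of $\alpha_P$ that was trapped between them (in the interlacing pattern) is exactly $r_{i,j}$; counting how many poles lie between $C_i$ and $C_j$ along $L_\theta$ — which is governed by the cyclic distance $j-i$ of the edges around $\partial P_{\ge 0}$ — pins down which root, hence which oval. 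For $C_i$ and $C_{i+k/2}$ ($k$ even) the two "halves" are symmetric, so the colliding root is the outermost, unbounded branch, i.e.\ the pseudoline; for $C_i$ and $C_{i+1+m}$ with $m<\tfrac k2-1$ one gets the $m$-th oval from inside. The main obstacle I anticipate is making the interlacing/sign bookkeeping on the pencil of lines fully rigorous and uniform — in particular handling the pseudoline (the "oval at infinity") on the same footing as the bounded ovals, and showing the induction is compatible with the indexing of ovals when one edge is removed; the cleanest route is probably to phrase everything in terms of the rational $1$-form on $\mathbb{P}^1$ obtained by restricting $\Omega(P)$, where residues and the $\pm 1$ pattern make the combinatorics transparent, rather than chasing real algebraic geometry directly.
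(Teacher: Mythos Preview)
Your central device---restricting $\Omega(P)$ to a generic line $L$ through an interior point $e$ and reading off an alternating pattern of residues $\pm 1$---does not work as stated, and this is where the argument breaks. A $2$-form cannot be restricted to a curve; what you can form is the rational function $g(t)=\alpha_P|_L\,/\prod_i\ell_i|_L$ on $L\cong\PP^1$, but the residues of $g(t)\,dt$ at its $k$ simple poles $L\cap C_i$ are \emph{not} $\pm 1$. The $\pm 1$ residues in Theorem~\ref{thm:rationalPolypolsArePosGeometries} come from the \emph{iterated} residue at a vertex (first along a boundary curve $C_i$, then at a vertex on $C_i$), and a generic $L$ through the interior is not a boundary component. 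Your parenthetical justification (``$e$ is inside so $L$ enters and exits $P_{\ge 0}$'') only controls the two crossings where $L$ meets actual edges of $P$; the remaining $k-2$ intersections $L\cap C_i$ lie outside $P_{\ge 0}$, and there is no a priori information about the sign of $\alpha_P$ there. Without that, partial-fractions interlacing does not force $k-3$ real roots of $\alpha_P|_L$. This is a missing idea, not just bookkeeping.

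The paper's argument is different and sidesteps this by doing the sign analysis on the boundary lines $C_i$ themselves rather than on a line through the interior. On each $C_i$ the zeros of $\alpha_P|_{C_i}$ are known exactly: they are the $k-3$ residual points $C_i\cap C_j$ with $j\notin\{i-1,i,i+1\}$, all real and simple by Lemma~\ref{lemma:transversalResidual}. This determines the sign of $\alpha_P$ on the whole arrangement. The paper then classifies the regions of $\PP^2_\R\setminus\bigcup C_i$ (Lemma~\ref{lem:regions}) and shows the sign pattern forces a branch of $A_P$ through each four-sided region; these branches glue into $\lfloor(k-3)/2\rfloor$ nested ovals, and which oval carries $C_i\cap C_{i+1+m}$ is read off directly from the cyclic combinatorics of those regions. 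Only at the end is a line through the interior used, and by then the sign in every region is already known, so counting intersections is immediate. Your inductive recursion $\alpha_P\,\ell'=\alpha_{P'}\,\ell_1\ell_k+c_T\,\ell_2\cdots\ell_{k-1}$ from additivity is correct, but deducing hyperbolicity of $\alpha_P$ from that of $\alpha_{P'}$ via such a linear relation is not straightforward---hyperbolic polynomials are not closed under these operations---and you have not indicated how to close that gap without an independent sign argument of the paper's kind.
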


Here, a \emph{pseudoline} of  a smooth real algebraic curve $X\subset \P^2$ is a connected component $S$ of $X(\R)$ such that $\P^2(\R)\setminus S$ is connected. 
An \emph{oval} of $X$ is a connected component $S$ of $X(\R)$ such that $\P^2(\R)\setminus S$ has two connected components. In that case, one connected component of $\P^2(\R)\setminus S$ is contractible while the other is a M\"obius strip. 
An oval $S_1$ of $X$ is \emph{nested} in another oval $S_2$ if $S_1$ is contained in the contractible connected component of $\P^2(\R)\setminus S_2$. 

The adjoint curves of a convex heptagon and octagon are illustrated in \Cref{fig:7gon}.
Note that in particular, Theorem \ref{thm:hyperbolicadjoint} implies Conjecture \ref{conj:Wachs} for polygons. 
In the sequel we give some auxiliary lemmas that we need for proving Theorem \ref{thm:hyperbolicadjoint}.
The following result is common knowledge in the literature of hyperplane arrangements. 
\begin{lemma}\label{lem:regions}
 Let $P_{\ge 0}\subset \R^2$ be a convex $k$-gon for $k\geq 4$. Then the set $\P^2(\R)\setminus \bigcup_{i=1}^k C_i$ has exactly the following connected components:
\begin{enumerate}
    \item the interior of the polygon $P_{\ge 0}$;
    \item for every $i=1,\ldots,k$, a connected component bounded by the three lines $C_{i-1}$, $C_i$, $C_{i+1}$; note that this component is the interior of a triangle bounded by the edge~$E_i$; 
    \item for every pair of vertices $v_{i-1,i}$ and $v_{j-1,j}$ with cyclic distance at least two, there is a connected component bounded by the four lines $C_{i-1}$, $C_i$, $C_{j-1}$, and $C_j$.
\end{enumerate}
\end{lemma}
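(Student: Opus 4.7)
The plan is to first count the total number of connected components of $\P^2_\R \setminus \bigcup_{i=1}^k C_i$ via Euler's formula, and then exhibit representatives of each of the three listed types in such a way that the counts match. Assume the generic position that all $\binom{k}{2}$ intersection points $C_i \cap C_j$ are distinct (the non-generic cases can be handled by a perturbation argument). Then the arrangement has $V = \binom{k}{2}$ vertices, and since each projective line $C_i$ is a topological circle cut into $k-1$ arcs by the remaining lines, $E = k(k-1)$. With $\chi(\P^2_\R) = 1$, Euler's formula $V - E + F = \chi$ yields $F = 1 + \binom{k}{2}$ faces.

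Next I would identify the three types of regions explicitly. The interior $P_{>0}$ is a single region by convexity. For type (2), fix an edge $E_i$; convexity places all other polygon vertices strictly on the interior side of $C_i$, so the extensions of $C_{i-1}$ and $C_{i+1}$ past $v_{i-1,i}$ and $v_{i,i+1}$ cross to the opposite side of $C_i$ and meet at a single point, bounding an open triangle that has $E_i$ as one side. Convexity further excludes every other $C_j$ from this triangle, since such a $C_j$ meets $C_i$ strictly beyond an endpoint of $E_i$ and cannot re-enter the triangle without crossing one of its two lateral sides twice. This yields $k$ triangular regions. For type (3), given vertices $v_{i-1,i}$ and $v_{j-1,j}$ of cyclic distance at least two, the four lines $C_{i-1}, C_i, C_{j-1}, C_j$ bound a unique connected component of their complement in $\P^2_\R$ whose closure contains both designated vertices, and convexity excludes every other $C_m$ from its interior; this gives $\binom{k}{2} - k$ regions. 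Summing, $1 + k + (\binom{k}{2} - k) = 1 + \binom{k}{2}$, which matches the Euler count, so these exhaust all components.

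The main obstacle will be type (3). One must simultaneously establish that the four lines bound a \emph{unique} connected region in $\P^2_\R$ with both designated vertices on its closure, and that no other line $C_m$ enters this region. In an affine chart the four lines in general position create a bounded quadrilateral plus several unbounded sectors, some of which get identified in the projectivization; a clean approach is to pick an affine chart whose line at infinity separates the polygon from the residual intersection points so that this identification reduces to a transparent combinatorial check. For the ``no-other-$C_m$'' condition, I would argue by the cyclic position of $E_m$: the edge $E_m$ lies on one of the two arcs of $\partial P_{\ge 0}$ cut out by the two designated vertices, and in either case convexity forces $E_m$ to the interior side of each of $C_{i-1}, C_i, C_{j-1}, C_j$, so the extension of $C_m$ can exit the four-line region only through a vertex that does not lie on the closure of the region. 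This case analysis, together with the Euler count, closes the proof.
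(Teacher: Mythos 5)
Your Euler-characteristic strategy (count the faces of the arrangement in $\P^2_\R$ and then exhibit that many representatives of the three claimed types) is genuinely different from the paper's argument, which instead fixes the cyclic order of the intersection points $v_{i,1},\ldots,v_{i,k}$ along each line $C_i$ (a consequence of convexity) and classifies components directly by their boundary cycles. Your global count $F = 1 - \binom{k}{2} + k(k-1) = 1 + \binom{k}{2}$ is correct, and matching it with $1 + k + \bigl(\binom{k}{2}-k\bigr)$ is a clean way to avoid the paper's phrase ``combinatorial case analysis.''

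However, there is a real error in the way you identify the type-(3) regions, and it is not just a detail to be tidied up. You assert that the four lines $C_{i-1}, C_i, C_{j-1}, C_j$ bound a \emph{unique} region in $\P^2_\R$ whose closure contains both $v_{i-1,i}$ and $v_{j-1,j}$. Neither the uniqueness nor the characterization is right. The four lines form a complete quadrilateral with three quadrilateral faces in $\P^2_\R$; \emph{two} of these (one of which contains the polygon interior) have both $v_{i-1,i}$ and $v_{j-1,j}$ as vertices, while the third avoids both. The actual type-(3) component is that third one: its vertices are the residual intersections $v_{i-1,j-1}$, $v_{i-1,j}$, $v_{i,j}$, $v_{i,j-1}$, as the paper makes explicit in the proof of Lemma~\ref{lemma:countingOvals}, and neither designated vertex $v_{i-1,i}$ nor $v_{j-1,j}$ lies on its closure. (One can also check directly from the cyclic order on $C_{i-1}$ that $v_{i-1,i}$ does not lie on the arc from $v_{i-1,j-1}$ to $v_{i-1,j}$.) Since your subsequent ``no other $C_m$ enters'' argument is aimed at the wrong region, it would have to be redone. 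To repair the proof, first establish the cyclic ordering of the $v_{i,\ell}$ on each $C_i$ (this is the key geometric input you cannot avoid), then define the type-(3) region as the complete-quadrilateral face \emph{opposite} the pair $\{v_{i-1,i}, v_{j-1,j}\}$, and use the cyclic ordering to verify that no further $C_m$ crosses its four bounding arcs. The same ordering also closes the small gap in your type-(2) argument: it is not that $C_m$ would have to cross a lateral side twice, but that $C_m$ cannot cross either lateral side at all, because the endpoints $v_{i-1,i}, v_{i-1,i+1}$ are consecutive on $C_{i-1}$ (and similarly on $C_{i+1}$).
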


\begin{proof}
We first note that on every line $C_i$, the intersection points with the other lines appear in the order $v_{i1},v_{i2},\ldots,v_{ik}$, where we leave out $v_{ii}$ in that list.
Since each line $C_i$ contains the edge $[v_{i-1,i},v_{i,i+1}]$, this claim follows from the fact that  $P_{\ge 0}$ is convex. 

For example, in the case of the heptagon (see Figure \ref{fig:7gon}) on the line $C_2$, we have the consecutive points: $v_{21},v_{23},v_{24},v_{25},v_{26},v_{27}.$ The figure also illustrates the $k = 8$ case.

A union of connected components of $\P^2(\R)\setminus \bigcup_{i=1}^k C_i$ is bounded by intervals on the lines $C_i$ adjacent to the region between intersection points $v_{i,j}$ and $v_{i,l}$ such that we have a cyclic ordering of these intersection points $v_{i_1,i_2},v_{i_2,i_3},\ldots,v_{i_{l-1},i_l}, v_{i_l,i_1}$. This is a connected component of the complement of the lines $C_i$ if there are no intersection points in the interval between two intersection points $v_{ij}$ and $v_{il}$ on line $C_i$, which is to say that $l=j+1$ or $l=j-1$. The cyclic ordering follows from the assumption that the $k$-gon $P_{\ge 0}$ is convex.
The statement of the lemma is now a combinatorial case analysis.
\end{proof}

\begin{figure}[h]
    \centering
    \includegraphics[scale=0.13,trim={0.15cm 0.2cm 0.2cm 0.15cm},clip]{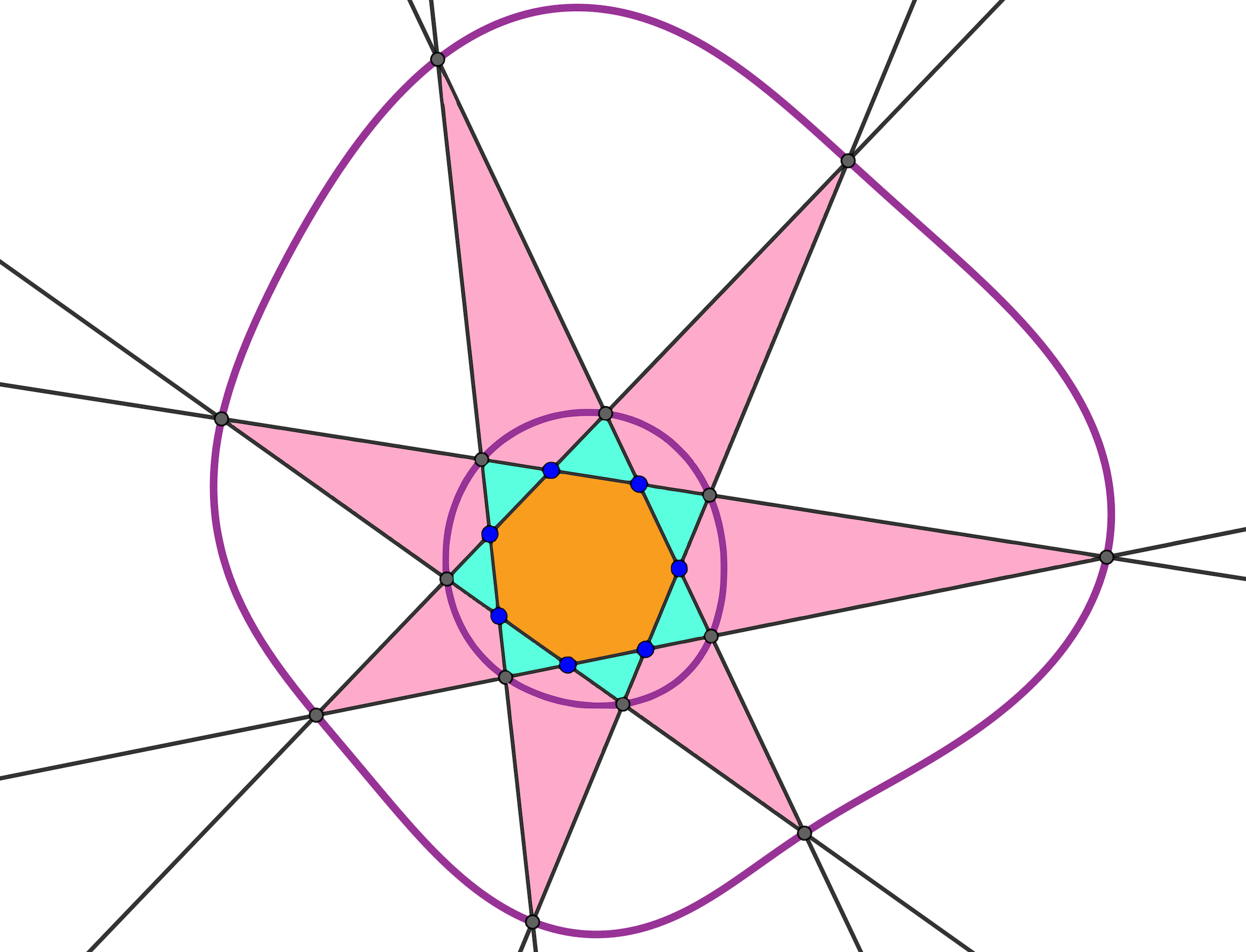}
    \includegraphics[scale=0.13,trim={0 0.15cm 0.15cm 0},clip]{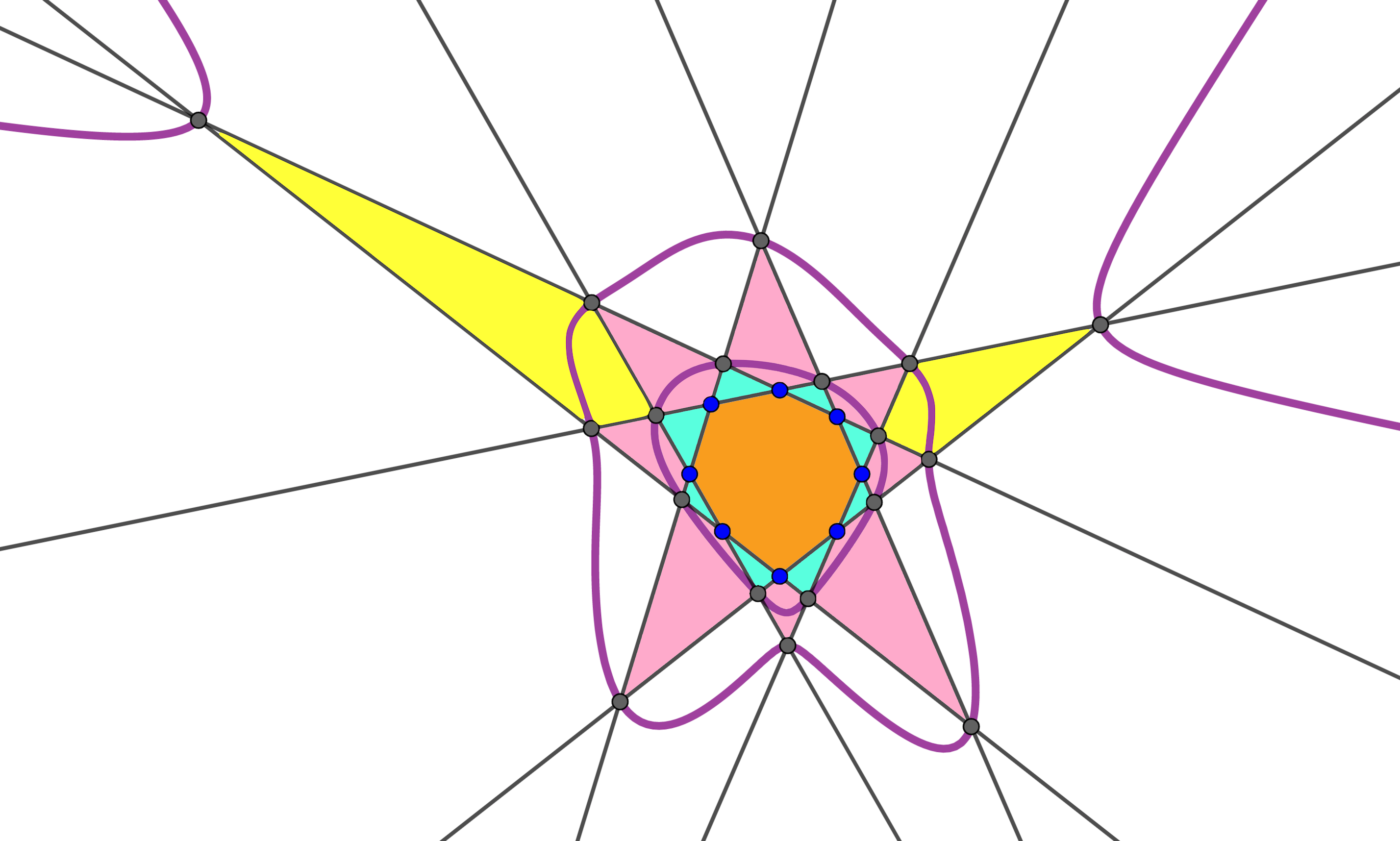}
    \caption{The adjoint of a heptagon and an octagon. See Lemma \ref{lem:regions}.}
    \label{fig:7gon}
\end{figure}

\begin{lemma}\label{lemma:countingOvals}
Let $P_{\ge 0}\subset \R^2$ be a convex $k$-gon for $k\geq 4$. The real locus $A_P(\R)\subset \P^2(\R)$ of the adjoint curve $A_P$ has at least  $\lfloor\frac {k-3}{2}\rfloor$ disjoint nested ovals.
\end{lemma}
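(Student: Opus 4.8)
The plan is to reduce the statement to exhibiting $\lfloor(k-3)/2\rfloor$ \emph{enclosing} ovals and then to place these on the residual points of a fixed cyclic distance.

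First I would record what Lemma~\ref{lemma:transversalResidual} (convex $k$-gons are regular polypols) together with the ordering of intersection points established in the proof of Lemma~\ref{lem:regions} give: $\deg A_P=k-3$, and $A_P$ meets each line $C_i(\R)$ transversally in exactly the $k-3$ residual points on it, which appear on the open arc $W_i:=C_i(\R)\setminus\overline{E_i}$ in the cyclic order $v_{i,i+2},v_{i,i+3},\dots,v_{i,i-2}$ (indices mod $k$). In particular $A_P(\R)$ misses $\partial P_{\ge0}=\bigcup_i\overline{E_i}$, so each connected component of $A_P(\R)$ lies either in the open polygon $P^\circ$ or in the open Möbius band $M:=\P^2_\R\setminus P_{\ge0}$. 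Call a component $O\subset M$ \emph{enclosing} if it is an oval whose bounded side $D_O\subset\P^2_\R$ contains $P_{\ge0}$. Two disjoint enclosing ovals are nested: they are disjoint simple closed curves in $M$, each cobounding an annulus with $P_{\ge0}$, so one of the two discs $D_O,D_{O'}$ contains the other. Hence it suffices to produce $\lfloor(k-3)/2\rfloor$ pairwise distinct enclosing ovals (components in $P^\circ$, if any, are simply not counted, so no appeal to Wachspress's conjecture for polygons is needed here).

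Next, for $m=1,\dots,\lfloor(k-3)/2\rfloor$ I would set $S_m:=\{v_{i,i+1+m}:i\in\Z/k\}$, the set of the $k$ residual points at cyclic distance $1+m$; since $2\le 1+m<k/2$ these are pairwise distinct, and the sets $S_1,\dots,S_{\lfloor(k-3)/2\rfloor}$ are pairwise disjoint. The claim to establish is that each $S_m$ lies on a single component $O_m$ of $A_P(\R)$ and that $O_m$ is enclosing; distinctness of $O_1,\dots,O_{\lfloor(k-3)/2\rfloor}$ then follows from disjointness of the $S_m$. I would prove the claim by tracing $A_P$ through the line arrangement $\bigcup_iC_i$: by Lemma~\ref{lemma:transversalResidual}, $A_P$ meets $\bigcup_iC_i$ only at residual points, so an arc of $A_P$ leaving a residual point into an open face of the arrangement stays in that face until it reaches another residual point on the face's boundary; and a face-by-face analysis — using convexity of $P_{\ge0}$, with the observation that $A_P$ cannot enter the edge triangles of Lemma~\ref{lem:regions}(2) through their apices (there it would have no boundary point to exit through) as the base case — forces the two local branches of $A_P$ at $v_{i,i+1+m}$ to run to the neighbouring residual points $v_{i,i\pm1+m}$ on $C_i$ and $v_{i\mp1,i+1+m}$ on $C_{i+1+m}$. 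Chasing these adjacencies cyclically around $\Z/k$ assembles $S_m$ and the connecting arcs into one closed curve $O_m$, and since $P_{\ge0}$ lies on the $E_i$-facing side of $C_i$ at every $v_{i,i+1+m}\in W_i$ — the same side for all $i$ — the curve $O_m$ separates $P_{\ge0}$ from the complementary region, so it is an enclosing oval.

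The main obstacle is the branch-chasing step. One must pin down the local face structure of the arrangement at each residual point precisely enough to conclude that the arcs close up into a \emph{single} component (rather than into several shorter cycles), that this component bounds a disc (i.e.\ it is an oval, not the pseudoline — which is exactly the case $m=k/2-1$ that has to be set aside when $k$ is even), and that $P_{\ge0}$ sits on its bounded side. This is a purely combinatorial argument that uses convexity of $P_{\ge0}$ essentially and is where the parity of $k$ enters, consistently with the statement of Theorem~\ref{thm:hyperbolicadjoint}.
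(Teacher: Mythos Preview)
Your strategy --- trace the local branches of $A_P$ through the faces of the line arrangement, anchored by the edge-triangle observation --- is genuinely different from the paper's proof, which rests on a \emph{sign argument}: the restriction $\alpha_P|_{C_i}$ has simple zeroes exactly at the $k-3$ residual points on $C_i$, so its sign alternates along $C_i$; this sign pattern forces each type-3 quadrilateral (corners $v_{i-1,j-1},v_{i-1,j},v_{i,j},v_{i,j-1}$) to contain a branch of $A_P(\R)$ joining the two opposite corners $v_{i-1,j-1}$ and $v_{i,j}$. A generic line through $P^\circ$ then meets at least $2\lfloor(k-3)/2\rfloor$ such branches, two in each ``ring'' of quadrilaterals, and this intersection count pins down the number of nested ovals at once. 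So the paper replaces your topological chase by the algebra of a single polynomial's sign changes.

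Your approach can be made to work, but the key step --- that the two local branches at $v_{i,i+1+m}$ enter the two adjacent $4$-gons at polygon-vertex distance $1+m$ rather than the pair at distances $m$ and $m+2$ --- is only asserted. The edge-triangle argument handles $m=1$, but for $m\ge 2$ you need an explicit induction via a parity count: in a $4$-gon at distance $m$ the four corners have cyclic distances $m-1,m,m,m+1$; by the inductive hypothesis the two distance-$m$ corners contribute entering arcs while the distance-$(m-1)$ corner does not, and since a compact $1$-manifold meets the boundary of a cell in an even number of points, the distance-$(m+1)$ corner cannot contribute either. That is the content of your ``face-by-face analysis'', and without it the chase does not close. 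Also, your stated targets are off: the branches do not connect to the neighbours $v_{i,i+1+m\pm1}$ and $v_{i\mp1,i+1+m}$ --- those are the \emph{adjacent} corners of the two $4$-gons --- but to the \emph{opposite} corners $v_{i-1,i+m}$ and $v_{i+1,i+2+m}$, which are precisely the elements of $S_m$ your cyclic gluing needs. With these fixes (and a cleaner justification that each $O_m$ meets a line through $P^\circ$ twice, replacing the ``same side for all $i$'' sketch), the argument goes through; the paper's sign analysis simply reaches the same endpoint with considerably less bookkeeping.
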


\begin{proof}
The main idea of the proof is that we can determine the sign of the adjoint polynomial on every interval on every line $C_i$. For this, fix a homogeneous polynomial $\alpha_P$ of degree $k-3$ defining the adjoint curve $A_P$.

Throughout the proof one should keep in mind that there is a case distinction: if $k$ is even, then the adjoint has odd degree and the position of the line at infinity with respect to the polygon matters for sign considerations. In particular, the adjoint polynomial changes sign along the line at infinity. For $k$ odd, the adjoint has even degree and a well defined sign on $\P^2(\R)$ so that the position of the line at infinity relative to $P_{\ge 0}$ does not matter. Nevertheless, without loss of generality we assume that the polygon is in general position: Namely, no two lines $C_i$ and $C_j$ intersect at infinity and the line  at infinity does not intersect the interior of the polygon $P_{\ge 0}$.

 \emph{Step 1. Existence of the ovals.}
 Let us fix a line $L := C_i$ that is the Zariski closure of an edge of $P_{\ge 0}$. Then there are lines $C_{i-1}$ and $C_{i+1}$ whose intersection points with $L$ lie in the boundary of $P_{\ge 0}$. The intersections with the other $k-3$ lines $C_j$ ($j\notin \{i-1,i,i+1\}$) lie on $A_P$. By Lemma \ref{lemma:transversalResidual}, the adjoint curve meets each boundary line  only at the residual points. Thus, the line $L$ is not entirely contained in $A_P$ and the $k-3$ intersection points of $L$ with the lines $C_j$ are the roots of the restriction of $\alpha_P$ to $L$. So they are all simple roots and the sign of $\alpha_{P}|_L$ changes in each of these 
 points, and only at these points.

This argument determines the sign of $\alpha_P$ (up to a global change of signs) on all the $k$ lines $C_1,\ldots,C_k$. By changing the sign of $\alpha_P$, if necessary, we can assume that $\alpha_p$ is positive on every edge of $P_{\ge 0}$.

We now look at the regions in the complement of the line arrangement $C_1,\ldots,C_k$ described in \Cref{lem:regions} and the sign of $\alpha_P$ on the bounding edges. 
 
By our choice of sign of $\alpha_P$, it is positive on all edges bounding regions of type $1$ and $2$ in \Cref{lem:regions}. We look at regions of type 3 that have four bounding line segments. For these regions, the equation $\alpha_P$ is positive on two of these line segments and negative on the other two. Therefore $\alpha_P$ has a connected component inside this region which passes through two intersection points $C_{j_1}\cap C_{j_2}$, respectively $C_{l_1}\cap C_{l_2}$ in the boundary of this region. 
If the line at infinity intersects the region, we have to modify this argument slightly: Even in this case, there are two residual points where the adjoint curve locally around this point enters the bounded region. So we also conclude in this case that each of the regions of type 3 intersects at least one connected component of the real locus of the adjoint curve that divides it into two $2$-dimensional parts.

 \emph{Step 2. Computing the number of ovals.}
 Let us now count how many ovals one can construct as shown in Step 1 above.
 Consider a region of type 3 in \Cref{lem:regions} which is a $4$-gon with edges from $v_{i-1,j-1}$ to $v_{i-1,j}$ on $C_{i-1}$, from $v_{i-1,j}$ to $v_{i,j}$ on $C_j$, from $v_{i,j}$ to $v_{i,j-1}$ on $C_i$, and from $v_{i,j-1}$ to $v_{i-1,j-1}$ on $C_{j-1}$. Since the adjoint polynomial has the same sign on the intervals on the curves $C_{i-1}$ and $C_j$ and a different sign on the other two intervals on $C_{j-1}$ and $C_i$, it has to change sign inside the region. Therefore, there is at least one branch of the real locus inside this region. 
 By moving the indices cyclically, these regions arrange in a circle around the polygon (one example shown in pink in \Cref{fig:7gon}). 
 In particular, two regions of type 3 determined by the vertices $v_{i-1,i}$ and $v_{j-1,j}$, resp.~$v_{k-1,k}$ and $v_{l-1,l}$ belong to the same circle of regions if the cyclic distances between the indices $i$ and $j$ and the indices $k$ and $l$ are equal. 
 Overall, we have $\lfloor\frac {k-3}{2}\rfloor$ such circles of regions for vertices of cyclic distance less than $\frac{k}{2}$.
 
 If we take a line $L$ through the interior of the polygon that does not contain a residual point, it must therefore pass through at least $2 \lfloor\frac {k-3}{2}\rfloor$ such regions, two for each circle. By the sign argument above, it has at least one real intersection point with the adjoint in each region. Such a line intersects the adjoint curve therefore in $k-3$ real points if $k-3$ is even, hence all intersection points with the adjoint are real. Otherwise, we have $k-4$ real intersection points and the only missing one must therefore also be real. 
 
 A posteriori we conclude, that there is precisely one branch of the adjoint curve in every region of type $3$ determined by vertices $v_{i-1,i}$ and $v_{j-1,j}$ of cyclic distance less than $\frac{k}{2}$. This branch contains the residual points $v_{i-1,j-1}$ and $v_{i,j}$. Moreover, these branches, by shifting the indices cyclically, glue to a real oval of the adjoint.
\end{proof}

\begin{example}
In the case of the octagon ($k=8$) from Figure \ref{fig:oddDegAdjoint}, the sign sequence on any line $C_i$ is, up to cyclic permutation, $(+,-,+,+,+,-,+,-)$. Whereas for the heptagon ($k=7$) we obtain the  sign sequence $(+,-,+,+,+,-,+)$.
\end{example}

\begin{figure}
    \centering
    \includegraphics[scale=0.14,trim={0.2cm 0.15cm 0.25cm 0},clip]{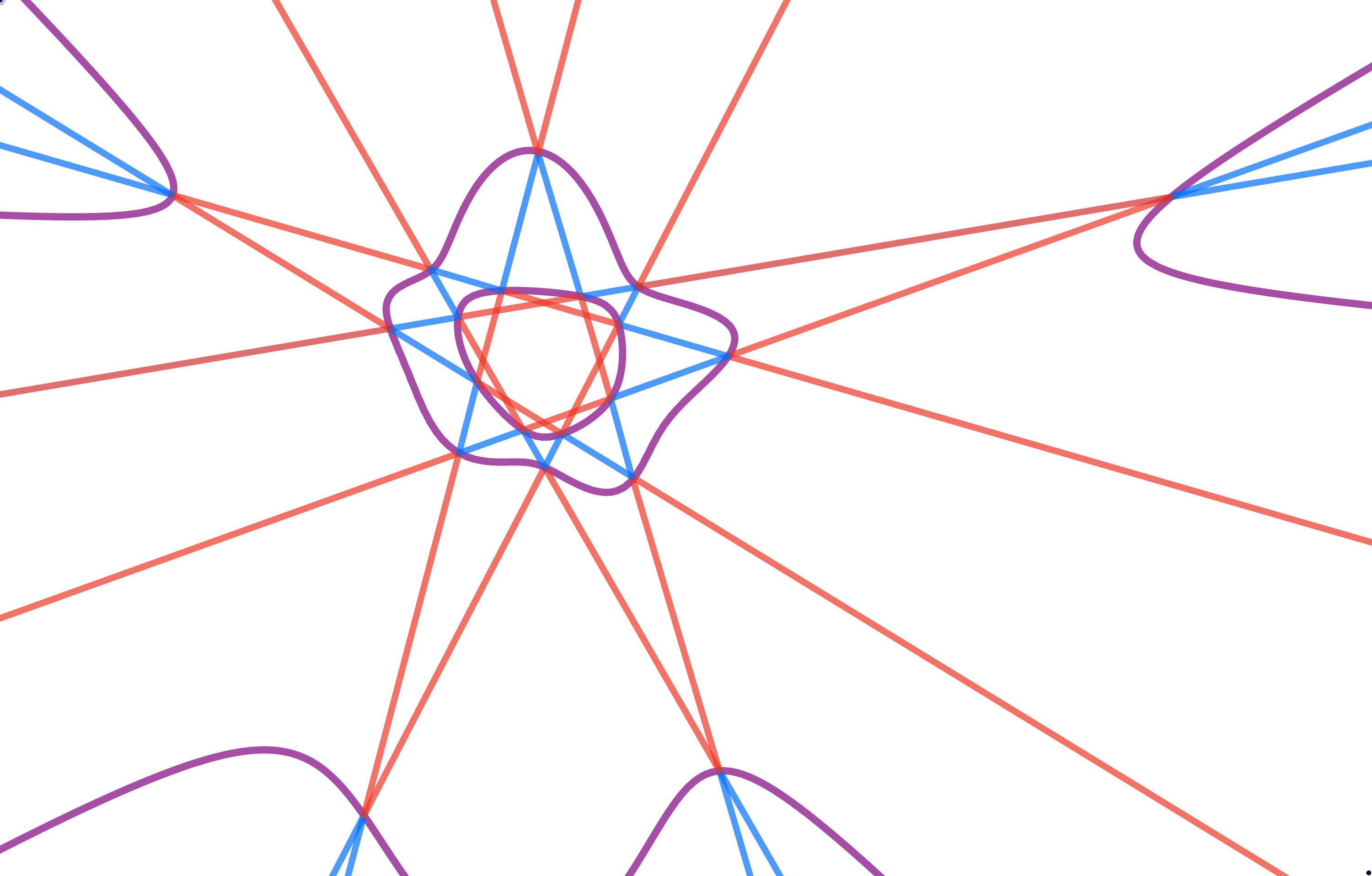}
    \caption{The adjoint polynomial of an octagon changes sign on every line $C_i,$ $i=1,\ldots,k$, when $C_i$ crosses the line at infinity. On the red line segments the adjoint polynomial is positive, whereas on the blue line segments it is negative.}
    \label{fig:oddDegAdjoint}
\end{figure}

We are now ready to prove Theorem \ref{thm:hyperbolicadjoint}.

\begin{proof}[{Proof of \Cref{thm:hyperbolicadjoint}}]
By Lemma \ref{lemma:countingOvals} , the adjoint curve has at least $\lfloor\frac {k-3}{2}\rfloor$ disjoint nested ovals and $k-3$ is the degree of the adjoint curve $A_P$. By \cite[Cor.~1.3.C.]{virointroduction}, $A_P$ has no other ovals besides the ones we counted above. Therefore (see for instance \cite[Thm.~5.2]{helton2007linear}), the adjoint $A_P$ is hyperbolic with respect to every point in the interior of the region in the complement of the adjoint curve in $\P^2(\R)$ containing the polygon $P_{\ge 0}$. 
The fact that the curve is strictly hyperbolic now follows from Bézout's Theorem by contradiction: If it had a real singularity, a line spanned by the singularity and an interior point of the polygon would intersect the adjoint in more than its degree many points.
\end{proof}

\begin{remark}\label{polytopes} The hyperbolicity of the adjoint of convex polygons does not generalize to higher-dimensional polytopes, as Example \ref{example:polytopeNotHyp} shows. The adjoint hypersurface of a polytope can be defined via an analogous vanishing condition as discussed in Section \ref{ssec:adjoints}; see \cite{kohn2019projective}.
Let $P_{\ge 0}\subset\P^n(\R)$ be a polytope with $k$ facets. Its \emph{residual arrangement} consists of all linear spaces that are intersections of hyperplanes spanned by facets of $P_{\ge 0}$ but that do not contain any face of $P_{\ge 0}$. If the hyperplane arrangement spanned by the facets of $P_{\ge 0}$ is simple (i.e., through any point in $\P^n(\R)$ pass at most $n$ hyperplanes), the \emph{adjoint hypersurface} of $P_{\ge 0}$ is the unique hypersurface of degree $k-n-1$ that passes through its residual arrangement. 
\end{remark}
\begin{example}\label{example:polytopeNotHyp}
Figure \ref{fig:counterexPolytope} shows a three-dimensional convex polytope whose adjoint surface is not hyperbolic. The combinatorial type of the polytope is a cube with a vertex cut off. However, its vertices are perturbed such that the plane arrangement spanned by the seven facets of the polytope is simple. Its residual arrangement consists of six lines and one isolated point. The adjoint is the unique surface of degree three passing through the six lines and the point. In this example, it is a smooth surface (see~\cite[Exm.~4.12]{kohn2019projective}).
Since hyperbolic cubic surfaces contain exactly three real lines, the adjoint surface of the polytope cannot be hyperbolic; see \cite[Chapter 5]{silhol} -- the hyperbolic type is case (5.4.5) on page 134 in Silhol's book.
\end{example}

\begin{figure}[htb]
    \centering
    \includegraphics[width=0.49\textwidth]{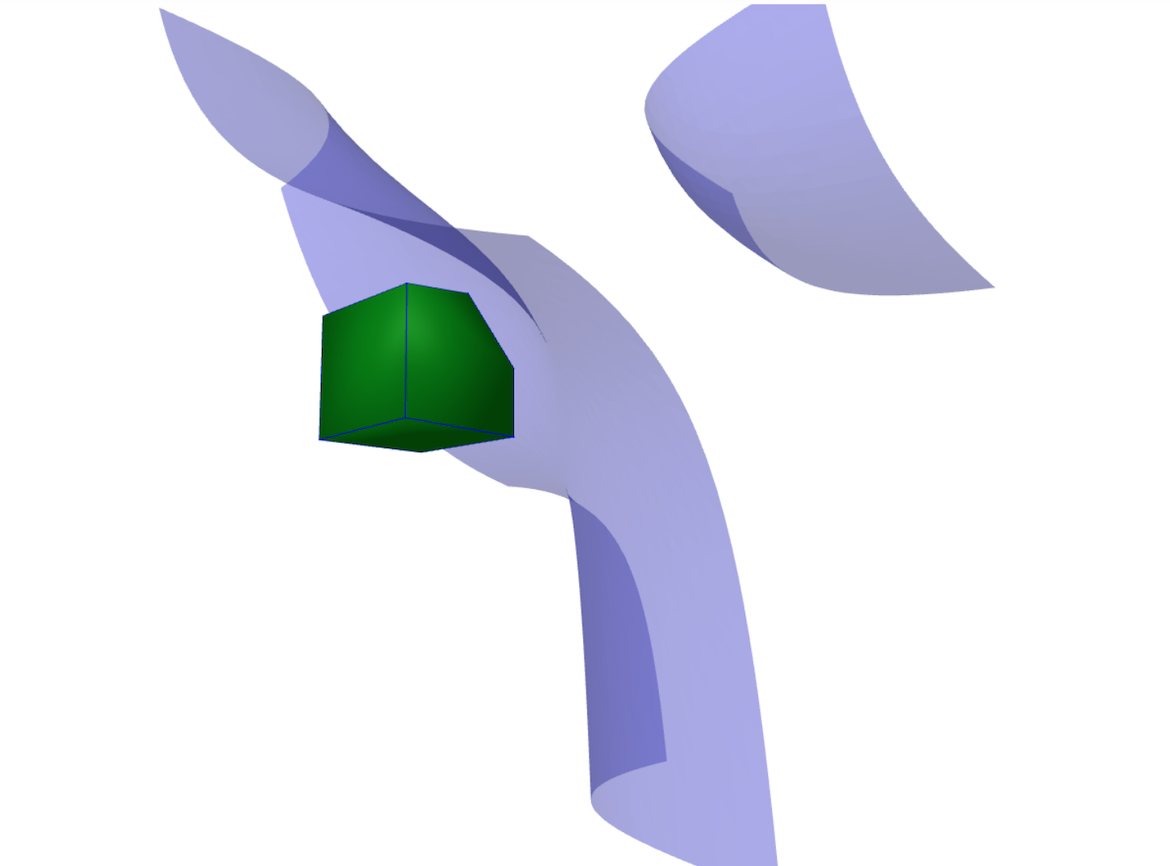}
    \includegraphics[width=0.49\textwidth]{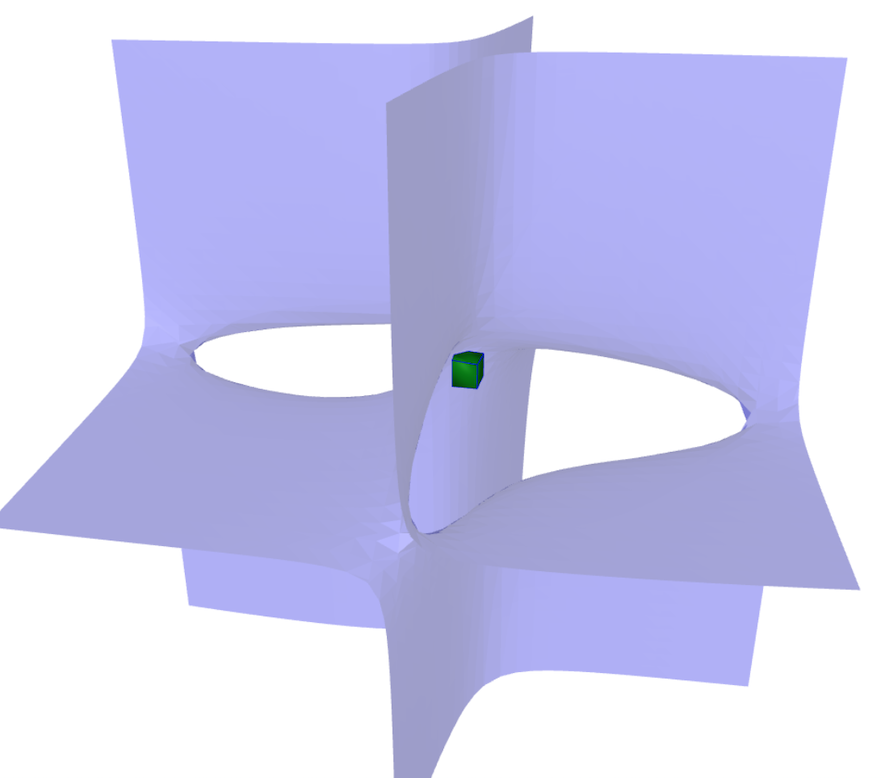}
    \caption{Non-hyperbolic adjoint surface of a convex polytope.}
    \label{fig:counterexPolytope}
\end{figure}

\subsection{Three conics} \label{subsec:threeconics}
The first non-trivial case of Conjecture~\ref{conj:Wachs} when a polypol is bounded by curves of degree greater than one is the case of three real conics; see \Cref{wachspress5}. 
Polycons of total degree six have recently been discussed by his  grandson J.~Wachspress in \cite{wachspress2020denominator}, but his arguments are non-conclusive. Below we attempt to settle the same case. Firstly, we classify all regular polycons bounded by three ellipses that meet transversally. 
Secondly, for almost all such polycons (including the cases of completely real intersections, the $M$-case), we show that the adjoint curve lies strictly outside of the polycon. Thirdly, for each of the  remaining cases, we find a representing triple of ellipses (with a polycon) and  compute the equation of its adjoint. In every example we have constructed, Wachspress's conjecture turns out to hold, but a formal argument which settles these cases in complete generality is currently still missing. 
Additionally, we observe that, in contrast to the case of polygons discussed in \Cref{subs:polygons}, there are configurations of three ellipses and a polycon for which the adjoint curve is not hyperbolic; see \Cref{rem:nonHyperbolic}.

\begin{theorem}\label{thm:wachspressellipses}
There are exactly $44$ topologically non-equivalent configurations of three ellipses in $\mathbb{R}^2$ such that each pair of ellipses intersects each other transversally and in at least two real points, and all three of them do not intersect at a common real point. In $33$ of these configurations, the adjoint curve of any regular polycon $P$ in the configuration does not intersect the interior of $P_{\ge 0}$.
\end{theorem}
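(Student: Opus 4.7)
\emph{Classification.} I would first enumerate the $44$ topological types of triples of real ellipses satisfying the hypotheses. By Bézout and transversality, each pair intersects in either $2$ or $4$ real points; since no real point lies on all three ellipses, the pair-intersection multiset lies in $\{(4,4,4),\,(4,4,2),\,(4,2,2),\,(2,2,2)\}$. Within each family the topological type is determined by the cyclic arrangement of the other ellipses' arcs along each ellipse together with the nesting pattern of the bounded regions; these data can be catalogued combinatorially, and one then checks which abstract patterns are geometrically realizable by actual ellipses. The full enumeration is deferred to the appendix and yields the $44$ classes.

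\emph{General setup for the adjoint.} A regular polycon $P$ bounded by three conics $C_1, C_2, C_3$ has exactly three vertices $v_{12}, v_{23}, v_{31}$ and three sides, one arc on each conic. By Lemma \ref{lemma:transversalResidual}, the adjoint $A_P$ is a real cubic meeting each $C_i$ transversally only at the real residual points on $C_i$, with total intersection number $6$ with each $C_i$ by Bézout. Normalize a real defining polynomial $\alpha_P$ for $A_P$ to be positive on each side of $P$; since $\alpha_P$ changes sign at each real transversal crossing with $C_i$ and has no other real zeros on the $C_i$, this normalization uniquely determines the sign of $\alpha_P$ on every real arc of every $C_i$.

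\emph{Strategy for the $33$ cases.} I would show that $A_P(\R)$ has no real branch inside $P_{\ge 0}$ by combining the sign analysis above with Bézout and the residual-point data specific to each arrangement. Since $\alpha_P$ is positive on all three sides of $P$, $A_P(\R)$ cannot cross $\partial P_{\ge 0}$, so any real branch of $A_P$ inside $P_{\ge 0}$ would have to be an oval of the real cubic disjoint from $C_1 \cup C_2 \cup C_3$. A smooth real cubic has at most one oval, and its position is pinned down by the nine residual points. In the M-case $(4,4,4)$, all nine residual points are real, lie outside $P_{\ge 0}$ by regularity of $P$, and uniquely determine the cubic; one then verifies, for each topological subtype of the $(4,4,4)$ family, that the oval (when present) encloses an exterior real residual point rather than the interior of $P_{\ge 0}$. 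For the non-M cases included in the $33$, the analogous argument uses the real residual points together with the conjugate pairs of complex residual points: complex residual pairs constrain the cubic without contributing real intersections, and one checks case-by-case that the sign pattern on $\partial P_{\ge 0}$ together with the known positions of the real residual points still forces every oval of $A_P(\R)$ into the exterior.

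\emph{Main obstacle.} The hard part is the $11$ remaining non-M configurations, in which the scarcity of real residual points on some $C_i$ (and the conjugate-pair nature of the complex ones) leaves the cubic $A_P$ too loosely constrained for the sign-plus-Bézout argument to rule out a hidden oval inside $P_{\ge 0}$. For these cases one would need either an additional global invariant of the polycon controlling $\alpha_P$ (for instance a positivity statement for $\alpha_P$ evaluated at a distinguished interior point, inherited from its constant sign on the sides) or a detailed analysis of the pencil of cubics through the residual points. Settling them in full generality is precisely what is left open by the theorem.
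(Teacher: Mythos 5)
Your classification strategy is essentially the paper's (Steps~0--4 of Appendix~\ref{sec:makeCatalog}), and your sign-analysis approach coincides with the argument in Proposition~\ref{prop:polyconHyperbolic}, which handles $28$ of the $33$ configurations. However, there are two genuine gaps.

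First, for the count of $44$, the realizability question is not a routine verification. The paper exhibits $5$ combinatorial patterns that satisfy all the local consistency and convexity constraints you describe but are nonetheless \emph{not} realizable by real conics; ruling them out requires Orevkov's braid-theoretic obstruction (the Murasugi--Tristram inequality applied to the link $K(C_m,p)$), as carried out in Proposition~\ref{prop:1}. Your phrase ``one then checks which abstract patterns are geometrically realizable'' treats this as bookkeeping, but it is the hardest step of the classification and requires a method well outside the sign/convexity toolkit.

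Second, your claimed strategy does not reach $33$ configurations. You assert that for every $(4,4,4)$ subtype, one verifies directly that the oval of $A_P$ (when present) encloses an exterior residual point; but four of the $M$-subtypes (both in Figure~\ref{fig:444_322} and both in Figure~\ref{fig:444_422}) are precisely cases where the sign-plus-B\'ezout argument \emph{fails} to locate the oval, because the sign pattern around the problematic polycon is symmetric enough to be compatible with an interior oval. These are the ``framed'' configurations, and together with the one in Figure~\ref{fig:244_322} they require the substantially different argument of Proposition~\ref{prop:problematicPolycons}: one first shows (Lemma~\ref{le:twointersections}) that a line through an interior point meeting $A_P$ twice outside $P_{\ge 0}$ forbids an interior component; then one analyzes how the six ``tentacles'' of the pseudoline connect (directly vs.\ through the line at infinity), using convexity of the enclosing ellipse in one subcase and a count of real inflection points of the cubic in another (a real plane cubic has exactly three real inflections, which excludes a connection pattern that would force at least four). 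Your suggestion to exploit the conjugate pairs of complex residual points is not what is used and does not obviously substitute: the complex pairs constrain $A_P$ as a complex curve but give no direct information about the location of the real oval. As written, your argument would prove the statement for $28$ configurations, not $33$.
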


For the proof of Theorem \ref{thm:wachspressellipses}, we refer the reader to the Appendix in Section \ref{sec:appendix}, where the $44$ admissible configurations of three ellipses are presented in a catalog; see Subsections \ref{sec:makeCatalog}--\ref{sec:catalog}.
Moreover, for $28$ of these configurations (and all polycons existing in these configurations) we prove Wachspress's conjecture by showing that the adjoint curve has to be hyperbolic with  the oval lying strictly outside of the polycon; see \Cref{prop:polyconHyperbolic}.
Thus, we are left with $16$ problematic configurations. Finally, in \Cref{prop:problematicPolycons} we give a more intricate argument for $5$ of the problematic configurations. We also compute the adjoint curve for example instances of the problematic polycons in the remaining $11$ configurations, see \Cref{fig:problematicAdjoint}.

\section{Finite Adjoint Maps}
\label{Finite Adjoint Correspondences}

The adjoint of a heptagon is a quartic curve, which has 14 degrees of freedom. Moreover, the adjoint is parametrized by the 14 vertex coordinates of the heptagon. It has been observed in \cite{kohn2018moment} that the \emph{adjoint map}, taking the vertices to the adjoint, is finite for heptagons and the authors posed the question to determine its degree. In this section, we conjecture that a general quartic curve is the adjoint of 864 heptagons and give numerical evidence. Moreover, we introduce an analogous adjoint map for arbitrary rational polypols and identify all cases in which it is generically finite. 

We fix positive integers $d_1,\ldots,d_k$ for $k \geq 2$ to denote the degrees of the boundary curves of a polypol. Let ${\cal R}_{d_i}$ be the space of complex rational plane curves of degree $d_i$. We consider the space ${\cal Y}^\circ_{d_1, \ldots, d_k} \subset {\cal R}_{d_1} \times \cdots \times {\cal R}_{d_k} \times (\P^2)^k$ given by 
$${\cal Y}^\circ_{d_1, \ldots, d_k} = \left \{ (C_1,\ldots,C_k,v_{12},\ldots,v_{k1}) \;\middle\vert\; 
\begin{array}{l}
C_i \in {\cal R}_{d_i} \text{ is nodal},  
    \text{$C_i$ and $C_j$ for $i \neq j$ }\\ \text{intersect transversally},  
    v_{ij} \in C_i\cap C_{j}\\ \text{is a nonsingular point on $C_i$ and $C_j$}
\end{array}
 \right \},$$
 and its Zariski closure ${\cal Y}_{d_1, \ldots, d_k} = \overline{{\cal Y}^\circ_{d_1, \ldots, d_k}}$ in ${\cal R}_{d_1} \times \cdots \times {\cal R}_{d_k} \times (\P^2)^k$. 
 \begin{lemma} \label{lem:dimY}
 The variety ${\cal Y}_{d_1, \ldots, d_k}$ has dimension $3d - k$, where $d = d_1 + \cdots + d_k$.
 \end{lemma}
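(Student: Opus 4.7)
The plan is to project ${\cal Y}_{d_1,\ldots,d_k}$ onto the product $\prod_{i=1}^k {\cal R}_{d_i}$ by forgetting the vertex data, and to show that this projection has dense image with finite, nonempty fibers. This reduces the problem to the classical dimension count $\dim{\cal R}_{d_i}=3d_i-1$, from which the total dimension $\sum_i(3d_i-1)=3d-k$ falls out immediately.

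For the dimension of ${\cal R}_{d_i}$, I would use the parametrization viewpoint. A rational plane curve of degree $d_i$ is, generically, the image of a morphism $\mathbb{P}^1\to\mathbb{P}^2$ of degree $d_i$, given by three degree-$d_i$ forms on $\mathbb{P}^1$ with no common zero, taken up to scalar and reparametrization. The space of such triples has dimension $3(d_i+1)-1=3d_i+2$, and quotienting by the three-dimensional group $\mathrm{Aut}(\mathbb{P}^1)$ yields $3d_i-1$. Equivalently, one may compute the codimension of ${\cal R}_{d_i}$ inside the complete linear system,
\[ \textstyle \binom{d_i+2}{2}-1-\binom{d_i-1}{2}=3d_i-1, \]
using that a generic nodal rational curve of degree $d_i$ has exactly $\binom{d_i-1}{2}$ nodes which impose independent linear conditions on the coefficients.

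For the projection $\pi\colon{\cal Y}^\circ_{d_1,\ldots,d_k}\to\prod_i{\cal R}_{d_i}$, the image is the locus of tuples $(C_1,\ldots,C_k)$ in which each $C_i$ is nodal and any two $C_i,C_j$ meet transversally at smooth points. This locus is dense in $\prod_i{\cal R}_{d_i}$ by a standard genericity argument: a generic rational plane curve is nodal (at its expected number of nodes), and transversality of two distinct plane curves is an open condition realized after arbitrarily small perturbation. Over any such tuple, B\'ezout's theorem yields $d_id_{i+1}$ admissible choices for each vertex $v_{i,i+1}$, so the fibers of $\pi$ are finite and nonempty. Taking dimensions through this dominant, generically finite map gives
\[ \textstyle \dim{\cal Y}_{d_1,\ldots,d_k}=\dim{\cal Y}^\circ_{d_1,\ldots,d_k}=\dim\prod_i{\cal R}_{d_i}=3d-k. \]
The one step deserving care is confirming that the conditions of nodality and pairwise transversality are generic (hence give a dense open locus); this is classical but could, if desired, be made explicit by assembling $k$ independently-parametrized rational curves in general position and verifying the open conditions directly on this model configuration.
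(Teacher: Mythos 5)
Your proof takes essentially the same approach as the paper's: project ${\cal Y}^\circ_{d_1,\ldots,d_k}$ to $\prod_i {\cal R}_{d_i}$, observe the map is dominant with finite fibers (the paper records the generic fiber size $d_1^2\cdots d_k^2$), and conclude from $\dim {\cal R}_{d_i} = 3d_i-1$. You simply spell out the standard parametrization count for $\dim {\cal R}_{d_i}$ and the genericity of the nodal and transversality conditions, which the paper treats as known.
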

 \begin{proof}
 The image of the projection ${\cal Y}^\circ_{d_1, \ldots, d_k} \rightarrow {\cal R}_{d_1} \times \cdots \times {\cal R}_{d_k}$ is dense and has finite fibers (of cardinality $d_1^2d_2^2\cdots d_k^2$). 
  Therefore 
 \[ \dim {\cal Y}_{d_1, \ldots, d_k} = \dim {\cal Y}^\circ_{d_1, \ldots, d_k} = \dim \left ( {\cal R}_{d_1} \times \cdots \times {\cal R}_{d_k} \right ) = 3d-k. \qedhere \] 
 \end{proof}

The \em adjoint map \em is the rational map 
\[\alpha_{d_1, \ldots, d_k}: {\cal Y}_{d_1, \ldots, d_k} \dashrightarrow \P \, (\C[x,y,z]_{d-3}),\]
 that takes a rational polypol to a defining equation of its adjoint curve. 
 
 \begin{theorem}\label{th: finite adjoint} The adjoint map $\alpha_{d_1, \ldots, d_k}$ is dominant and generically finite precisely in the following cases, up to cyclic permutations, and up to reversing the order:
  \begin{align} \label{eq:finiteadjmap}
  \begin{matrix}
   (d_1, \ldots, d_k)=  & (2,2,2,2), & (1,2,2,3), & (1,2,3,2), & (1,1,3,3), \\
& (1,1,2,4), & (1,2,1,4), &  (1,1,1,5), & (1,1,1,1,1,1,1).
  \end{matrix}
\end{align}
 \end{theorem}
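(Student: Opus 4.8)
The plan is to turn the statement into a dimension count and then handle the two directions separately. First I would note that $\alpha_{d_1, \ldots, d_k}$ takes values in $\P(\C[x,y,z]_{d-3})$, of dimension $\binom{d-1}{2}-1=\tfrac{d^2-3d}{2}$ once $d\ge 4$ (and empty or a point for $d=2,3$, where the claim is vacuous). A dominant, generically finite rational map forces the dimensions of source and target to agree, so by Lemma~\ref{lem:dimY} we need $3d-k=\tfrac{d^2-3d}{2}$, i.e.\ $2k=d(9-d)$. Since every $d_i\ge 1$ we have $2\le k\le d$; then $2k\le 2d$ gives $d\ge 7$ and $2k>0$ gives $d\le 8$. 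Hence either $d=k=7$, forcing $(d_1,\ldots,d_7)=(1,1,1,1,1,1,1)$, or $d=8,\ k=4$, so $(d_1,\ldots,d_4)$ is a composition of $8$ into four positive parts. I would then observe that cyclic shifts and the reversal $(d_1,\ldots,d_k)\mapsto(d_k,\ldots,d_1)$ induce isomorphisms of $\mathcal Y$ commuting with the adjoint map (they only relabel boundary curves and vertices and leave the adjoint unchanged), so one may work modulo the dihedral action; the compositions of $8$ into four parts split into exactly eight dihedral classes, represented by the seven tuples listed in \eqref{eq:finiteadjmap} with $k=4$ together with the \emph{alternating} class $(1,3,1,3)$.

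For the forward direction, a dominant rational map between two varieties of the same dimension is automatically generically finite, so since source and target are equidimensional it suffices to prove that $\alpha_{d_1,\ldots,d_k}$ is dominant for each of the eight listed types. Dominance holds as soon as the differential of $\alpha_{d_1,\ldots,d_k}$ is surjective at a single rational polypol $P$, equivalently as soon as its Jacobian there has full rank. By Example~\ref{ex:adjoint} and Proposition~\ref{prop:uniqueAdjoint} the adjoint is the unique curve of degree $d-3$ through the $\tfrac{d^2-3d}{2}$ residual points of $P$ and depends algebraically on $P$, so this is a finite symbolic computation. For the heptagon $(1,1,1,1,1,1,1)$ this is the finiteness already observed in \cite{kohn2018moment}; for each of the seven types with $d=8$ I would write down explicit boundary curves and vertices over $\Q$, compute the $20$ residual points and the quintic through them, and check that the $20\times 20$ Jacobian is nonsingular.

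The hard part will be the reverse direction: showing that the \emph{alternating} class $(1,3,1,3)$ is \emph{not} dominant and generically finite, so that the eight classes above are exactly the finite ones. Here $C_1,C_3$ are lines and $C_2,C_4$ are rational cubics, and because the four vertices lie on $C_1\cap C_2$, $C_2\cap C_3$, $C_3\cap C_4$, $C_4\cap C_1$, the two cubics are \emph{opposite} in the $4$-cycle; hence every one of the nine points of $C_2\cap C_4$ is a residual point and $C_2\cap C_4\subset A_P$. By Max Noether's $AF+BG$ theorem the adjoint is then of the form $\alpha_P=B_1f_2+B_2f_4$ for conics $B_1,B_2$, and restricting to $C_2$ and $C_4$ and using Lemma~\ref{lemma:transversalResidual} shows that $B_2$, resp.\ $B_1$, is the conic through the node of $C_2$, resp.\ $C_4$, together with the four residual points cut on it by $C_1\cup C_3$. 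I would then bound the dimension of the image of $\alpha_{(1,3,1,3)}$: for a general quintic $A$, the pencils of cubics whose base locus lies on $\{A=0\}$ form a family of dimension $16-9=7$ (the condition $A\in(f_2,f_4)$ cuts out codimension $21-12=9$ in the Grassmannian $\mathrm{Gr}(2,\C[x,y,z]_3)$ of such pencils, of dimension $16$), requiring two rational members of the pencil whose nodes also lie on $\{A=0\}$ costs two further conditions, and then the two lines $C_1,C_3$ — each forced to meet $\{A=0\}$ in a prescribed configuration of five points distributed among $C_2,C_3,C_4$ — are over-determined, so the relevant incidence variety maps to a proper subvariety of $\P(\C[x,y,z]_5)$. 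The technical heart, and the step I expect to be the main obstacle, is making this count rigorous — in particular ruling out excess-dimensional components of the incidence variety; once that is done, $\alpha_{(1,3,1,3)}$ is not dominant, and combined with the first two steps this yields exactly the list in \eqref{eq:finiteadjmap}.
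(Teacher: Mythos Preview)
Your overall strategy matches the paper's: the same Diophantine reduction to $(k,d)\in\{(7,7),(4,8)\}$, the same enumeration of the nine candidate types (the eight listed plus $(1,3,1,3)$), and the same split into proving dominance for eight of them and non-dominance for $(1,3,1,3)$. For the eight positive cases the paper also proceeds computationally, though it uses certified numerical homotopy continuation (finding a regular isolated point in a fibre and invoking irreducibility of $\mathcal Y$, established separately in Lemma~\ref{lem:Yirred}) rather than your symbolic Jacobian check; either route is valid and they are close in spirit.

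The genuine divergence is in the $(1,3,1,3)$ case. Your Max Noether observation that $\alpha_P\in(f_2,f_4)$ is correct, and the identification of $B_1,B_2$ with specific conics through the nodes and four residual line-points is a nice structural fact. But the subsequent global dimension count --- pencils of cubics with base locus on $A$, rational members with nodes on $A$, then the two lines over-determined --- is, as you yourself flag, the hard part, and it is not clear how to rule out excess components without substantial further work. The paper sidesteps this entirely with an argument \emph{on the quintic itself} (Proposition~\ref{lem:(1,3,1,3)}): if a nonsingular quintic $A$ were the adjoint of some $(1,3,1,3)$-polypol, the nine points $C_2\cap C_4$ form a nonspecial degree-$9$ divisor on the genus-$6$ curve $A$, so by Riemann--Roch they move in a complete linear system of projective dimension $3$. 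The two linear systems of nodal cubics (one with node at the node of $C_2$ and through its four line-residual points, the other similarly for $C_4$) restrict to $A$ with moving part of degree $9$ and dimension $\ge 2$; both moving parts sit inside that same $3$-dimensional system and therefore share at least a pencil. Each divisor in that pencil is cut out by a new pair $(C_2',C_4')$ which, together with the original lines $C_1,C_3$, gives a $(1,3,1,3)$-polypol with the same adjoint $A$. Hence every nonsingular quintic in the image has positive-dimensional fibre and $\alpha_{1,3,1,3}$ is not dominant. This Riemann--Roch argument is short, intrinsic to $A$, and completely avoids the excess-intersection bookkeeping your approach would have to confront.
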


To prove Theorem \ref{th: finite adjoint}, we need the following lemma. 

\begin{lemma} \label{lem:Yirred}
For all types listed in \eqref{eq:finiteadjmap}, as well as $(d_1, \ldots, d_4) = (1,3,1,3)$, the variety ${\cal Y}_{d_1, \ldots, d_k}$ is irreducible. 
\end{lemma}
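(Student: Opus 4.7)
The plan is to project $\mathcal{Y}^\circ_{d_1,\ldots,d_k}$ onto its vertex coordinates and invoke the classical fact that a dominant morphism onto an irreducible base whose fibers over a dense open subset are irreducible of constant dimension has an irreducible source. Concretely, consider
$$\pi\colon \mathcal{Y}^\circ_{d_1,\ldots,d_k}\longrightarrow (\P^2_\C)^k,\qquad (C_1,\ldots,C_k,v_{12},\ldots,v_{k1})\longmapsto (v_{12},\ldots,v_{k1}).$$
The target is irreducible of dimension $2k$; by Lemma~\ref{lem:dimY} the source has dimension $3d-k$, so the generic fiber should be of dimension $3d-3k$. Since $\mathcal{Y}_{d_1,\ldots,d_k}=\overline{\mathcal{Y}^\circ_{d_1,\ldots,d_k}}$, proving that $\mathcal{Y}^\circ$ is irreducible suffices.

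First I would record that every $\mathcal{R}_d$ appearing in one of the listed tuples (so $1\le d\le 5$) is irreducible. This is a consequence of the classical irreducibility of Severi varieties of irreducible rational plane curves, and can also be seen directly by noting that $\mathcal{R}_d$ is birational to the irreducible quotient $\P\bigl(H^0(\mathcal{O}_{\P^1}(d))^{\oplus 3}\bigr)/PGL_2$ parameterising degree-$d$ maps $\P^1\to\P^2$, an open subset of a rational variety.

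Second I would analyse the generic fiber of $\pi$. Over a tuple $(v_{12},\ldots,v_{k1})$ of pairwise distinct points it equals an open subset (cut out by the nodality, transversality, and nonsingular-at-vertices conditions) of the product
$$\prod_{i=1}^{k}\mathcal{R}_{d_i}(v_{i-1,i},v_{i,i+1}),\quad\text{where}\quad \mathcal{R}_d(p,q):=\{C\in\mathcal{R}_d : p,q\in C\}.$$
For $d=1$ this factor is the single point $\{\overline{pq}\}$. For $d\ge 2$, I would parametrise a rational degree-$d$ curve by a map $\varphi\colon \P^1\to\P^2$, use $PGL_2$ to place the two preimages of $p,q$ at $0$ and $\infty$, and observe that $\varphi(0)\in\langle p\rangle$ and $\varphi(\infty)\in\langle q\rangle$ impose four linear conditions on the coefficients of $\varphi$. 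Hence $\mathcal{R}_d(p,q)$ is birational to a linear space modulo the one-dimensional torus fixing $\{0,\infty\}$, in particular irreducible of dimension $3d-3$. A product of irreducible varieties is irreducible and a nonempty open subset of an irreducible variety is irreducible, so the generic fiber is irreducible, with dimension $\sum_{i=1}^k(3d_i-3)=3d-3k$ matching the expected value from Lemma~\ref{lem:dimY}.

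Finally, to conclude, I would verify dominance of $\pi$ by exhibiting, for each listed tuple, an explicit configuration of curves that lies in $\mathcal{Y}^\circ$ (e.g.\ seven generic lines, four generic smooth conics, etc.); existence at one point combined with irreducibility of the generic fiber forces $\pi$ to be dominant. The standard irreducibility criterion then gives that $\mathcal{Y}^\circ_{d_1,\ldots,d_k}$ is irreducible, whence $\mathcal{Y}_{d_1,\ldots,d_k}$ is irreducible as its Zariski closure. The main subtlety I foresee is coordinating the open conditions across the $k$ factors of the generic fiber: nodality, transversality, and nonsingularity at each $v_{ij}$ are each clearly open and nonempty on one factor, but one must verify that the intersection of these open conditions is nonempty in the full product, which is straightforward for any of the listed tuples but does require a case-by-case (or Bertini-type) check.
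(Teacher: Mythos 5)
Your approach is genuinely different from the paper's. You fiber $\mathcal{Y}^\circ$ over the vertex coordinates $(\P^2_\C)^k$ and argue the generic fiber is an open subset of a product of irreducible spaces $\mathcal{R}_{d_i}(p,q)$, whereas the paper constructs, for each listed tuple, an explicit dominant rational parameterization $(\P^2_\C)^{10}\dashrightarrow \mathcal{Y}^\circ$ (resp. $(\P^2_\C)^7\dashrightarrow \mathcal{Y}^\circ_{1,\ldots,1}$) using the vertices together with six chosen residual points, and observes that a polypol determines those ten points. The paper's route is case-by-case in exhibiting the parameterization (Figure~\ref{fig:8types}), but it sidesteps entirely the question you defer to the end (nonemptiness of the intersected open conditions), since those hold automatically on a dense open of the source projective space. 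Your route trades those pictures for the irreducibility of the Severi variety and a more structural argument, which is more uniform but relies on a heavier input.

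There is, however, a genuine gap in the statement you call the ``classical fact.'' A dominant morphism $\pi\colon X\to Y$ with $Y$ irreducible and irreducible fibers of constant dimension over a dense open $U\subseteq Y$ does \emph{not} force $X$ to be irreducible: $X$ could have an irreducible component lying entirely over $Y\setminus U$. The correct statement requires the additional hypothesis that $\pi^{-1}(U)$ is dense in $X$, equivalently that every irreducible component of $X$ dominates $Y$. To close the gap you would need to rule out components of $\mathcal{Y}^\circ$ sitting over the locus where vertices collide or the fiber jumps. One way to do this: each component of $\mathcal{Y}^\circ$ has dimension at least $3d-k$, because $\mathcal{Y}^\circ$ is cut out inside the $(3d+k)$-dimensional ambient space $\prod_i\mathcal{R}_{d_i}\times(\P^2_\C)^k$ by $2k$ incidence conditions and open conditions; a component lying over a proper closed subset of $(\P^2_\C)^k$ (dimension $\le 2k-1$) would force a fiber of dimension $\ge 3d-3k+1$, and you must then verify that fiber dimensions jump only over loci of codimension at least 2, which keeps such putative components below the expected dimension. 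None of this is stated in your proposal; as written, the argument claims a conclusion that the quoted lemma does not deliver. The open-conditions nonemptiness issue you do flag is real but easy, whereas the domination issue is the more substantive omission.
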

\begin{proof}
To prove this, we give explicit rational parameterizations of ${\cal Y}^\circ_{d_1, \ldots, d_k}$ for all types in \eqref{eq:finiteadjmap} and for $(d_1, \ldots, d_4) = (1,3,1,3)$. In each of the cases with $k = 4$, the parameterization is given by $(\P^2)^{10} \dashrightarrow {\cal Y}^\circ_{d_1,d_2,d_3,d_4}$, taking 6 residual points $r_1, \ldots, r_6$ and the four vertices $v_{12},v_{23},v_{34},v_{41}$ to a polypol.
This is illustrated in Figure \ref{fig:8types}.
\begin{figure}
    \centering
    \includegraphics[width=\textwidth]{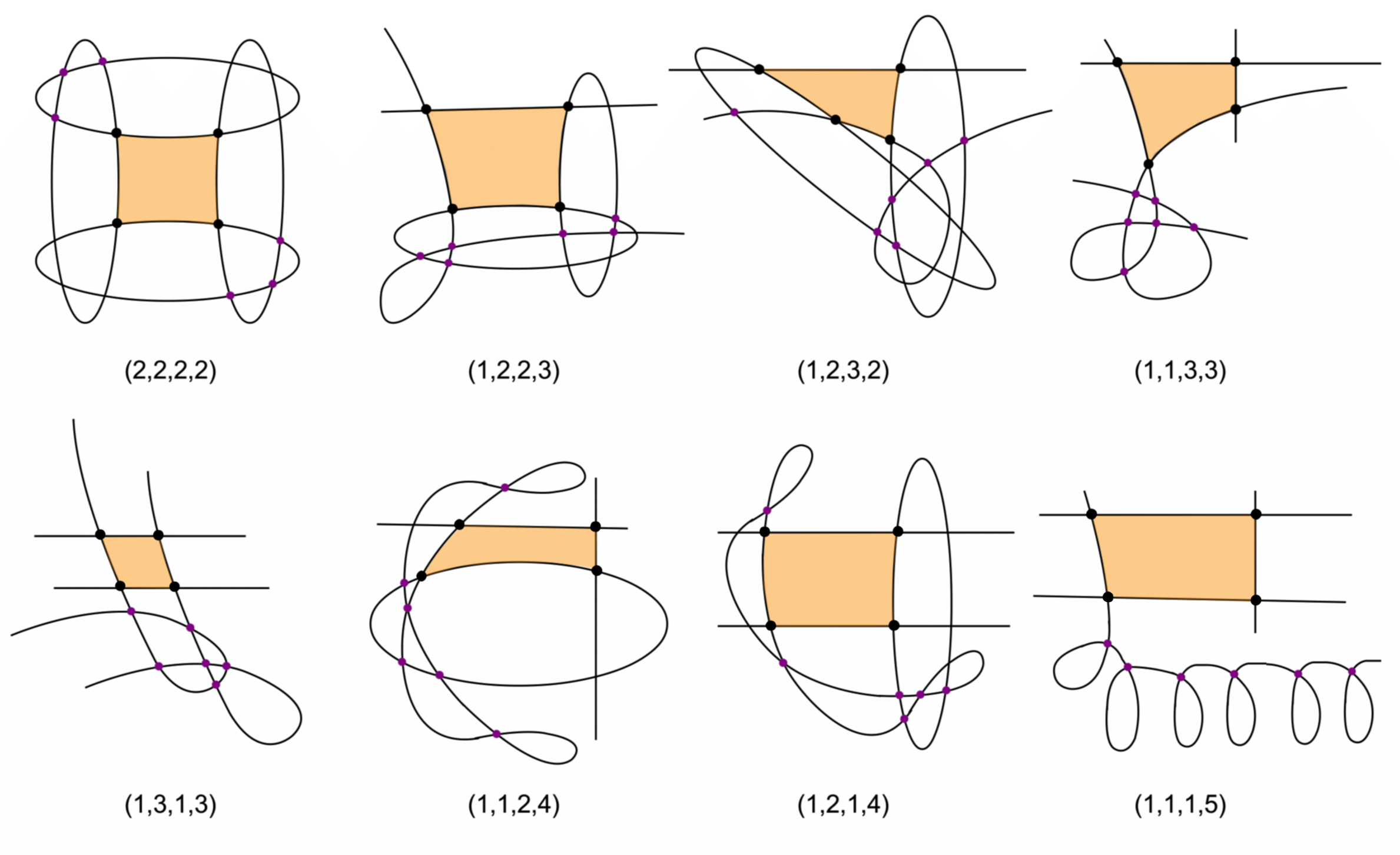}
    \caption{Four vertices and six residual points parameterizing ${\cal Y}^\circ_{d_1,d_2,d_3,d_4}$ for all  types in Lemma \ref{lem:Yirred} with $k = 4$.}
    \label{fig:8types}
\end{figure}

For instance, for type $(2,2,2,2)$ we get four conics in the following way. Let $C_1$, respectively $C_4$, be the unique conic passing through $\{ r_1,r_2,r_3 \}$ and $ \{v_{12},v_{41} \}$, respectively $\{v_{34},v_{41} \}$. Similarly, let $C_2$, respectively $C_3$, be the conic uniquely determined by $\{r_4,r_5,r_6\}$ and $\{v_{12},v_{23}\}$, respectively $\{v_{23},v_{34}\}$. The parameterization of all other $k = 4$ cases can be deduced from Figure \ref{fig:8types}.

In the case $(d_1, \ldots, d_7) = (1,1,1,1,1,1,1)$, the boundary curves are straight lines and given 7 points in the plane we find the equations for the 7 lines defining this polygon. This gives a rational parameterization $(\P^2)^{7} \dashrightarrow {\cal Y}^\circ_{1,1,1,1,1,1,1}$.
\end{proof}

 \begin{proof}[Proof of Theorem \ref{th: finite adjoint}]
For $\alpha_{d_1, \ldots, d_k}$ to be dominant and generically finite, we must have $\dim {\cal Y}_{d_1, \ldots, d_k} = \dim \P \, (\C[x,y,z]_{d-3}) = \binom{d-1}{2}-1$. Using Lemma \ref{lem:dimY}, we obtain 
\begin{align}
    \label{eq:finiteDominant}
 \textstyle   3d-k = \binom{d-1}{2}-1.
\end{align}
To find all integer solutions of this equation where $d \geq k$, we write \eqref{eq:finiteDominant} as $d^2 - 9d +2k = 0$. Hence, the solutions are
\begin{align*}
\textstyle    d = \frac{9}{2} \pm \frac{1}{2} \sqrt{81-8k}.
\end{align*}
We have positive integer solutions if and only if $k \in \{ 4,7,9,10 \}$. Considering the constraint $d \geq k$, the only possible solutions are $(k,d) \in \{ (4,8), (7,7) \}$. It follows that, based on this dimension count, the 9 possible candidates for $(d_1, \ldots, d_k)$ such that the adjoint map can be dominant and generically finite are given in \Cref{lem:Yirred}. Below, in Proposition \ref{lem:(1,3,1,3)}, we will show that generic finiteness fails for $(d_1, \ldots, d_k) = (1,3,1,3)$.

We now show that for the types listed in \eqref{eq:finiteadjmap}, the adjoint map is dominant and has generically finite fibers. Since both domain and codomain of the adjoint map are irreducible by Lemma \ref{lem:Yirred} and of the same dimension, by \cite[Ch.~1, \S 8, Thm.~2 and Cor.~1]{mumford1999red}, it suffices to show that there is an isolated point in $\alpha_{d_1, \ldots, d_k}^{-1}(\alpha_P)$ for some adjoint $\alpha_P \in \P \, (\C[x,y,z]_{d-3})$. We do this for the cases in \eqref{eq:finiteadjmap} via \emph{certified numerical computation}. We sketch these computations and work out an example for $(d_1, \ldots, d_4) = (2,2,2,2)$ below (Example \ref{ex:2222}). In what follows, we write shortly ${\cal Y}^\circ, {\cal Y}$ for ${\cal Y}^\circ_{d_1, \ldots, d_k}, {\cal Y}_{d_1, \ldots, d_k}$.

The variety ${\cal Y}$ is embedded in the space ${\cal R}_{d_1} \times \cdots \times {\cal R}_{d_k} \times (\P^2)^k$ of tuples of boundary curves and vertices. In order to write down explicit polynomial equations, we work in a larger ambient space which also keeps track of residual points and the adjoint. 
The residual points come in groups of size $d_i d_j-1$ for $(i,j)\in\mathcal{I}:=\{(1,2),\ldots,(k-1,k),(1,k)\}$ and $d_i d_j$ for $(i,j)\in\mathcal{S}\setminus\mathcal{I}$ where $\mathcal{S}:=\{(i,j)\mid 1 \leq i < j \leq k\}$. 
Hence, a point $P \in {\cal Y}^\circ$ corresponds to a rational polypol with ordered tuple of residual points
\[ \textstyle R(P) \in ~W := \prod_{(i,j)\in\mathcal{S}\setminus\mathcal{I}} {\rm (\P^2)}^{d_id_j} \times \prod_{(i,j)\in\mathcal{I}} {\rm (\P^2)}^{d_id_j - 1} \]
and with a unique adjoint $\alpha_P \in \P \, (\C[x,y,z]_{d-3})$. 
 
We consider the subset of
\begin{equation} \label{eq:bigspace}
    \left [ {\cal R}_{d_1} \times \cdots \times {\cal R}_{d_k} \times (\P^2)^k \right ] \times W \times \P \, (\C[x,y,z]_{d-3}) 
\end{equation}
consisting of all points $(P, R(P), \alpha_p)$ such that 
\begin{enumerate}[label=(\roman*)]
    \item $P \in {\cal Y}^\circ$,
    \item $\alpha_P(p) = 0$ for any projection $p$ of $R(P)$ onto a factor of $W$,
    \item $p\in C_i$ for $p \in \{v_{i-1,i},v_{i,i+1}\}$ and for $p$ equal to any of the relevant residual points.
\end{enumerate}  

The Zariski closure of 
this set in \eqref{eq:bigspace} is denoted by $\hat{{\cal Y}}$. It is clear that generic finiteness of the projection $\hat{{\cal Y}} \rightarrow \P \, (\C[x,y,z]_{d-3})$ is equivalent to generic finiteness of $\alpha_{d_1, \ldots, d_k}$. With this setup, $\hat{{\cal Y}}$ is contained in the variety given by the incidences (ii) and (iii). 
For all $(d_1, \ldots, d_k)$ from \eqref{eq:finiteadjmap}, this gives a system of polynomial equations $F(P,R(P);\alpha_P) = 0$ with as many equations as unknowns (the coordinates of $P, R(P)$), parameterized by the coefficients of the adjoint $\alpha_P$; see Example \ref{ex:2222} for an illustration. The solution set for a general $\alpha_P$ contains the fiber of $\hat{{\cal Y}} \rightarrow \P \, (\C[x,y,z]_{d-3})$ over $\alpha_P$.  
We proceed as follows. 
\begin{enumerate}
    \item Fix a polypol $P \in {{\cal Y}^\circ}$.
    \item Compute its residual points $R(P)$ and its adjoint $\alpha_P$.
    \item Verify that $(P,R(P))$ is a regular, isolated solution of $F(P,R(P);\alpha_P) = 0$. Here, by a regular, isolated solution we mean an isolated solution of multiplicity one. Equivalently, these are solutions at which the Jacobian matrix of the square polynomial system has full rank. 
    \item Conclude that $\hat{{\cal Y}} \rightarrow \P \, (\C[x,y,z]_{d-3})$ is dominant and has generically finite fibers.
\end{enumerate}
Step 2 is performed using the software \texttt{HomotopyContinuation.jl} (v2.3.1) \cite{breiding2018homotopycontinuation} and \texttt{EigenvalueSolver.jl} \cite{bender2021yet}. Although this only leads to numerical approximations of $R(P)$ and $\alpha_P$, using \emph{certification} in Step 3 leads to a rigorous proof that we found a regular, isolated solution. For this, we used the certification method proposed in \cite{breiding2020certifying} and implemented in \texttt{HomotopyContinuation.jl}. The certificates obtained from this computation will be made available at \url{https://mathrepo.mis.mpg.de}.
 \end{proof}
 
 \begin{example}[The case of $(2,2,2,2)$-polypols] \label{ex:2222}
To illustrate how we compute fibers of $\hat{{\cal Y}} \rightarrow \P \, (\C[x,y,z]_{d-3})$ in the proof of Theorem \ref{th: finite adjoint}, we work out the case $(d_1, \ldots, d_4) = (2,2,2,2)$ explicitly. We have $k = 4$ and $d = 8$. To describe the variety $\hat{\cal Y}$, we consider the incidence variety given by all points $(C_1; C_2; C_3; C_4; v_{12},v_{23},v_{34},v_{41}; r_1, \ldots, r_{20}; \alpha_P)$ in ${\cal R}_2 \times {\cal R}_2 \times {\cal R}_2 \times {\cal R}_2 \times (\P^2)^4 \times (\P^2)^{20} \times \P \, (\C[x,y,z]_{5})$ satisfying 
\begin{align} \label{eq:equations2222}
\begin{matrix}
    \alpha_P(r_i) = 0, \quad i = 1, \ldots, 20, \\
    f_1(v_{14}) = f_1(v_{12}) = 0, &
    f_1(r_i) = 0, & i = 1,2,3,7,8,9,13,14,15,16, \\
    f_2(v_{12}) = f_2(v_{23}) = 0, &
    f_2(r_i) = 0, & i = 4,5,6,7,8,9,17,18,19,20, \\
    f_3(v_{23}) = f_3(v_{34}) = 0, &
    f_3(r_i) = 0, & i = 4,5,6,10,11,12,13,14,15,16, \\
    f_4(v_{34}) = f_4(v_{41}) = 0, &
    f_4(r_i) = 0, & i = 1,2,3,10,11,12,17,18,19,20,
\end{matrix}
\end{align}
where $f_i$ is a defining equation of $C_i$.
The 20 residual points $(r_1, \ldots, r_{20}) \in (\P^2)^{20}$ are subdivided into 
\begin{align} \label{eq:respointsgroups2222}
\begin{matrix}
    \{r_1, r_2, r_3 \} \subset C_1 \cap C_4, &
    \{r_4, r_5, r_6 \} \subset C_2 \cap C_3, \\ 
    \{r_7, r_8, r_9 \} \subset C_1 \cap C_2, &
    \{r_{10},r_{11},r_{12} \} \subset C_3 \cap C_4, \\
    \{r_{13},r_{14},r_{15},r_{16} \} \subset C_1 \cap C_3, &
    \{r_{17},r_{18},r_{19},r_{20} \} \subset C_2 \cap C_4.
\end{matrix}
\end{align}
For instance, for the $(2,2,2,2)$-polypol in Figure \ref{fig:8types}, the two groups of residual points in $C_1 \cap C_4$ and $C_2 \cap C_3$ are marked by purple dots. The points $r_7, \ldots, r_{12}$ are the non-marked intersection points, and $r_{13}, \ldots, r_{20}$ are complex. 
The 68 equations \eqref{eq:equations2222} define a variety in the 88-dimensional space ${\cal R}_2 \times {\cal R}_2 \times {\cal R}_2 \times {\cal R}_2 \times (\P^2)^4 \times (\P^2)^{20} \times \P \, (\C[x,y,z]_{5})$. 

This variety contains the 20-dimensional $\hat{\cal Y}$. We view \eqref{eq:equations2222} as a system of 68 polynomial equations on the 68-dimensional space ${\cal R}_2 \times {\cal R}_2 \times {\cal R}_2 \times {\cal R}_2 \times (\P^2)^4 \times (\P^2)^{20}$, parameterized by the coefficients of $\alpha_P$. The fiber of $\hat{{\cal Y}} \rightarrow \P \, (\C[x,y,z]_{5})$ over a generic quintic plane curve $\alpha_P$ consists  of all solutions to these equations. 

Theorem \ref{th: finite adjoint} implies that $\hat{{\cal Y}} \rightarrow \P \, (\C[x,y,z]_{5})$ is a branched covering, which means that once we found a solution to \eqref{eq:equations2222}, we can use it as a seed for a \emph{monodromy} computation \cite{duff2019solving}. This allows us to compute different polypols with the same adjoint. An example obtained using the \texttt{monodromy\_solve} command in \texttt{HomotopyContinuation.jl} is given in Figure \ref{fig:2222adj}. 
\begin{figure}[htb]
    \centering
    \includegraphics[scale=0.35]{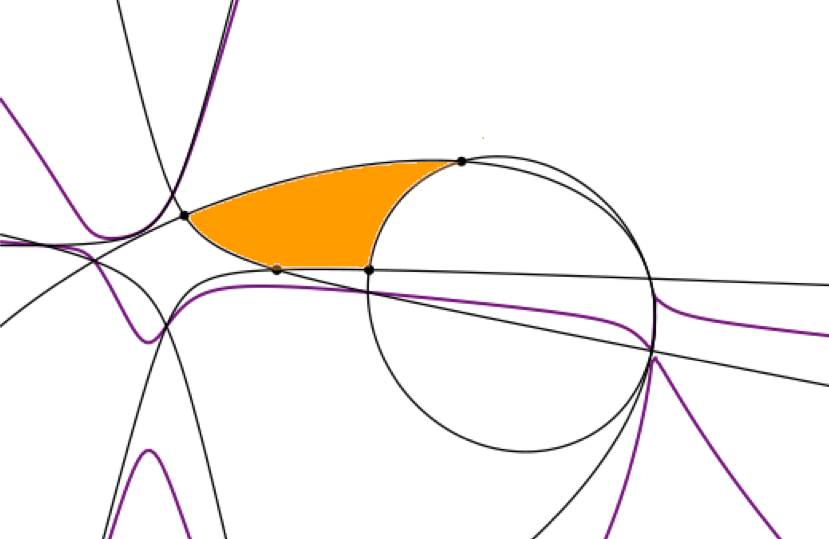}~
    \includegraphics[scale=0.35]{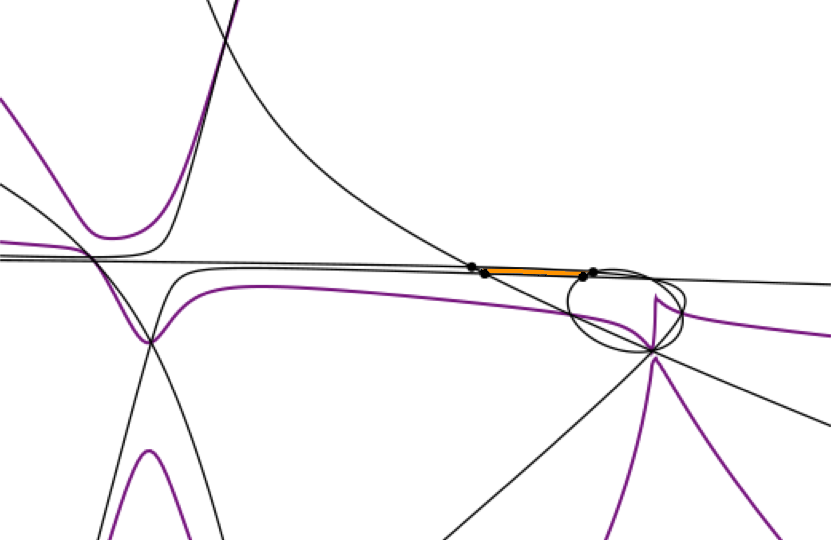}~
    \caption{Two regular (2,2,2,2)-polypols that share the same adjoint. (The adjoint is nonsingular, even though it here looks singular.) }
    \label{fig:2222adj}
\end{figure}
 \end{example}
 
 Theorem \ref{th: finite adjoint} covers all types of polypols in Lemma \ref{lem:Yirred}, except $(1,3,1,3)$. The following result implies that the set of quintic plane curves that are the adjoint curve of a $(1,3,1,3)$-polypol is a lower-dimensional subvariety of $\PP (\C[x,y,z]_5)$. 
 \begin{proposition} \label{lem:(1,3,1,3)}
 The adjoint map 
$\alpha_{1,3,1,3}: {\cal Y}_{1,3,1,3} \dashrightarrow \P \, (\C[x,y,z]_{5})$
that takes a polypol of type $(1,3,1,3)$ to its adjoint curve is not dominant.
\end{proposition}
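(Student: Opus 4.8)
The plan is to turn the statement into a dimension count and then exploit the special combinatorics of the cyclic type $(1,3,1,3)$. By \Cref{lem:dimY}, $\dim {\cal Y}_{1,3,1,3}=3\cdot 8-4=20$, which equals $\dim \P(\C[x,y,z]_5)=\binom{7}{2}-1=20$; since ${\cal Y}_{1,3,1,3}$ is irreducible by \Cref{lem:Yirred} and $\P(\C[x,y,z]_5)$ is irreducible, the adjoint map $\alpha_{1,3,1,3}$ is dominant if and only if it is generically finite, exactly as in the proof of \Cref{th: finite adjoint}. So it suffices to show that a generic $(1,3,1,3)$-polypol admits a positive-dimensional family of polypols with the same adjoint curve, i.e.\ that $\alpha_{1,3,1,3}$ has positive-dimensional generic fibre.

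The first step is to record the structural feature that distinguishes $(1,3,1,3)$ from the types in \eqref{eq:finiteadjmap}: here the two non-adjacent (``diagonal'') pairwise intersections of boundary curves are $C_1\cap C_3$, a single point $f$, and $C_2\cap C_4$, nine points $p_1,\dots,p_9$, and all ten are residual points. In particular $\{p_1,\dots,p_9\}$ is the complete intersection of the two cubics $C_2=V(f_2)$ and $C_4=V(f_4)$. Since a regular sequence of two cubics has no syzygies in degree below $6$, the map $(g_2,g_4)\mapsto g_2f_2+g_4f_4$ from pairs of conics to quintics is injective, and its image is exactly $H^0(\mathcal I_{\{p_1,\dots,p_9\}}(5))$. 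The adjoint quintic $\alpha_P$ lies in this space, so there is a \emph{unique} decomposition $\alpha_P=g_2f_2+g_4f_4$ with $g_2,g_4$ conics. Evaluating at the residual points of $C_2$ that are not among the $p_i$, namely $a_1,a_2\in C_1\cap C_2$, $b_1,b_2\in C_2\cap C_3$ and the node $n_2$ of $C_2$, and using that none of these lie on $C_4$, forces $g_4$ to vanish at all five of them; hence $g_4$ is the unique conic through $\{a_1,a_2,b_1,b_2,n_2\}$, and symmetrically $g_2$ is the unique conic through $\{e_1,e_2,c_1,c_2,n_4\}$, where $e_1,e_2\in C_4\cap C_1$, $c_1,c_2\in C_3\cap C_4$ and $n_4=\Sing C_4$. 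Note that $\alpha_P$ then automatically vanishes at the four points $V(g_2)\cap V(g_4)$, which are generically not residual points; already this shows that the adjoint of a $(1,3,1,3)$-polypol satisfies incidences beyond the residual ones.

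With this in hand I would construct the positive-dimensional fibre. Fix a general quintic $A$ that is an adjoint. One recovers a polypol with adjoint $A$ by choosing a line $C_1$ and, through one of the five points of $A\cap C_1$, a line $C_3$, partitioning $A\cap C_1$ and $A\cap C_3$ into the residual roles above; then building a pencil of cubics by selecting a nodal cubic $C_2$ whose node lies on $A$ and whose residual intersection with $A$ is a complete intersection of cubics, a second nodal member $C_4$ of that pencil with node on $A$, and the conics $g_4,g_2$ as above; the surviving conditions are that the ``line points'' $e_1,e_2,c_1,c_2$ cut out on $C_4$ by $A$ lie on $C_1\cup C_3$ and that $C_1\cap C_3\in A$. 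Bookkeeping the parameters against these conditions shows that the reconstruction retains a free parameter; concretely, one deforms the pair $(C_2,C_4)$ within the pencil through $\{p_1,\dots,p_9\}$ and re-adjusts $g_2,g_4$ (hence the lines $C_1,C_3$) so that all twenty residual points stay on the fixed quintic $A$, producing a one-parameter family of $(1,3,1,3)$-polypols with adjoint $A$. This proves that $\alpha_{1,3,1,3}$ is not dominant.

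The hard part is precisely this last count: the incidence conditions relating the cubic pencil to the two lines need not be independent, and one must verify that a non-trivial dependency persists, so that the fibre — of expected dimension zero — is actually positive-dimensional. A rigorous shortcut, in the spirit of \Cref{ex:2222}, is to fix one explicit $(1,3,1,3)$-polypol, compute its adjoint $A$ and the decomposition $A=g_2f_2+g_4f_4$ numerically, write down the square-ish system of incidence equations whose solutions over $A$ form the fibre, and certify that this solution set is positive-dimensional (for instance by checking that the Jacobian drops rank along it), thereby confirming non-dominance.
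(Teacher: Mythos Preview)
Your opening reduction is correct, and the decomposition $\alpha_P=g_2f_2+g_4f_4$ together with the identification of the conics $g_2,g_4$ is a valid and pleasant structural observation. But the deformation you propose in the last step does not work. Moving $(C_2,C_4)$ inside the pencil $\langle f_2,f_4\rangle$ through the nine points $p_1,\dots,p_9$ keeps those nine points fixed, yet a generic member of that pencil is a \emph{smooth} elliptic cubic; only finitely many members are nodal, so you cannot produce a continuous family of rational boundary cubics this way. The accompanying ``re-adjustment'' of $g_2,g_4$ and of the lines does not rescue this, and your own caveat that ``the hard part is precisely this last count'' is exactly right: the argument stops where the content lies, and the suggested numerical Jacobian check is not actually carried out.

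The paper's proof takes a different and decisive route. It fixes the two lines $C_1,C_3$ \emph{and} the two node locations $r_2,r_4$ --- not the nine points --- and works intrinsically on a smooth quintic adjoint $A$, of genus $6$. The nine points $C_2\cap C_4$ form a nonspecial degree-$9$ divisor on $A$, so by Riemann--Roch its complete linear system $L$ has projective dimension $3$. The cubics with a node at $r_2$ through the four fixed line points form a linear system of dimension $\ge 2$; restricted to $A$ it has a fixed part of degree $6$ (the node counted twice plus the four line points) and a moving part $M_2\subset L$ of dimension $\ge 2$. The same holds for $C_4$, giving $M_4\subset L$. Two projective subspaces of dimension $\ge 2$ in a $3$-dimensional $L$ meet in at least a pencil; each divisor in this common pencil is simultaneously cut by nodal cubics $C_2',C_4'$ which, together with the fixed $C_1,C_3$, yield a $(1,3,1,3)$-polypol with adjoint $A$. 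So the fibre is positive-dimensional with the lines held fixed --- the opposite of your attempt to fix the nine points and move the lines.
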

\begin{proof} 
By Lemma \ref{lem:Yirred}, the source and target are irreducible of the same dimension, so the map is dominant only if the general quintic curve is an adjoint for finitely many polypols.  Thus, if the map is dominant, then the general adjoint curve would be nonsingular.

So let $A$ be a nonsingular quintic plane curve, adjoint to a rational polypol of type (1,3,1,3).
We claim that $A$ is the adjoint curve of a positive-dimensional family of rational polypols of type (1,3,1,3).

Let $C_1$ and $C_3$ be the lines and $C_2$ and $C_4$ the irreducible nodal cubic curves of the polypol for which $A$ is the adjoint.
Let $r_1$ be the intersection point of the two lines, $r_2$ and $r_4$ the nodes of $C_2$ and $C_4$ respectively, and let $C_1\cap C_2=\{r_3, r_5, v_{12}\}, C_2\cap C_3=\{r_6, r_7, v_{23}\}, C_1\cap C_4=\{r_8, r_9, v_{14}\}, C_3\cap C_4=\{r_{10}, r_{11}, v_{34}\}$
and 
$C_2\cap C_4=\{r_{12},...,r_{20}\}$.
Then the quintic curve $A$ passes through the points $r_1,...,r_{20}.$  

On $A$ the points $r_{12},...,r_{20}$ form a divisor of degree $9$.  It is nonspecial, since a canonical divisor of $A$ is the intersection with a conic and the $9$ points $r_{12},...,r_{20}$ are not contained in a conic section.  Therefore, by Riemann--Roch, it moves in a complete linear system $L$ of projective dimension $3$.  
Now $C_2$ belongs to the linear system $|C_2|$ of cubic curves with a node at $r_2$ and passing through the four points $r_3,r_5, r_6,r_7$ which has dimension at least $2$.
Similarly $C_4$ belongs to the linear system $|C_4|$ of cubic curves with a node at $r_4$ and passing through the four points $r_8,r_9, r_{10},r_{11}$, also of dimension at least $2$.
These two linear systems of curves restrict to linear systems of divisors $L_2$ and $L_4$ of degree $15$ on $A$ that have dimension at least $2$.
Both $L_2$ and $L_4$ have a fixed part of degree $6$ and a moving part of degree $9$.  The fixed part of $L_2$ is $r_2$ with multiplicity $2$ and $r_3,r_5, r_6,r_7$.  Similarly
$r_4$ and $r_8,r_9, r_{10},r_{11}$ is the fixed part of $L_4$.
In both cases, $r_{12},...,r_{20}$ belong to the moving part. 
So the moving part $M_2$ of $L_2$ and the moving part $M_4$ of $L_4$ are subsystems of $L$.  For dimension reasons they must meet at least in a pencil.  Each divisor $r'_{12},...,r'_{20}$ in that pencil is the intersection of nodal cubic curves $C_2'$ and $C_4'$ that together with $C_1$ and $C_3$ define a rational polypol whose adjoint curve is $A$.
This concludes the proof of the claim, and thereby the proposition.
 \end{proof}
The proof of Proposition \ref{lem:(1,3,1,3)} does not exclude that the (closure of the) image of the adjoint map $\alpha_{1,3,1,3}$ is the discriminant locus of $\PP \, ( \C[x,y,z]_5)$. The following example shows that almost all quintic curves adjoint to a $(1,3,1,3)$-polypol are nonsingular. 
\begin{example}
The image of the $(1,3,1,3)$-polypol with vertices $v_{12} = (1:-1:1), v_{23} = (1:1:1), v_{34} = (-1:1:1), v_{14} = (-1:-1:1)$ and boundary curves given by
\[ \begin{matrix}
    f_1 = y+z, &
f_2 =  -x^3-x^2y+xy^2-y^3+x^2z+2xyz+3y^2z+8xz^2-12z^3, \\
f_3 =  y-z, & 
f_4 =  x^3-x^2y-xy^2-y^3+x^2z-2xyz+3y^2z-8xz^2-12z^3 
\end{matrix} \]
under the adjoint map $\alpha_{1,3,1,3}$ is the nonsingular quintic curve given by 
\[
3x^2y^3+5y^5+x^4z-3x^2y^2z-14y^4z-5x^2yz^2+y^3z^2-16x^2z^3+48y^2z^3+12yz^4+48z^5 = 0.
\]

\end{example}
In the light of Theorem \ref{th: finite adjoint}, the next natural question is to compute the degree of the adjoint map in the cases \eqref{eq:finiteadjmap}. 
 
Using
\texttt{monodromy\_solve} in \texttt{HomotopyContinuation.jl}, we found the following answer for heptagons. 

\begin{proposition}\label{prop: hept} A generic quartic curve is the adjoint of at least 864 complex 7-gons.
\end{proposition}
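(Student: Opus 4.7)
The proposed proof is essentially a certified numerical computation, following the template already established in the proof of Theorem~\ref{th: finite adjoint}. The plan is to exhibit $864$ distinct complex heptagons mapping to the same generic quartic adjoint, and to certify that they are honest, isolated solutions of the incidence equations.

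First I would set up a square polynomial system whose solutions parameterize the fiber of the adjoint map over a fixed quartic. A heptagon in $\mathcal Y^\circ_{1,\dots,1}$ is determined by its $7$ vertices $v_{12},v_{23},\dots,v_{71}\in\mathbb P^2_\mathbb C$, which in turn determine the $7$ boundary lines $C_i=\overline{v_{i-1,i}v_{i,i+1}}$ and the $14$ residual intersection points $r_{ij}=C_i\cap C_j$ with $|i-j|\ne 0,1\pmod 7$. Working in the larger ambient space of Example~\ref{ex:2222}, I would impose the incidences $\alpha_P(r_{ij})=0$ together with the defining conditions that each $r_{ij}$ lie on the two lines spanned by the appropriate vertex pairs. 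Viewed as equations on the $14$-dimensional space of vertex tuples and parameterized by the coefficients of $\alpha_P\in\mathbb P(\mathbb C[x,y,z]_4)$, this gives a square polynomial system $F(P;\alpha_P)=0$ whose solution set, for generic $\alpha_P$, contains the fiber of the adjoint map.

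Next I would compute one explicit seed solution $(P_0,\alpha_{P_0})$, for instance starting from a generic real convex heptagon $P_0$, computing its adjoint $\alpha_{P_0}$ (either symbolically as in Example~\ref{ex:adjoint} via linear algebra on the residual points, or numerically via \texttt{HomotopyContinuation.jl}), and verifying that $(P_0,\alpha_{P_0})$ is a regular isolated root of $F(\,\cdot\,;\alpha_{P_0})=0$. With this seed in hand, the key step is a monodromy computation: by Lemma~\ref{lem:Yirred} the variety $\mathcal Y_{1,\dots,1}$ is irreducible, so by Theorem~\ref{th: finite adjoint} the adjoint map $\alpha_{1,\dots,1}$ is a branched covering of irreducible varieties and its monodromy group acts transitively on the generic fiber. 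Consequently, looping $\alpha_P$ around complex paths in $\mathbb P(\mathbb C[x,y,z]_4)$ and tracking the seed under the resulting parameter homotopies will, with probability one, eventually produce every element of the fiber. Concretely I would invoke \texttt{monodromy\_solve} in \texttt{HomotopyContinuation.jl} and continue looping until the solution count stabilizes at $864$, using the standard heuristic stopping criterion of many consecutive loops producing no new solutions.

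Finally, to obtain a rigorous lower bound I would certify the resulting $864$ approximate heptagons using the interval Krawczyk-based certification method of \cite{breiding2020certifying}, as implemented in \texttt{HomotopyContinuation.jl}. Each certificate proves that a small complex box contains a unique regular solution of $F(\,\cdot\,;\alpha_P)=0$, and pairwise disjointness of the boxes then guarantees that the $864$ solutions are genuinely distinct. Since these certified solutions all lie in the fiber over a single (generic) quartic $\alpha_P$, the proposition follows. The only genuine obstacle is the monodromy step: it only delivers a lower bound on the degree, since a priori one cannot exclude that additional sheets are missed by the sampled loops. For the claim ``at least $864$'' this is not an issue, but upgrading the result to an exact count would require an independent upper bound, for instance via a trace test on the solutions or via direct intersection-theoretic computations on a suitable compactification of $\mathcal Y_{1,\dots,1}$.
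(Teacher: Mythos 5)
Your proposal follows the same overall strategy as the paper—certified numerical monodromy on a parameterized polynomial system—so the logic is sound, but it differs in the parameterization and contains a factor-of-$14$ bookkeeping slip. The paper does not use the incidence-variety setup of Example~\ref{ex:2222}: instead it passes to the \emph{dual} heptagon and uses Warren's formula to write the adjoint of the dual directly as an explicit polynomial in the $7$ vertex coordinates, giving a polynomial map $(\PP_\C^2)^7 \dashrightarrow \PP(\C[x,y,z]_4)$ with no auxiliary residual-point unknowns. Your approach, keeping the residual points as extra coordinates and imposing the incidences of Theorem~\ref{th: finite adjoint}, is equally valid and more uniform across the cases in \eqref{eq:finiteadjmap}, but it produces a larger (though still square) system. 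The genuine gap is where you write that the monodromy ``stabilizes at $864$'': on the space of \emph{labeled} vertex tuples the fiber of the adjoint map has $12096 = 14\cdot 864$ points, because the dihedral group $D_7$ (cyclic shifts and reversal of the vertex labels) acts freely on the fiber without changing the underlying geometric heptagon. A naive monodromy run will therefore stabilize at $12096$, not $864$, and you must either divide by $|D_7|=14$ at the end (as the paper does) or build the $D_7$-action into the computation via the \texttt{group\_action} option of \texttt{monodromy\_solve}. Once that is fixed, your certified-lower-bound conclusion, and your caveat that monodromy cannot by itself certify the exact degree, match the paper's treatment (which likewise records the exact count only as Conjecture~\ref{conj:heptagon-adjoints}).
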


\begin{proof} There is a 1-1 correspondence between heptagons and their dual heptagons. We consider the rational map taking a heptagon to the adjoint of its dual, which has the same degree as the adjoint map $\alpha_{1,1,1,1,1,1,1}$. In general, the adjoint of the dual of a polytope $P\subseteq \mathbb{R}^n$ is given by Warren's formula  \cite{warren1996barycentric,kohn2019projective}:
$$\alpha(t)=\sum_{\sigma \in \tau(P)}\mathrm{vol}(\sigma)\prod_{v\in V(P)\setminus V(\sigma)} \ell_v(t), $$
where $\tau(P)$ is a triangulation of $P$ that uses only the vertices of $P$, $V(P)$ denotes the set of vertices of $P$, $V(\sigma)$ is the set of vertices of the simplex $\sigma$, and $\ell_v(t)=1-v_1t_1-\cdots-v_nt_n$. Therefore, we map the 7 vertices of a heptagon to the adjoint of its dual using this formula, giving us a polynomial map. We get the same heptagon if we act on the vertices with the dihedral group $D_7$. Using the command \texttt{monodromy\_solve} with a random seed, we find $12096=14\cdot 864$ different points in the fiber. Certification guarantees that all of these are distinct, regular, isolated solutions. Since $|D_7|=14$, the statement follows. The certificates will be available at \url{https://mathrepo.mis.mpg.de}.
\end{proof}

Our numerical computations consistently gave the same answer for different random seeds in the monodromy computations. This supports the following conjecture. 

\begin{conjecture} \label{conj:heptagon-adjoints}A generic quartic curve is the adjoint of precisely 864 complex 7-gons.
\end{conjecture}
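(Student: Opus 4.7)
The plan is to upgrade the inequality $d \geq 864$ of Proposition \ref{prop: hept} to the equality $d = 864$, where $d$ is the degree of the (generically finite, dominant, by Theorem \ref{th: finite adjoint}) adjoint map $\alpha_{1,1,1,1,1,1,1}: \mathcal{Y}_{1,1,1,1,1,1,1} \dashrightarrow \P(\C[x,y,z]_4)$. The strategy is a rigorous, certified trace test applied to the monodromy orbit already produced in Proposition \ref{prop: hept}. Because $\mathcal{Y}_{1,1,1,1,1,1,1}$ is irreducible (Lemma \ref{lem:Yirred}), the monodromy group of this cover acts transitively on a generic fiber, so it suffices to certify that the $12096$ ordered vertex tuples already computed (corresponding to $864$ heptagons modulo $D_7$) exhaust the entire fiber over a generic quartic, rather than forming a proper union of Galois orbits.

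Concretely, after fixing a generic quartic $\alpha^* \in \P(\C[x,y,z]_4)$, I would choose a generic affine line $L \subset \P(\C[x,y,z]_4)$ through $\alpha^*$, parameterized by $t \in \C$ with $\alpha(0) = \alpha^*$, and set up the parametric polynomial system encoding Warren's formula exactly as in the proof of Proposition \ref{prop: hept}. Using certified numerical path tracking (via Krawczyk-type certification as implemented in \texttt{HomotopyContinuation.jl}), I would continue each of the $12096$ approximate fiber points at $t = 0$ to several chosen points $t = t_1, t_2, t_3 \in \C$, producing certified enclosures of the deformed solutions at each sample.

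The trace test of Sommese--Verschelde--Wampler asserts that, for a generic linear form $\lambda$ on the $14$ vertex coordinates, the partial trace
\[T(t) \;=\; \sum_{i=1}^{12096} \lambda\bigl(v_1^{(i)}(t), \ldots, v_7^{(i)}(t)\bigr)\]
along $L$ is an affine-linear function of $t$ if and only if the $12096$ tracked branches form a complete union of fibers of the cover. Combined with the transitivity of monodromy on the full fiber, this forces the $12096$ branches to exhaust the fiber. The verification reduces to certifying that the second divided difference $T[t_1,t_2,t_3]$ vanishes exactly; once this is established, the lower bound of Proposition \ref{prop: hept} closes the gap and yields $d = 864$.

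The principal obstacle is executing the trace test rigorously at this scale. One must (i) certify that the $12096$ points at $t=0$ are genuinely distinct regular solutions, (ii) certify that path-tracking along $L$ produces no collisions or loss of endpoints over $t_1,t_2,t_3$, and (iii) certify the exact vanishing of $T[t_1,t_2,t_3]$ rather than merely its numerical smallness. Step (iii) is the delicate one: one must distinguish the true zero from any residual value smaller than the algebraic separation between $T$ and the nearest non-affine polynomial consistent with the computed enclosures, which requires exact arithmetic on certified rational approximations together with an a priori degree bound on $T$ inherited from the multilinear structure of Warren's map. A purely algebraic alternative --- subtracting base-locus contributions from the multidegree Bezout bound $\tfrac{14!}{2^7} = 681\,080\,400$ of the multilinear map $(\P^2)^7 \dashrightarrow \P^{14}$ --- would demand a detailed blow-up analysis of the strata where vertices coincide or become collinear, and appears substantially harder to control than the certified trace test.
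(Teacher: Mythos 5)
This statement is a \emph{conjecture} in the paper; the authors prove only the lower bound in Proposition~\ref{prop: hept} and present the equality as open, supported by the consistency of monodromy runs across random seeds. Indeed, the Outlook section explicitly asks: ``can one provide a theoretical argument for Conjecture~\ref{conj:heptagon-adjoints}?'' So there is no proof of yours to compare against the paper's --- the paper simply has no proof.

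Regarding your proposed route, it is a reasonable research plan but not yet a proof, and I see two genuine gaps. First, you invoke the Sommese--Verschelde--Wampler trace test as asserting that the partial trace along a line $L\subset\PP(\C[x,y,z]_4)$ is affine-linear in $t$ iff the tracked set is the full fiber. The SVW affine-linearity theorem is a statement about slicing an irreducible variety of degree $D$ by a moving pencil of \emph{linear} spaces in its ambient space: in that setting the union of slices is a plane curve of degree $D$ and the trace is minus the ratio of the top two coefficients, hence affine-linear. Pulling back a line in the \emph{base} of a nonlinear branched cover --- which is what the adjoint map $\alpha_{1,1,1,1,1,1,1}$ is --- does not produce a linear slicing of the domain, and the fiberwise trace is in general a rational function of the base parameter, not affine-linear. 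Making the trace test work for a branched cover is precisely the content of the Leykin--Rodriguez--Sottile paper the article cites (\cite{leykin2018trace}); it requires a different setup (moving witness-type slices that mix domain and codomain directions, or a multiprojective formulation) and some justification that the partial trace is a polynomial of known small degree. As written, your ``if and only if'' is not a theorem you can appeal to.

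Second, even with the correct trace-test formulation, the step you yourself flag as ``the delicate one'' --- certifying that the second divided difference of the partial trace is \emph{exactly} zero rather than merely small --- is where a computer-assisted proof would actually live, and you do not resolve it. Krawczyk-type certification gives rigorous enclosures of regular solutions, which is exactly what Proposition~\ref{prop: hept} uses to certify the lower bound $\geq 864$; but turning ``the enclosure of $T[t_1,t_2,t_3]$ contains $0$'' into ``$T[t_1,t_2,t_3]=0$'' requires an a priori lower bound on the modulus of the nearest nonzero value this algebraic quantity could take, compatible with the tracked enclosures. That separation bound is not something interval arithmetic supplies for free, and deriving one from ``the multilinear structure of Warren's map'' is a serious piece of work, not a routine step. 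Combined with the fact that no computation is actually performed, your proposal is a plan for a possible computer-assisted proof, not a proof; the conjecture remains open as the paper states.
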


As an illustration of the monodromy computations for the heptagon case, Figure \ref{fig:heptadj} shows two different heptagons with the same adjoint curve. 

\begin{figure}[htb]
    \centering
    \includegraphics[scale=0.25]{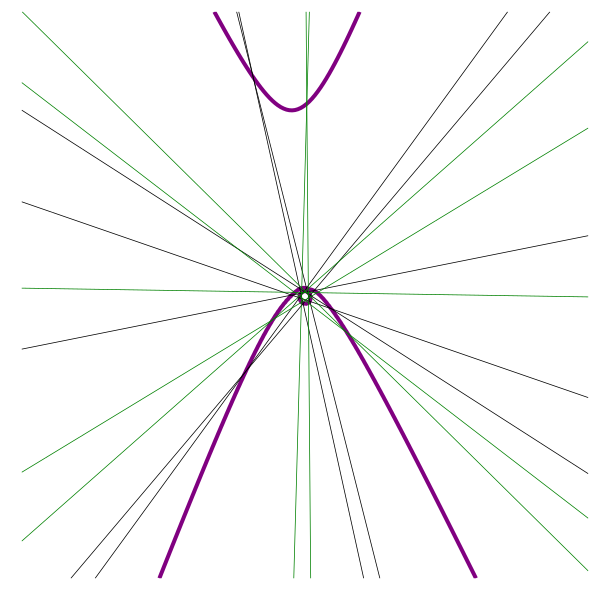}~
    \includegraphics[scale=0.25]{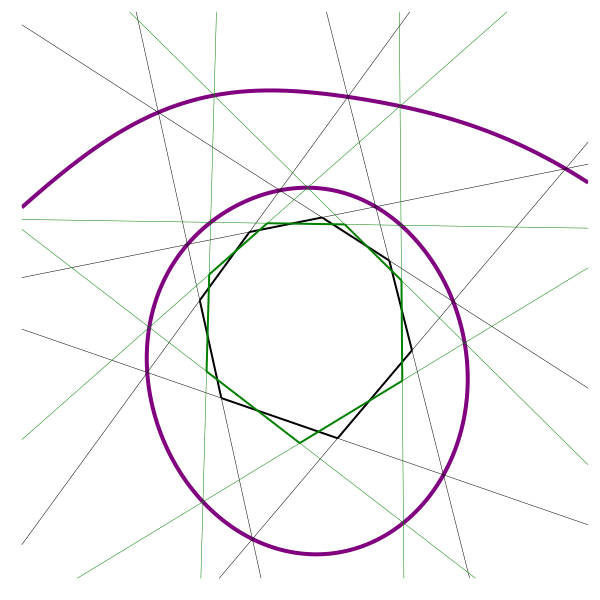}~
    \caption{Two heptagons (green and black) and their shared quartic adjoint (purple), zoomed out and zoomed in.}
    \label{fig:heptadj}
\end{figure}

\begin{remark} 
For the cases in \eqref{eq:finiteadjmap} with $k = 4$, it is much more challenging to compute the degree of the adjoint map. One difficulty comes from the large symmetry groups acting on the fibers in the polynomial systems constructed as in \eqref{eq:equations2222}. An orbit in the $(2,2,2,2)$ case consists of $|S_3|^4 \cdot |S_4|^2 = 6^4 \cdot 24^2 = 746496$ points, since the residual points come in 4 groups of size 3 and 2 groups of size 4, see \eqref{eq:respointsgroups2222}. The command \texttt{monodromy\_solve} can take these group actions into account via the option \texttt{group\_action}. Using this we found 14095 points in a general fiber of $\alpha_{2,2,2,2}$, each representing an orbit of size $746496$. More solutions can be found, but we interrupted the computation after 72 hours. 
\end{remark}

\section{Statistics and Push-forward}
\label{Statistics and Pushforward}

As mentioned in the introduction, the study of positive geometries and their canonical forms was originally motivated by their connection to scattering amplitudes in particle physics \cite{arkani2017positive}. Prominent geometries in this context are the amplituhedron \cite{arkani2015positive}, whose canonical form describes $\textup{N} = 4$ SYM amplitudes, and the ABHY associahedron \cite{arkani2018scattering}, which plays the same role in bi-adjoint scalar $\phi^3$ theories. In the latter setting, the amplitude can alternatively be computed as a global residue over the solutions to the \emph{scattering equations} \cite{cachazo2014scattering}. The connection between these two descriptions is given by the \emph{push-forward formula} (Heuristic \ref{heur:pushfwd}). 

In \cite{sturmfels2021likelihood}, it was observed that the scattering equations are the likelihood equations for the moduli space ${\cal M}_{0,n}$ of genus 0 curves with $n$ marked points, interpreted as a statistical model. It was also established that many other statistical models (called \emph{positive} models, see Definition \ref{def:posmodels}) admit a natural definition for a \emph{statistical amplitude}. This amplitude is the rational function defining the canonical form $\Omega(X_{\geq 0})$ of a positive geometry $X_{\geq 0}$, naturally associated to a positive model ${\cal X}$. It may be obtained, via the push-forward formula, as a sum over the critical points of the log-likelihood function.

In the next subsection, we recall the basics of likelihood estimation in algebraic statistics. For a more detailed exposition, see \cite{huh2014likelihood}. In  \Cref{subsec:pushfwd} we recall the definition of the push-forward of a differential form and state a heuristic from \cite{arkani2017positive}. Finally, in \Cref{subsec:CFlikelihood}, we make the connection between the push-forward formula and likelihood estimation. We propose to use canonical forms as a \emph{trace test} to detect whether a set of approximate solutions to the likelihood equations is complete. Example \ref{ex:toricsquare3} presents a regular polypol with nonlinear boundary whose canonical form is given by a global residue over the solutions to the likelihood equations of a toric statistical model. 

\subsection{Likelihood equations}

In algebraic statistics, a \emph{statistical model} ${\cal X}$ is a subvariety of $\P^n$, whose intersection with the probability simplex $\PP^n_{>0} = \{(p_0:\cdots:p_n) \in \P^n(\R) ~|~ p_i \neq 0,  \textup{ sign}(p_0) = \cdots = \textup{sign}(p_n) \}$ is non-empty. The points ${\cal X} \cap \PP^n_{>0}$ represent feasible probability distributions for a discrete random variable with $n+1$ states. In our context, a statistical model ${\cal X}$ of dimension $d$ will arise via a parametrization $(p_0(x_1, \ldots, x_d): \cdots: p_n(x_1,\ldots, x_d))$, where $p_i(x_1,\ldots, x_d)$ are functions that sum to one. 

Suppose that, in an experiment, state $i$ is observed a total number of $u_i$ times. We would like to find a probability distribution $(p_0:\cdots:p_n) \in {\cal X} \cap \PP^n_{>0}$ that `best explains' these experimental data $(u_0, \ldots, u_n)$. An approach to this problem coming from \emph{likelihood inference} is to consider the probability distribution that maximizes the \emph{log-likelihood function} $L = u_0 \log p_0 + \cdots + u_n \log p_n$ on ${\cal X} \cap \PP^n_{>0}$. In practice, this can be done by solving the \emph{likelihood equations}
\begin{equation} \label{eq:likelihood}
    \frac{\partial L(x_1,\ldots, x_d)}{\partial x_1} = \cdots = \frac{\partial L(x_1,\ldots, x_d)}{\partial x_d} = 0,
\end{equation}
where $L(x_1,\ldots, x_d) = u_0 \log (p_0(x_1,\ldots, x_d)) + \cdots + u_n \log (p_n(x_1,\ldots, x_d))$. The number of complex solutions to \eqref{eq:likelihood}, i.e., the number of critical points of $L$ on ${\cal X}$, is an invariant called the \emph{maximum-likelihood degree} (ML degree) of ${\cal X}$ \cite{catanese2006maximum}. It measures the complexity of the maximum likelihood estimation problem for the model ${\cal X}$. 

For the purposes of this paper, we will consider a particular class of parametrized, statistical models, introduced in \cite{sturmfels2021likelihood}.
The definition uses the notion of \emph{positive rational functions}, which are fractions of two polynomials with real, positive coefficients. 
\begin{definition}[positive models] \label{def:posmodels}
A model ${\cal X}$ is called \emph{positive} if it is parametrized by $(p_0:\cdots: p_n): \C^d \dashrightarrow \P^n$ and $p_i(x_1, \ldots, x_d)$ are positive rational functions that sum to 1. 
\end{definition}
Positive models form a rich class of statistical models. For examples, see \cite[Sec.~6]{sturmfels2021likelihood}. The likelihood estimation problem for positive models leads in a natural way to a \emph{morphism of positive geometries}. To illustrate how this works, we will focus on the subclass of \emph{toric models}. For any Laurent polynomial $q = \sum_{i=0}^n c_i x^{a_i}$ (here $x^{a_i}$ is short for $x_1^{a_{i,1}}x_2^{a_{i,2}} \cdots x_d^{a_{i,d}}$, $a_i \in \Z^d$) with positive coefficients $c_i > 0$, we obtain a positive statistical model ${\cal X}$ as the closure of the image of the map $\C^d \dashrightarrow \P^n$ given by
\[ x \mapsto \left( \frac{c_0 x^{a_0}}{q(x)} : \cdots : \frac{c_n x^{a_n}}{q(x)} \right ). \]
The log-likelihood function is  
\begin{align*}
  \textstyle  L(x) & \textstyle = \sum_{i=0}^n u_i \log (c_i x^{a_i}) - \left( \sum_{i=0}^n u_i \right) \log (q(x)) \\
     & \textstyle = \sum_{j = 1}^d \left(\sum_{i=0}^n u_i a_{i,j} \right) \log x_j - \left( \sum_{i=0}^n u_i \right) \log (q(x)) + \sum_{i=0}^n u_i \log (c_i),
\end{align*}
and the likelihood equations are 
\begin{equation} \label{eq:likelihoodtoric}
  \textstyle  v_j = \left( \sum_{i=0}^n u_i \right)~ x_j \frac{\partial q(x)/ \partial x_j}{q(x)}, \quad j = 1, \ldots, d,
\end{equation}
with $v_j = \sum_{i=0}^n u_i a_{i,j}$. Let $X_{\geq 0}$ be the Newton polytope of $q(x)$, dilated by the sample size $\sum_{i=0}^n u_i$. The equations \eqref{eq:likelihoodtoric} give a rational map $\C^d \dashrightarrow \C^d$, sending $(x_1, \ldots, x_d)$ to $(v_1, \ldots, v_d)$, which extends to $\phi: \P^d \dashrightarrow \P^d$. The restriction of $\phi$ to $\R_{>0}^d$ is a diffeomorphism $\R_{>0}^d \rightarrow X_{>0}$ known as the \emph{toric moment map}.

\begin{definition}[Morphism of pseudo-positive geometries] \label{def:morphism}
A \emph{morphism} between two \linebreak[4] pseudo-positive geometries $(X,X_{\geq 0})$, $(Y,Y_{\geq 0})$ is a rational map $\phi: X \dashrightarrow Y$ which restricts to an orientation preserving diffeomorphism $\phi_{|X_{>0}}: X_{>0} \rightarrow Y_{>0}$.
\end{definition}

In our toric example, $\phi$ is a morphism between the positive geometries $(\P^d, \P^d_{\geq 0})$ and $(\P^d, X_{\geq 0})$, where $\P^d_{\geq 0}$ is the Euclidean closure of $\P^d_{> 0}$ in $\P^d(\R)$. In the rest of Section \ref{Statistics and Pushforward}, we will see that $\phi$ is the key ingredient for relating the solutions to the likelihood equations \eqref{eq:likelihoodtoric} to the canonical form of the polytope $X_{\geq 0}$. 

\begin{example} \label{ex:toricsquare1}
Consider the 2-dimensional toric model ${\cal X}$ given by $q(x) = c_0 + c_1 x + c_2 y + c_3 xy$. For general coefficients $c_i$, the ML degree of ${\cal X}$ is 2 \cite{amendola2019maximum}. The associated positive geometry is the square $X_{\geq 0} = [0,u_0+u_1+u_2+u_3]^2 \subset \P^2(\R)$. Its interior $X_{>0}$ is diffeomorphic to $\R_{>0}^2$ via the rational map 
\begin{equation} \label{eq:phitoricsquare}
 \textstyle   \phi: (x,y) \mapsto \left ( \left( \sum_{i=0}^3 u_i \right)x \frac{c_1 + c_3 y}{c_0 + c_1 x + c_2 y + c_3 xy},~ \left( \sum_{i=0}^3 u_i \right) y \frac{c_2 + c_3 x}{c_0 + c_1 x + c_2 y + c_3 xy} \right )
\end{equation} 
obtained from the likelihood equations as in \eqref{eq:likelihoodtoric}. In this case, $v_1$ and $v_2$ are given by $v_1 = u_1 + u_3$ and $v_2 = u_2 + u_3$ respectively. 
\end{example}

\subsection{Push-forward} \label{subsec:pushfwd}

Let $X$ and $Y$ be nonsingular projective varieties of the same dimension $d$.
Suppose $(X,X_{\ge 0})$ and $(Y,Y_{\ge 0})$ are pseudo-positive geometries, and that $\phi:X\dashrightarrow Y$ is a rational map that induces a morphism $\Phi: (X,X_{\ge 0}) \to (Y,Y_{\ge 0})$ of pseudo-positive geometries. Let $D\subset X$ and $C\subset Y$ denote the boundary divisors of $(X,X_{\ge 0})$ and $(Y,Y_{\ge 0})$. Each irreducible component of $D$ is mapped either onto an irreducible component of $C$, or to a smaller-dimensional subvariety of a component of $C$, and each component of $C$ is the image of a component of $D$, so $\phi(D)=C$. The trace map 
$\Tr_\phi: \phi_*\Omega_X^d \to \Omega_Y^d$
extends to the pushforward map (see \cite[Sec.~4]{arkani2017positive} and \cite[II (b), p.~352]{griffiths1976variations})
\[\Phi_*: \phi_*\mathcal M_X^d \to \mathcal M_Y^d,\]
where $\mathcal M_X$ and $\mathcal M_Y$ are the sheaves of meromorphic differentials.

If $X = Y = \P^d$, $\phi: \C^d \rightarrow \C^d$ is locally given by $d$ rational functions $g_1, \ldots, g_d$, then the push-forward of $\omega = h ~ d x_1 \wedge \cdots \wedge d x_d$ at a general point $v\in Y$ is
$$ \Phi_*(\omega)(v) = \left ( \sum_{\phi(x) = v} \frac{h(x)}{J_{g_1, \ldots, g_d} (x)} \right ) dv_1 \wedge \cdots \wedge dv_d,$$
where the sum is over all preimages $x$ of $v$ under $\phi$ and $J_{g_1, \ldots, g_d} (x) = \det \left ( \frac{\partial g_j}{\partial x_i} \right )_{i,j}$ is the Jacobian of $\phi$. 

\begin{example} \label{ex:linesegment}
Let $X = \P^1$ and $X_{\geq 0} = [0, a] \subset \R = \{(1:t)\, |\,\, t\in \R\} \subset X$. The map $\phi: \P^1 \rightarrow \P^1$ given by $\phi(x_0:x_1) = (x_0^2: x_1^2)$ is a morphism of the positive geometries $(\P^1,[0,a])$ and $(Y, Y_{\geq 0 }) = (\P^1, [0, a^2])$. Locally, on $\C = \{x_0 \neq 0\} \subset X$ it is given by $\phi(x) = x^2$. The canonical form of $X_{\geq 0}$ is $\Omega(X_{\geq 0}) = \nicefrac{a}{x(a-x)}\, dx $. Note that $\phi^{-1}(v) = \{ \sqrt{v}, - \sqrt{v} \}$. The push-forward is given by 
\begin{align*}
\Phi_*(\Omega(X_{\geq 0}))(v) &= \left ( \sum_{\phi(x) = v} \frac{a}{x(a-x)} \frac{1}{2x} \right)  dv\\
&= \left (  \frac{a}{\sqrt{v} (a - \sqrt{v})} \frac{1}{2 \sqrt{v}} +  \frac{a}{-\sqrt{v} (a + \sqrt{v})} \frac{-1}{2 \sqrt{v}} \right ) dv \\
&= \frac{a}{2v} \left( \frac{1}{a - \sqrt{v}} + \frac{1}{a + \sqrt{v}} \right ) dv \\
&= \frac{a^2}{v(a^2-v)} dv = \Omega(Y_{\geq 0})(v). \qedhere
\end{align*}
\end{example}

The following is phrased as a heuristic in \cite[Heuristic 4.1, p.~11]{arkani2017positive}.
\begin{heuristic} \label{heur:pushfwd}
The map $\Phi_*$ takes the canonical form of $(X,X_{\ge 0})$ to the canonical form of $(Y,Y_{\ge 0})$.
\end{heuristic}

In dimension one, the intuition behind Heuristic \ref{heur:pushfwd} comes from the global residue theorem. We generalize \cite[Ex.~7.9 and 7.10]{arkani2017positive} to prove a stronger version of Heuristic \ref{heur:pushfwd} in this case. 
\begin{proposition} \label{prop:dim1}
Consider a real interval $[a, b] \subset \mathbb{R}$ with $a < b$ and let $\phi \in \mathbb{R}[x]$ be any non-constant polynomial. If $\phi([a,b]) = [\phi(a),\phi(b)]$, then $\Omega(\phi([a,b])) = \Phi_*(\Omega([a,b]))$. 
\end{proposition}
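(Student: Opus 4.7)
My plan is to reduce both sides of the identity to explicit rational expressions in the target coordinate $v$ and then establish their equality via the global residue theorem on $\P^1_\C$.

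First, the hypothesis $\phi([a,b])=[\phi(a),\phi(b)]$ guarantees that the target is a non-degenerate closed interval (in particular $\phi(a)\neq\phi(b)$), so by \eqref{eq:canonicalFormInterval} I can write
\[
\Omega(\phi([a,b])) \;=\; \frac{\phi(b)-\phi(a)}{(v-\phi(a))(\phi(b)-v)}\,dv.
\]
Unwinding the definition of the pushforward in dimension one, for generic $v\in\C$ I also get
\[
\Phi_*(\Omega([a,b])) \;=\; \Bigl(\sum_{x\,:\,\phi(x)=v} \frac{b-a}{(x-a)(b-x)\,\phi'(x)}\Bigr) dv,
\]
where the sum is over the $\deg\phi$ simple roots $x_i$ of $\phi(x)-v$, which for generic $v$ are distinct from $\{a,b\}$.

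The key step is to recognize each summand as a residue of a single global $1$-form on $\P^1_\C$. I introduce
\[
\omega_v \;:=\; \frac{b-a}{(x-a)(b-x)(\phi(x)-v)}\,dx
\]
and compute by local expansion
\[
\Res_{x=a}\omega_v=\tfrac{1}{\phi(a)-v}, \qquad \Res_{x=b}\omega_v=\tfrac{-1}{\phi(b)-v}, \qquad \Res_{x=x_i}\omega_v=\tfrac{b-a}{(x_i-a)(b-x_i)\,\phi'(x_i)}.
\]
Because $\phi$ is non-constant, the denominator of $\omega_v$ has degree at least $3$ in $x$, so the coordinate change $x=1/u$ shows that $\omega_v$ is holomorphic at $\infty$. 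The global residue theorem then forces the total residue to vanish, yielding
\[
\sum_{x_i} \frac{b-a}{(x_i-a)(b-x_i)\,\phi'(x_i)} \;=\; \frac{1}{\phi(b)-v}-\frac{1}{\phi(a)-v} \;=\; \frac{\phi(b)-\phi(a)}{(v-\phi(a))(\phi(b)-v)},
\]
which is exactly the desired equality on a Zariski-dense open set of $v$.

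The main obstacle I foresee is dealing with non-generic $v$: at critical values of $\phi$ the roots of $\phi-v$ collide and the naive sum-over-preimages formula for $\Phi_*$ must be replaced by higher-order residues; similarly, if $v\in\{\phi(a),\phi(b)\}$ one root coincides with $a$ or $b$. Both issues are handled cleanly by the observation that both sides of the identity are \emph{a priori} rational $1$-forms in $v$, so equality on a Zariski-dense open subset of the $v$-line automatically extends to equality everywhere. The hypothesis $\phi([a,b])=[\phi(a),\phi(b)]$ enters only to identify the target positive geometry correctly; the residue calculation itself is valid for any non-constant $\phi$ with $\phi(a)\neq\phi(b)$.
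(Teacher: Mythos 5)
Your proof is correct and follows essentially the same strategy as the paper: both construct the auxiliary rational $1$-form $\dfrac{b-a}{(x-a)(b-x)(\phi(x)-v)}\,dx$ and apply the global residue theorem on $\P^1_\C$ to equate the sum of residues over the preimages of $v$ with the residues at $a$ and $b$, then conclude equality of rational functions. The only difference is presentational: you explicitly check holomorphicity at $\infty$ and spell out the genericity/rationality argument at the end, both of which the paper leaves implicit.
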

Note that $\phi$ need not induce a morphism of $[a,b]$ and $[\phi(a),\phi(b)]$ as positive geometries.
\begin{proof}
Define the rational function 
\[g = \frac{b-a}{(x-a)(b-x)(\phi(x)-y)} = \frac{b-a}{h}, \]
where $y$ is a complex parameter. By the global residue theorem, we have for almost all $y \in \mathbb{C}$ that
\[ {\rm res}_a (g) + {\rm res}_b (g) + \sum_{x \text{ s.t. } \phi(x) = y } {\rm res}_x(g) = \frac{b-a}{h'(a)} + \frac{b-a}{h'(b)} + \sum_{x \text{ s.t. } \phi(x) = y }  \frac{b-a}{h'(x)} = 0. \]
Since $h'(x) = -(x-a)(\phi(x)-y) + (b-x)(\phi(x)-y) + (x-a)(b-x)\phi'(x)$, we obtain
\[  \frac{b-a}{(b-a)(\phi(b)-y)} -  \frac{b-a}{(b-a)(\phi(a)-y)} = \sum_{x \text{ s.t. } \phi(x) = y }  \frac{b-a}{(x-a)(b-x)\phi'(x)}.  \]
This holds for all but finitely many $y \in \mathbb{C}$. Therefore, this is an equality of rational functions equivalent to
\[ \Omega([\phi(a),\phi(b)]) = \frac{\phi(b)-\phi(a)}{(y-\phi(a))(\phi(b)-y)} dy = \Phi_*(\Omega([a,b])). \qedhere \]
\end{proof}

\begin{remark}[Push-forward by birational maps]
If $\phi$ is an isomorphism, the heuristic clearly works. If
$\phi$ is birational, the map $\Phi_*$ is an isomorphism \cite[II (b), p.~352]{griffiths1976variations}, but even so, the heuristic needs to be proven. An example of an explicit calculation is the ``teardrop'' example of the nodal cubic (see \cite[5.3.1]{arkani2017positive}). In that case, one computes 
\[ \Phi_* \left (\frac{2a}{(a-t)(t+a)u(1-u)}dt\wedge du \right )=\frac{2a}{y^2-x^2(x+a^2)}dx\wedge dy.\]
Note that three of the boundary components of the positive geometry in the source collapse to the same point in the image.
The adjoint curve of the source polypol (a quadrangle) is the line at infinity.

More generally, if $d=2$ and $\phi : X\to Y$ is birational, then, since $\Phi_*$ is an isomorphism, $\Phi_*$ commutes with the wedge product. Assume further that for each irreducible boundary curve $D_i$ of $(Y,Y_{\ge 0})$ there is a unique irreducible boundary curve $C_i$ of $(X,X_{\ge 0})$ such that $\phi(C_i)=D_i$. Then Heuristic \ref{heur:pushfwd} works. Indeed, write $\Omega(X,X_{\geq 0})=\omega$ and let $x$ be a local equation for $C_i$. Then $\omega=\omega'\wedge \frac{dx}{x}+ \dots$ where $\dots$ is holomorphic near $C_i$, and $\Res_{C_i} \omega =\omega'$. If $y$ is a local equation for $D_i$, then $x=\phi^*y$, and we get
$\Phi_* \omega=\Phi_*\omega' \wedge \Phi_* \frac{d\phi^*y}{\phi^* y}+\dots = \Phi_*\omega' \wedge \frac{dy}{y}+\dots.$ Hence
$\Res_{D_i}\Phi_*\omega=(\Phi|_{C_i})_* \Res_{C_i} \omega=(\Phi|_{C_i})_*\Omega(C_i,C_{i,\geq 0})$. Here we wrote $(\Phi|_{C_i})_*$ for the push-forward map induced by the restriction $\phi|_{C_i}$.
Since the heuristic works in dimension 1 (Proposition \ref{prop:dim1}), we have  $(\Phi|_{C_i})_*\Omega(C_i,C_{i,\geq 0})=\Omega(D_i,D_{i,\geq 0})$. Hence we have shown that for all boundary components $D_i$, we have
\[\Res_{D_i}\Phi_* \Omega(X,X_{\geq 0})= \Omega(D_i,D_{i,\geq 0}),\]
hence $\Phi_* \Omega(X,X_{\geq 0})=\Omega(Y,Y_{\geq 0}).$

\end{remark}

\subsection{Canonical forms and trace tests for likelihood problems} \label{subsec:CFlikelihood}

We return to the likelihood estimation problem for a toric model ${\cal X}$. Recall that the likelihood equations \eqref{eq:likelihoodtoric} give a rational map $\P^d \dashrightarrow \P^d$ which induces a morphism of the positive geometries $(\P^d, \P^d_{\geq 0})$ and $(\P^d, X_{\geq 0})$, where $X_{\geq 0} \subset \R^d$ is the $(u_0 + \cdots + u_n)$-dilation of the Newton polytope of $q(x)$. For ease of notation, we will write \eqref{eq:likelihoodtoric} as $v_j = g_j(x)$ in what follows, i.e., $\phi = (g_1, \ldots, g_d)$.

\begin{proposition} \label{prop:toricamplitude}
Let ${\cal X}$ be a toric model of dimension $d$ with associated positive geometry $(\P^d, X_{\geq 0})$. Let $\Omega(X_{\geq 0}) = h(v_1, \ldots, v_d) ~ dv_1 \wedge \cdots \wedge dv_d$. We have the following identity of rational functions: 
\[ h(v_1, \ldots, v_d) = \sum_{v_j = g_j(x) \, \forall j} \frac{1}{x_1 \cdots x_d} \frac{1}{J_{g_1, \ldots, g_d}(x)}, \]
where the sum is over all pre-images of $(v_1, \ldots, v_d)$ under $\phi$.
\end{proposition}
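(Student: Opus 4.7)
My plan is to derive the identity by applying the push-forward formula of \Cref{subsec:pushfwd} to the morphism of positive geometries $\phi \colon (\P_\C^d, \P^d_{\geq 0}) \to (\P_\C^d, X_{\geq 0})$ induced by the likelihood equations \eqref{eq:likelihoodtoric}. The input is to identify the canonical form of the source. Working in the affine chart $p_0 = 1$ with coordinates $x_1, \ldots, x_d$, the region $\P^d_{\geq 0}$ is a $d$-simplex bounded by the $d$ coordinate hyperplanes $\{x_i = 0\}$ together with the hyperplane at infinity $\{p_0 = 0\}$. A direct calculation in homogeneous coordinates shows that
\[
\omega_0 := \frac{dx_1 \wedge \cdots \wedge dx_d}{x_1 \cdots x_d}
\]
has simple poles precisely along these $d+1$ boundary hyperplanes, and its residue along $\{x_i = 0\}$ is the analogous form on a $(d-1)$-simplex. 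By induction on $d$ together with Definition \ref{def:posGeometry}, this identifies $\omega_0 = \Omega(\P_\C^d, \P^d_{\geq 0})$.

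Next, I would apply the explicit local push-forward formula recalled in \Cref{subsec:pushfwd} to the coordinate expression $\phi = (g_1, \ldots, g_d)$. Since the coefficient of $\omega_0$ is $1/(x_1 \cdots x_d)$, the push-forward at a generic $v$ is exactly
\[
\Phi_*(\omega_0)(v) = \Biggl(\sum_{\phi(x) = v} \frac{1}{x_1 \cdots x_d \cdot J_{g_1, \ldots, g_d}(x)} \Biggr)\, dv_1 \wedge \cdots \wedge dv_d,
\]
so the right-hand side of the claimed identity is, by construction, the coefficient of $\Phi_*(\omega_0)$. Thus the proposition reduces to the statement $\Phi_*(\omega_0) = \Omega(X_{\geq 0})$, which is Conjecture \ref{conj:pushfwd} for this particular morphism.

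The remaining step, which is the main content of the argument, is to verify the push-forward conjecture for toric moment maps. I would argue this by induction on $d$, with the one-dimensional base case supplied by Proposition \ref{prop:dim1}. For the inductive step, the key observation is that each facet of the Newton polytope $X_{\geq 0}$ corresponds to the collection of boundary components of the source mapped onto it by $\phi$, and that the restriction of $\phi$ to this collection is itself a toric moment map, now associated to the strictly lower-dimensional toric model coming from restricting $q$ to that face. Because residue commutes with push-forward, the residue of $\Phi_*(\omega_0)$ along any facet of $X_{\geq 0}$ equals the push-forward of the canonical form on a lower-dimensional $(\P_\C^{d-1}, \P^{d-1}_{\geq 0})$, which by induction is the canonical form of that facet. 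Once one has shown that $\Phi_*(\omega_0)$ has the prescribed residues and no poles in the interior $X_{>0}$, the uniqueness part of Definition \ref{def:posGeometry} forces $\Phi_*(\omega_0) = \Omega(X_{\geq 0})$, and reading off the coefficient of $dv_1 \wedge \cdots \wedge dv_d$ yields the asserted identity of rational functions.

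The main obstacle is ruling out spurious poles of $\Phi_*(\omega_0)$ in the interior $X_{>0}$ that would a priori arise from critical points of $\phi$ where several branches of $\phi^{-1}$ collide. Here one exploits the classical fact that the toric moment map restricts to a diffeomorphism between the positive orthant and the interior of the moment polytope, so for real $v \in X_{>0}$ the sum is over a finite set of smooth, well-separated preimages; extending this control to the complexification and verifying that the apparent poles on the complex branch locus cancel in the sum is the technical heart of the argument.
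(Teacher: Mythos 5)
Your reduction is exactly the paper's: identify $\Omega(\PP^d_{\geq 0}) = (x_1\cdots x_d)^{-1}\,dx_1\wedge\cdots\wedge dx_d$, apply the local push-forward formula, and observe that the claimed identity is precisely the statement $\Phi_*(\Omega(\PP^d_{\geq 0})) = \Omega(X_{\geq 0})$, i.e.\ Conjecture~\ref{conj:pushfwd} for the toric moment map. Up to that point the argument is correct and matches the paper.

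Where you diverge, there is a genuine gap. The paper at this point simply cites \cite[Thm.~7.12]{arkani2017positive}, where the push-forward conjecture is actually proved for toric moment maps. You instead sketch an inductive argument: residues commute with push-forward, the restriction of $\phi$ to each boundary facet is again a toric moment map of lower dimension, hence by induction the pushed-forward form has the right residues, and uniqueness does the rest. Two steps in that sketch are not established and cannot be waved through. First, the commutation of residue with push-forward for rational maps is exactly the step that makes the general push-forward conjecture still open; the paper's Outlook explicitly notes that the argument in \cite{arkani2017positive} built on this idea ``is non-conclusive.'' You would need to verify it in the toric setting, which is where the real work lies. Second, and you acknowledge this yourself, matching residues only pins down $\Phi_*(\omega_0)$ once you know it has no additional poles; ruling out poles contributed by the complex branch locus of $\phi$ inside $X_{>0}$ is the ``technical heart'' you leave unresolved. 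As written, your proposal reduces the proposition to a fact and then offers an incomplete proof of that fact, whereas the paper's proof is complete because it cites the existing theorem. The fix is straightforward: replace the inductive sketch with the citation to \cite[Thm.~7.12]{arkani2017positive}, or, if you want a self-contained argument, actually carry out the residue-commutation and pole-cancellation analysis for the toric map.
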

\begin{proof}
Heuristic \ref{heur:pushfwd} works in the case of the toric moment map \cite[Thm.~7.12]{arkani2017positive}. The statement follows directly from applying the push-forward formula, taking into account that $\Omega(\PP^d_{\geq 0}) = (x_1 \cdots x_d)^{-1} dx_1 \wedge \cdots \wedge dx_d$.
\end{proof}
We point out that this formulation is equivalent to \cite[Thm.~16]{sturmfels2021likelihood}, which uses the toric Hessian of the log-likelihood function. The rational function $h$ evaluated at $v_j = \sum_{i=1}^n u_i a_{i,j}$, is the \emph{amplitude} of ${\cal X}$ \cite[Sec.~6]{sturmfels2021likelihood}. By Proposition \ref{prop:toricamplitude}, and \eqref{eq:likelihoodtoric}, it is the global residue of $(x_1 \cdots x_d)^{-1}$ over the critical points of the log-likelihood function. 

Numerical approximations for the solutions to the likelihood equations can be obtained using methods from numerical nonlinear algebra. As illustrated in Section \ref{Finite Adjoint Correspondences}, these solutions can be \emph{certified} to prove a lower bound on the ML degree of a model ${\cal X}$. Showing that a set of approximate critical points contains an approximation for \emph{all} solutions is more difficult. For witness set computations, \emph{trace tests} have been developed to give strong numerical evidence for completeness \cite{leykin2018trace}. Here, we propose to use Proposition \ref{prop:toricamplitude} as a specialized trace test for likelihood problems coming from toric models. Trace tests for other positive models can be obtained in a similar way. 

Suppose we have computed a set of $\ell$ approximate solutions to the likelihood equations \eqref{eq:likelihoodtoric} for a given set of positive integer data $u_i$. By Proposition \ref{prop:toricamplitude} we can evaluate the amplitude of ${\cal X}$ at $v_j = \sum_{i=1}^n u_i a_{i,j}$ in two different ways. The first is to compute the canonical form of the dilated Newton polytope of $q(x)$ and plug in $v_j = \sum_{i=1}^n u_i a_{i,j}$ in this rational function. The second is to compute the sum of $(x_1 \cdots x_d J_{g_1, \ldots, g_d})^{-1}$ evaluated at our $\ell$ critical points. These numbers should coincide, and if they do, this test gives strong numerical evidence that the ML degree of ${\cal X}$ is $\ell$.

\begin{example} \label{ex:toricsquare2}
We illustrate Proposition \ref{prop:toricamplitude} for the model ${\cal X}$ in Example \ref{ex:toricsquare1}. Consider $(c_0,c_1,c_2,c_3) = (15,2,8,23)$ and suppose  $(u_0,u_1,u_2,u_3) = (1,2,5,2)$ are the collected data in an experiment. Numerical approximations of the critical points of $L(x,y)$ are $\textup{Crit}(L) = \{ (0.42266, 2.08633), (-4.11469,-0.18234) \}$. These were obtained using the Julia package \texttt{HomotopyContinuation.jl} as outlined in \cite[Sec.~3]{sturmfels2021likelihood}.
The canonical form of the square $X_{\geq 0}$ is 
$$ \Omega(X_{\geq 0}) = \frac{100}{v_1 v_2 (10-v_1)(10-v_2)} ~ dv_1 \wedge dv_2 = h(v_1, v_2) ~ dv_1 \wedge dv_2,$$
where the numerator 100 of $h$ is the square of the sample size, i.e., $(u_0+u_1+u_2+u_3)^2$. Evaluating the amplitude $h$ at $(v_1,v_2) = (u_1+u_3, u_2+u_3) = (4,7)$ gives $25/126$. Evaluating the sum 
\[ \sum_{(x,y) \in \textup{Crit}(L)} \frac{1}{xy} \frac{1}{J_{g_1,g_2}(x,y)}, \]
where $g_1, g_2$ are the rational functions from \eqref{eq:phitoricsquare}, at our approximate critical points gives $0.1984126984126981$. This agrees with 25/126 up to 15 significant decimal digits, which is about the unit round-off in double precision floating point arithmetic. 
\end{example}

We conclude this section with an example that illustrates how to design alternative trace tests using canonical forms of polypols with non-linear boundaries in the plane. 

\begin{example} \label{ex:toricsquare3}
Consider again the model from Examples \ref{ex:toricsquare1} and \ref{ex:toricsquare2}. In Example \ref{ex:toricsquare2}, we wrote the rational function giving the canonical form of the square $X_{\geq 0}$ as a sum over the critical points of the likelihood function $L(x,y)$. Here we make the observation that a similar construction can be applied to other polypols, obtained as the image of the map $\phi$ in \eqref{eq:phitoricsquare} restricted to subgeometries of $\PP^2_{\geq 0}$. For instance, the image under $\phi$ of the standard simplex $T_{\ge 0}$ with vertices $(0,0),(1,0),(0,1)$ is the polypol $P_{\ge 0}$ with boundary curve $C = \{f_1f_2f_3 = 0 \}$ where $f_1 = v_1, f_2 = v_2$ and 
\begin{align*} f_3 &= s^{2}{v}_{2}{c}_{0}{c}_{1}^{2}-s\,{v}_{1}{v}_{2}{c}_{0}{c}_{1}^{2}+
     s^{2}{v}_{1}{c}_{0}{c}_{1}{c}_{2}-s\,{v}_{1}^{2}{c}_{0}{c}_{1}{c}_{2}+s^{2}{v}_{2}{c}_{0}{c}_{1}{c}_{2}-s\,{v}_{2}^{2}{c}_{0}{c}_{1}{c}_{2}-s^{3}{c}_{1}^{2}{c}_{2} \\
     &+2\,s^{2}{v}_{1}{c}_{1}^{2}{c}_{2}-s\,{v}_{1}^{2}{c}_{1}^{2}{c}_{2}+s^{2}{v}_{2}{c}_{1}^{2}{c}_{2}-s\,{v}_{1}{v}_{2}{c}_{1}^{2}{c}_{2}+s^{2}{v}_{1}{c}_{0}{c}_{2}^{2}
     -s\,{v}_{1}{v}_{2}{c}_{0}{c}_{2}^{2} \\ &-s^{3}{c}_{1}{c}_{2}^{2}+s^{2}{v}_{1}{c}_{1}{c}_{2}^{2}+2\,s^{2}{v}_{2}{c}_{1}{c}_{2}^{2}-s\,{v}_{1}{v}_{2}{c}_{1}{c}_{2}^{2} -s\,{v}_{2}^{2}{c}_{1}{c}_{2}^{2}+s\,{v}_{1}^{2}{c}_{0}^{2}{c}_{3} \\ & -2\,s\,{v}_{1}{v}_{2}{c}_{0}^{2}{c}_{3}+s\,{v}_{2}^{2}{c}_{0}^{2}{c}_{3}-s^{2}{v}_{1}{c}_{0}{c}_{1}{c}_{3} 
     +s\,{v}_{1}^{2}{c}_{0}{c}_{1}{c}_{3}+s^{2}{v}_{2}{c}_{0}{c}_{1}{c}_{3}-s\,{v}_{1}{v}_{2}{c}_{0}{c}_{1}{c}_{3} \\ &+s^{2}{v}_{1}{c}_{0}{c}_{2}{c}_{3}-s^{2}{v}_{2}{c}_{0}{c}_{2}{c}_{3}  
     -s\,{v}_{1}{v}_{2}{c}_{0}{c}_{2}{c}_{3}+s\,{v}_{2}^{2}{c}_{0}{c}_{2}{c}_{3}-s^{3}{c}_{1}{c}_{2}{c}_{3}+s^{2}{v}_{1}{c}_{1}{c}_{2}{c}_{3} \\ &+s^{2}{v}_{2}{c}_{1}{c}_{2}{c}_{3}  -s\,{v}_{1}{v}_{2}{c}_{1}{c}_{2}{c}_{3},
     \end{align*}
     where $s = u_0 + u_1 + u_2 + u_3$.
     With the numerical data from Example \ref{ex:toricsquare2}, this evaluates to $f_3 = -528000 + 383000v_1 - 111400v_2 - 153480v_1v_2 + 55930v_1^2 + 75670v_2^2$.
     Using Theorem \ref{thm:rationalPolypolsArePosGeometries}, we find that the canonical form of $P_{\ge 0}$ is $\Omega(P_{\ge 0}) = h_P(v_1, v_2) ~ dv_1 \wedge dv_2$ with 
     \[ h_P = \frac{528000 + 65800 v_1 + 263200 v_2}{v_1 v_2 (-528000 + 383000v_1 - 111400v_2 - 153480v_1v_2 + 55930v_1^2 + 75670v_2^2)}  . \]
     The numerator defines the adjoint line $A_P$. This is illustrated in Figure \ref{fig:toricexample}.
     Evaluating $h_P$ at $(v_1,v_2) = (u_1+u_3, u_2+u_3) = (4,7)$ gives $65840/370629$. Using the push-forward formula for $\Phi_*(\Omega(T_{\ge 0}))$ with $\Omega(T_{\ge 0}) = \frac{1}{xy(x+y-1)} dx \wedge dy$, we get the alternative expression
     \[ \sum_{(x,y) \in \textup{Crit}(L)} \frac{1}{xy(x+y-1)} \frac{1}{J_{g_1,g_2}(x,y)} \]
     for $h_P$. Plugging in our numerically obtained critical points gives the number $0.17764\ldots$, which has relative error
     \[ \frac{|0.17764395122885715 - 65840/370629|}{65840/370629} \approx 1.4 \cdot 10^{-15}. \]
     
     \begin{figure}
         \centering
         \input{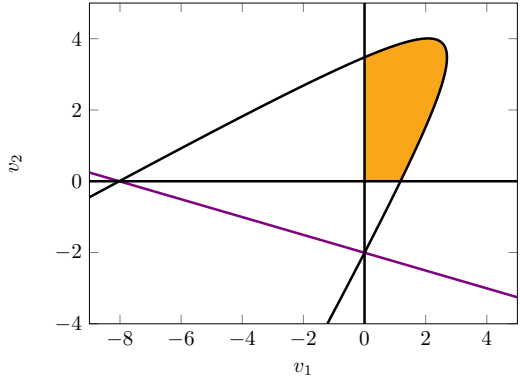}
         \caption{The polypol from Example \ref{ex:toricsquare3} (shaded in orange), its boundary curves (in black) and its adjoint line  (in purple). }
         \label{fig:toricexample}
     \end{figure}
\end{example}

\section{3D-Polypols}
\label{3D-Polypols}

Wachspress introduced three-dimensional polypols in $\R^3$, called \emph{polypoldrons} \cite[Chapter 7]{MR0426460}.
They are semialgebraic sets whose boundary $2$-dimensional facets are semialgebraic subsets of boundary surfaces, 
enclosing a simply connected set. The boundary facets intersect on boundary edges that are segments of space curves. The edges meet at the vertices of the semialgebraic set.

    A polypoldron is \emph{simple} if the number of facets (F), edges (E) and vertices (V) satisfy the Euler equation $$F-E+V=2,$$
    and \emph{well-set} if in addition, each vertex is a triple point, the edges and the boundary facets are nonsingular, and the boundary surfaces have no points in the interior of the polypoldron.
    
    A well-set polypoldron has adjoint surfaces. Such surfaces pass by the triple points that are not vertices and the curves of double points that do not contain edges.
    To make the adjoint surface unique, these restrictions are in general not enough, so Wachspress adds the condition to pass by a number of additional points on the boundary surface to assure a unique adjoint to the polypoldron \cite[ 7.5]{MR0426460}. 
    
    In this section, we concentrate on polypols with quadric boundary surfaces and an adjoint that is unique, without adding conditions beyond passing by certain singularities of the boundary surface.
 
The case of polytopes, i.e. with linear boundary components, was made precise and generalized to any dimension in \cite{kohn2019projective}; see Remark~\ref{polytopes}.  We will discuss and give infinite families of polypols with quadric boundary surfaces  in $\R^3$. 
 
 \subsection{Rational quadric 3D-polypols}
 
 We start with a definition that differs slightly  from  the one of polypoldrons.
It is a generalization of real polypols in the plane; see Definition \ref{def:quasi-regular}.   
 \begin{definition} A \emph{polypol} $P$ in $\R^3$ is a connected compact semialgebraic set such that:
 \begin{enumerate}
     \item The Zariski closure of its Euclidean boundary $\partial P$ is a surface which is a finite union of irreducible \emph{boundary surfaces}: $S=S_1\cup \ldots\cup S_k$.
 
\item Each boundary component $S_{i,\geq 0}:=\overline {(S_i\cap \partial P)^o}$, the Euclidean closure of the interior of $S_i\cap \partial P$,  is a connected semialgebraic set such that
the Zariski closure of its Euclidean boundary $\partial S_{i,\geq 0}$ is a finite union of irreducible \emph{edge curves}
$C_{i,j_1}\cup \ldots \cup C_{i,j_{d_i}}$,
where $ C_{i,{j_l}}\subset S_i\cap S_{j_l}.$

\item Each \emph{edge} $C_{(i,j),\geq 0}:=\overline{(C_{i,j}\cap \partial S_{i,\geq 0})^o}$ is a nonsingular segment in  $C_{i,j}$, whose  endpoints are  two \emph{vertices} that are nonsingular on $C_{i,j}$:
$$v_{i,j,l}\in C_{i,j}\cap C_{i,l}\quad {\rm and}\quad  v_{i,j,l'}\in C_{i,j}\cap C_{i,l'},$$

\item  On each boundary component $S_{i,\geq 0}$, the edges
$C_{(i,j_1),\geq 0}\cup \cdots \cup C_{(i,j_{d_i}),\geq 0}$ form a unique cycle of length $d_i$, with
vertices $v_{i,j_1,j_2},\ldots,v_{i,j_{d_i},j_1}$.
  \end{enumerate}

We denote by $S(P)$ the set of boundary surfaces, by $C(P)$  the set of edge curves, and by $V(P)$ the set of vertices of the polypol $P$. 
  We say that $P$ is {\em rational} if all the boundary surfaces $S_i$ and the edge curves $C\in C(P)$ are rational. Moreover, we say that $P$ is {\em simple} if the singular locus of $S$ is reduced, with no points of multiplicity more than three, and every triple point is ordinary.
  In the special case when all boundary surfaces $S_i$ are nonsingular quadric surfaces, and all edge curves are nonsingular conic sections, we call $P$  a {\em quadric polypol}.
 \end{definition}
 
 We consider two special cases of quadric polypols in $\R^3 $.  
 First, we define {\em polyhedral polypols}.
 For a convex simple polyhedron $H$ 
 with $k$ facets  labelled by $\{1,\dots,k\}$, we write $I_H\subset \{(i,j)|1\leq i<j\leq k\}$ for its edge set and  $J_H\subset \{(l,m,n)|1\leq l<m<n\leq k\}$ for its vertex set. 
 \begin{definition}
 Let $P\subset \R^3$ be a rational polypol with  boundary surface $S=S_1\cup \cdots\cup S_k$. Then $P$ is  {\em polyhedral} if there is a convex
 simple polyhedron $H$ with $k$ facets such that 
  $$C(P)=\{C_{i,j}\subset S_i\cap S_j|(i,j)\in I_H\}
 \quad \text{and} \quad
V(P)=\{ v_{l,m,n}\subset S_l\cap S_m\cap S_n|(l,m,n)\in J_H\}.$$
  \end{definition}

Let $P$ be a quadric polypol, i.e. all boundary surfaces are nonsingular quadric surfaces and all edge curves are nonsingular conic sections. Complex projective quadric surfaces are isomorphic to $\P^1\times \P^1$, so any curve on a nonsingular quadric has a bidegree. The nonsingular intersection of two quadric surfaces is an elliptic curve of bidegree $(2,2)$. An edge curve, i.e. a nonsingular conic section, $C_{i,j}\subset S_i\cap S_j$ is a component of bidegree $(1,1)$ of the singular intersection between two boundary surfaces $S_i$ and $S_j$.
On each boundary surface $S_i$, the edge curves form a cycle and  their union  defines an elliptic curve.
More precisely, after blowing up the intersections between edge curves other than the vertices, the strict transform of the edge curves forms a cycle of nonsingular rational curves.  By the adjunction formula \cite[Ch.~V, Prop.~1.5 and Ex.~1.3]{H}, such a cycle has  arithmetic genus~$1$. 

We will be concerned with adjoint surfaces to a quadric polypol.  Similarly to the polytope case, these are best understood in the complex projective setting, so from here on we shall consider the boundary surfaces and the edge curves as complex projective varieties in $\P^3.$

The singular locus of the boundary surface $S=\bigcup_iS_i\subset\P^3 $ of a quadric polypol $P$ is the union  $\bigcup_{ij}(S_i\cap S_j)$.
We define the \emph{residual points} $R(P)$ of $P$ to be the union of the curves in $\bigcup_{ij}(S_i\cap S_j)$ that are not edge curves and the triple points, where three boundary surfaces meet, that are not vertices of the polypol.
  
\begin{theorem}\label{quadjoint} Let $P$ be a simple quadric polyhedral polypol with boundary surface  $S=S_1\cup \cdots\cup S_k$.  If  
one of the $S_i$ 
has exactly three edge curves,
then there is  a unique  surface $A_P$, called the  \emph{adjoint surface} of $P$, of degree $2k-4$ that contains $R(P)$.
\end{theorem}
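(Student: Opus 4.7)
The plan is to extend the cohomological framework of \Cref{prop:uniqueAdjoint} to surfaces in $\PP^3_\C$. I would consider the partial normalization $\nu: Z \to S$ of the boundary surface $S = \bigcup_i S_i$, desingularizing only the residual locus $R(P)$ (the residual double curves in $S_i \cap S_j$ for $(i,j)\notin I_H$ together with the residual triple points), while preserving the non-normal locus along the edge curves and the vertices. A trace map argument analogous to \cite{piene1978ideals} yields an isomorphism $\nu_* \omega_Z \cong \mathcal{C}_\nu \omega_S$, where $\mathcal{C}_\nu$ is the conductor ideal of $\nu$. By the adjunction formula $\omega_S \cong \mathcal{O}_S(2k-4)$, and from the sequence $0 \to \mathcal{O}_{\PP^3}(-4) \to \mathcal{O}_{\PP^3}(2k-4) \to \omega_S \to 0$ together with the vanishings $H^0(\mathcal{O}_{\PP^3}(-4)) = H^1(\mathcal{O}_{\PP^3}(-4)) = 0$, the restriction map $H^0(\PP^3, \mathcal{O}(2k-4)) \to H^0(S,\omega_S)$ is an isomorphism. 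Consequently, adjoint surfaces of degree $2k-4$ containing $R(P)$ correspond bijectively to nonzero elements of $H^0(S, \mathcal{C}_\nu \omega_S) \cong H^0(Z, \omega_Z)$, and the theorem reduces to showing $h^0(Z, \omega_Z) = 1$.

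By Serre duality on the Cohen--Macaulay surface $Z$, we have $h^0(\omega_Z) = h^2(\mathcal{O}_Z)$. I would compute this using the Mayer--Vietoris spectral sequence for the cover $\{\tilde S_i\}$ of $Z$. The pairwise intersections are the edge curves $C_{i,j} \cong \PP^1$ for $(i,j) \in I_H$, and the triple intersections are the isolated vertex points $v_{l,m,n}$ for $(l,m,n) \in J_H$. Each $\tilde S_i$ is rational (a blowup of the quadric $S_i \cong \PP^1 \times \PP^1$), so $H^q(\mathcal{O}_{\tilde S_i}) = 0$ for $q>0$; the $\PP^1$'s and points likewise have no higher coherent cohomology. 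The spectral sequence thus degenerates at $E_2$, and one obtains $H^*(Z, \mathcal{O}_Z) \cong H^*(\mathcal{N}, \C)$, where $\mathcal{N}$ is the nerve of the cover. For a simple polyhedral polypol with underlying polyhedron $H$, the nerve $\mathcal{N}$ is combinatorially the boundary complex of the simplicial dual polytope $H^*$, hence homeomorphic to $S^2$, and so $h^2(\mathcal{O}_Z) = 1$.

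The hypothesis that one face $S_i$ has exactly three edge curves should be used to pin down the adjoint as a non-degenerate surface. Restricting $A_P$ to a triangular face $S_i \cong \PP^1 \times \PP^1$ and subtracting the three required residual conics (of total bidegree $(3,3)$) from the bidegree-$(2k-4, 2k-4)$ restriction reduces to a residual of bidegree $(d_i - 2, d_i - 2) = (1,1)$, which is uniquely determined by three general points -- namely the three vertices on $S_i$. This rigid incidence datum anchors the adjoint on $S_i$ and should be used to verify that the section of $\omega_Z$ produced by the cohomological argument does not vanish identically on any $\tilde S_i$, i.e.\ that the resulting surface is a genuine hypersurface of degree exactly $2k-4$. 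The main obstacle I expect is verifying the Cohen--Macaulay property of $Z$ along the edge curves (where two smooth quadrics meet along a smooth conic) and, more delicately, at the vertices (where three smooth quadrics meet at an ordinary triple point), since this is what justifies Serre duality and the degeneration of the Mayer--Vietoris spectral sequence in this mildly singular setting.
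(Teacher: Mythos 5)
Your approach is genuinely different from the paper's: you propose to transplant the planar conductor/adjunction argument of Proposition~\ref{prop:uniqueAdjoint} to surfaces and compute $h^0(\omega_Z)$ by Serre duality and a Mayer--Vietoris spectral sequence, reducing to the combinatorics of the nerve (the dual complex of $H$, a simplicial $2$-sphere). The paper instead works directly with ideal sheaves on $\PP^3$: it proves the tetrahedron case by hand via restrictions to the planes of the residual conics, and then runs an induction that removes a triangular facet $S_k$ and uses Lemma~\ref{lem:adjointonquadric} to control the restriction $\mathcal{I}_{R(P)\cap S_k}(2k-4)$. Your observation that $h^2(\mathcal{O}_Z)=1$ because the nerve is an $S^2$ is elegant and, if the rest of the chain held, would give a cleaner and more conceptual proof.

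However, the central reduction has a gap. The Rosenlicht--Piene identity $\nu_*\omega_Z \cong \mathcal{C}_\nu\omega_S$ that you invoke is stated for \emph{finite} morphisms, but the $\nu$ you construct is not finite: you take each $\tilde S_i$ to be a blowup of $S_i$, and the blowdown $\tilde S_i\to S_i$ has positive-dimensional fibers over the blown-up points. (If instead one takes the finite partial normalization that merely unglues the sheets along the residual double curves without blowing up, the conductor ideal is supported only on those double curves.) In either case the conductor does \emph{not} see the residual triple points that lie only on edge curves --- precisely the four points $p_1,\dots,p_4$ in the paper's tetrahedron argument, where three sheets meet along three \emph{edge} conics and the local ring of $S$ is unchanged by the partial normalization, so the conductor is trivial there. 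Yet the definition of $R(P)$ and hence of the adjoint surface requires passing through these points, and the paper's count ($h^0(\mathcal{I}_B(4))\ge 5$ cut down to $h^0(\mathcal{I}_{R(P)}(4))\ge 1$ by imposing $p_1,\dots,p_4$) shows these are genuine extra linear conditions, not a consequence of containing the residual curves. Without them your bijection between adjoints and $H^0(\mathcal{C}_\nu\omega_S)$ overcounts, so $h^0(\omega_Z)=1$ does not by itself give uniqueness of the adjoint surface in the sense of the theorem. Relatedly, your Mayer--Vietoris computation is insensitive to the hypothesis that some $S_i$ has exactly three edge curves, which the paper's induction relies on (it says ``assume $S_k$ contains three edge curves and consider removing $S_k$''); either the hypothesis is removable (the paper does conjecture this, citing Lemma~\ref{lem:adjointonquadric}) or your reduction is losing information, and one should regard the fact that the hypothesis plays no role in your argument as a warning sign rather than a bonus.
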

We postpone the proof to Section \ref{subsection:quadricpolyhedral}.

In the construction and analysis of quadric polypols, we observe three possible configurations a triple of edge conics may form.
\begin{lemma}\label{triplesofconics}
 Let $C_1,C_2,C_3$ be nonsingular conics in pairwise distinct planes $P_1,P_2,P_3$, respectively, and assume any two of the conics intersect in two distinct points.  Then the triple of conics forms one of three possible configurations:
    \begin{enumerate}
    \item (First kind). The intersection $P_1\cap P_2\cap P_3$ is a point $q$ that does not lie on any of the three conics.  
    Then the union of the three conics lies in a unique quadric surface.
    \item (Second kind). The intersection $P_1\cap P_2\cap P_3$ is a point $q$ that lies on all three conics.
    The union of this kind of three conics lies in a quadric surface if and only if their tangent lines at the common point $q$ are coplanar.
    \item (Third kind). The intersection $P_1\cap P_2\cap P_3$ is a line $L$, and the three pairs of intersection points coincide. The union of this kind of conics lie in a quadric surface if and only if their tangent lines at both intersection points are coplanar.
\end{enumerate}
\end{lemma}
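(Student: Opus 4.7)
I would first establish the trichotomy. Since the three planes are pairwise distinct, $P_1\cap P_2\cap P_3$ is either a single point $q$ or a line $L$. If it is a line $L$, then the three pairwise intersection lines $L_{ij}=P_i\cap P_j$ all coincide with $L$ and each $C_i\cap C_j=C_i\cap L=C_j\cap L$ is the same pair of points, giving the third kind. If it is a point $q$, then the assumption that each $C_i\cap L_{ij}$ consists of two distinct points gives: if $q\in C_1$ then $q\in C_1\cap L_{12}\subset C_2$, and similarly $q\in C_3$, which is the second kind; otherwise $q$ lies on none of the conics, which is the first kind.

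For the quadric analysis, the key is to compute the dimensions of the filtration $\mathcal{Q}(C_1)\supset \mathcal{Q}(C_1\cup C_2)\supset \mathcal{Q}(C_1\cup C_2\cup C_3)$ of quadratic forms on $\PP^3_\C$ vanishing on the indicated curves. Writing $l_i$ for a linear form defining $P_i$ and $f_i$ for a quadratic form whose restriction to $P_i$ cuts out $C_i$, one computes $\dim \mathcal{Q}(C_1)=5$, and that the restriction $\rho_2\colon\mathcal{Q}(C_1)\to H^0(P_2,\mathcal{O}(2))$ has kernel $\C\cdot l_1 l_2$ and image equal to the $4$-dimensional space of conics in $P_2$ through $C_1\cap P_2$; hence $\dim\mathcal{Q}(C_1\cup C_2)=2$. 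The further restriction $\rho_3\colon\mathcal{Q}(C_1\cup C_2)\to H^0(P_3,\mathcal{O}(2))$ is injective because a kernel element $l_3\cdot\ell$ forces $\ell$ to be proportional to both $l_1$ and $l_2$ and hence to be zero, so the image is two-dimensional.

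In the first kind, the four points $(C_1\cup C_2)\cap P_3$ are distinct and no three are collinear (the two lines $L_{13},L_{23}$ meet only at $q\notin C_i$), so the space of conics in $P_3$ through them is exactly two-dimensional and must coincide with the image of $\rho_3$. Since $f_3$ lies in it, $\mathcal{Q}(C_1\cup C_2\cup C_3)$ is one-dimensional, yielding a unique quadric. In the second and third kinds, any $Q\in\mathcal{Q}(C_1\cup C_2)$ has tangent plane $T_pQ$ at a common point $p\in C_1\cap C_2$ containing both tangent lines $T_{p,1}\subset P_1$ and $T_{p,2}\subset P_2$; by the transversality of $C_i\cap L_{12}$ these are distinct, so $T_pQ=\mathrm{span}(T_{p,1},T_{p,2})$ is a fixed plane independent of $Q$, and thus $Q|_{P_3}$ has the prescribed tangent line $\mathrm{span}(T_{p,1},T_{p,2})\cap P_3$ at $p$. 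A dimension count then identifies the image of $\rho_3$ as the two-dimensional family of conics in $P_3$ passing through the common points of the $C_i\cap P_3$ with these prescribed tangent directions. The conic $f_3$ belongs to this image exactly when its own tangent line at each common point agrees with the prescribed one, which is equivalent to coplanarity of $T_{q,1},T_{q,2},T_{q,3}$ in the second kind and of both triples $T_{p,i}$ and $T_{p',i}$ in the third kind.

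The main obstacle I anticipate is verifying rigorously that the image of $\rho_3$ fills out the claimed two-dimensional space of conics with prescribed incidences and tangencies, rather than sitting as a proper subspace by accident. This can be handled by exhibiting two linearly independent elements of $\mathcal{Q}(C_1\cup C_2)$ whose restrictions to $P_3$ are not proportional, for example the reducible quadric $l_1 l_2$ together with any lift of $f_2$ through $\rho_2$; the claimed description of the image then follows from a direct dimension comparison.
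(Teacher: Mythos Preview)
Your approach is correct and genuinely different from the paper's. The paper argues directly with the pencil $\mathcal{Q}(C_1\cup C_2)$: in the first kind, a general member of the pencil meets $C_3$ only in its four intersection points with $C_1\cup C_2$, so some member contains $C_3$; for the necessity direction in the second and third kinds it observes that if the three tangent lines at a common point $q$ span $\PP^3$, then any quadric through $C_1\cup C_2\cup C_3$ would be singular at $q$, but a quadric singular at $q$ meets each plane $P_i$ through $q$ in a conic singular at $q$, contradicting the nonsingularity of $C_i$. Your route instead restricts the pencil to $P_3$ and identifies its image as a pencil of conics with prescribed base scheme, then checks whether $f_3$ lies in it.

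Both arguments hinge on the same two-dimensional pencil, but the paper's singularity argument for the ``only if'' is quicker and avoids the delicate identification of the base scheme of $\rho_3(\mathcal{Q}(C_1\cup C_2))$ that you flag as the main obstacle. Conversely, your filtration yields uniqueness in the first kind immediately from the dimension count, whereas the paper's existence argument leaves uniqueness implicit (one must still note that $l_1 l_2$ lies in the pencil but not through $C_3$). One small inaccuracy: you write that \emph{any} $Q\in\mathcal{Q}(C_1\cup C_2)$ has a tangent plane at $p$, but the reducible member $l_1 l_2$ is singular along $L_{12}$ and hence at $p$. This does not break your argument, since the singular restriction $l_1 l_2|_{P_3}$ still vanishes on the length-two scheme at $p$ in any direction, so the containment of the image in the prescribed two-dimensional family holds regardless; your proposed fix via dimension comparison is exactly what is needed.
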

\begin{proof}  The three possibilities of intersections are clear from the assumptions. To see when the union $C_1\cup C_2\cup C_3$ lies in a quadric surface, we observe that any two of the conics lie in a pencil of quadric surfaces.  In the first case, the general quadric in the pencil intersects the third conic only in its four intersection points with the first two conics. Therefore, one quadric in the pencil contains also the third.  In the two other cases, the same argument applies, when the tangent lines at the common intersection point $q$ are coplanar.  If they are not and the three tangent lines span $\P^3$, then $q$ would be a singular point on the quadric surface.   But a quadric singular at $q$ cannot contain three nonsingular conics through $q$ that lie in distinct planes.    
\end{proof}

\begin{remark}\label{rm:existence}
We show the existence of simple  polyhedral quadric polypols with a boundary component containing three edge curves.  We do this inductively, 
and  start by showing the existence of a tetrahedral quadric polypol.
Start with a 
triple of nonsingular conics $C_{1,2},C_{2,3},C_{1,3}$ of the second kind, with tangent lines at the common intersection point $v_4$ spanning $\P^3$. 
Each pair $C_{1,2}\cup C_{1,3}$, $C_{1,2}\cup C_{2,3}$ and $C_{2,3}\cup C_{1,3}$ lies in a pencil of quadric surfaces, of which the general one is nonsingular. So we 
let $S_1,S_2,S_3$ be nonsingular 
quadric surfaces that contain $C_{1,2}\cup C_{1,3}$, $C_{1,2}\cup C_{2,3}$ and $C_{2,3}\cup C_{1,3}$, respectively. 
Choose vertices $v_1,v_2,v_3$ on $C_{2,3}$, $C_{1,3}$ and $C_{1,2}$, respectively, each lying in only one of the conics. 

We have
$S_1\cap S_2=C_{1,2}\cup D_{1,2}$, where $D_{1,2}$ is another conic section.  Similarly,
$S_i\cap S_j=C_{i,j}\cup D_{i,j}$ for any $i\neq j$.  By the choice of quadrics $S_i$, we may assume that the conics $D_{i,j}$ are nonsingular.
The intersection $S_1\cap S_2\cap S_3$ is eight points. Each conic $C_{i,j}$ contains four of these, while the triple of conics, being of the second kind,  has four intersection points, one triple and three double. But then exactly one of the eight points does not lie on any of the conics $C_{i,j}$, hence must lie on all three conics $D_{i,j}$.
Therefore the triple of conics $D_{1,2},D_{1,3},D_{2,3}$ is of the second kind, just as $C_{1,2}, C_{1,3}, C_{2,3}$.

To find $S_4$ and conics $C_{1,4}, C_{2,4},C_{3,4}$ for a tetrahedral polypol, 
we consider the three quadric cones $Q_1, Q_2,Q_3$ with vertices at $v_1,v_2,v_3$  
containing the conics $D_{2,3}, D_{1,3}, D_{1,2}$, respectively.
The intersection $Q_1\cap Q_2\cap Q_3$
consists of eight points and includes the point $D_{2,3}\cap D_{1,3}\cap D_{1,2}$.  We choose one of the other seven points, $w$, which we may assume is not on any of the surfaces $S_1,S_2,S_3$.  Consider the three lines $l_1,l_2,l_3$ spanned by $w$ and the three vertices $v_1,v_2,v_3$, and  let $w_i=l_i\cap D_{j,k}$ for each triple of pairwise distinct indices $i,j,k$. Then  $v_i,w_i,v_j,w_j$ span a plane $\Pi_{i,j}$.  
Moreover, $v_i,w_i,v_j,w_j$ lie on $S_k$, in particular on a conic section $C_{k,4}=\Pi_{i,j}\cap S_k$. 
Thus, we get a triple of conics $C_{1,4}, C_{2,4}, C_{3,4}$ of the first kind.  Their union has arithmetic genus $4$ and lies in a unique quadric surface that we denote by $S_4$.
For a general choice of vertices $v_1,v_2,v_3$, the surface $S_4$ and the conic curves $C_{k,4}$ may be shown to be nonsingular.
Note that residual to the curves $C_{k,4}$ in $S_k\cap S_4$, we find conics $D_{k,4}$, and that the triple $D_{1,4},D_{2,4},D_{3,4}$  is of the third kind.

By this construction, the surface $S_1\cup\ldots\cup S_4$ together with edge curves $C_{i,j}$ and vertices $v_1,\dots,v_4$ form a tetrahedral quadric polypol. 
Figure \ref{fig:kuleflater} shows a tetrahedral polypol of spheres.  Since any real intersection of spheres is a point or a circle, any four spheres that have a real curve of intersection between any two form a tetrahedral polypol.   The four spheres all intersect along a common imaginary circle at infinity, so it is not a simple polypol.

\begin{figure}[ht]
\centering
\includegraphics[width=7.2cm]{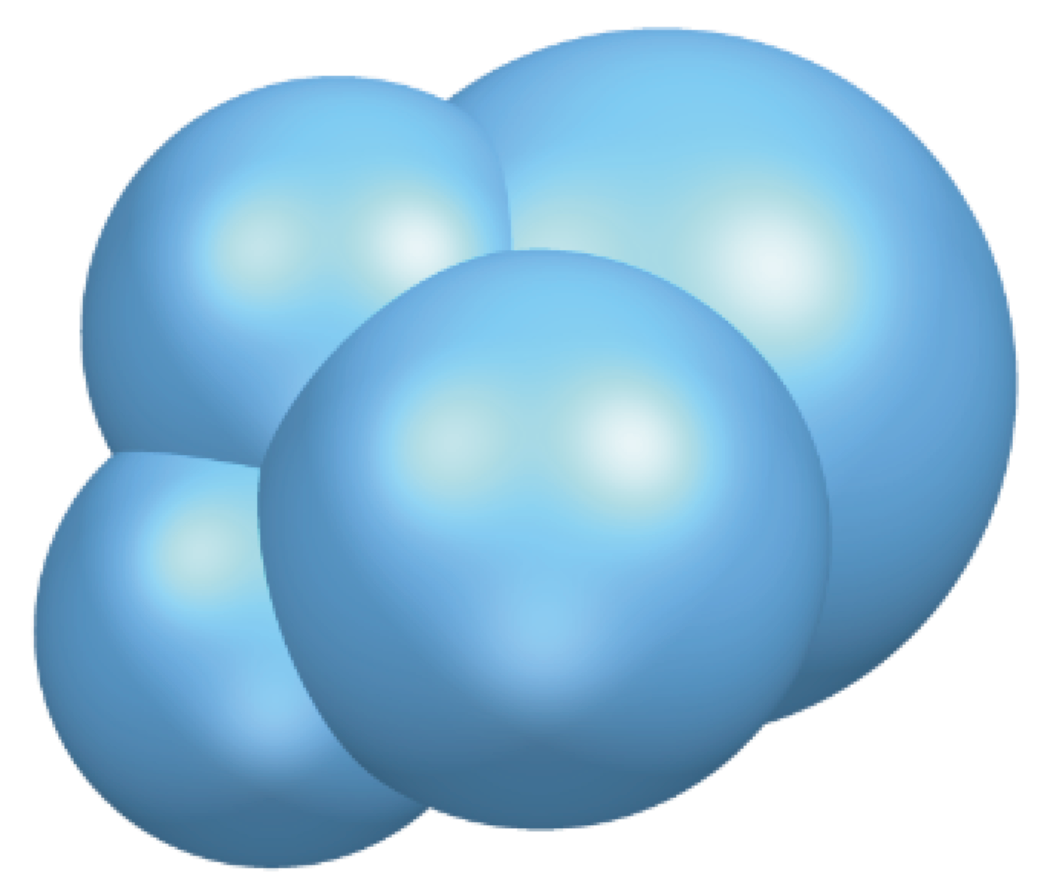} 
\caption{A tetrahedral polypol of spheres.}
\label{fig:kuleflater}
\end{figure}

Inductively, starting with any simple polyhedral quadric polypol $P$ and three quadric boundary surfaces with a common vertex $v$, we may  as above find a triple of conics of the first kind in these three quadrics that lie in a unique quadric $Q_v$, such that the four form a tetrahedral quadric polypol.   Replacing the common vertex $v$ of the three quadrics, with the new quadric surface  $Q_v$ with three conic edge curves and their three new vertices of intersection, we get a simple polyhedral quadric polypol $P'$.
\end{remark}

Not all simple quadric polypols are polyhedral, as the following example demonstrates.

\begin{example}
\label{ex:3cycle}
Let $P$ be a polypol formed by three ellipsoids with three edge conics
$C_{1,2}\cup C_{1,3}\cup C_{2,3}$ and two vertices, the common intersection of the three conics. So the edge conics form a triple of the third kind. This is illustrated in Figure \ref{fig:morton}.
  
$P$ is a simple quadric polypol, but it has only three boundary surfaces so it is not polyhedral.
The residual point set $R(P)$ consists of the three residual conics, a triple of conics of the first kind that lie in a unique quadric surface.  This surface is an adjoint of the polypol $P$.
\end{example}

\begin{figure}[H]
\centering
\includegraphics[width=6.2cm,trim={0cm 0cm 0cm 7.2cm},clip]{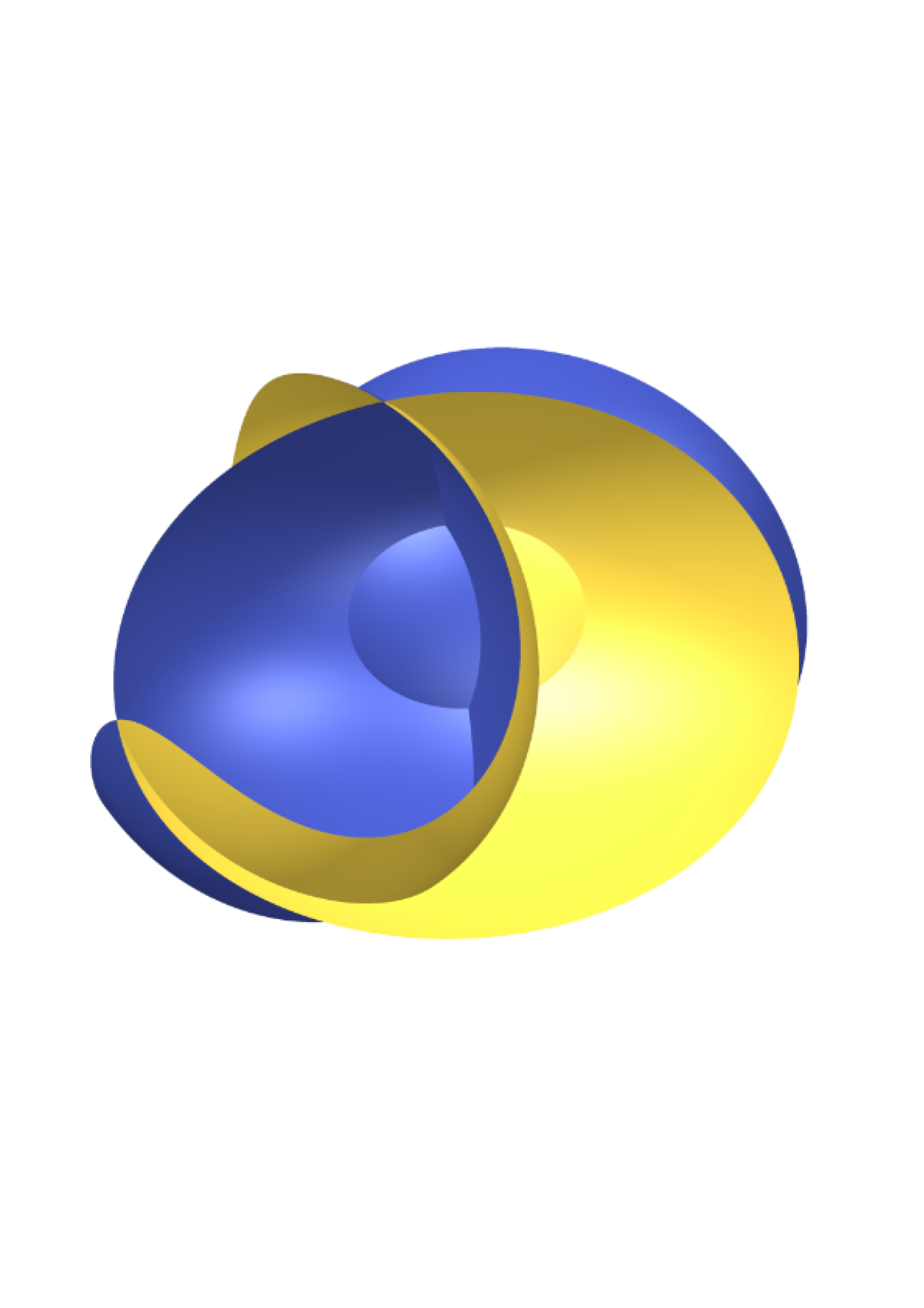}
\vskip -2cm
\caption{A cycle of three ellipsoids, whose quadric adjoint has no real point.}
\label{fig:morton}
\end{figure}
This example generalizes to a family of ``almost''  polypols  that we refer to as \emph{cycles of quadrics}.  They are certainly not polyhedral, but not quite polypols either: there are more than three edge curves through the vertices. Otherwise, they fit the definition of quadric polypols. 

\begin{definition} 
 Let $v_1, v_2\in \P^3$ be two points, and let  $S=S_1\cup \cdots \cup S_k,$ with $k\geq 3$, be a union of nonsingular quadric surfaces whose common intersection includes the two points $v_1,v_2$ when $k=3$, and equals the two points when $k>3$. Assume furthermore that there are $k$ \emph{edge  conics} $C_1,\dots,C_k$ passing through $v_1, v_2$ such that $C_i\subset S_i\cap S_{i+1}$ for  $i=1,\dots,k-1$ and $C_k \subset S_k\cap S_1$.
 Moreover, we assume that the singularities of $S$ are double points along transverse intersections of pairs of components $S_i$ and $S_j$, ordinary triple points between triples of components, plus the two ordinary $k$-tuple points $v_1,v_2$.  Then we call  $S$ a  {\em cycle of quadrics} and the two points $v_1,v_2$ its \emph{poles}.
\end{definition}

 For a cycle of quadrics $S$, we denote by
 $R(S)$ the singular locus of $S$ that is residual to the set of edge conics, i.e., $R(S)$ consists of the union of pairwise intersection curves:  
 $$R(S)=\bigcup_{i \neq j} \bigl((S_i\cap S_j) \setminus (C_1\cup\cdots\cup C_k) \bigr).$$

 \begin{theorem} Let $S=S_1\cup \cdots \cup S_k$ be a cycle of quadrics with $k\geq 3$.  There is a unique surface $A_S\subset \PP^3$ of degree $2k-4$ that contains the residual singularities $R(S)$ of $S$ and none of the edge conics $C_1, \dots,C_k$. 
 We call $A_S$ the \emph{adjoint surface} of $S$. When $k>3$, the surface $A_S$ passes through the poles of the cycle.
 \end{theorem}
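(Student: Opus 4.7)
The plan is to mirror the strategy used in the proof of Proposition \ref{prop:uniqueAdjoint} for the planar case. First, I would form the partial normalization $\nu: Z \to S$ that desingularizes $S$ only along the residual intersection curves $R(S)$, leaving the edge conics $C_1,\ldots,C_k$ together with the two poles $v_1, v_2$ as the non-normal locus of $Z$. The resulting surface $Z$ has $k$ components, each identified with one of the smooth quadrics $S_i \cong \P^1_\C \times \P^1_\C$, glued cyclically along the edge conics and meeting at both poles. By the result of Piene \cite{piene1978ideals}, one has an isomorphism $\nu_* \omega_Z \cong \mathcal{C}_\nu \omega_S$, where $\mathcal{C}_\nu$ is the conductor of $\nu$.

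From the exact sequence
\[0 \to \Omega^3_{\P^3_\C} \to \Omega^3_{\P^3_\C}(S) \to \omega_S \to 0\]
together with the vanishings $H^i(\P^3_\C, \Omega^3_{\P^3_\C}) = 0$ for $i = 0, 1$, the restriction map $\rho: H^0(\P^3_\C, \mathcal{O}_{\P^3_\C}(2k-4)) \to H^0(S, \omega_S)$ is an isomorphism. Adjoint surfaces of degree $2k-4$ containing $R(S)$ but none of the edge conics $C_i$ therefore correspond precisely to sections in $\rho^{-1} H^0(S, \mathcal{C}_\nu \omega_S) \cong \rho^{-1} H^0(Z, \omega_Z)$, and uniqueness of $A_S$ reduces to the claim $h^0(Z, \omega_Z) = 1$.

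For this cohomology computation, I plan to use adjunction component by component. On each $S_i \cong \P^1 \times \P^1$, the two edge conics $C_{i-1}$ and $C_i$ are both of bidegree $(1,1)$, so $C_{i-1}+C_i$ has bidegree $(2,2)$ and $\omega_{S_i}(C_{i-1}+C_i) \cong \mathcal{O}_{S_i}$, whose global sections are constants. A section of $\omega_Z$ is then specified by a tuple $(c_1,\ldots,c_k) \in \C^k$, subject to Poincar\'e-residue compatibility along each edge conic $C_i$ (linking $c_i$ and $c_{i+1}$) and to further matching conditions at the two $k$-fold poles. Explicit bookkeeping of these residues around the cyclic configuration of components and edges should show that all compatibility conditions together cut out a one-dimensional subspace, yielding $h^0(Z, \omega_Z) = 1$. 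The main obstacle will be the residue analysis at the poles, where $k$ sheets meet in an ordinary $k$-fold point and the usual normal-crossing residue formalism must be carefully adapted; this is what makes the cycle-consistency argument nontrivial.

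For the final claim that $A_S$ passes through both poles when $k > 3$, I would perform a local computation. At a pole $v$ the conductor is $\mathcal{C}_\nu|_v = \bigoplus_{i=1}^k \mathfrak{m}_{S_i,v}$, and tracing through the Poincar\'e-residue expression for $\rho(\alpha_S)$ on the branch $S_i$ shows that the product $\prod_{j \neq i} f_j$ of local equations of the other branches contributes a zero of order $k-1$ at $v$. The conductor condition then forces $\alpha_S$ itself to vanish at $v$ precisely when $k > 3$; for $k = 3$ the induced vanishing at $v$ is already accounted for by the passage of the adjoint through the neighbouring residual curves. The subtlety here will be tracking the precise multiplicity of each factor in the Poincar\'e-residue at the pole, where the $k$-fold nature of the singularity is essential.
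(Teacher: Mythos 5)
Your approach is genuinely different from the paper's, which proceeds by an inductive argument with ideal-sheaf exact sequences rather than via partial normalization and the conductor. The paper removes one quadric $S_k$ at a time: the key sequence is
\[
0\to {\mathcal I}_{R'_k}(2k-6)\xrightarrow{\;\cdot q_k\;}  {\mathcal I}_{R_k}(2k-4)\xrightarrow{\;|_{S_k}\;} {\mathcal I}_{T_k,S_k}(2k-4)\to 0,
\]
with $T_k = R_k\cap S_k$ a $(2k-4,2k-4)$-curve on $S_k$, so ${\mathcal I}_{T_k,S_k}(2k-4)\cong{\mathcal O}_{S_k}$, and the inductive hypothesis plus a short linkage argument on a conic $C'$ handle the kernel term. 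No residue formalism is needed anywhere.

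Your proposal mirrors the planar conductor argument, which is a natural idea, but there is a genuine gap at the poles, and it is more serious than your closing remark suggests. At an ordinary $k$-fold point where $k$ smooth sheets of a surface meet with independent tangent directions, the union is Cohen--Macaulay but for $k>3$ it is typically \emph{not} Gorenstein. Consequently $\omega_Z$ is not a line bundle at those points, and the entire framework of ``a section of $\omega_Z$ is a tuple $(c_1,\dots,c_k)$ of constants subject to residue matching'' collapses there: there is nothing like a Poincar\'e residue to match, and the local sections of $\omega_Z$ near a pole are not generated by pulled-back forms on the components. So this is not a bookkeeping issue to be ``carefully adapted''; for $k\ge 4$ the argument, as stated, does not get off the ground at the poles. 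Relatedly, the isomorphism $\nu_*\omega_Z\cong\mathcal C_\nu\omega_S$ that you cite from \cite{piene1978ideals} (or Gorenstein--Rosenlicht) is a curve statement, and extending it to a surface with non-Gorenstein singularities along $\nu$ would itself need justification.

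The last claim, that $A_S$ passes through the poles when $k>3$, does not require any local analysis at all: when $k>3$ there exist non-adjacent pairs $S_i,S_j$, and the full intersections $S_i\cap S_j$ belong to $R(S)$ and pass through both poles (all the quadrics contain the two poles), so $A_S\supseteq R(S)$ automatically hits the poles. Your proposed ``conductor forces $\alpha_S$ to vanish at $v$ precisely when $k>3$'' computation is both unnecessary and, given the Gorenstein issues above, not clearly well-posed. Your reduction to $h^0(Z,\omega_Z)=1$ and the adjunction computation on each $S_i\cong\PP^1\times\PP^1$ are correct and might suffice to carry through the case $k=3$ (where the poles are only ordinary triple points), but for general $k$ the paper's ideal-sheaf induction sidesteps all the local subtleties that your route would have to confront head-on.
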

 \begin{proof}
We set $R_k=B_1\cup \cdots \cup B_k\cup\bigcup_{ij}B_{i,j}$, where $B_i$ is residual to $C_i$ in $S_{i}\cap S_{i+1}$ and $B_{i,j}=S_i\cap S_j, j\neq i+1$,
and give a proof by induction. 
In addition to the claim of the theorem, we show that $h^1({\mathcal I}_{R_k}(2k-4))=0$.  
We start with  $k=3$, which is \Cref{ex:3cycle}.  
We give a proof here that is more amenable to induction, so we first
consider $R_3=B_1\cup B_2\cup B_3$. Let $S_3=\{q_3=0\}$ and consider the exact sequence of ideal sheaves
\[0\longrightarrow {\mathcal I}_{R'_3}\xrightarrow{\;\cdot q_3\;} {\mathcal I}_{R_3}(2)\xrightarrow{\;|_{S_3}\;} {\mathcal I}_{T_3,S_3}(2)\longrightarrow 0,\]
where  
${\mathcal I}_{T_3,S_3}$ is the ideal of $T_3=B_2\cup B_3$ on $S_3$.   Then $R'_3$ is the component $B_1$ of $R_3$ that is not in $S_3$.  
So the first sheaf in the sequence is the ideal sheaf of a conic in $\PP^3$, and therefore $h^0({\mathcal I}_{R'_3})=h^1({\mathcal I}_{R'_3})=0$. Clearly $T_3$ is a curve of bidegree $(2,2)$ on $S_3$, so $h^0({\mathcal I}_{T_3,S_3}(2))=1$ and $h^1({\mathcal I}_{T_3,S_3}(2))=0$. Therefore, also $h^0({\mathcal I}_{R_3}(2))=1$ and
$h^1({\mathcal I}_{R_3}(2))=0$.  

For $k>3$, we let $S_k=\{q_k=0\}$ and consider the exact sequence of ideal sheaves
\begin{equation}\label{eq:exactsequence1}
    0\to {\mathcal I}_{R'_k}(2k-6)\xrightarrow{\;\cdot q_k\;}  {\mathcal I}_{R_k}(2k-4)\xrightarrow{\;|_{S_k}\;} {\mathcal I}_{T_k,S_k}(2k-4)\to 0,
\end{equation}
where $T_k=R_k\cap S_k=B_{k-1}\cup B_k \cup \bigcup_{i=2}^{k-2} B_{i,k}$.  
Then $T_k$ is a curve of bidegree $(2k-4,2k-4)$ on $S_k$, so ${\mathcal I}_{T_k,S_k}(2k-4)={\mathcal O}_{S_k}$. Hence,  $h^0({\mathcal I}_{T_k,S_k}(2k-4))=h^0(S_k,{\mathcal O}_{S_k})=1$ and 
$h^1({\mathcal I}_{T_k,S_k}(2k-4))=h^1(S_k,{\mathcal O}_{S_k})=0$.
To complete the proof, we show below that  $h^0({\mathcal I}_{R'_k}(2k-6))=h^1({\mathcal I}_{R'_k}(2k-6))=0$.

The curve $R'_k$ is the part of $R_k$ that is not contained in $S_k$.  
Assuming for a moment that $$B_{k-1,1}=S_{k-1}\cap S_1=C'\cup B',$$ where $C'$ is a conic that contains $v_1$ and $v_2$ and $B'$ is a different plane conic, then $S_1,\dots,$ $S_{k-1}$ is a cycle of quadrics with edge conics $C_1,\dots,C_{k-2},C'$.
Thus, $$R'_k=R_{k-1}\cup C'=\overline{(R_{k-1}\setminus B')}\cup B_{k-1,1}.$$  We use the first equality in our induction argument below, and from the latter equality we see that 
the argument does not depend on the momentary assumption.

Consider the exact sequences of sheaves
  $$0\to {\mathcal I}_{R_{k-1}}(2k-6)\to {\mathcal O}_{\PP^3}(2k-6)\to {\mathcal O}_{R_{k-1}}(2k-6)\to 0,$$
  and 
  $$0\to {\mathcal I}_{R_{k-1}\cup C'}(2k-6)\to {\mathcal O}_{\PP^3}(2k-6)\to {\mathcal O}_{R_{k-1}\cup C'}(2k-6)\to 0.$$
  
  By induction,
  $h^0({\mathcal I}_{R_{k-1}}(2k-6))=1$ and
  $h^1({\mathcal I}_{R_{k-1}}(2k-6))=0$, so 
  \begin{align}
      \label{eq:residualVsP3}
     h^0({\mathcal O}_{R_{k-1}}(2k-6))=h^0({\mathcal O}_{\PP^3}(2k-6))-1.
  \end{align}
  Furthermore, the adjoint surface defined by the unique section in $H^0({\mathcal I}_{R_{k-1}}(2k-6))$ does not contain $C'$,
  so  $h^0({\mathcal I}_{R_{k-1}\cup C'}(2k-6))=0$.  Moreover, the intersection $R_{k-1}\cap S_{k-1}$ is a curve of bidegree $(2k-6,2k-6)$ on $S_{k-1}$, so $R_{k-1}\cap C'$ is a divisor of degree $4k-12$ on $C'$, and hence
  there is an equivalence of divisors $(R_{k-1}\cap C')\cong (2k-6)H\cap C'$ on $C'$, where $H$ is the class of a plane in $\PP^3$. 
Now consider the exact sequence of sheaves:
  $$0\to {\mathcal O}_{R_{k-1\cup C'}}(2k-6)\to {\mathcal O}_{R_{k-1}}(2k-6)\oplus {\mathcal O}_{C'}(2k-6)\to {\mathcal O}_{R_{k-1}\cap C'}(2k-6)\to 0.$$
  Since $(R_{k-1}\cap C')\cong (2k-6)H\cap C'$, the restriction $$H^0({\mathcal O}_{C'}(2k-6))\to H^0({\mathcal O}_{R_{k-1}\cap C'}(2k-6))$$ is surjective.
  Therefore, 
  \begin{align*}h^0({\mathcal O}_{R_{k-1}\cup C'}(2k-6))&=h^0({\mathcal O}_{R_{k-1}}(2k-6))+h^0({\mathcal O}_{C'}(2k-6))- (4k-12)\\
  &=h^0({\mathcal O}_{R_{k-1}}(2k-6))+1=h^0({\mathcal O}_{\PP^3}(2k-6)),
  \end{align*}
 where the latter equality was shown in \eqref{eq:residualVsP3}.
  This implies that $h^1({\mathcal I}_{R_{k-1}\cup C'}(2k-6))=0$.  We finally conclude from the cohomology of the sequence (\ref{eq:exactsequence1}) that 
  $h^0({\mathcal I}_{R_{k}}(2k-4))=1$ and $h^1({\mathcal I}_{R_{k}}(2k-4))=0$.
  
  Now we show that the adjoint surface $A_S$ does not contain the edge conics $C_i$.
  The adjoint $A_S$ intersects each quadric $S_i$ in a curve $A_i=A_S\cap S_i$ of bidegree $(2k-4,2k-4)$.   It contains every curve of intersection $S_i\cap S_j$ except for the two conics  $C_{i-1}$ and $C_i$ in the intersection with $S_{i-1}$ and $S_{i+1}$, respectively.  The union of these curves has 
bidegree $(2k-4,2k-4)$ on $S_i$, so we get an equality
$$ \textstyle A_i= ( \bigcup_{i,j}S_i\cap S_j) \setminus ( C_{i-1}\cup C_i ).$$

Finally, when $k>3$, there are $\binom{k}{2}-k=k(k-3)/2$  pairs of  the boundary quadrics that do not share an edge conic, so their intersection passes through the poles. 
Therefore also the adjoint surface $A_S$ passes through the poles.
 \end{proof}

Although the existence of a unique adjoint depends only on the complex boundary surface, the motivation both with the polypols and the positive geometries lies in the application to a real boundary.
Therefore, we provide examples of cycles of quadrics, where all boundary quadrics are real with a real nonsingular point.
\begin{example}
Let $v_1,v_2$ be real points, and assume $C_1,\ldots,C_k$ are real nonsingular ellipses through $v_1,v_2$.  Furthermore, assume that the $C_i$ lie in distinct planes and that the tangent lines at a vertex $v_i$ of any three of them span $\R^3$. Let $S_1$ be a general real quadric through $C_1\cup C_k$ and let $S_i$ be a general real quadric through $C_{i-1}\cup C_{i}$ for $i=2,\ldots,k$.  Then the surface $S=S_1\cup \cdots \cup S_k$ forms a real cycle of quadrics. 
\end{example}

\subsection{The adjoint of a quadric polyhedral polypol}\label{subsection:quadricpolyhedral}

Here we will prove Theorem \ref{quadjoint}.  We conjecture that the theorem holds for any simple quadric polypol.  In fact, the following lemma applies to any simple quadric polypol.
The idea of the lemma is that the existence of a curve $A_i$ that would be the intersection of the adjoint $A_P$ with the boundary surface $S_i$ follows from the definitions.
 
\begin{lemma}\label{lem:adjointonquadric}
Let $P$ be a simple quadric polypol with boundary surfaces $S_1,...,S_k$ and edge curves $C_{i,j}\subset S_i\cap S_j$, and let $C_i=C_{i,j_1}\cup \ldots \cup C_{i,j_{d_i}}$ be the union of edge curves on $S_i$. 
Then there is a unique curve $A_i$ of bidegree $(2k-4,2k-4)$ on $S_i$ with the following components:
\begin{enumerate}
    \item $(S_i\cap S_j)\setminus C_{i,j}$, whenever $C_{i,j}$ is an edge curve, 
    \item $S_i\cap S_j$ (for $i \neq j$), whenever this intersection does not contain an edge curve, 
    \item a residual curve $A_{i,r}$ of bidegree $(d_i-2,d_i-2)$ that intersects $C_i$ exactly in the singularities of $C_i$ that are not vertices of $P$.
\end{enumerate}
Moreover, if $Z_i$ is the set of singular points on $C_i$ that are not vertices of $P$, then we have that $h^1(S_i,{\mathcal I}_{Z_i}(d_i-2))=0$.

In  particular, any adjoint surface $A_P$ to $P$ contains no edge curve or vertex on $P$.
\end{lemma}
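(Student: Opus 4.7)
The plan is to first identify the bidegree of the residual component $A_{i,r}$ by bidegree accounting on $S_i\cong\P^1_\C\times\P^1_\C$. Each intersection curve $S_i\cap S_j$ has bidegree $(2,2)$; when it contains an edge $C_{i,j}$ of bidegree $(1,1)$, the residual component has bidegree $(1,1)$. Summing contributions from the $d_i$ edge-sharing neighbors and the $k-1-d_i$ remaining boundary surfaces yields $(2k-2-d_i,2k-2-d_i)$, so subtracting from the required total $(2k-4,2k-4)$ forces $A_{i,r}$ to have bidegree $(d_i-2,d_i-2)$.

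For the existence and uniqueness of $A_{i,r}$, I would proceed as in Proposition~\ref{prop:uniqueAdjoint}, combining adjunction on $S_i$ with a partial normalization. Since $K_{S_i}=\mathcal O_{S_i}(-2,-2)$ and $C_i$ has bidegree $(d_i,d_i)$, adjunction gives $\omega_{C_i}=\mathcal O_{C_i}(d_i-2,d_i-2)$ and $g_a(C_i)=(d_i-1)^2$. The partial normalization $\nu_i:\tilde C_i\to C_i$ desingularizing the nodes $Z_i$ (nodal by the simple hypothesis) returns a cycle of $d_i$ smooth rational curves meeting only at the $d_i$ vertices, which has arithmetic genus $1$; hence $\#Z_i=(d_i-1)^2-1=d_i(d_i-2)$ and $h^0(\tilde C_i,\omega_{\tilde C_i})=h^1(\tilde C_i,\mathcal O_{\tilde C_i})=1$. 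Because a Gorenstein curve of arithmetic genus $1$ whose dualizing sheaf admits a nonzero global section must have $\omega$ trivial, this unique section is nowhere vanishing. By the Rosenlicht/Gorenstein trace isomorphism, $\nu_{i*}\omega_{\tilde C_i}\cong\mathcal C_{\nu_i}\omega_{C_i}=\mathcal I_{Z_i,C_i}\,\omega_{C_i}$, so $h^0(C_i,\mathcal I_{Z_i,C_i}\omega_{C_i})=1$.

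I would then lift this to $S_i$ via the short exact sequence
\[
0\to\mathcal O_{S_i}(-2,-2)\to\mathcal I_{Z_i,S_i}(d_i-2,d_i-2)\to\mathcal I_{Z_i,C_i}\,\omega_{C_i}\to 0,
\]
obtained by twisting $0\to\mathcal O_{S_i}(-C_i)\to\mathcal I_{Z_i,S_i}\to\mathcal I_{Z_i,C_i}\to 0$ by $\mathcal O(d_i-2,d_i-2)$. The Künneth vanishings $H^0(\mathcal O(-2,-2))=H^1(\mathcal O(-2,-2))=0$ on $\P^1\times\P^1$ yield $h^0(S_i,\mathcal I_{Z_i}(d_i-2))=1$, producing the unique $A_{i,r}$. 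For the vanishing $h^1(S_i,\mathcal I_{Z_i}(d_i-2))=0$, I would use that $h^0(\mathcal O_{S_i}(d_i-2,d_i-2))=(d_i-1)^2=\#Z_i+1$, so the evaluation map $H^0(\mathcal O(d_i-2,d_i-2))\to H^0(\mathcal O_{Z_i})$ is surjective; combined with $h^1(\mathcal O_{S_i}(d_i-2,d_i-2))=0$ (Künneth again), this gives the claim.

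Finally, because the unique section of $\omega_{\tilde C_i}$ is nowhere zero on $\tilde C_i$, its lift vanishes on no component of $C_i$ and at no vertex; hence $A_{i,r}$ contains no edge curve and avoids every vertex, and the same holds for $A_i$. For the ``in particular'' assertion, any adjoint surface $A_P\supset R(P)$ of degree $2k-4$ (assumed not to contain any $S_i$) cuts $S_i$ in a curve of bidegree $(2k-4,2k-4)$ containing all required components of $R(P)\cap S_i$; uniqueness of $A_i$ forces $A_P\cap S_i=A_i$, so $A_P$ contains no edge curve or vertex. The main obstacle will be justifying that the dualizing section on $\tilde C_i$ is nowhere vanishing---this rests on the arithmetic-genus-$1$ structure of the normalized cycle together with Rosenlicht duality---and carefully matching the combinatorial count of nodes in $Z_i$ with the cohomological dimension count; both steps closely parallel the argument for Proposition~\ref{prop:uniqueAdjoint}.
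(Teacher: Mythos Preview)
Your proposal is correct and rests on the same core idea as the paper---adjunction on $S_i$ together with the fact that the cycle of edge conics, once the non-vertex nodes are resolved, has arithmetic genus~$1$---but the technical packaging differs in two places. First, the paper blows up $S_i$ at $Z_i$ and runs the exact sequence $0\to K_{\tilde S_i}\to\mathcal O_{\tilde S_i}(\tilde C_i+K_{\tilde S_i})\to\mathcal O_{\tilde C_i}(\tilde C_i+K_{\tilde S_i})\to 0$ on the blown-up surface, obtaining both $h^0=1$ and $h^1=0$ in one stroke (the $h^1$ vanishing uses Serre duality on the rational surface $\tilde S_i$); you instead stay on $S_i$, invoke the Rosenlicht trace isomorphism for the partial normalization of $C_i$ exactly as in Proposition~\ref{prop:uniqueAdjoint}, and then handle $h^1(\mathcal I_{Z_i}(d_i-2))=0$ by a separate dimension count. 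Both routes are valid; yours is a more literal transplant of the planar argument, while the paper's blow-up makes the vanishing of $h^1$ slightly more transparent. Second, for the ``in particular'' clause the paper does not appeal to uniqueness of $A_i$: it assumes an edge conic $C_{i,j}\subset A_P$ and runs a B\'ezout cascade---the residual $(d_i-2,d_i-2)$ part would then meet the adjacent edge conic $C_{i,l}$ in $2d_i-3>2d_i-4$ points, forcing $C_{i,l}$ in as well, and propagating around the cycle yields $C_i\subset A_{i,r}$, a bidegree contradiction. Your uniqueness argument is cleaner and equally valid once you know $A_{i,r}$ is the unique $(d_i-2,d_i-2)$ curve through $Z_i$; just note (as both you and the paper implicitly assume) that no $S_i$ is a component of $A_P$, so that $A_P\cap S_i$ is genuinely a curve.
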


\begin{proof}
The restriction $A_i=A_P\cap S_i$ is a curve on $S_i$ of bidegree $(2k-4,2k-4)$ on $S_i$  that contains the intersections $S_i\cap S_j, i\not= j$, except for the edge curves on $S_i$.  Assume there are $d_i$ edge curves $C_{i,j}$ on $S_i$.  
Then  $A_i$ contains a component of $S_i\cap S_j$  for each $j$.
We denote by $A_{i,e}$ the union in $A_i$ of the $d_i$ conic sections $(S_i\cap S_j)\setminus C_{i,j}$ where $C_{i,j}$ is an edge curve. These conic sections all have bidegree $(1,1)$, so $A_{j,e}$ has bidegree $(d_i,d_i)$ on $S_i$  The curve $A_i$ also contains the intersections $S_i\cap S_j$ that do not contain an edge curve. These curves have bidegree $(2,2)$. We denote the union of them by 
$A_{i,s}$.  So $A_{i}=A_{i,e}\cup A_{i,s}\cup A_{i,r},$ where $A_{i,r}$ is a curve that contains no component in the singular locus of the boundary surface.
Since $A_{i,e}$ consists of $d_i$ conic sections, the curve $A_{i,s}$ has degree $4(k-1-d_i)$ and bidegree $(2(k-1-d_i),2(k-1-d_i))$.  So the curve $A_{i,r}$ has degree $2(2k-4)-2d_i-4(k-1-d_i)=2d_i-4$ and bidegree $(d_i-2,d_i-2)$ on $S_i$.   
The uniqueness of $A_i$ is now equivalent to the uniqueness of the curve $A_{i,r}$ with the given properties.

Similar to the plane curve case, see Proposition \ref{prop:uniqueAdjoint}, the curve $A_{i,r}$ is an adjoint to $C_i$ on $S_i$.  Let $\pi:\tilde S_i\to S_i$ be the blowup of $S_i$ in the singularities of $C_i$ that are not vertices.  The strict transform $\tilde C_i$  of $C_i$ on $\tilde S_i$ forms a cycle of rational curves.  It has arithmetic genus one, so by adjunction $\mathcal{O}_{\tilde C_i}(\tilde C_i+K_{\tilde S_i})$ has a unique section with no zeros.  Since $h^0(\tilde S_i,K_{\tilde S_i})=h^1(\tilde S_i, K_{\tilde S_i})=0$, it follows from cohomology of the exact sequence of sheaves 
$$0\to K_{\tilde S_i}\to \mathcal{O}_{\tilde S_i}(\tilde C_i+K_{\tilde S_i})\to \mathcal{O}_{\tilde C_i}(\tilde C_i+K_{\tilde S_i})\to 0 $$
that $$h^0(\tilde S_i, \mathcal{O}_{\tilde S_i}(\tilde C_i+K_{\tilde S_i}))=1\;\;{\rm and}\;\; h^1(\tilde S_i, \mathcal{O}_{\tilde S_i}(\tilde C_i+K_{\tilde S_i}))=0,$$ and that the unique section in $H^0(\tilde S_i,\mathcal{O}_{\tilde S_i}(\tilde C_i+K_{\tilde S_i})$ defines a curve $\tilde A_{i,r}$ that has no zeros on $\tilde C_i$.  Now, $K_{S_i}$ is a divisor of bidegree $(-2,-2)$ on $S_i$, so the image $\pi(\tilde A_{i,r})$ of $\tilde A_{i,r}$ on $S_i$ is a curve of bidegree $(d_i-2,d_i-2)$ that intersects $C_i$ only in the points blown up. 
The cohomology  $h^1(S_i,{\mathcal I}_{Z_i}(d_i-2))=0$ since it is equivalent to $h^1(\tilde S_i, \mathcal{O}_{\tilde S_i}(\tilde C_i+K_{\tilde S_i}))=0$.

Finally, we assume for contradiction that an adjoint surface $A_P$ to $P$ contains an edge curve $C_{i,j}$.  Then $A_i=S_i\cap A_P$ contains this curve $C_{i,j}$ as a component.
Let $A_{i,r}\subset A_i$ be the curve residual to the first two kinds of components in the intersections $S_i\cap S_j$ as in the lemma. Then $A_{i,r}$ is a curve of bidegree $(d_i-2,d_i-2)$ that contains $C_{i,j}$ and passes through all  singularities of $C_i$ that are not vertices.   
 Let $C_{i,l}$ be an edge curve on $S_i$ with a common vertex with $C_{i,j}$.
Then there are two vertices and  $2d_i-4$ other singularities of $C_i$ that lie on $C_{i,l}$.  The curve $A_{i,r}$ passes through one of the two vertices and therefore has at least $2d_i-3$ common points with $C_{i,l}$, while the intersection number $A_{i,r}\cdot C_{i,l}= 2d_i-4$.
Therefore, also $C_{i,l}$ is a component of $A_{i,r}$.  
Repeating this argument along the cycle of edge curves on $S_i$, we deduce 
$C_i\subset A_{i,r}$, which is absurd since the bidegree of $A_{i,r}$ is $(d_i-2,d_i-2)$, while it is $(d_i,d_i)$ for $C_i$.
\end{proof}
\begin{proof}[Proof of Theorem \ref{quadjoint} ]   
Let us start with the quadric tetrahedron. Let  $S=S_1\cup S_2\cup S_3\cup S_4$ be such that $S_i\cap S_j=C_{i,j}\cup B_{i,j}$ -- two nonsingular conic sections for each $i,j$ -- and such that any three $S_i$ intersect transversally in $8$ points. 
Let $v_{i,j,k}=C_{i,j}\cap C_{i,k}\cap C_{j,k}$.  So there are six conic curves $C_{ij}$ and four vertices $v_{i,j,k}$. 
The residual locus $R(P)$ of the quadric tetrahedron consists of conic curves $B_{i,j}$ and some singular points on the edge curves.
Similar to the edge curves $C_{i,j}$, there are six conic curves $B_{i,j}$.  There are $32$ triple points in $S$, eight for each triple of components.  Of these $24$ lie on the $B_{i,j}$, while the vertices $v_{i,j,k}$ and four more, $p_1,\dots,p_4$, lie only on the curves $C_{i,j}$, i.e., outside the $B_{i,j}$. 

We now argue that there is at least one quartic surface passing through the six conics $B_{i,j}$ and the four triple points $p_1,\dots,p_4$.  Notice that two conics $B_{i,j}$ and $B_{m,n}$ intersect in two points if they share an index, and do not intersect at all if they do not.  Let $l_{i,j}$ be the linear form defining the plane $P_{i,j}$ of $B_{i,j}$ and consider the exact sequence of ideal sheaves \small
$$0\to {\mathcal I}_{B_{1,2}\cup B_{1,3}\cup B_{1,4}}(2)\xrightarrow{\cdot l_{2,3}} {\mathcal I}_{B_{1,2}\cup B_{1,3}\cup B_{1,4}\cup B_{2,3}}(3)\xrightarrow{|_{P_{2,3}}} {\mathcal I}_{P_{2,3}\cap(B_{1,2}\cup B_{1,3}\cup B_{1,4}\cup B_{2,3})}(3)\to 0.$$ \normalsize
 Then $B_{1,2}\cup B_{1,3}\cup B_{1,4}$ is a curve of bidegree $(3,3)$ in $S_1$, so $h^0({\mathcal I}_{B_{1,2}\cup B_{1,3}\cup B_{1,4}}(2))=1$ and $h^1({\mathcal I}_{B_{1,2}\cup B_{1,3}\cup B_{1,4}}(2))=0$. The intersection $P_{2,3}\cap(B_{1,2}\cup B_{1,3}\cup B_{1,4}\cup B_{2,3})$ is the union of the conic $B_{2,3}$ and the two points of intersection $P_{2,3}\cap B_{1,4}$, so 
 $$h^0({\mathcal I}_{P_{2,3}\cap(B_{1,2}\cup B_{1,3}\cup B_{1,4}\cup B_{2,3})}(3))=1\;{\rm and}\; h^1({\mathcal I}_{P_{2,3}\cap(B_{1,2}\cup B_{1,3}\cup B_{1,4}\cup B_{2,3})}(3))=0.$$  Therefore, $$h^0({\mathcal I}_{B_{1,2}\cup B_{1,3}\cup B_{1,4}\cup B_{2,3}}(3))=2\;{\rm and}\; h^1({\mathcal I}_{B_{1,2}\cup B_{1,3}\cup B_{1,4}\cup B_{2,3}}(3))=0.$$  Similarly, by the exact sequence of ideal sheaves   
  $$0\to {\mathcal I}_{B_{1,2}\cup B_{1,3}\cup B_{1,4}\cup B_{2,3}}(3)\xrightarrow{\;\cdot l_{2,4}\;} {\mathcal I}_{B'}(4)\xrightarrow{\;|_{P_{2,4}}\;} {\mathcal I}_{P_{2,4}\cap B'}(4)\to 0,$$
  where $B'=B_{1,2}\cup B_{1,3}\cup B_{1,4}\cup B_{2,3}\cup B_{2,4}$, 
   we conclude that $$h^0({\mathcal I}_{B'}(4))=6\;{\rm and}\; h^1({\mathcal I}_{B'}(4))=0.$$  
   Let $B=\bigcup_{1\leq i<j\leq 4}B_{i,j}=B'\cup B_{3,4}.$
   The conic  $B_{3,4}$ intersects  the $5$ conics in $B'$ in altogether eight points, so $h^0({\mathcal I}_B(4))\geq 5$ and $h^1({\mathcal I}_B(4))=h^0({\mathcal I}_B(4))-5$.  The points $p_1,\dots,p_4$ impose at most independent conditions on these sections, so there is at least one quartic surface containing the six conics $B_{i,j}$ and the four triple points $p_1,\dots,p_4$, i.e., we have $h^0({\mathcal I}_{R(P)}(4))\geq 1$.
   Now, 
   $R(P)=B\cup \{p_1,..,p_4\}$, so $\chi({\mathcal I}_{R(P)}(4))=
   \chi({\mathcal I}_{B}(4))-4$ while $h^i({\mathcal I}_{R(P)}(4))=
   h^i({\mathcal I}_{B}(4))$ for $i>1$.
   Therefore, $h^1({\mathcal I}_{R(P)}(4))=h^0({\mathcal I}_{R(P)}(4))-1$.
   
 To show that $h^0({\mathcal I}_{R(P)}(4))= 1$, we assume for contradiction that there is a pencil of quartic surfaces containing the $B_{i,j}$ and the points $p_1,\ldots,p_4$. Let $A$ be one of them.  On each surface $S_i$, the intersection $A\cap S_i$ is a $(4,4)$-curve that contains the three $(1,1)$-curves $B_{i,j}$ and the three triple points on $S_i$ that are not on the $B_{i,j}$.  As the three latter points cannot be collinear, the intersection  $A\cap S_i$ is independent of the choice of $A$. 
Therefore, in the pencil of surfaces $A$, there is one that contains the surface $S_i$.  But then it contains, on each of the other surfaces $S_j$, the four $(1,1)$-curves $B_{j,k}$ (for $k\neq j$) and $C_{ij}$ in addition to a triple point of intersection on the three surfaces $S_k$ (for $k\neq i$), which is impossible.  
So the surface $A$ is unique, i.e.  $h^0({\mathcal I}_{R(P)}(4))=1$ and $h^1({\mathcal I}_{R(P)}(4))=0$.  
Notice that the curve $A_i=A\cap S_i$ intersects the $C_{i,j}$ only in triple points different from the vertices, as in Lemma \ref{lem:adjointonquadric}.
 
In the general case, we argue  by induction on the number of components $k$, and want to prove that $h^0({\mathcal I}_{R(P)}(2k-4))=1$ and $h^1({\mathcal I}_{R(P)}(2k-4))=0$.   
 We assume that $S_k$ contains three edge curves and consider removing the surface $S_k$  from the polypol $P$. 
  
Removing the facet in the $k$-th hyperplane  of the convex polyhedron $H$ that corresponds to the polyhedral polypol $P$, the remaining facets naturally extend to facets of a polyhedron $H'$.  Removing the surface $S_k$, the remaining surfaces define a  polyhedral polypol $Q$ corresponding to $H'$:  

The three vertices in $S_k$ lie on three edges that are not in $S_k$.  These three edge curves intersect in one (or two) points.  Either one may be chosen as vertex in $Q$, we call it $v_Q$. 
Thus, the edge curves of $Q$ are the edge curves in $P$ that do not lie in $S_k$, 
and the vertices of $Q$ are vertices of $P$ that are not in $S_k$ and the new vertex $v_Q$.
 
 Consider now the following exact sequence of sheaves,
 where $S_k = \{ q_k=0 \}$:
  $$0\to {\mathcal I}_{R(Q)\cup {v_Q}}(2k-6) \xrightarrow{\;\cdot q_k\;} {\mathcal I}_{R(P)}(2k-4) \xrightarrow{\;|_{S_k}\;} {\mathcal I}_{R(P)\cap S_k}(2k-4)\to 0.$$
Using the notation from \Cref{lem:adjointonquadric}, the intersection  $R(P)\cap S_k$ is the union of a curve of bidegree 
$(2k-2-d_k,2k-2-d_k)$ and the set $Z_k$ of singularities on $C_k$ that are not vertices of $P$. So ${\mathcal I}_{R(P)\cap S_k}(2k-4)={\mathcal I}_{Z,S_k}(d_k-2)$, and hence, by Lemma \ref{lem:adjointonquadric}, 
$$h^0({\mathcal I}_{R(P)\cap S_k}(2k-4))=1\;{\rm and}\;h^1({\mathcal I}_{R(P)\cap S_k}(2k-4))= h^1({\mathcal I}_{Z_k,S_k}(d_k-2))=0.$$
By the induction hypothesis, we assume $$h^0({\mathcal I}_{R(Q)}(2k-6))=1 \;\textrm{ and }\;  h^1({\mathcal I}_{R(Q)}(2k-6))=0.$$ 
 
Then, as in Lemma \ref{lem:adjointonquadric}, the adjoint surface $A_Q$ does not contain any vertex, including $v_Q$. 
So only the zero section in ${\mathcal I}_{R(Q)}(2k-6)$ vanishes at $v_Q$. 
It follows that $$h^0({\mathcal I}_{R(Q)\cup v_Q}(2k-6))=h^1({\mathcal I}_{R(Q)\cup v_Q}(2k-6))=0,$$  and therefore  

    $$h^0({\mathcal I}_{R(P)}(2k-4))=1\; {\rm and}\; h^1({\mathcal I}_{R(P)}(2k-4))=0.$$ 
   The adjoint surface $A_P$ is defined by the unique section of ${\mathcal I}_{R(P)}(2k-4)$.  
  \end{proof}

\section{Outlook}
In the previous sections we have studied polypols and positive geometries from the point of view of algebraic geometry. These efforts have led to several conclusive results, but they also give rise to a series of natural follow-up questions. With the hope of inspiring future research directions, we present some of these questions below. 

\paragraph{Wachspress's conjecture.}
The conjecture by E. Wachspress that the adjoint curve of a regular rational polypol in the plane does not pass through the interior of the polypol (\Cref{conj:Wachs}) is still widely open. We attempted to prove the first non-trivial case for regular polycons bounded by three ellipses, but -- as explained in \Cref{sec:appendix} -- a formal proof is still missing for 11 such polycons.

\paragraph{Hyperbolicity.}
It was shown in Section \ref{Outside Adjoints} that in the case of plane convex polygons the adjoint curve is always hyperbolic, while for polytopes in higher dimensions as well as polycons formed by three ellipses it can  both be hyperbolic and non-hyperbolic. A natural question is to find some sufficient conditions on the polypols/polypoldrons which guarantee the hyperbolicity of the related adjoint hypersurfaces.

\paragraph{Singular adjoints.}
A more specific (complex) problem related to the previous question is as follows. Consider the (closure of the) space  of pairs consisting of a triple of generic conics and a $9$-tuple of their $12$ points of pairwise intersection where we have arbitrarily removed one point from each pairwise intersection (containing $4$ points). For a general such $9$-tuple of points, the unique cubic curve passing through them is nonsingular. 
An interesting question is to find a description of $9$-tuples coming from triples of conics for which the respective adjoint curve is singular, i.e., to describe the discriminant in this space.
One can easily observe that this discriminant has several components:
The cubic adjoint curve is singular, for instance, if either all three conics pass through the same point or if two of the conics are tangent to each other and this point is included in the $9$-tuple of residual points.

\paragraph{Adjoint maps.}
In Theorem \ref{th: finite adjoint}, we have identified all types of planar polypols with a finite adjoint map. However, finding the degrees of these maps is a nice computational challenge, as well as an interesting theoretical question. In particular, can one provide a theoretical argument for Conjecture \ref{conj:heptagon-adjoints}? How many of the 864 heptagons in this conjecture can be real? How many can be convex?

By Proposition \ref{lem:(1,3,1,3)}, the adjoint map $\alpha_{1,3,1,3}$ is not dominant. A natural question to ask is which quintic curves are the adjoint of a $(1,3,1,3)$-polypol?
 
\paragraph{Limits of canonical forms.}
This topic is mentioned in Ch.~10 of \cite{arkani2017positive}. We state it here in the simplest possible form. Consider a convex real-algebraic lamina $\Upsilon\subset \R^2$, i.e., a convex domain whose boundary is an oval of a real algebraic curve. Consider a sequence $\{P_n\}$ of convex inscribed polygons which exhaust $\Upsilon$ when $n\to \infty.$ Let $\Omega_n:=\Omega(P_n)$ be the canonical form of $P_n$. 
What is the limit 
$\lim_{n\to\infty} \Omega_n$? In particular, does it exist and is it independent of the sequence $\{P_n\}$? What is its description in terms of $\Upsilon$?

Observe that the limit $\lim_{n\to\infty} \Omega_n$ (if it exists) will typically be non-rational. Furthermore, 
since $\Omega_n$ can be interpreted as the scaled moment-generated function of the dual polygon $P_n^*$ \cite[Section 7.4.1]{arkani2017positive}, one can hope that 
$\lim_{n\to\infty} \Omega_n$ exists and coincides with the scaled moment-generating function for the convex domain $\Upsilon^*$ dual to $\Upsilon$.

\paragraph{Pushforward conjecture.} While the pushforward relation phrased as a heuristic (Heuristic \ref{heur:pushfwd}) has been settled in the case of the toric moment map (\cite[Thm.~7.12]{arkani2017positive}) and in the one-dimensional case (Proposition \ref{prop:dim1}), a general argument is still missing. A sketch of such an argument, based on the commutation of push-forward and taking residues, is given in \cite{arkani2017positive}. However, the presented results are non-conclusive. Proposition \ref{prop:dim1} suggests that requiring $\phi$ to induce a morphism of positive geometries (Definition \ref{def:morphism}) is too restrictive. It is an important remaining challenge to identify the necessary assumptions on $\phi$ for the push-forward relation to hold and to give a rigorous proof. 

\paragraph{Higher dimensional polypols.} Our discussion of polypols of dimension at least three is limited to polypols in $\R^3$ with quadric surfaces as boundary components.
Clearly, generalizations to rational boundary components of higher degree and to higher dimensions would be interesting; in particular, to find  the polypols with a unique adjoint. 

Fitting these polypols into the theory of positive geometries requires a discussion of canonical forms.   A central question is which real $n$-dimensional polypols allow a canonical form, and how positive geometries may be obtained from unions and differences of real polypols as in \Cref{ssec:additivity}. The natural candidate for a canonical form is a rational $n$-form with poles along the boundary hypersurface and zeros along an adjoint hypersurface to that hypersurface. 
These questions are, as far as we know, open already for real quadric polypols.

\small
\section*{Acknowledgements}
The authors want to thank C.~Eur, T.~Lam, D. Pasechnik, C. Riener, and B. Schröter for numerous discussions of the topic and their interest in our project. We are sincerely grateful to G.~M.~Polotovskiy at the Nizhny Novgorod Campus of the Higher School of Economics  for his help with Section~\ref{sec:Pol} which is essentially just a polished translation of his preliminary note on non-realizability of certain configurations of three ovals sent to B.~Shapiro.
We thank the referee for suggestions that improved various aspects of this article.
M.-{\c S}. Sorea is thankful to Antonio Lerario for the supportive working environment at SISSA (Trieste, Italy) during her postdoc.
Finally, we acknowledge that the software packages GeoGebra, Inkscape, LibreOffice Draw, Microsoft Paint, Plots.jl, Contour.jl and Surfer have been indispensable for the presentation of our results. 

K.~Kohn and F.~Rydell were partially supported by the Knut and Alice Wallenberg Foundation within their WASP (Wallenberg AI, Autonomous Systems and Software Program) AI/Math initiative.
M.-{\c S}. Sorea was supported by the project ``Mathematical Methods and Models for
Biomedical Applications'' financed by National Recovery and Resilience Plan PNRR-III-C9-2022-I8.

\normalsize

\bibliographystyle{alpha}
\bibliography{lit}

\section*{Authors' addresses}

\small

\noindent Kathl\'en Kohn, KTH Royal Institute of Technology,
\hfill  {\tt kathlen@kth.se}

\noindent Ragni Piene, University of Oslo,
\hfill  {\tt ragnip@math.uio.no}

\noindent Kristian Ranestad, University of Oslo,
\hfill  {\tt ranestad@math.uio.no}

\noindent Felix Rydell, KTH Royal Institute of Technology,
\hfill  {\tt felixry@kth.se}

\noindent Boris Shapiro, Stockholm University,
\hfill {\tt shapiro@math.su.se}

\noindent Rainer Sinn,  Universit\"at Leipzig,
\hfill  {\tt rainer.sinn@uni-leipzig.de}

\noindent Miruna-Stefana Sorea, Lucian Blaga University of Sibiu,
\hfill  {\tt mirunastefana.sorea@ulbsibiu.ro}

\noindent Simon Telen,  MPI MiS Leipzig,
\hfill  {\tt simon.telen@mis.mpg.de}

\normalsize

\newpage
\appendix
\section{Wachspress's conjecture for three ellipses}\label{sec:appendix}
In what follows, we first create the catalog of the $44$ admissible configurations of three ellipses described in Theorem \ref{thm:wachspressellipses}; see Sections \ref{sec:makeCatalog}--\ref{sec:catalog}.
Next, we provide an argument that proves Wachspress's  conjecture for $28$ of these configurations (and all polycons existing in these configurations) by showing that the adjoint curve has to be hyperbolic with  the oval lying strictly outside of the polycon; see \Cref{prop:polyconHyperbolic}.
This leaves us with $16$ problematic configurations. Finally, we provide a more intricate argument for $5$ of these in \Cref{prop:problematicPolycons}, and compute the adjoint of example instances for the problematic polycons in the remaining $11$ configurations; see Figure \ref{fig:problematicAdjoint}.

\subsection{Creating the catalog}
\label{sec:makeCatalog}

Our first aim is to classify all topologically distinct configurations of three ellipses in $\R^2$ that intersect transversally such that all three of them do not intersect at the same real point and, additionally, each pair intersects at least twice in $\R^2$. By topologically equivalent configurations we mean configurations which can be obtained from one another by a diffeomorphism of $\R^2$. 

We distinguish such configurations of three ellipses by their \emph{intersection type}:  (222)~-- all three pairs intersect exactly twice in $\R^2$, (224) -- precisely one pair of two ellipses intersects in four real points, (244) -- precisely one pair of two ellipses intersects only twice in $\R^2$, and (444) -- all pairs of ellipses intersect  in four real points. 

\begin{remark}
In real algebraic geometry, the latter case is usually referred to   as the \emph{$M$-case}, see \cite{MR1384316}. 
That article contains, in particular, statistical information about all possible topological configurations of  three real nonsingular conics transversally  intersecting  in $\P^2(\R)$ with intersection type (444), i.e., in the $M$-case. According to the row 7 of Table 1 of that paper, there exist 105 such configurations. 
Notice that there are more  projective non-equivalent configurations  of three conics than affine configurations of three ellipses.   
In particular, a configuration of two ellipses intersecting each other in four real points is projectively non-equivalent to a configuration consisting of an ellipse and a hyperbola intersecting each other in four real points.
\end{remark} 

Below we provide a method/algorithm, consisting of  four  steps,  for finding all possible non-equivalent configurations of three ellipses in $\R^2$ as described in \Cref{thm:wachspressellipses}. We note that this method turned out to be  sufficient for our purposes, but it might need some extra steps to solve a similar problem for more than three ellipses. 
We present the final outcome of our method in \Cref{sec:catalog}.

\begin{enumerate}
\item[Step 0.] \textbf{Subdivision according to intersection types} 

Fix an intersection type: (222), (224), (244), or (444).
\item[Step 1.] \textbf{Obtaining a preliminary excessive catalog}

During this step, consisting of three substeps, we create all possible topological configurations  of two ellipses and an oval (that is not necessarily convex) of the intersection type chosen in Step 0.
By an oval we mean a simple closed curve in $\R^2$. (Notice that during this step we enumerate configurations of two ellipses and an oval, identifying those which can be obtained from one another by a continuous deformations and  global symmetries. As a result  we enumerate the latter configurations up to global diffeomorphisms of $\R^2$.) 
    \item[Step 1.A] Draw two ellipses, one vertical  and one horizontal, that intersect each other in a way consistent with the chosen intersection type.
    The horizontal  ellipse cuts  the vertical one  either into two or four arcs.
    
    Observe that the intersection type determines whether the third oval can intersect the vertical ellipse in two or four real points. We list all topologically distinct cases of how this oval can intersect the vertical ellipse locally, i.e., we choose two resp. four short segments that meet the vertical ellipse in all topologically distinct ways (up to symmetries).

     To illustrate Step 1.A, let us provide more details in the (244)-case, see  Figure~\ref{fig:3conics}. We start by drawing the vertical and the horizontal ellipses intersecting each other  in four real points. Then we find all ways the third oval can intersect the vertical ellipse locally in two points. In other words, we are drawing two short segments of the oval where it intersects the vertical ellipse. Up to symmetry, we get five different cases of possible local intersections  shown in  Figure~\ref{fig:3conics}.
     \begin{figure}[htb]
    \centering
    \includegraphics[scale=0.6]{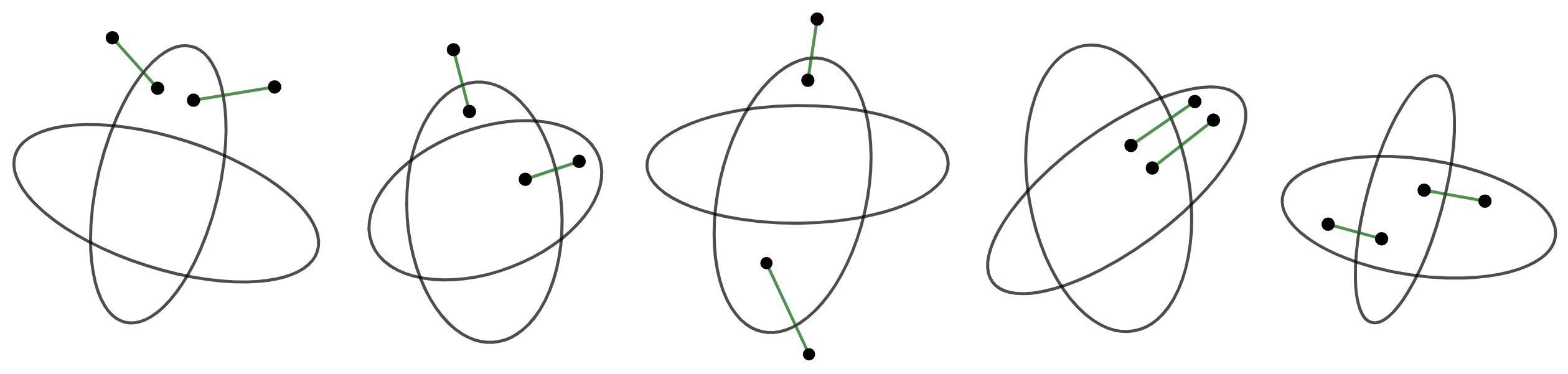}
    \caption{Five possible topologically distinct cases of how the third oval can intersect the vertical ellipse locally; these local intersections are shown by two green  segments.}
    \label{fig:3conics}
\end{figure}

     \begin{figure}[htb]
    \centering
    \includegraphics[scale=1.25]{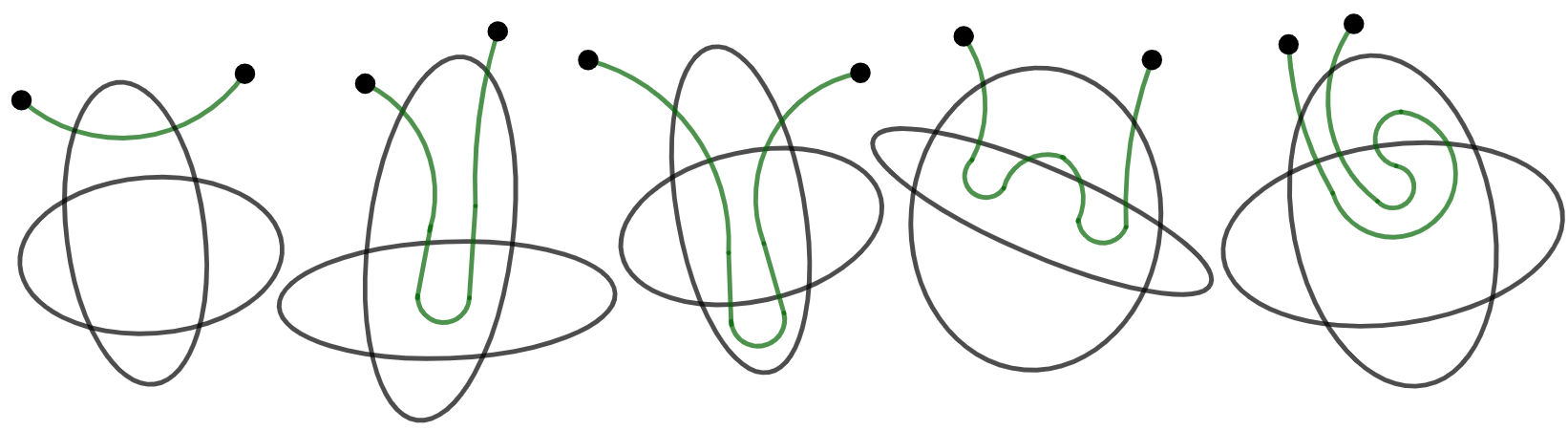}
    \caption{The five subcases of the leftmost case in Figure \ref{fig:3conics}.}
    \label{fig:3conicsSubc}
\end{figure}
    \item[Step 1.B] Subdivide each case obtained in Step 1.A  further by connecting  the short segments from Step 1.A  in all admissible ways (consistent with the chosen intersection type) inside the union of the two ellipses. This determines how the third oval intersects  the horizontal ellipse in the interior of the vertical ellipse. 
        For the leftmost case in  Figure~\ref{fig:3conics}, all such connections are shown  in  Figure~\ref{fig:3conicsSubc}.
    \item[Step 1.C] Finally, for each subcase obtained in Step 1.B, complete the curve in all possible  ways to get  topologically distinct ovals (recall that an oval here is a simple closed curve) that intersect the two ellipses we started with according to the chosen intersection type. 
   For the leftmost subcase in Figure \ref{fig:3conicsSubc}, all its possible completions with intersection type (244) are shown in Figure \ref{fig:3conics_bis}.
    \begin{figure}[htb]
    \centering
    \includegraphics[scale=0.3]{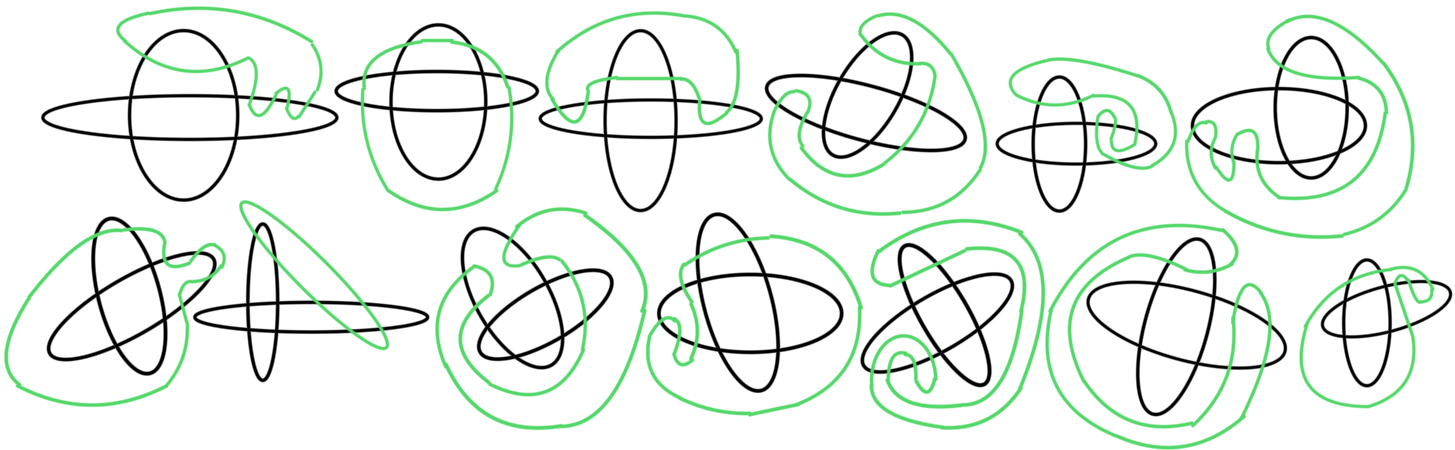}
    \caption{Up to simple symmetries such as reflection, precisely these thirteen ovals are obtained in Step 1.C from the leftmost subcase in Figure \ref{fig:3conicsSubc}. We number them from left to right as 1--6 in the first row and 7--13 in the second row.} \label{fig:3conics_bis}
\end{figure}
    \item[Step 2.] \textbf{Reduction}
    
    In this step, we determine which configurations found in Step 1  cannot be realized by three convex ovals, using the following two arguments\footnote{In our setting, all configurations that cannot be excluded in this way are realizable as convex configurations. We are not aware of a similar criterion for the case of more than three ovals and/or higher degree curves.}. 
    
    First, the intersection of two ellipse interiors must be convex. For instance, this excludes configurations 1, 3, 4, 6, 9, 10 and 13 from Figure \ref{fig:3conics_bis}.
    
    Second, a line intersects a convex oval in at most two points.
    This excludes configurations 5, 11 and 12 in Figure \ref{fig:3conics_bis}. To see this in configuration 12, consider the 4-sided convex intersection of the two ellipses and the line $L$ that passes through its lower-right and upper-left vertex. It is clear that $L$ must intersect the green oval in at least four points. In configuration 5, we reach the same conclusion if we consider the line spanned by the two upper left points of the intersection between the horizontal ellipse and the green oval.
    
    It not hard to see that all three out of the eleven configurations shown in Figure \ref{fig:3conics_bis} that are left can be drawn with convex ovals. These configurations, 2, 7, and 8, are realized with ellipses in Figures \ref{fig:244_321} (middle and right) and \ref{fig:244_433}.

    \item[Step 3.] \textbf{Identification}
    
    During this step, we decide which configurations of three ovals found in Step 2 are topologically equivalent and which are distinct.  
    To this end, we define the \textit{outer-arc type} of a configuration  as follows. Consider the complement of the interiors of the two ellipses and the oval. Count the boundary arcs of this complement and how many of them belong to each of the three curves. 
    Finally, order these three numbers decreasingly.
    Such a triple is called the \emph{outer-arc type} of a configuration; see Figures \ref{fig:222}--\ref{fig:444}. 
    We observe that two  configurations with distinct outer-arc types cannot be topologically equivalent.
    
    It is left to decide which configurations of the same outer-arc type are topologically equivalent.
 
    If the numbers of polycons inside two configurations differ, then the configurations are different. 
    Similarly, we can count regions with more than three sides to distinguish between distinct configurations.
    Finally, we identify that two configurations are equivalent by considering all permutations of the three ovals in one of the configurations.  
    \item[Step 4.] \textbf{Realization} 
    
    Represent the topological configurations of three ovals remaining after Step 3 by three ellipses (e.g., by using  \texttt{Geogebra} \cite{hohenwarter2002geogebra}) or show their non-realizability by ellipses using a method suggested by S.~Orevkov in \cite{MR1679799} that we describe in \Cref{sec:Pol}.
\end{enumerate}

\subsection{Non-realizability of certain configurations of ovals by ellipses}\label{sec:Pol}

In Step 4 of the method described above, we were able to realize all configurations with three ellipses, except the 5 configurations of intersection type $(444)$ in Figure \ref{Fig:exc}.

\begin{figure}[htb]
     \centering
         \begin{subfigure}[b]{0.19\textwidth}
         \centering
         \includegraphics[width=\textwidth]{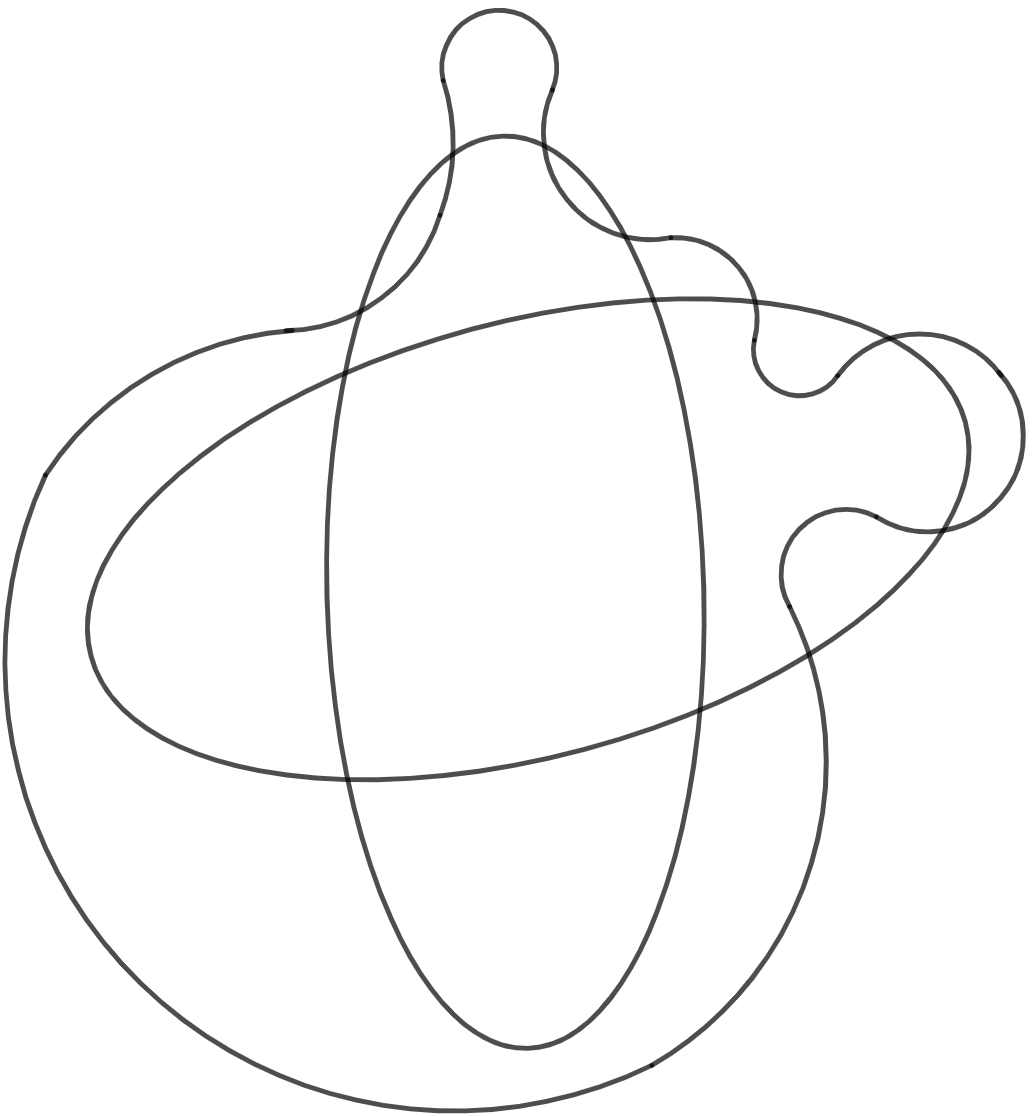}
         \caption{$544445112222$}
         \label{fig:exc-1}
     \end{subfigure}
     \hfill
         \begin{subfigure}[b]{0.20\textwidth}
         \centering
         \includegraphics[width=\textwidth]{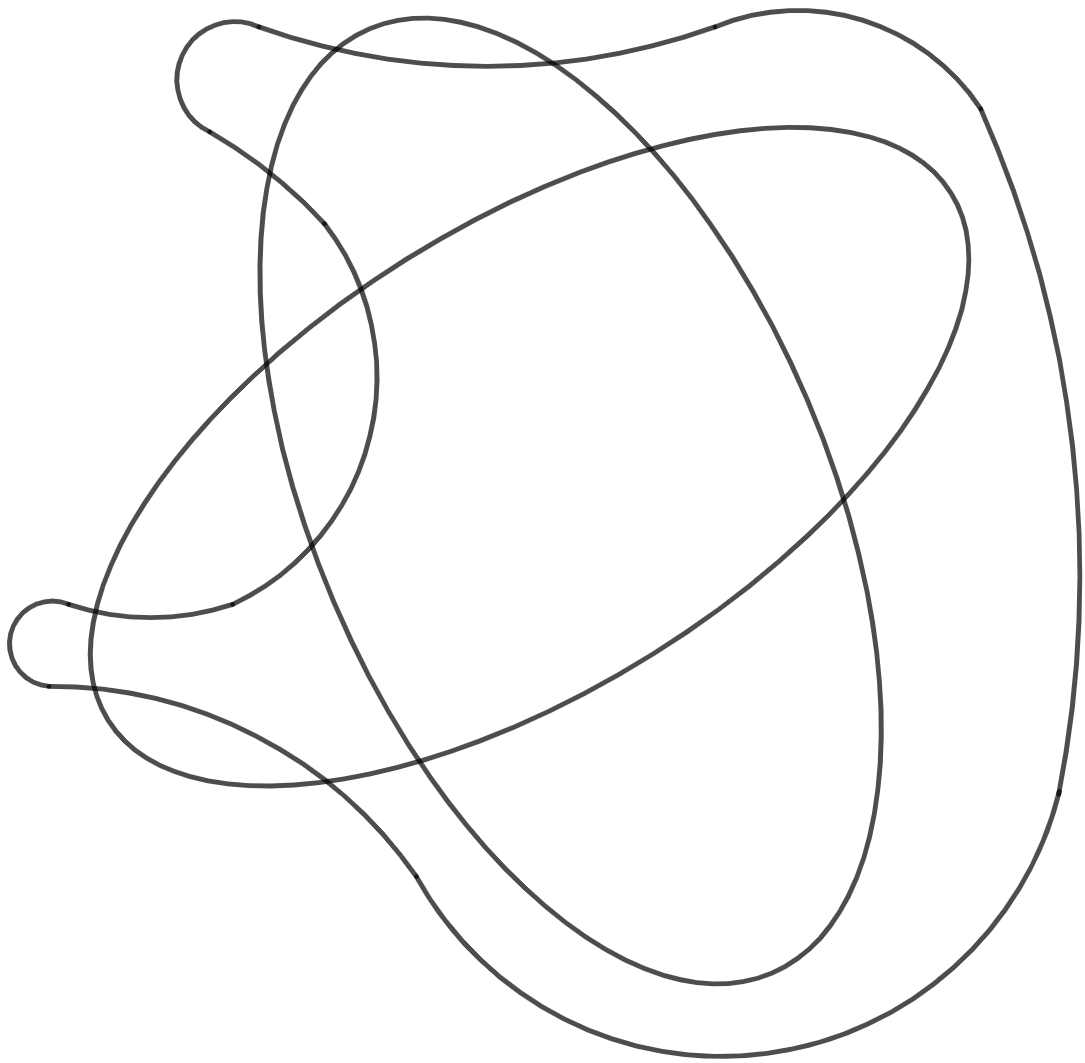}
         \caption{$4411522212125$}
         \label{fig:exc-2}
     \end{subfigure}
     \hfill
         \begin{subfigure}[b]{0.19\textwidth}
         \centering
         \includegraphics[width=\textwidth]{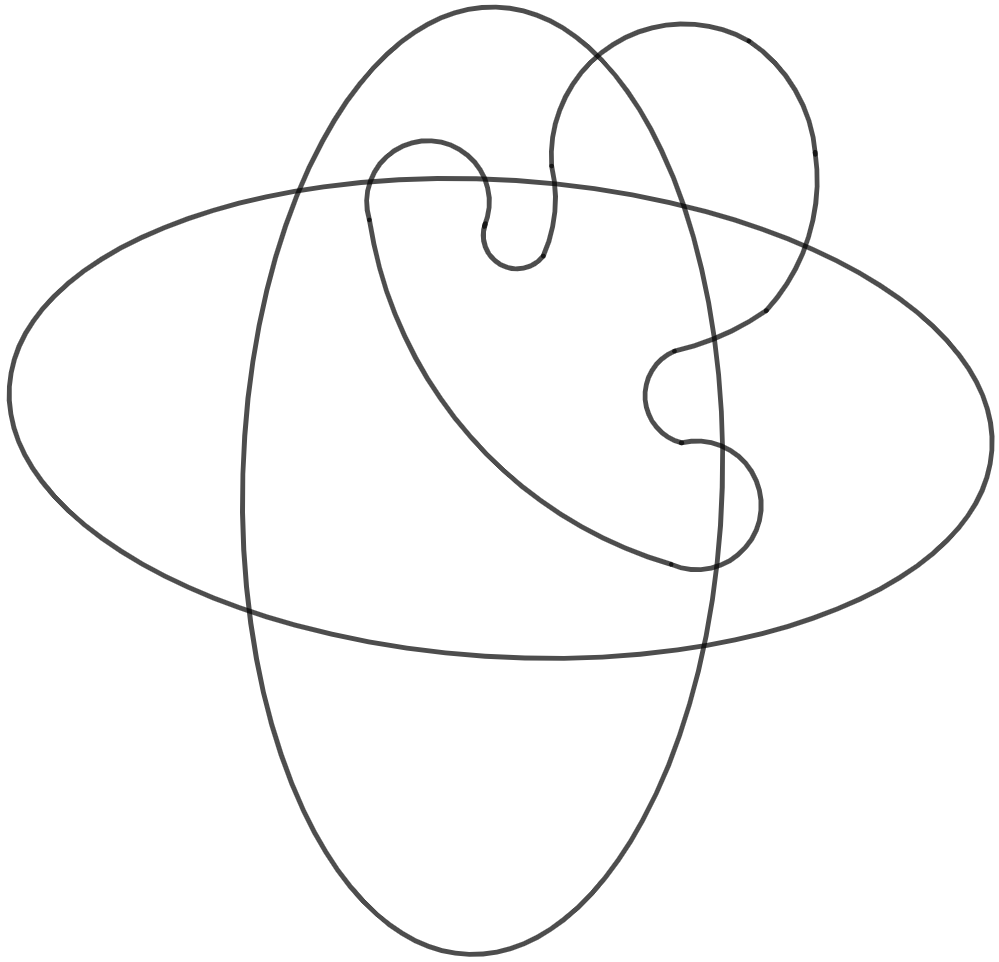}
         \caption{$554211121214$}
         \label{fig:exc-3}
     \end{subfigure}
      \hfill
    \begin{subfigure}[b]{0.19\textwidth}
         \centering
         \includegraphics[width=\textwidth]{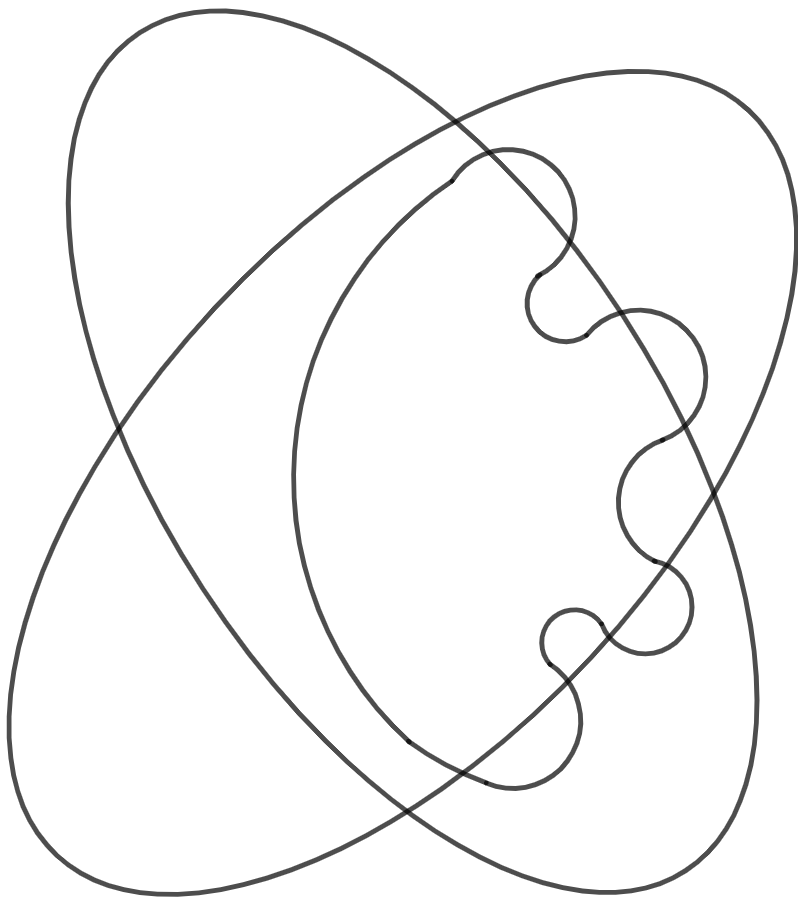}
         \caption{$554555522114$}
         \label{fig:exc-4}
     \end{subfigure}
      \hfill
         \begin{subfigure}[b]{0.19\textwidth}
         \centering
         \includegraphics[width=\textwidth]{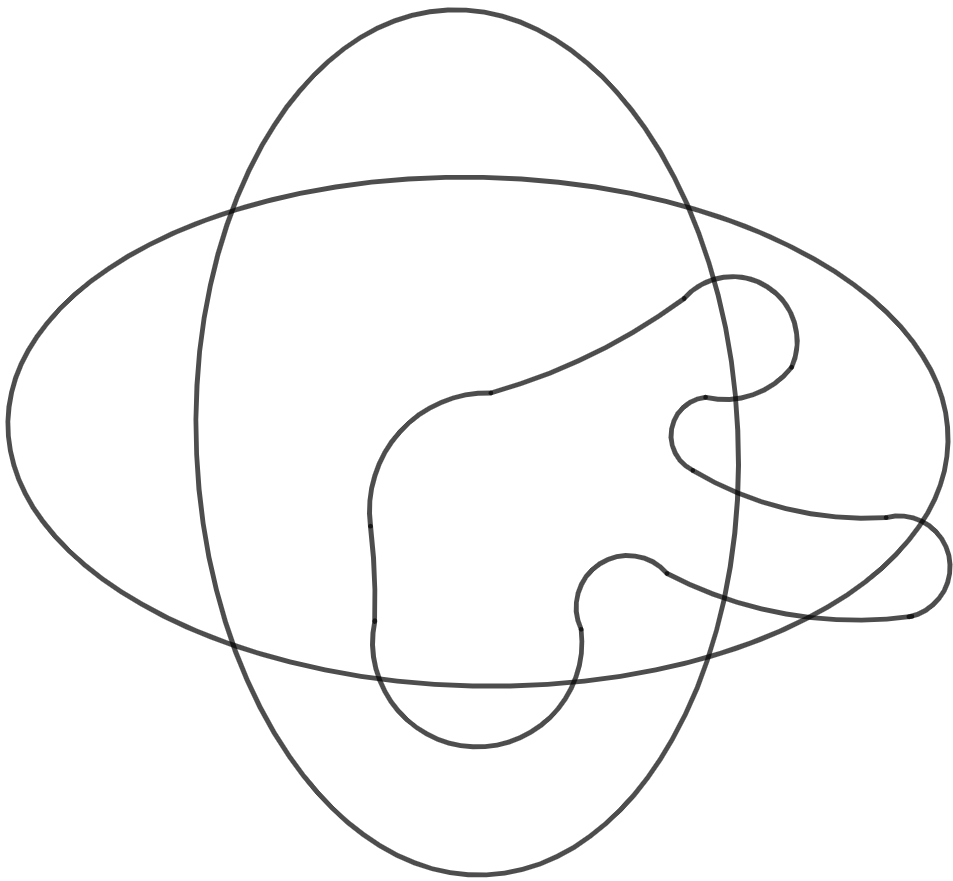}
         \caption{$544545542211$}
         \label{fig:exc-5}
     \end{subfigure}
        \caption{Configurations non-realizable by three ellipses. The subcaptions show their $\times$-codes. For instance, $544445112222$ is the shorthand for $\times_5\times_4\times_4\times_4\times_4\times_5\times_1\times_1\times_2\times_2\times_2\times_2$. }
        \label{Fig:exc}
\end{figure}

Further results of this subsection are due to G.~M.~Polotovskiy. 

\begin{proposition} \label{prop:1} 
None of the five configurations of three ovals shown in Figure~\ref{Fig:exc} can be realized as a union of three real conics. 
\end{proposition}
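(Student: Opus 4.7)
The plan is to prove non-realizability by applying Orevkov's auxiliary-curve method (as outlined in Section~\ref{sec:Pol}) separately to each of the five configurations, in each case extracting a contradiction from Bézout's theorem.

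First I would suppose, for contradiction, that one of the five configurations in Figure~\ref{Fig:exc} is realized by three real conics $C_1,C_2,C_3\subset\mathbb R^2$. Since every oval in the configuration is a bounded simple closed curve, every $C_i$ is forced to be an ellipse, hence bounds a convex disk $D_i$. The $\times$-code recorded in the caption encodes, in a chosen cyclic direction on the outer boundary of $\mathbb R^2\setminus(D_1\cup D_2\cup D_3)$, the sequence of arcs indexed by which $C_i$ they belong to; together with the convexity of each $D_i$, this combinatorial datum determines the configuration up to ambient homeomorphism of $\mathbb R^2$. In particular, the cyclic pattern in which $C_i\cap C_j$ meets $C_k$ is rigidly fixed.

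The core idea is then to choose, in each configuration, an auxiliary curve $\Gamma$ that is forced by this cyclic pattern to meet one of the $C_i$ in too many real points. The primary choice of $\Gamma$ is a real line $L$ through two carefully selected intersection points $p,q\in C_i\cap C_j$: by tracing the topological position of $L$ relative to the convex disks $D_k$ and the sequence of arcs recorded in the $\times$-code, one shows that $L$ must transversally cross some $C_k$ at least three times, which is forbidden by Bézout ($\#(L\cap C_k)\le 2$). When no such line suffices — which one expects for the more entangled $\times$-codes of the configurations in Figure~\ref{fig:exc-4} and \ref{fig:exc-5} — the auxiliary curve $\Gamma$ is instead taken to be a singular (reducible) member of the pencil $\lambda C_i+\mu C_j$, which is a pair of real lines through the four points of $C_i\cap C_j$; each component is a line, so $\Gamma\cdot C_k\le 4$, and the position of the two lines is further constrained because they must separate the four points of $C_i\cap C_j$ into two pairs in a way determined by the $\times$-code, again leading to a contradiction with the number of arcs of $C_k$ that the pencil must cross.

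The main obstacle is that this argument is intrinsically case-by-case: each of the five $\times$-codes in Figure~\ref{Fig:exc} yields its own combinatorial diagram, so the selection of $p,q$ (or of the correct degenerate member of the pencil), together with the explicit verification that the count of transversal crossings exceeds the Bézout bound, must be carried out separately for each configuration by a careful combinatorial inspection of the arc sequence. A secondary subtlety is checking that convexity of the $D_i$, not just topological embeddedness of the ovals, is genuinely used: without convexity several of the configurations are in fact realizable by smooth ovals (this is already visible in Step~2 of the catalog construction in \Cref{sec:makeCatalog}), so each argument must ultimately invoke a line that bisects a convex disk in a way incompatible with the cyclic pattern forced by the $\times$-code.
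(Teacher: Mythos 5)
Your proposal names ``Orevkov's auxiliary-curve method'' but then actually sketches something else entirely: an elementary planar argument based on Bézout's theorem for auxiliary lines (and, as a fallback, for reducible members of the pencil through $C_i\cap C_j$). This is not the method the paper uses, and -- more importantly -- the primary version of your argument cannot succeed in principle. The footnote attached to Step~2 of the catalog construction says exactly that all configurations surviving Step~2 \emph{are} realizable by three convex ovals; the five configurations in Figure~\ref{Fig:exc} are among these. But your auxiliary-line argument uses only two facts about each $C_k$: it bounds a convex disk, and a line meets it in at most two real points. Both hold for arbitrary convex ovals, so any contradiction your argument produced would also rule out a convex-oval realization, contradicting the footnote. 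The obstruction the paper needs is inherently algebraic (it sees the complexification of the conics, not just their real picture), and this is exactly what the paper's braid-theoretic argument captures: it builds the link $K(C_m,p)$ in $L_p(\C)$, passes to the associated braid, invokes Rudolph's quasipositivity theorem for braids coming from real algebraic curves, and then checks the Murasugi--Tristram inequality (via the $\times$-code and a computer computation), finding the value $2>0$ for all five configurations. None of this is reproduced, or replaceable, by planar Bézout counts.

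Two further issues. First, you misread the $\times$-code: it does not record the cyclic sequence of outer boundary arcs of $\R^2\setminus(D_1\cup D_2\cup D_3)$, but rather the sequence of strand positions $k$ of the critical lines (tangencies and crunodes) of the chosen maximal pencil $L_p$, which is what feeds into the braid construction; so the combinatorial invariant you propose to reason from is not the one written in the captions. Second, your fallback involving the degenerate member of the pencil $\lambda C_i+\mu C_j$ is genuinely algebraic and at least has a chance of seeing something convex-oval arguments miss, but you neither verify it works for any of the five configurations nor address whether the degenerate conic is real (the pencil through four real or two pairs of complex-conjugate base points has three reducible members, which need not all be pairs of real lines). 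As written, the proposal is a sketch of an approach that is mostly doomed by the convex-realizability observation, with the surviving fragment left completely unverified.
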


\begin{proof}  The argument uses the method based on the theory of braids and links suggested by S.~Orevkov  in \cite{MR1679799}. This method has become standard in the problems of topology of reducible real algebraic curves, see e.g.,  \cite{MR1935547}, and \cite{MR1958018}; below we  only present its short account sufficient for the basic understanding of the argument. 

Let $C_m$ be a real projective algebraic plane curve of degree $m$  (i.e., a curve defined by a real homogeneous degree-$m$ polynomial in three variables), all singularities of which are nodes. We write  $C_m(\R)$ for  its real part.

Assume that there exists a point  $p\in \PP^2(\R)\setminus  C_m(\R)$ such that the pencil $L_p$  of lines through  $p$ is \textit{maximal}, which means the following: 
\begin{enumerate}
    \item[a)] $L_p$ contains a line $l_0$ that intersects the curve 
$C_m(\R)$ in $m$  distinct real points. We call $l_0$ a \textit{maximal line}.
\item[b)] Every line  $l\in L_p$ intersects $C_m(\R)$ in at least $m - 2$ distinct real points.
\item[c)] Each line of the pencil has no more than one real point of double intersection with $C_m(\R)$. Each such critical line is either tangent to 
 $C_m(\R)$ or intersects  $C_m(\R)$  at a (cru)node, i.e., a real node where two real branches intersect each other.
\end{enumerate}

For each configuration in Figure \ref{Fig:exc} (assuming it could be realized by three conics), a maximal pencil would obviously exist: the point $p$ can be chosen in the intersection of the interior of the three ovals (e.g., as in Figure \ref{Fig4} left). 
With such a choice  of $p$ every line in the pencil  $L_p$ intersects the curve in six real points (counting multiplicities).  We observe that condition c)  can always be achieved by a small perturbation of the point $p$.

Let us now choose affine coordinates $(x,y)$ in $\R^2$ in such a way that the line $l_0$  becomes the line at infinity (which implies that the point $p$ is also located at infinity) and such that the pencil $L_p$ will become the pencil of parallel lines   $\{l_t\}$  where $l_t$ is the line given by the equation $x=t$; see Figure~\ref{Fig2} left, where $\{l_t\}$ is shown as parallel lines.

\begin{figure}[htb]
    \centering
	\includegraphics[width=\textwidth]{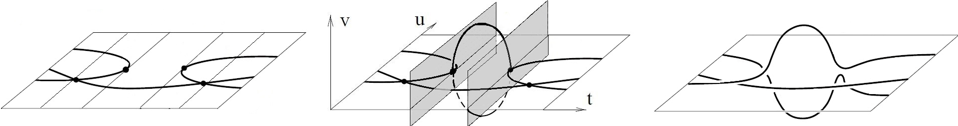}
	\caption{Constructing a link using a maximal pencil. }
	\label{Fig2}
\end{figure}

\begin{figure}[htb]
	\centering
	\includegraphics[height=4cm]{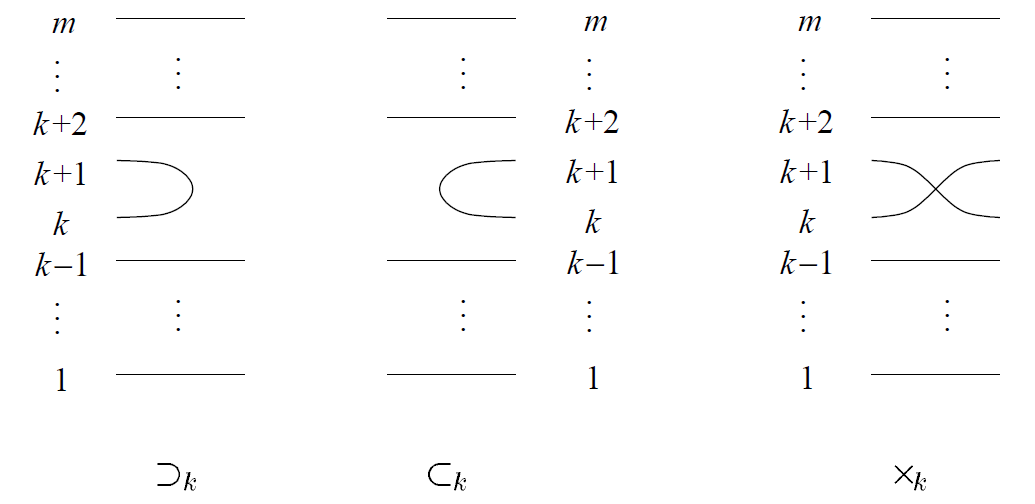}
		\caption{Possible symbols of the $\times$-code.}
		\label{Fig3}
	\end{figure}

Let $\{l_{t_1}, \dots , l_{t_s}\}$ be the set of critical lines (i.e., lines passing through crunodes  or tangent to $C_m(\R)$), ordered according to the increase of parameter values ${t_i}$.
The scheme  of location of the curve $C_m(\R)$ with respect to the pencil  $L_p$  is coded by the word $u_1\cdots u_s$ where the letter $u_i$ characterizes 
the local behavior  of $C_m(\R)$ near $l_{t_i}$ and attains one of the three possible values: $\supset\!_k$, $\subset\!_k$, $\times\!_k$ ($k\in \{1,\dots,m-1\}$)
as explained in Figure~\ref{Fig3}. 

In what follows, we call the coding word the \emph{$\times$-code}.  
In the configurations in Figure \ref{Fig:exc} (assuming they could be realized by three conics), the symbols     $\subset\!_k, \supset\!_k$
are unnecessary since  every line from the pencil $L_p$ intersects each of the conics transversally. Thus, their $\times$-codes  contain only the symbols $\times\!_k$ for $k\in \{1,\dots,5\}$. 

Next we include $C_m(\R)$ into a bigger  one-dimensional singular curve $M\subset \PP^2$ defined as  $M:=C_m(\C) \cap
L_p(\C)$,    
where  $L_p(\C)$ is the complexification of the real pencil $L_p$, i.e., we substitute each  line in $L_p$ by its complexification in $\PP^2$. 
Observe that $L_p(\C)$ is a  $3$-dimensional subset of $\PP^2$ and every complex line from $L_p(\C)$  intersects $C_m(\C)$ in finitely many points. 
The union of these points (for all values of the real parameter in $L_p$ running over $\PP^1(\R)$) forms the  curve $M\subset \PP^2$ which obviously includes 
$C_m(\R)$.

The curve $M$ is homeomorphic to a collection of circles, some of which are pairwise glued together at the nodes of $C_m(\R)$  and at the tangency points of the pencil $L_p$ with this curve (see Figure~\ref{Fig2} center\footnote{The figure is schematic -- the ``imaginary axis'' $v$ is $2$-dimensional.}).

Removing all the gluing points in a standard way (see Figure~\ref{Fig2} right),  we obtain the link $K(C_m,p)\subset L_p(\C)$. (Geometric details of this resolution are a bit lengthy and can be found in  \cite{MR1679799}, pp. 13--14.)
Let $b(C_m,p)$ denote a braid with $m$ strands whose closure coincides with $K(C_m,p)$. 
For what follows  it is important to observe\footnote{In principle,  Orevkov's method is applicable even in the case when condition  b) does not hold, but in such situation it is very difficult to present/check all possible occurring links in the complex domain.} that condition b)  guarantees that the braid $b(C_m,p)$ is uniquely determined (up to conjugation in the group $B_m$ of braids with $m$ strands) by the relative position of the curve $C_m(\R)$ and the pencil $L_p$ 
  in 
$\PP^2(\R)$.  
Recall that the group 
$B_m$ has the following standard (co)representation in terms of the generators $\sigma_k$:
$$
\langle\sigma_1,\dots,\sigma_{m-1}\,|\,\text{
	$\sigma_i\sigma_j = \sigma_j\sigma_i$ if
	$|i-j|>1$,
	$\sigma_i\sigma_j\sigma_i=\sigma_j\sigma_i\sigma_j$
	if $|i-j|=1$}\rangle.
$$
If the initial curve is algebraic, then the obtained braid $b(C_m,p)$ must be \textit{quasipositive} \cite{MR683760}, i.e., it has 
to admit a presentation in the form
$\prod_{j=1}^k\omega_j\sigma_{i_j}\omega_j^{-1}$, where 
$\omega_j$  are some words in the alphabet $\{\sigma_1, \dots , \sigma_{m-1}, \sigma_1^{-1}, \dots ,
\sigma_{m-1}^{-1}\}$. 

As a necessary condition of quasipositivity, S.~Orevkov \cite{MR1679799} suggested to use  the following (for notation see ibid.): 

\smallskip{}
\noindent
{\bf Murasugi--Tristram inequality}. 
\textit{If $b=\prod \sigma_i^{k_i}$ is a quasipositive braid with $m$ strands, then  its closure satisfies the inequality $$|\sigma(b)|+m-e(b)-n(b)\leq0,$$
	where $\sigma(b)$ and $n(b)$ are the signature and the defect of the closure of the braid $b$, and $e(b)=\sum k_i$ is the algebraic degree of the braid $b$}. 
	
\smallskip{}	
	Recall that the signature and the defect of a link are defined as the signature and defect of its Seifert matrix (quadratic form)  which, by definition, is the intersection matrix of the cycles on the Seifert surface of the link, see details in e.g., \cite{MR1679799}, \S~2.5--2.6.

\begin{figure}[htb]
    \centering
    \includegraphics[scale=0.3]{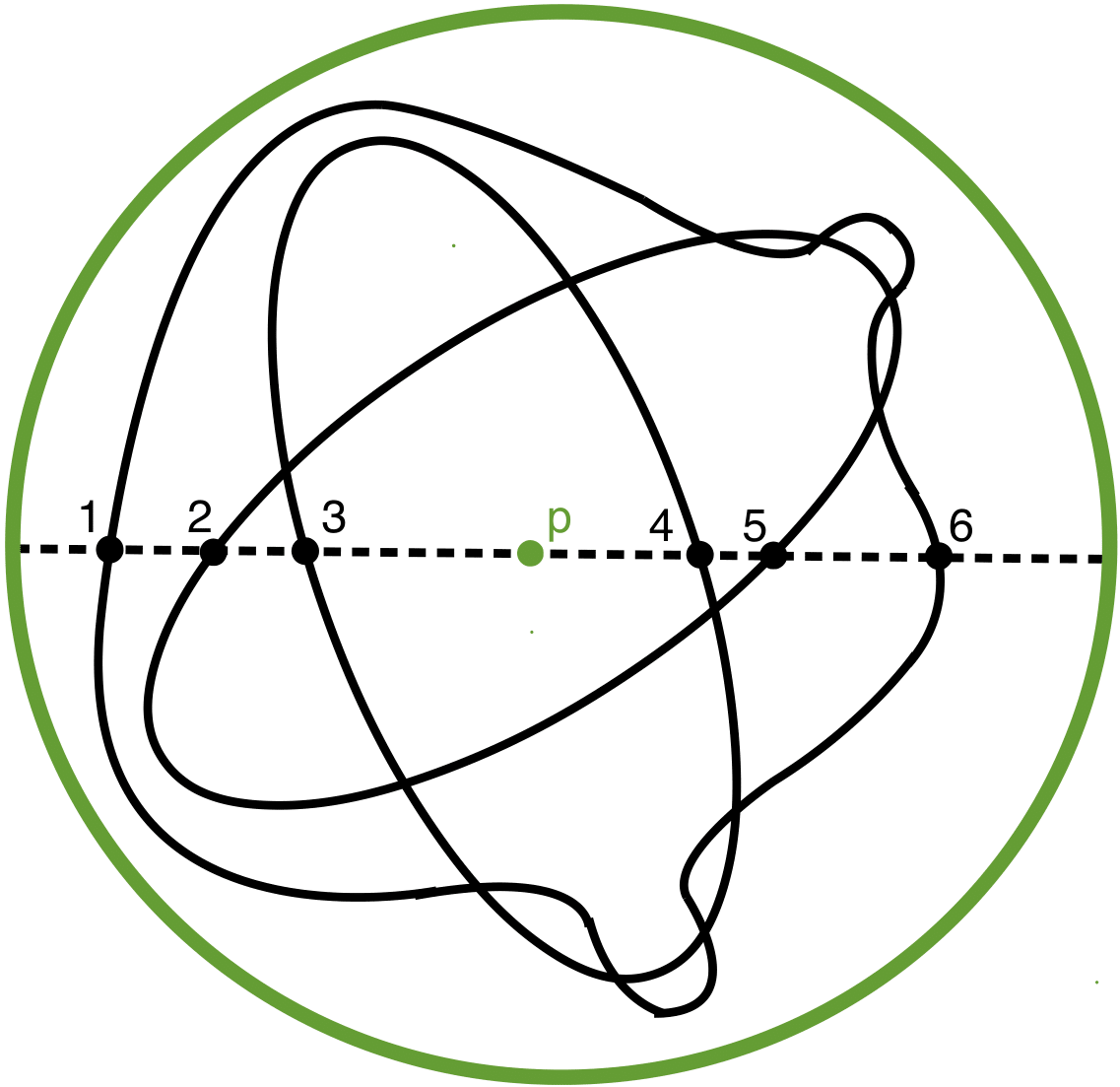}
    \hspace{2cm}
    \includegraphics[scale=0.3]{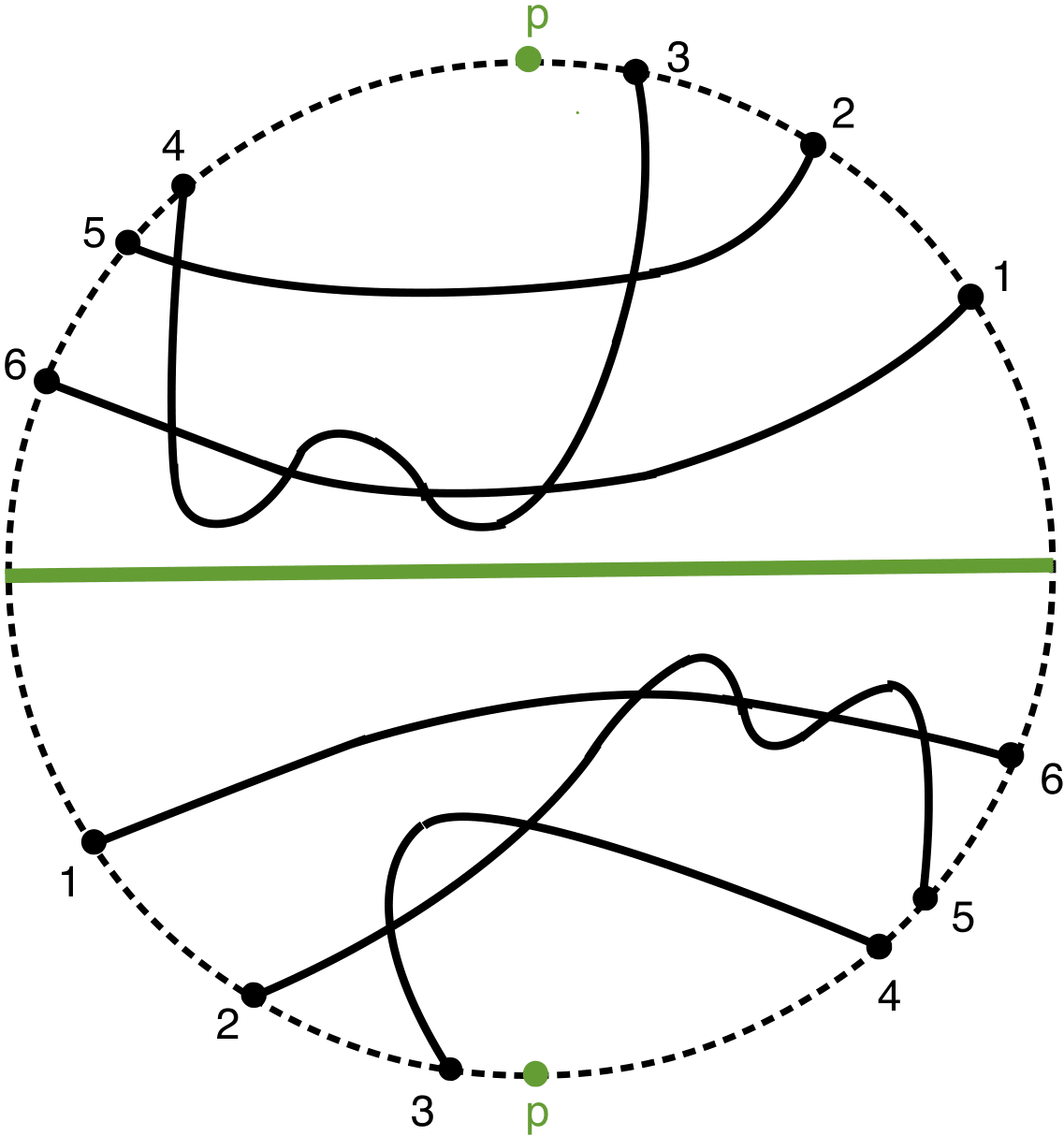}
    \caption{The leftmost configuration from Figure \ref{Fig:exc} in $\PP^2(\R)$ with a line (dashed) that intersects the three ovals in six real points.}
    \label{Fig4}
\end{figure}

\begin{figure}[htb]
    \centering
    \includegraphics[scale = 0.45]{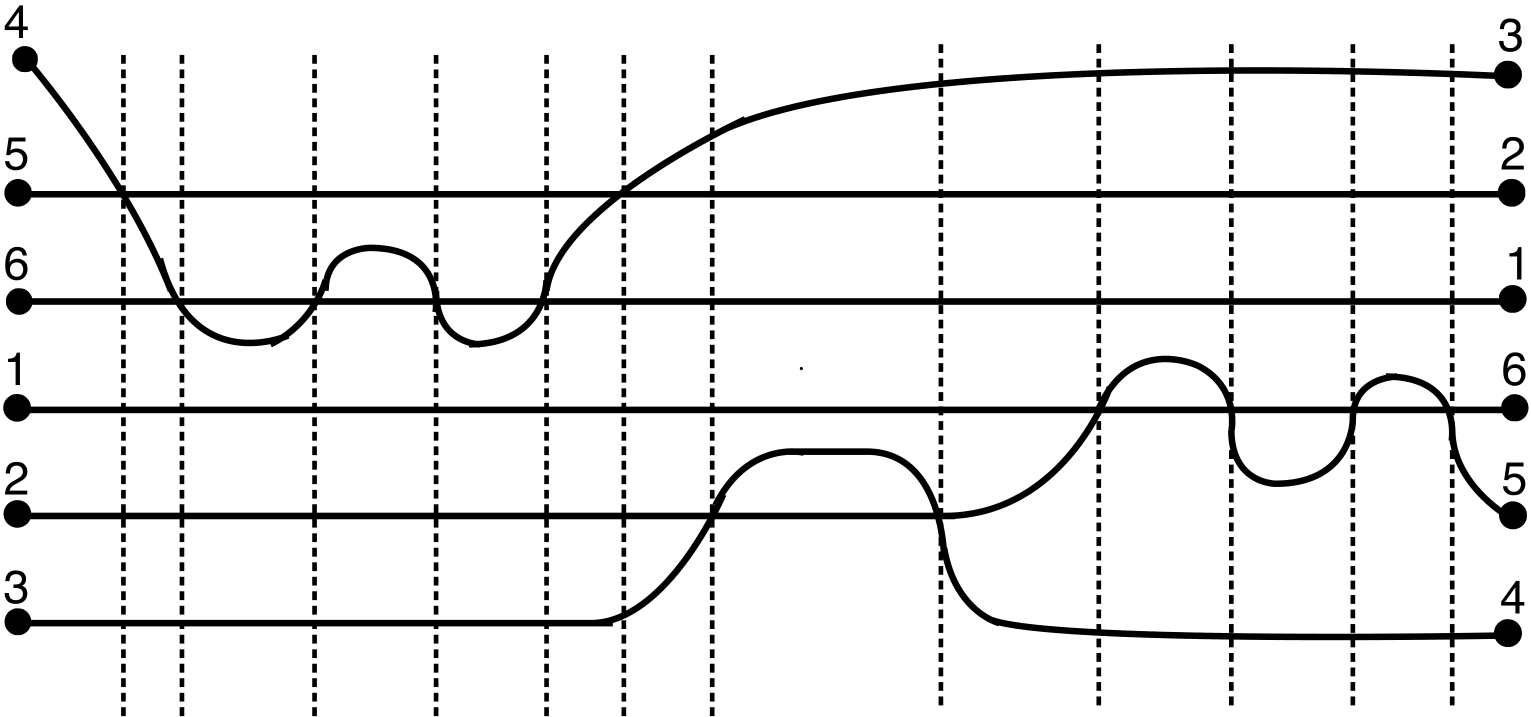}
    \caption{Affine picture of \Cref{Fig4} (right) such that all lines through $p$ are vertical.}
    \label{Fig5}
\end{figure}

For the leftmost configuration in Figure~\ref{Fig:exc}, we choose the point $p$ and the line $l_0$ as shown in Figure~\ref{Fig4} left, where the curve is shown in the real projective plane modelled on the unit disk with pairwise identified opposite  points of its boundary circle. For the convenience of producing the  $\times$-code, let us choose the system of coordinates in a different way -- namely, such that 
$l_0$ becomes the boundary circle as in Figure~\ref{Fig4} right.  Now using the affine plane 
(i.e., sending the line $l_0$ to infinity), we obtain Figure~\ref{Fig5} from which it is easy to obtain the  $\times$-code: 
$\times\!_5\times\!_4\times\!_4\times\!_4\times\!_4\times\!_5\times\!_1\times\!_1\times\!_2\times\!_2\times\!_2\times\!_2$.
Similarly, we obtain the $\times$-codes of the remaining four configurations in \Cref{Fig:exc}, presented in the subcaptions. 

The computer program written in the early 2000's by   M.~Gushchin,
allows us to calculate the left-hand side of the Mirasugi--Tristram inequality using the $\times$-code.\footnote
{S.~Orevkov has written his own code  which is partially published in \cite{MR1935547}. In all previously tested cases   both programs give the same outcome.}
For all the five $\times$-codes obtained above, the computer results for the left-hand side are equal to $2$, which means that neither of the configurations in Figure~\ref{Fig:exc} can be realized as a union of three conics. 
\end{proof}

\subsection{The catalog}\label{sec:catalog}

We present all 44 configurations of three ellipses described in \Cref{thm:wachspressellipses} in Figures~\ref{fig:222}--\ref{fig:444}. We found some configurations difficult to draw without having some arcs of the conics to be very close to each other.
We provide topological sketches of these configurations in \Cref{fig:sketches}. In the next subsection, we prove Wachspress's  conjecture for 28 of the 44 configurations.
The remaining 16 problematic configurations are framed in Figures~\ref{fig:222}--\ref{fig:444}.
We will see that each of these has in fact only one problematic polycon up to symmetry, shaded in orange in the figures.

\begin{figure}[htb]
     \centering
         \begin{subfigure}[b]{0.24\textwidth}
         \centering
         \includegraphics[width=\textwidth]{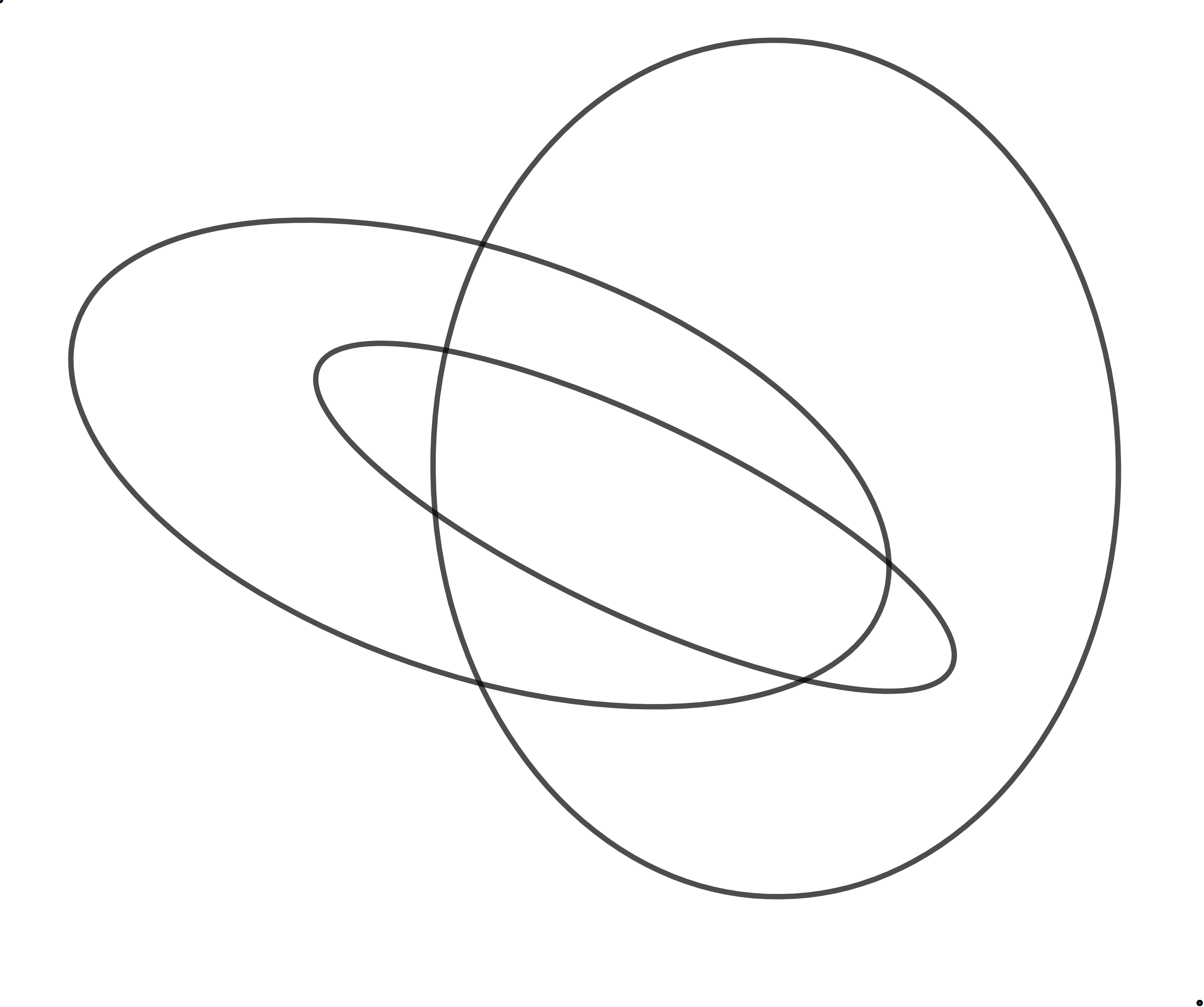}
         \caption{110}
         \label{fig:222_110}
     \end{subfigure}
     \hfill
         \begin{subfigure}[b]{0.24\textwidth}
         \centering
         \frame{\includegraphics[height=2.5cm, width=3cm]{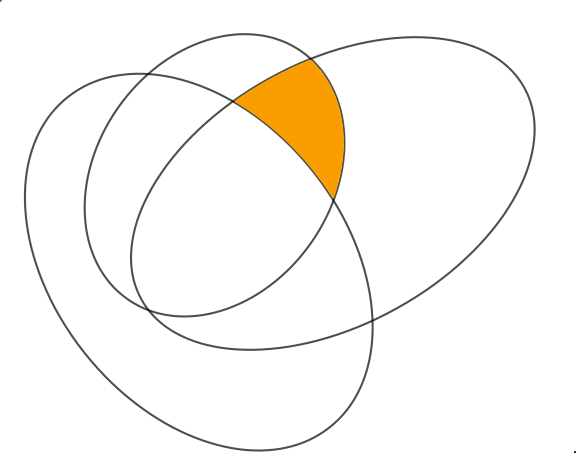}}
         \caption{111}
         \label{fig:222_111}
     \end{subfigure}
     \hfill
         \begin{subfigure}[b]{0.24\textwidth}
         \centering
         \frame{\includegraphics[height=2.5cm,width=3cm]{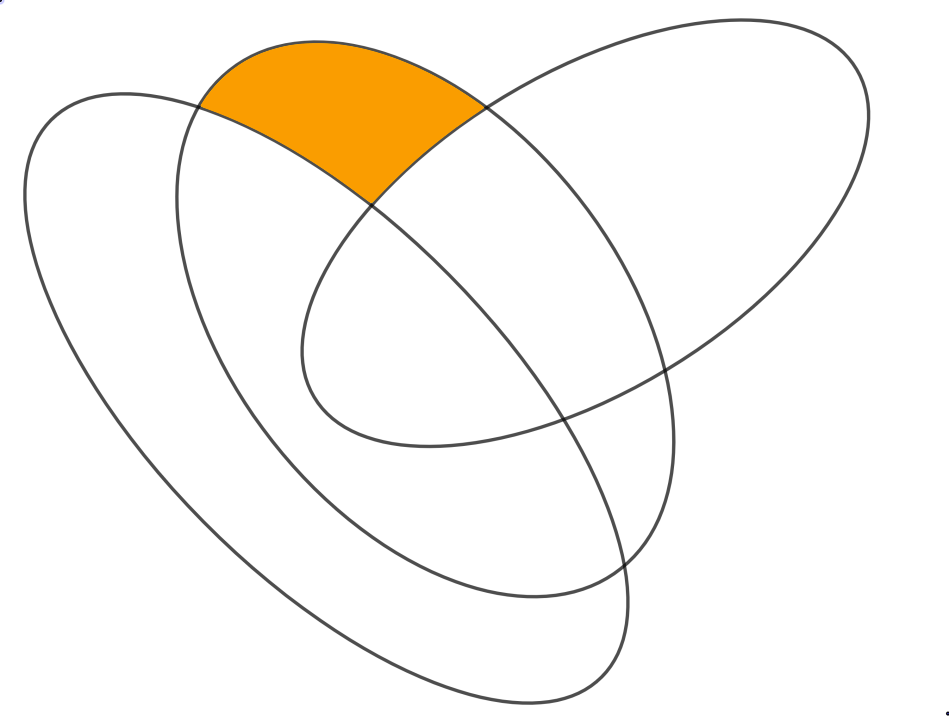}}
         \caption{211}
         \label{fig:222_211}
     \end{subfigure}
    \begin{subfigure}[b]{0.24\textwidth}
         \centering
         \includegraphics[width=\textwidth]{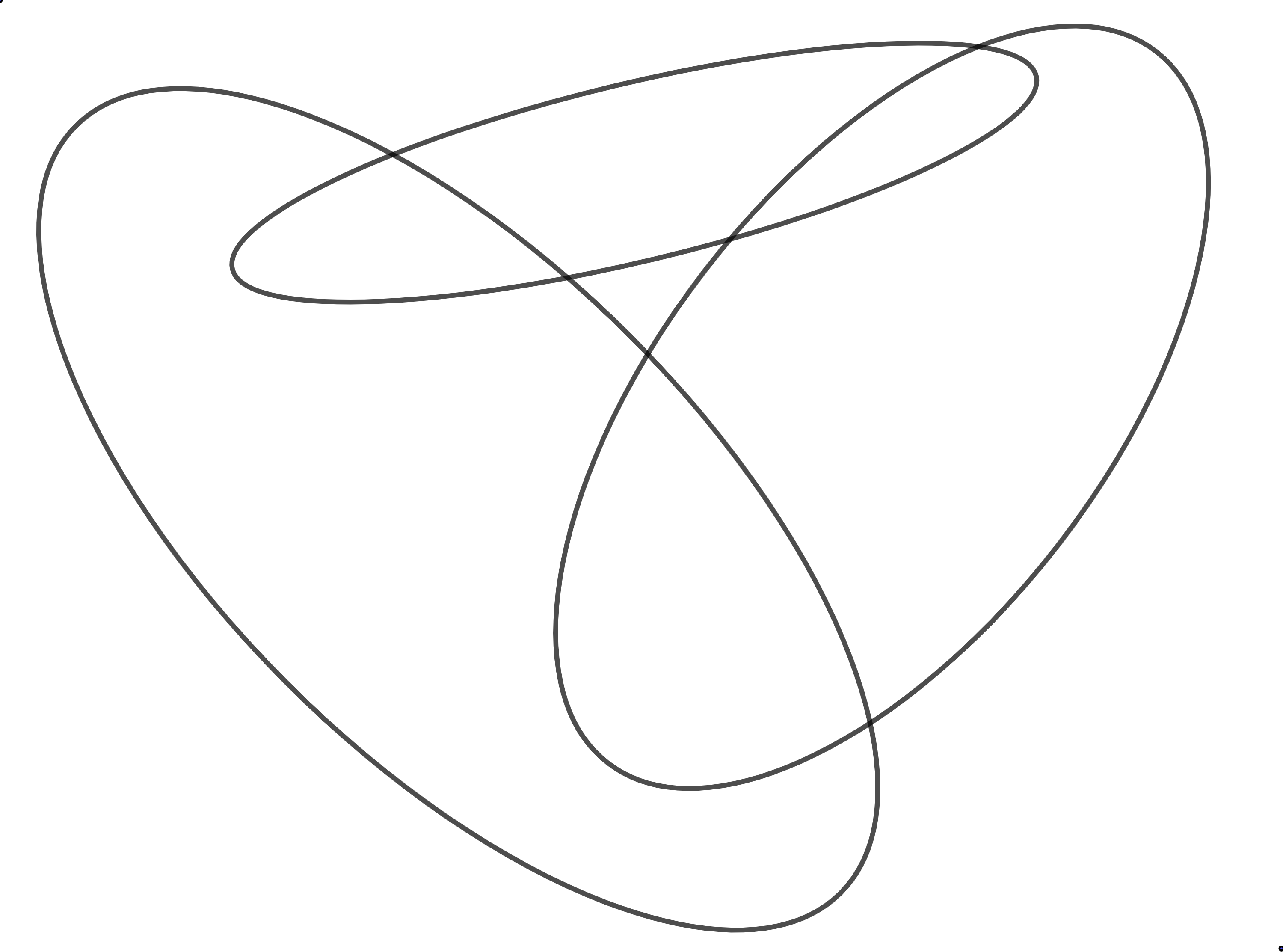}
         \caption{222}
         \label{fig:222_222}
     \end{subfigure}
        \caption{Intersection type (222). The subcaptions show the outer arc type.
        Problematic configurations are framed: The unique (up to symmetry) problematic polycon is orange.}
        \label{fig:222}
\end{figure}

\begin{figure}[htb]
     \centering
     \begin{subfigure}[b]{0.24\textwidth}
         \centering
         \includegraphics[width=\textwidth]{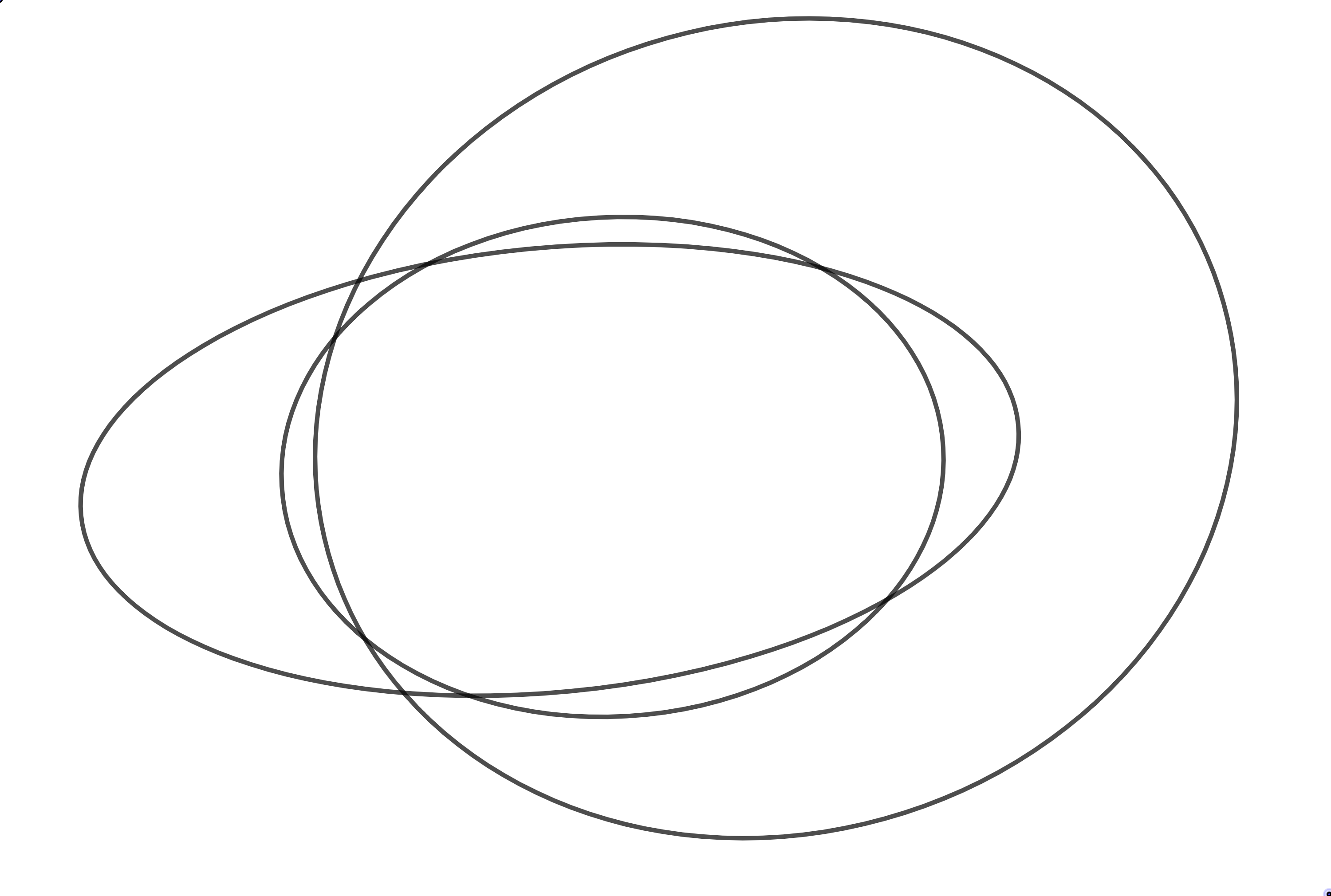}
         \caption{110}
         \label{fig:224_110}
     \end{subfigure}
         \begin{subfigure}[b]{0.24\textwidth}
         \centering
         \frame{\includegraphics[height=2.5cm,width=3cm]{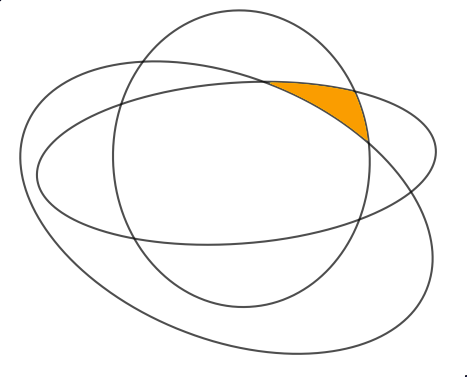}}
         \caption{111}
         \label{fig:224_111}
     \end{subfigure}
     \hfill
      \begin{subfigure}[b]{0.48\textwidth}
         \centering
         \frame{\includegraphics[height=2.5cm,width=3cm]{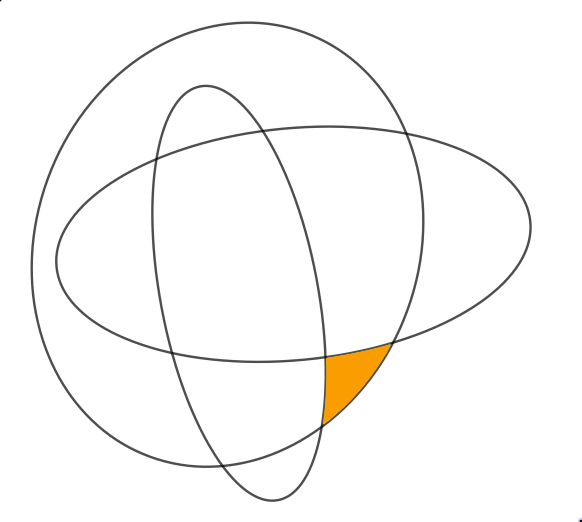}}
         \includegraphics[width=0.49\textwidth,width=3cm]{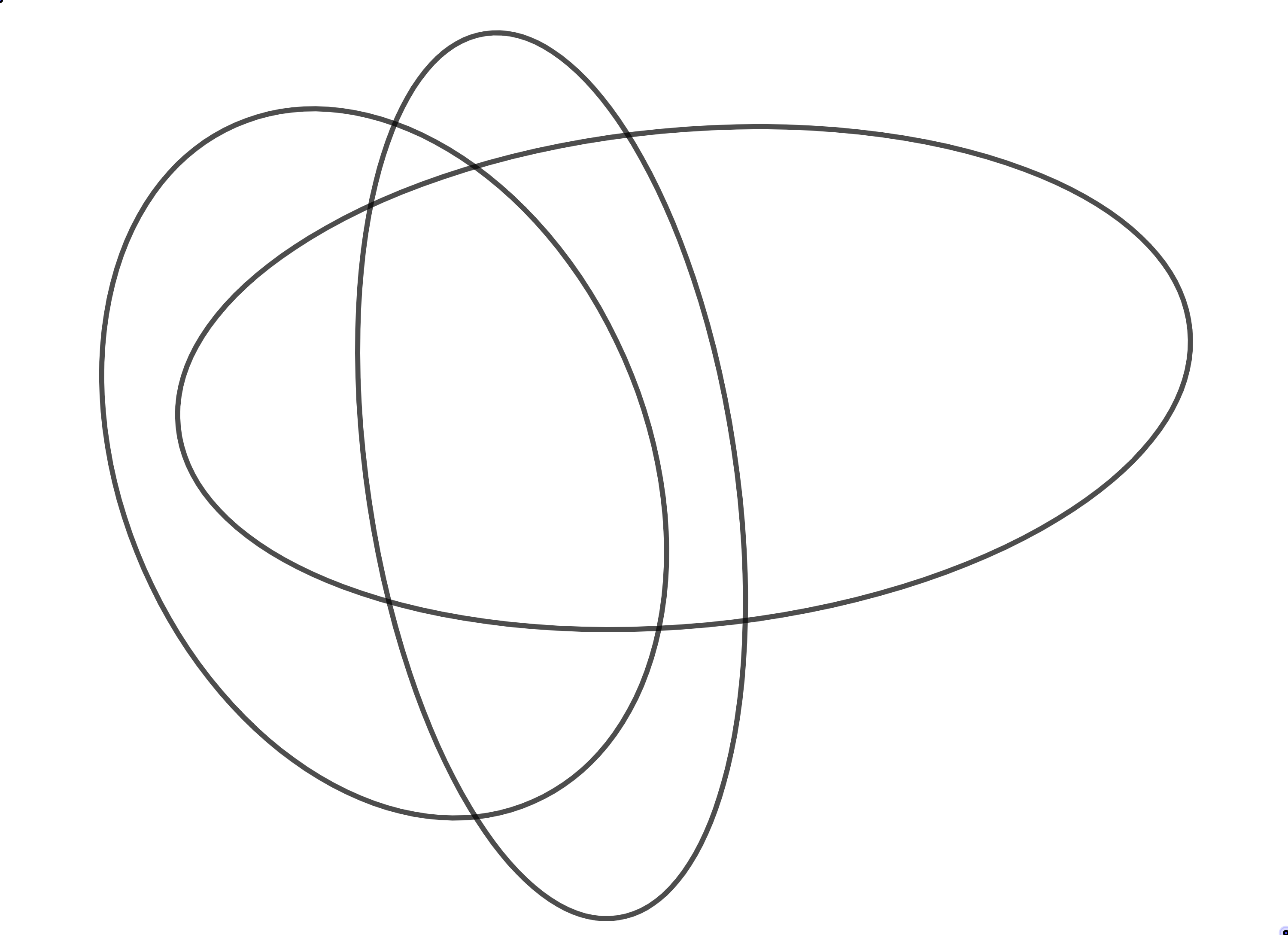}
         \caption{211}
         \label{fig:224_211}
     \end{subfigure}  
         \begin{subfigure}[b]{0.24\textwidth}
         \centering
         \includegraphics[width=\textwidth]{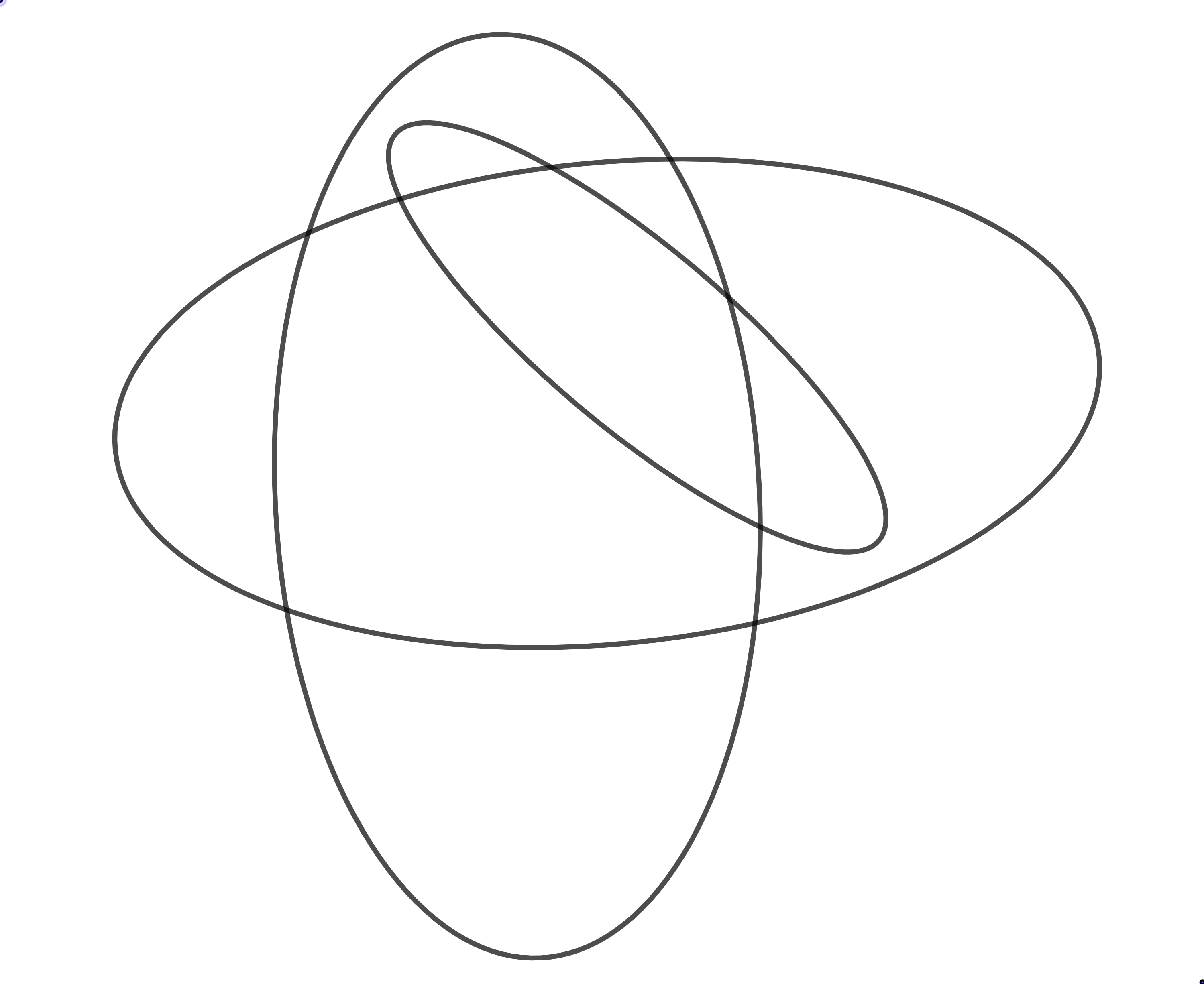}
         \caption{220}
         \label{fig:224_220}
     \end{subfigure}
     \hfill
         \begin{subfigure}[b]{0.24\textwidth}
         \centering
         \frame{\includegraphics[height=2.5cm,width=3cm]{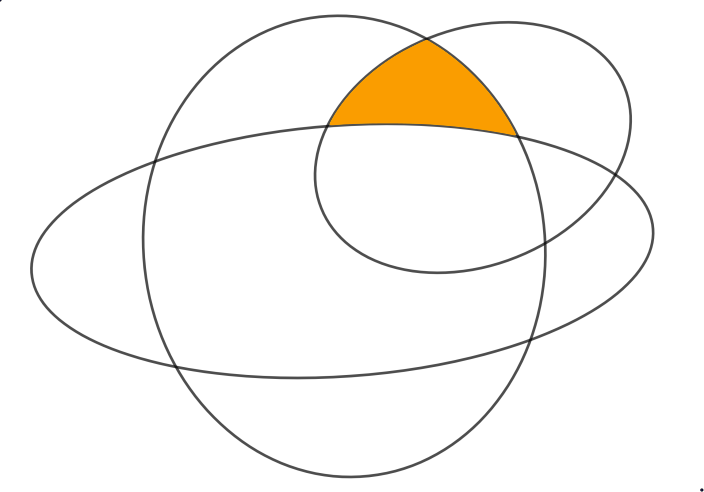}}
         \caption{221}
         \label{fig:224_221}
     \end{subfigure}
     \hfill
         \begin{subfigure}[b]{0.24\textwidth}
         \centering
         \frame{\includegraphics[height=2.5cm,width=3cm]{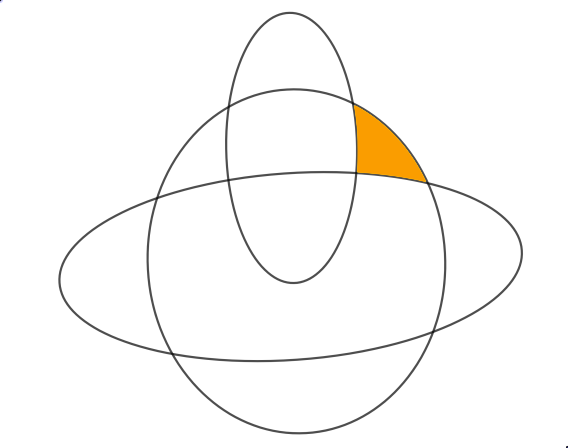}}
         \caption{321}
         \label{fig:224_321}
     \end{subfigure}
    \begin{subfigure}[b]{0.24\textwidth}
         \centering
         \includegraphics[width=\textwidth]{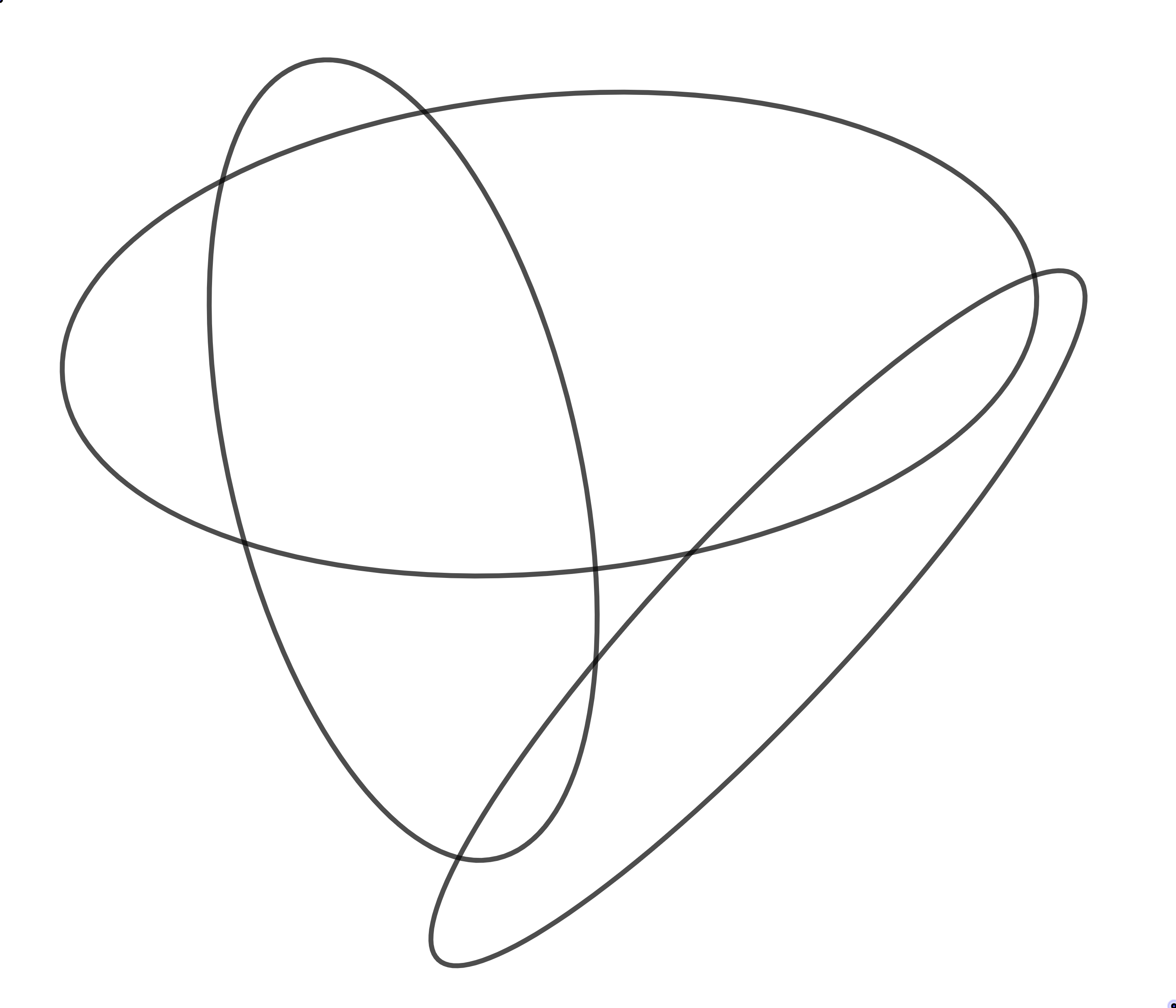}
         \caption{332}
         \label{fig:224_332}
     \end{subfigure}
        \caption{Intersection type (224). The subcaptions show the outer arc type.   Problematic configurations are framed: The unique (up to symmetry) problematic polycon is orange.}
        \label{fig:224}
\end{figure}

\begin{figure}[htb]
     \centering
     \begin{subfigure}[b]{0.19\textwidth}
         \centering
         \includegraphics[width=\textwidth,height=2cm]{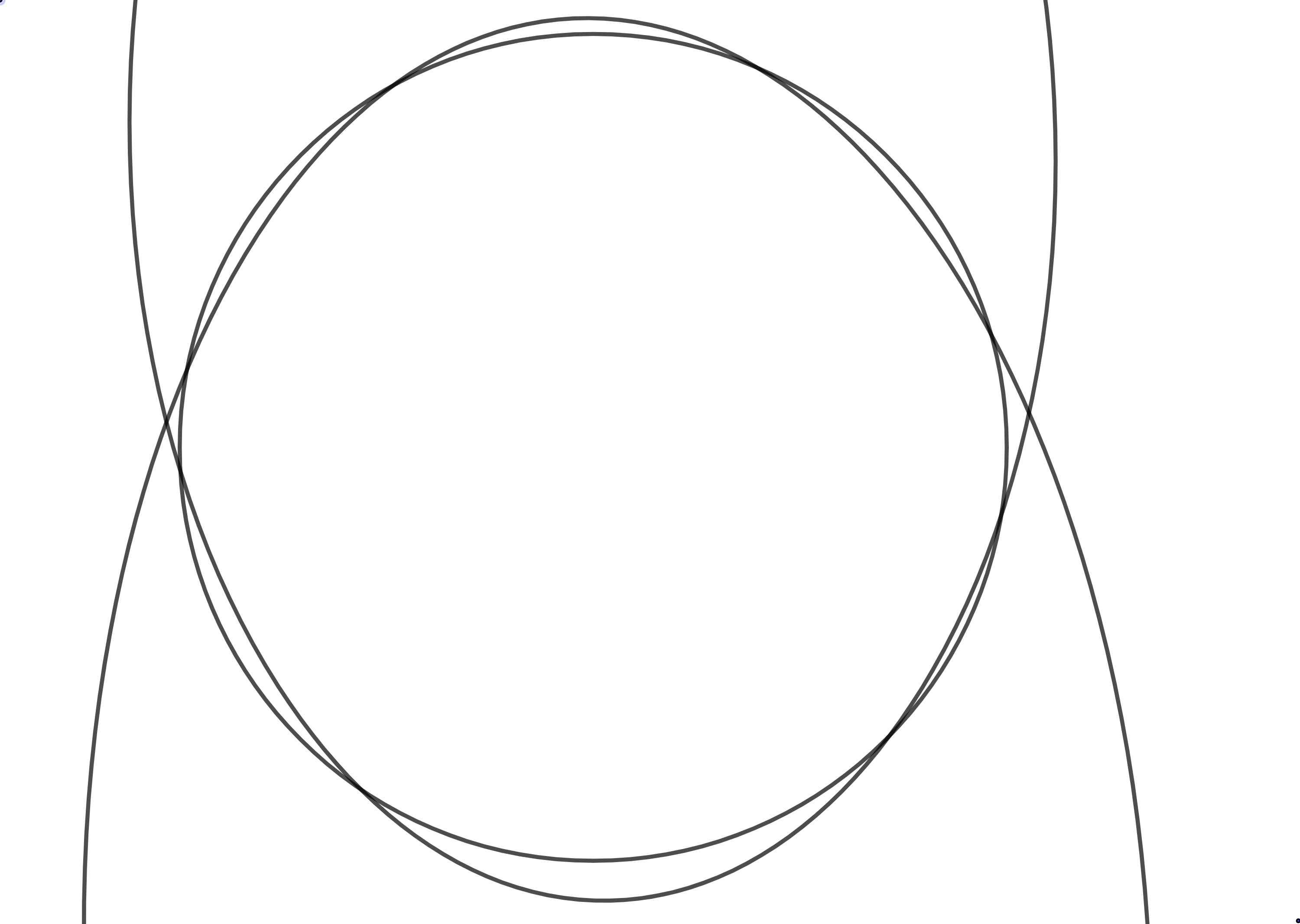}
         \caption{110}
         \label{fig:244_110}
     \end{subfigure}
     \hfill
         \begin{subfigure}[b]{0.19\textwidth}
         \centering
         \includegraphics[width=\textwidth]{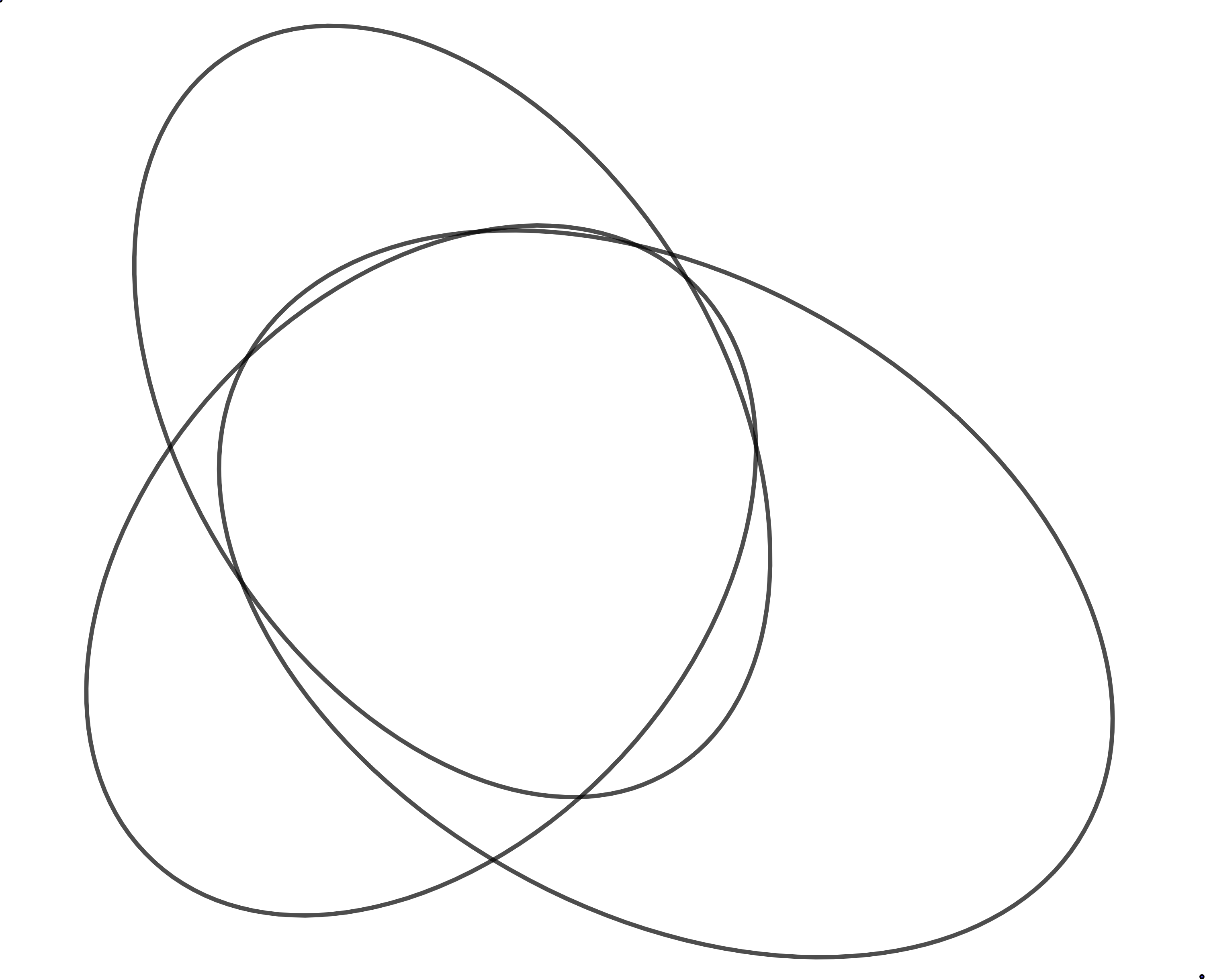}
         \caption{111}
         \label{fig:244_111}
     \end{subfigure}
     \hfill
         \begin{subfigure}[b]{0.19\textwidth}
         \centering
         \includegraphics[width=\textwidth,height=2cm]{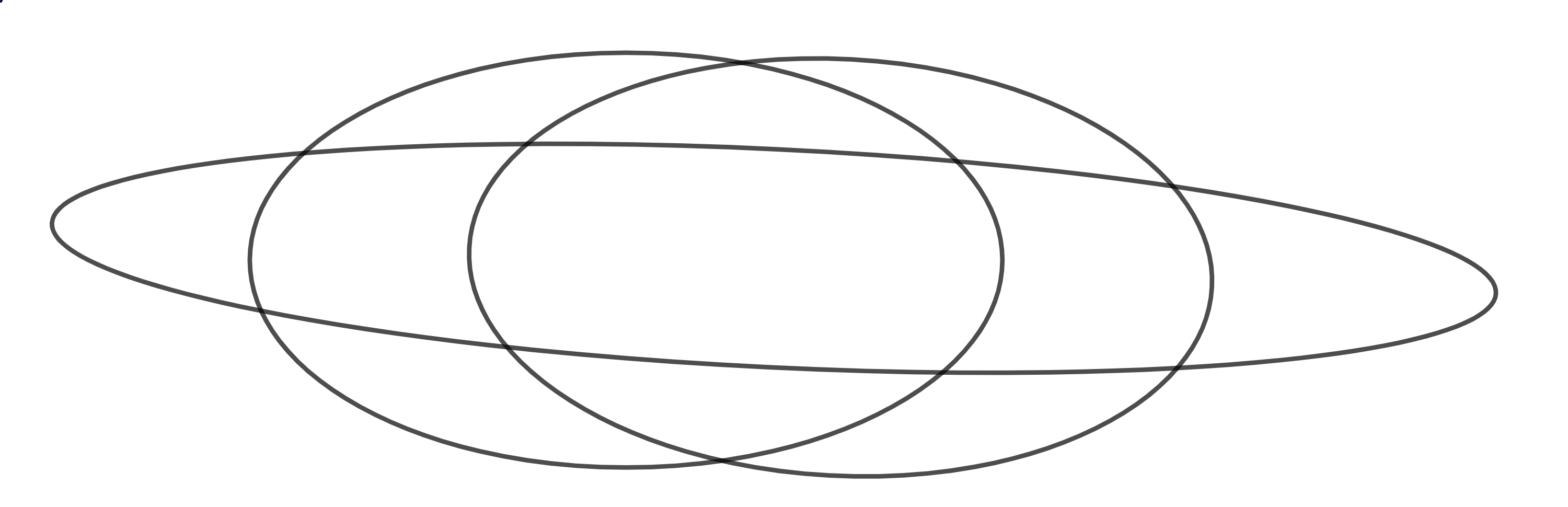}
         \caption{222}
         \label{fig:244_222}
     \end{subfigure}
     \hfill
         \begin{subfigure}[b]{0.19\textwidth}
         \centering
         \frame{\includegraphics[height=2cm,width=2.6cm]{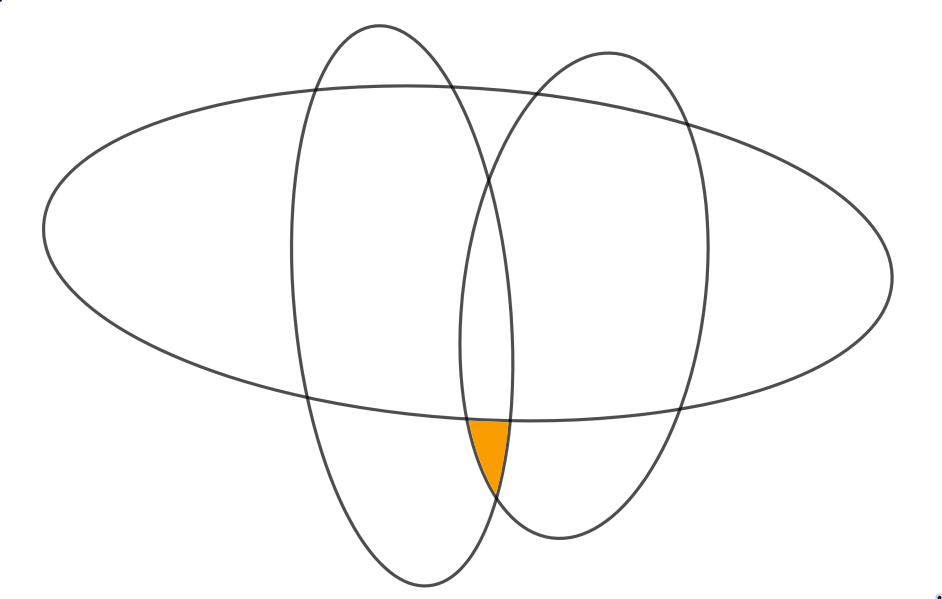}}
         \caption{322}
         \label{fig:244_322}
     \end{subfigure}
     \hfill
         \begin{subfigure}[b]{0.19\textwidth}
         \centering
         \frame{\includegraphics[height=2cm,width=2.6cm]{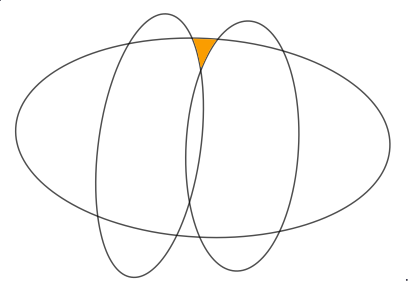}}
         \caption{422}
         \label{fig:244_422}
     \end{subfigure}
    \begin{subfigure}[b]{0.19\textwidth}
         \centering
         \includegraphics[width=\textwidth]{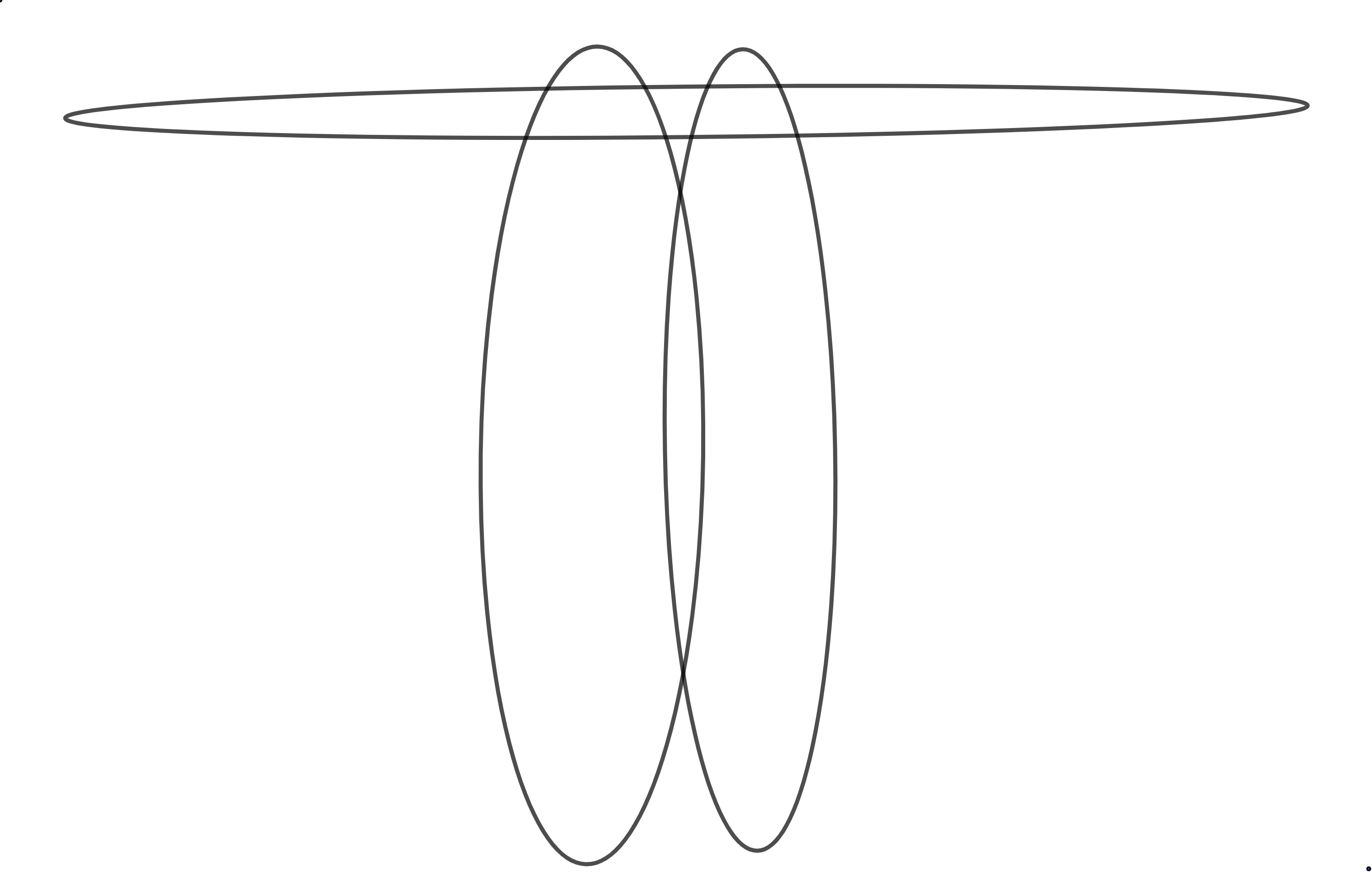}
         \caption{433}
         \label{fig:244_433}
     \end{subfigure}
     \hfill
     \begin{subfigure}[b]{0.38\textwidth}
         \centering
         \includegraphics[width=0.49\textwidth]{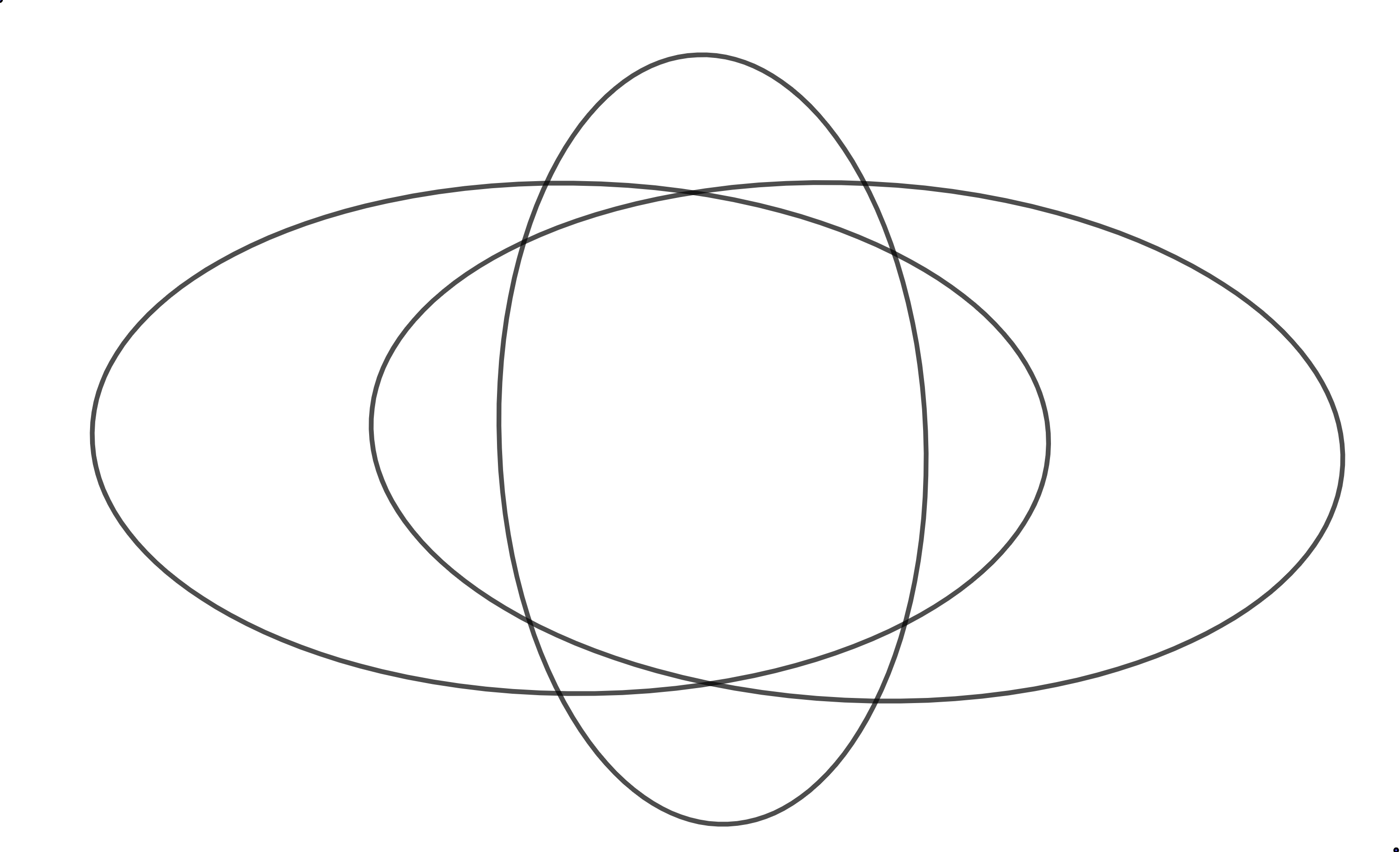}
         \includegraphics[width=0.49\textwidth]{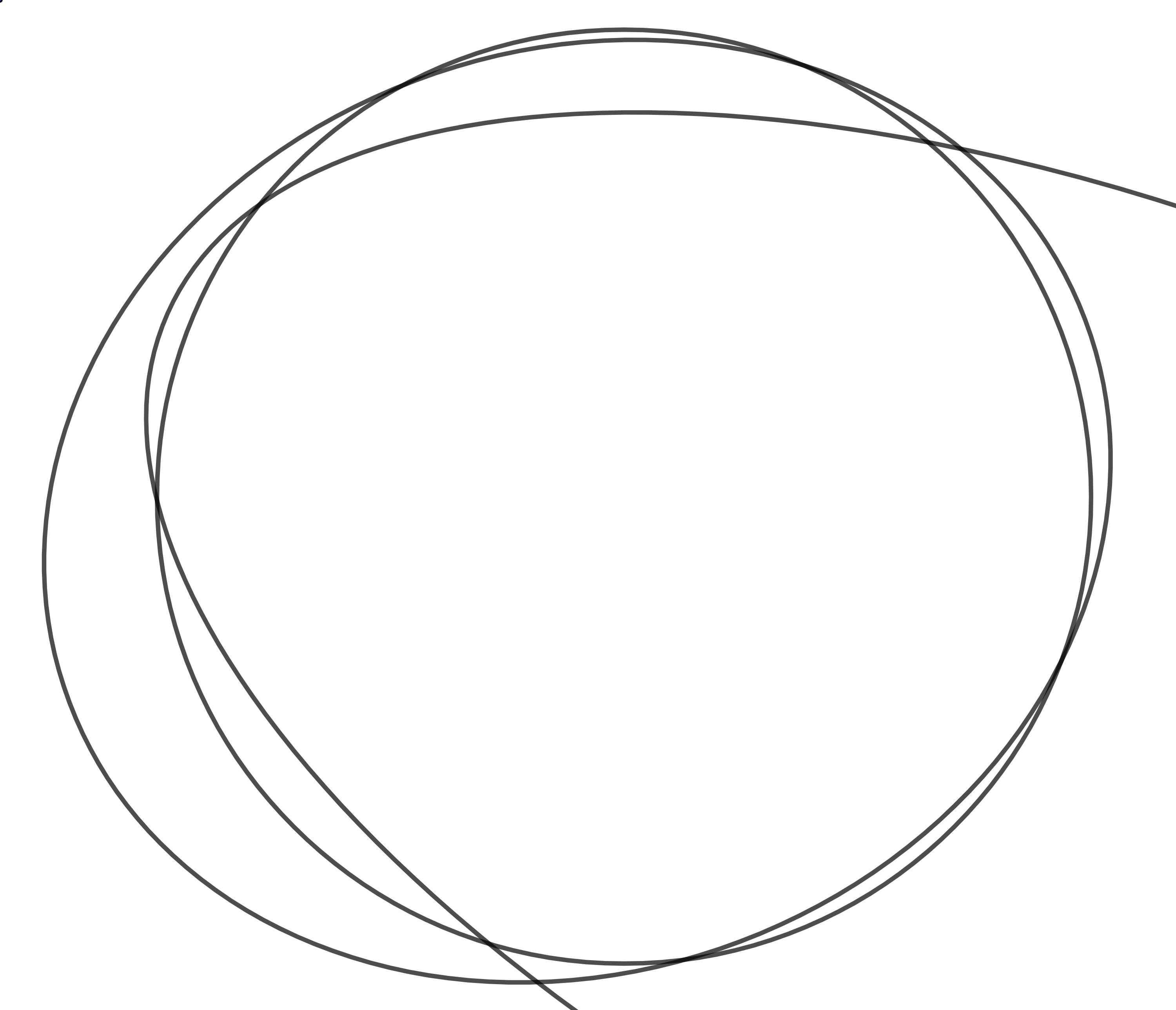}
         \caption{211}
         \label{fig:244_211}
     \end{subfigure}     \hfill
     \begin{subfigure}[b]{0.38\textwidth}
         \centering
         \includegraphics[width=0.48\textwidth]{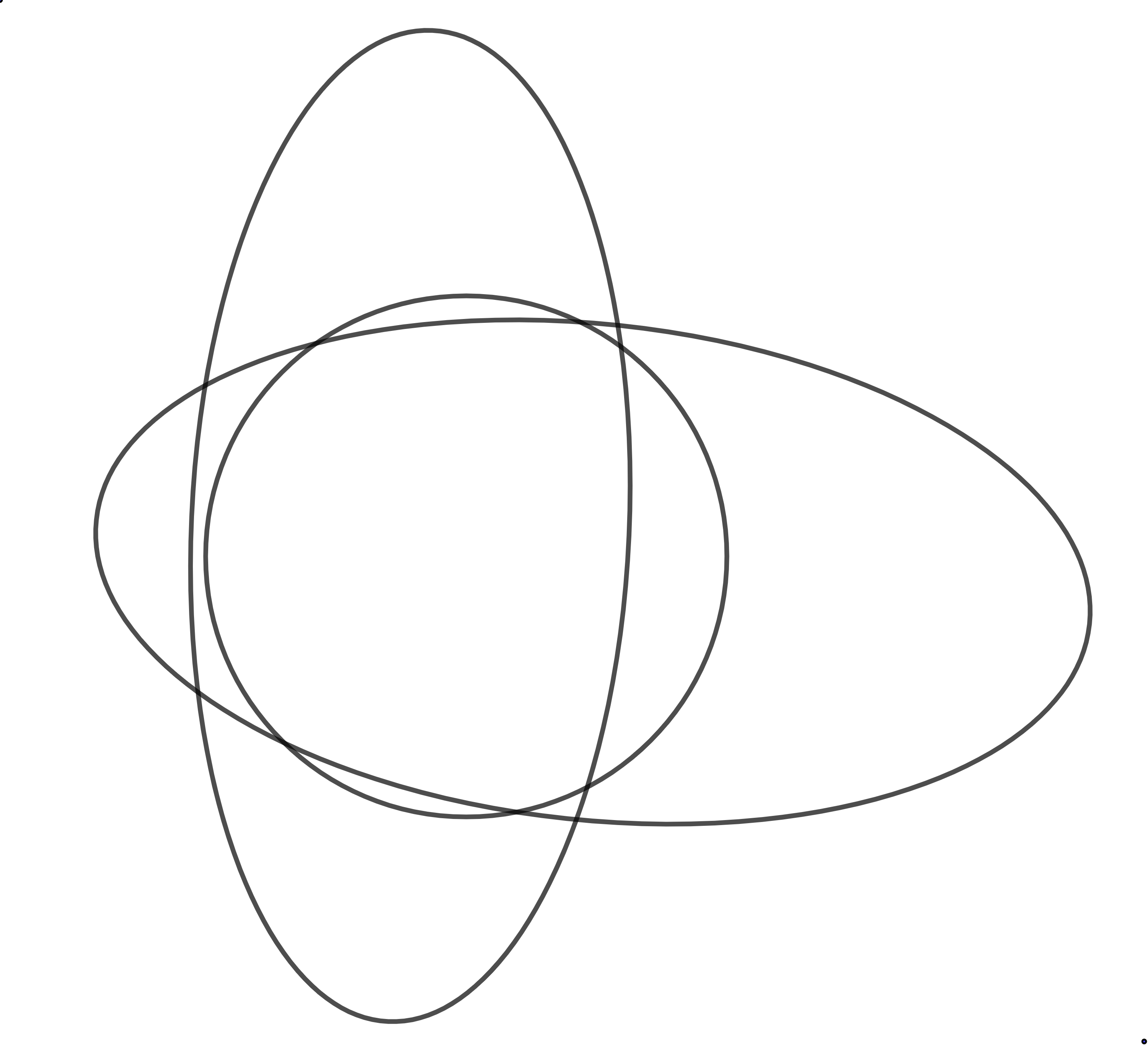}
         \includegraphics[height=2cm]{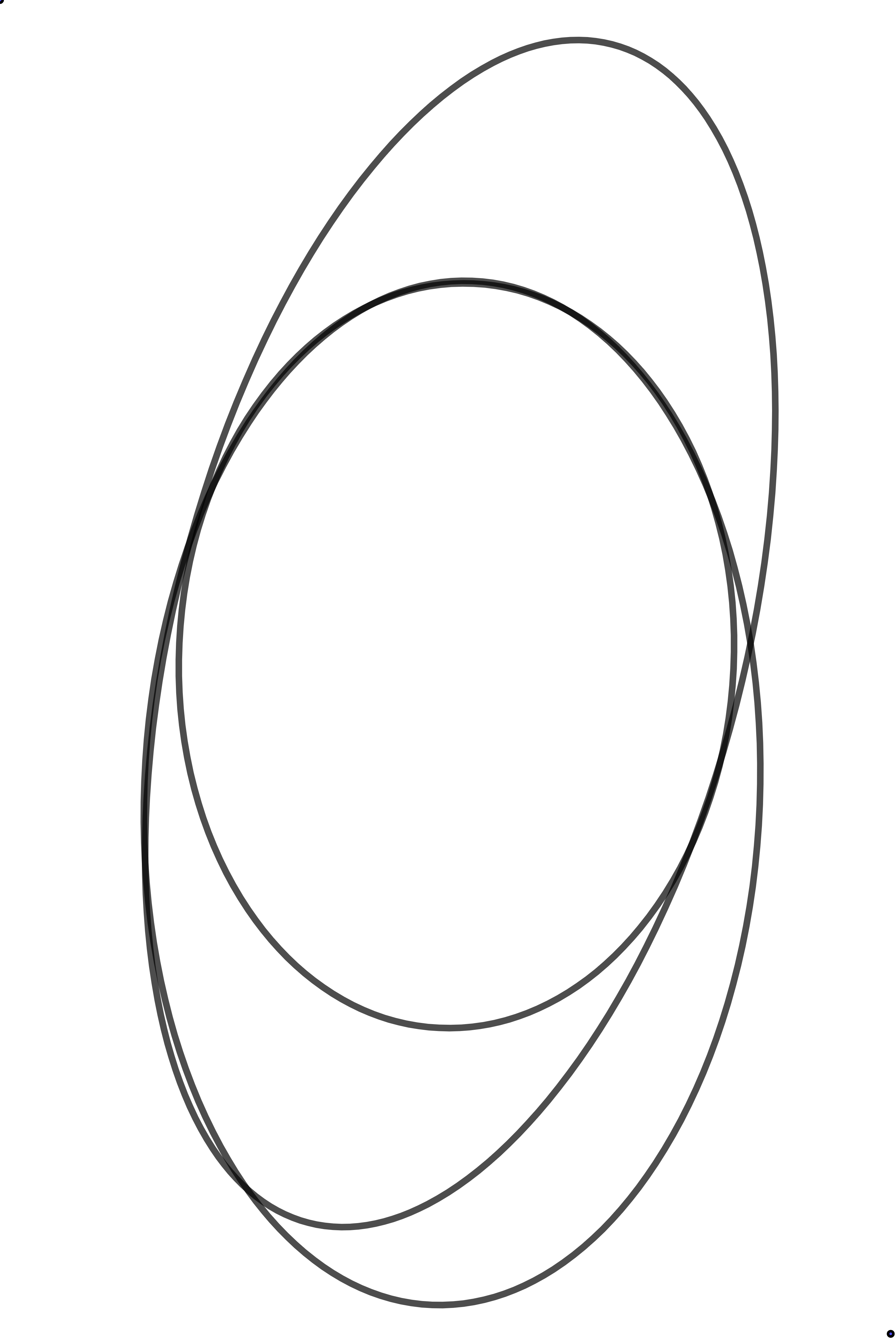}
         \caption{220. Right sketched in Fig. \ref{fig:244_220_top}.}
         \label{fig:244_220}
     \end{subfigure}     
     \begin{subfigure}[b]{0.38\textwidth}
         \centering
         \frame{\includegraphics[height=2cm,width=2cm]{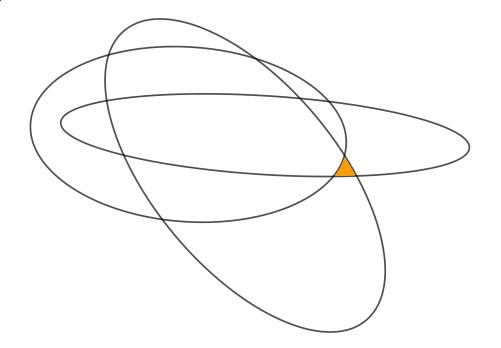}}
         \frame{\includegraphics[height=2cm,width=2cm]{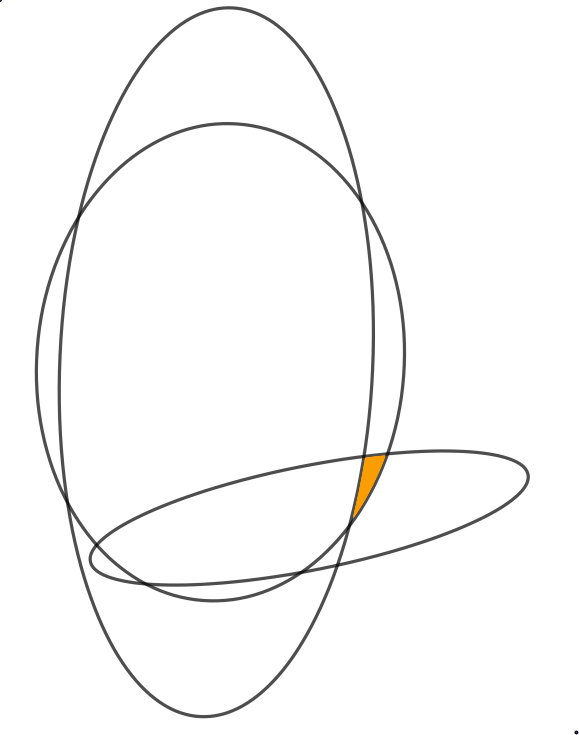}}
         \caption{221}
         \label{fig:244_221}
     \end{subfigure}  \hfill
     \begin{subfigure}[b]{0.57\textwidth}
         \centering
         \includegraphics[width=0.32\textwidth]{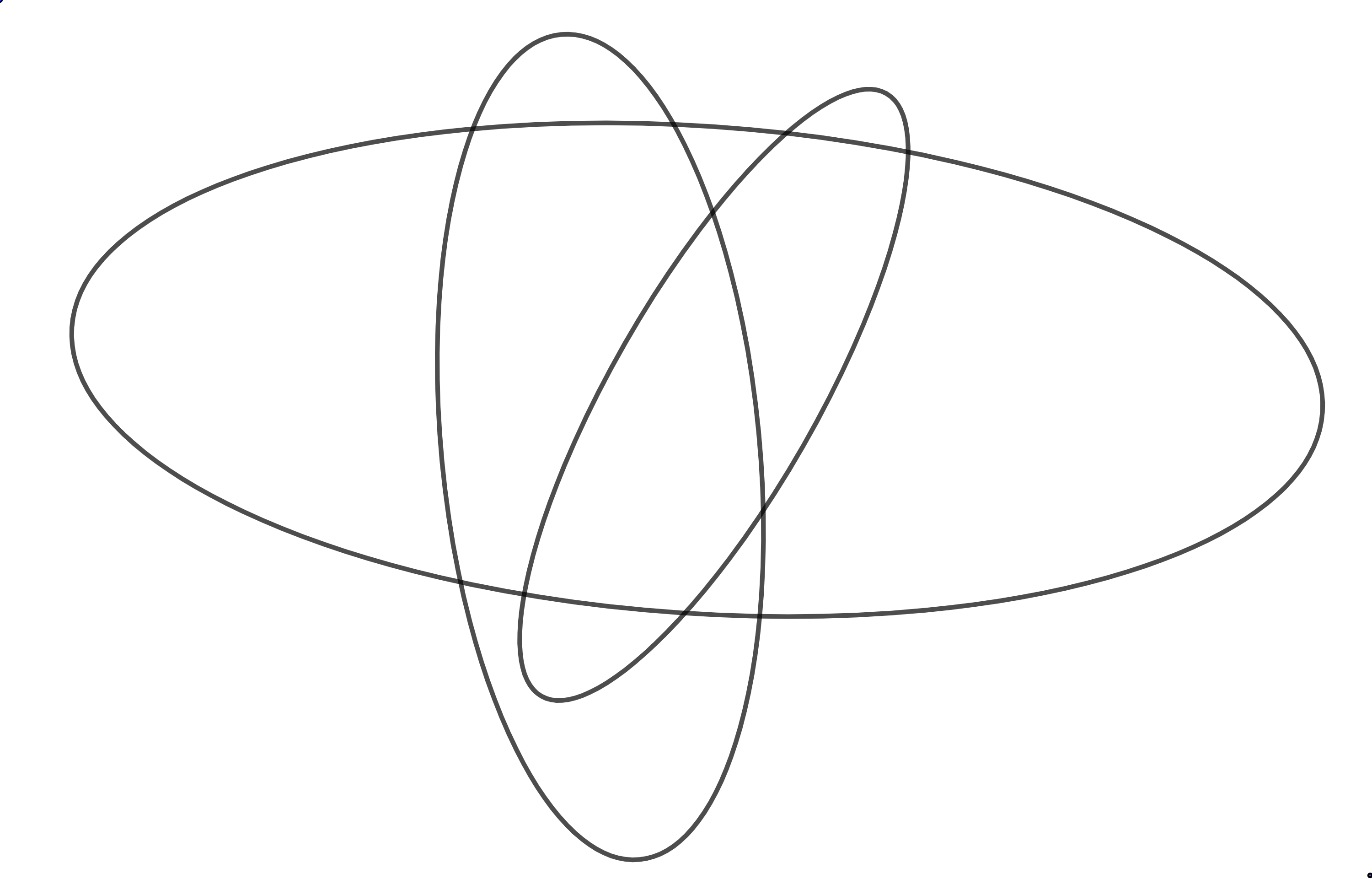} 
         \frame{\includegraphics[height=2cm,width=2cm]{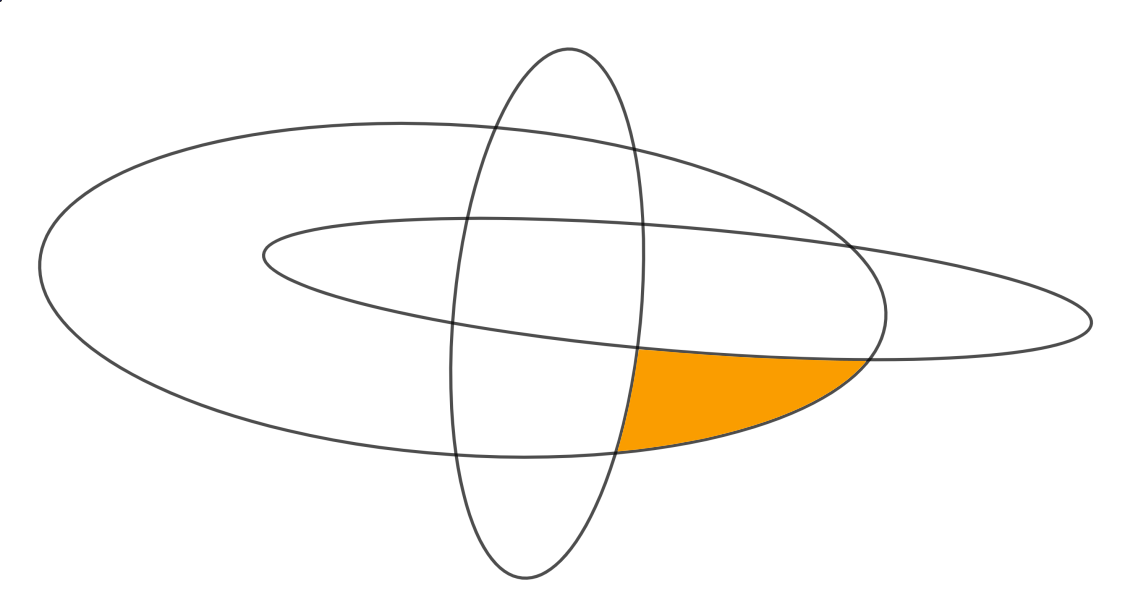}}
         \frame{\includegraphics[height=2cm,width=2cm]{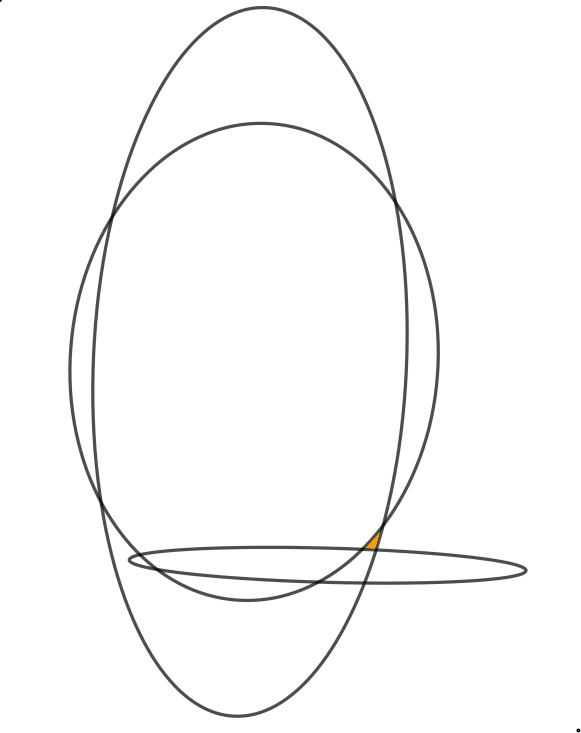}}
         \caption{321}
         \label{fig:244_321}
     \end{subfigure}
        \caption{Intersection type (244). The subcaptions show the outer arc type. 
        Problematic configurations are framed: The unique (up to symmetry) problematic polycon is orange.}
        \label{fig:244}
\end{figure}

\begin{figure}[htb]
     \centering
     \begin{subfigure}[b]{0.22\textwidth}
         \centering
         \includegraphics[height=2cm]{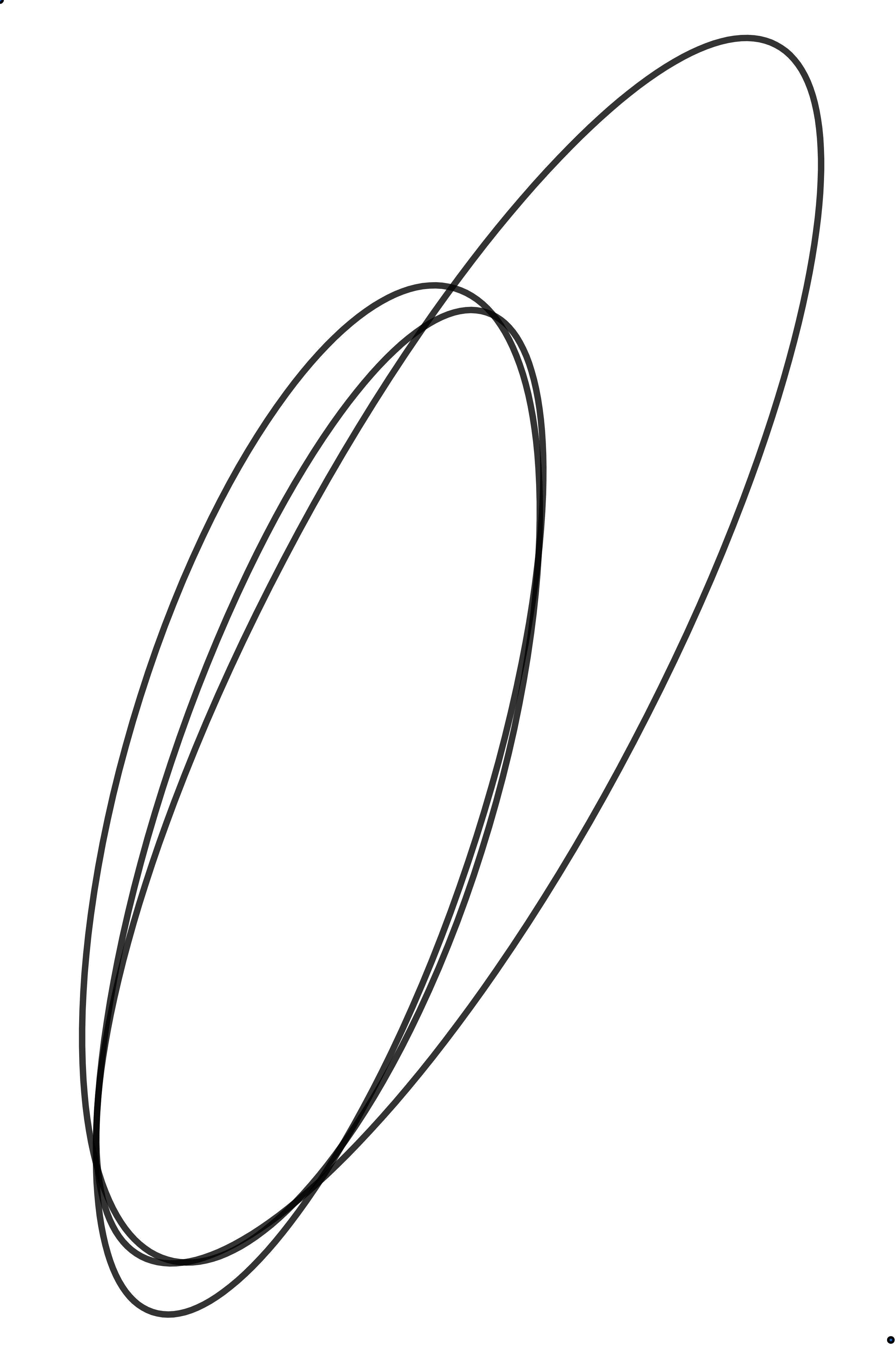}
         \caption{111.  See  Fig.\ref{fig:444_111_top}.}
         \label{fig:444_111}
     \end{subfigure}
     \hfill
     \begin{subfigure}[b]{0.19\textwidth}
         \centering
         \includegraphics[width=\textwidth]{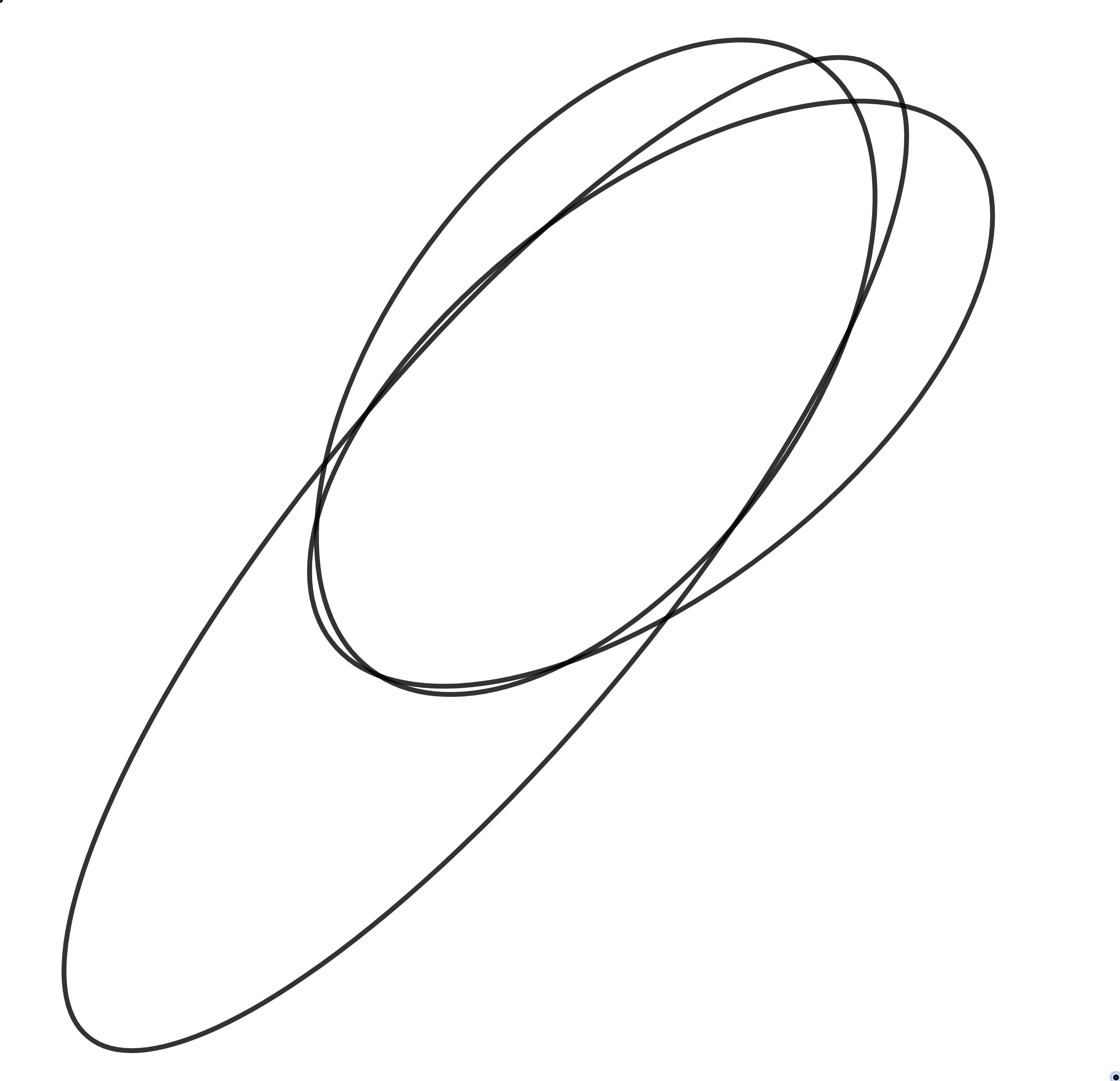}
         \caption{211}
         \label{fig:444_211}
     \end{subfigure}
     \hfill
     \begin{subfigure}[b]{0.18\textwidth}
         \centering
         \includegraphics[width=\textwidth]{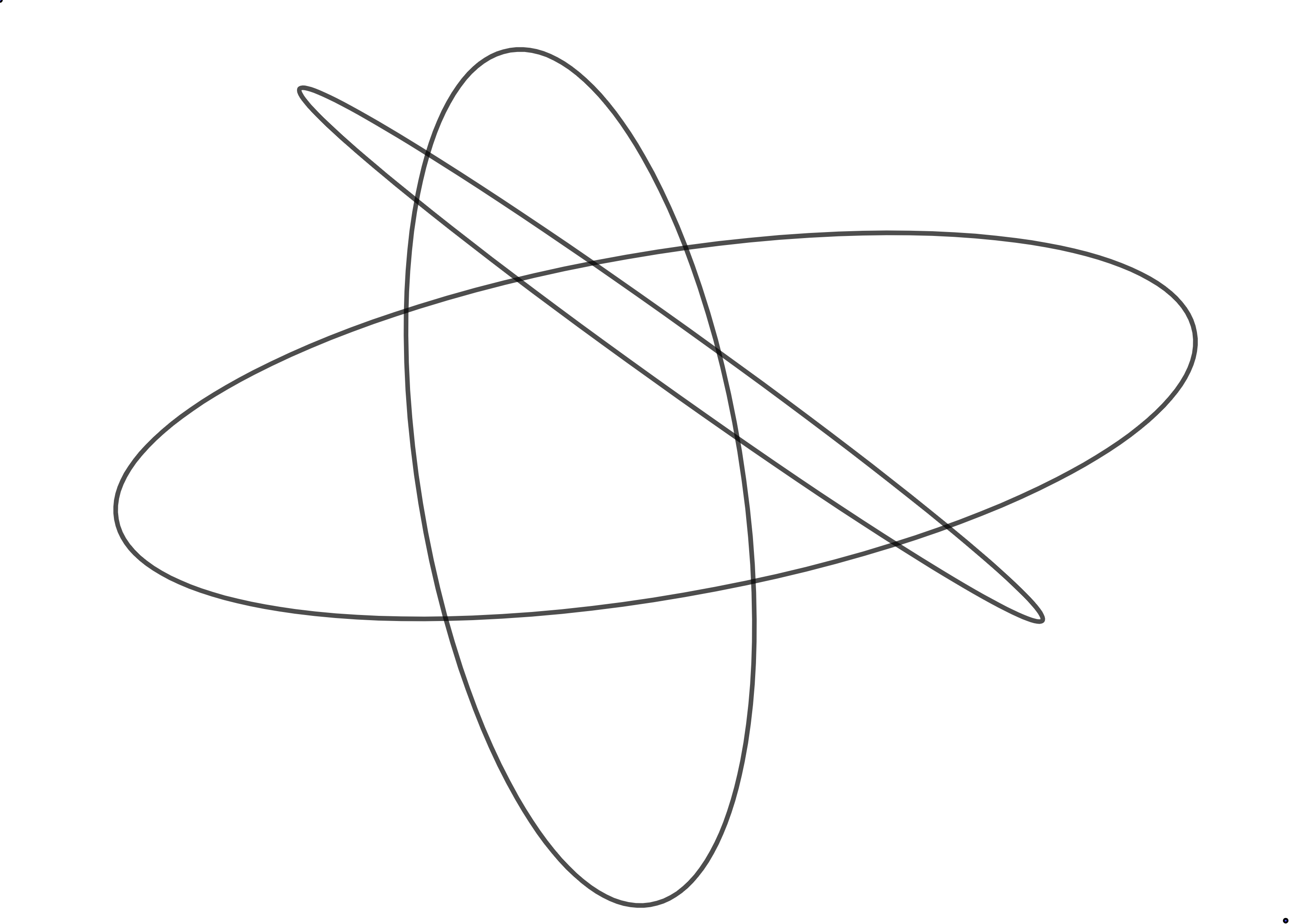}
         \caption{332}
         \label{fig:444_332}
     \end{subfigure}
     \begin{subfigure}[b]{0.18\textwidth}
         \centering
         \includegraphics[width=\textwidth]{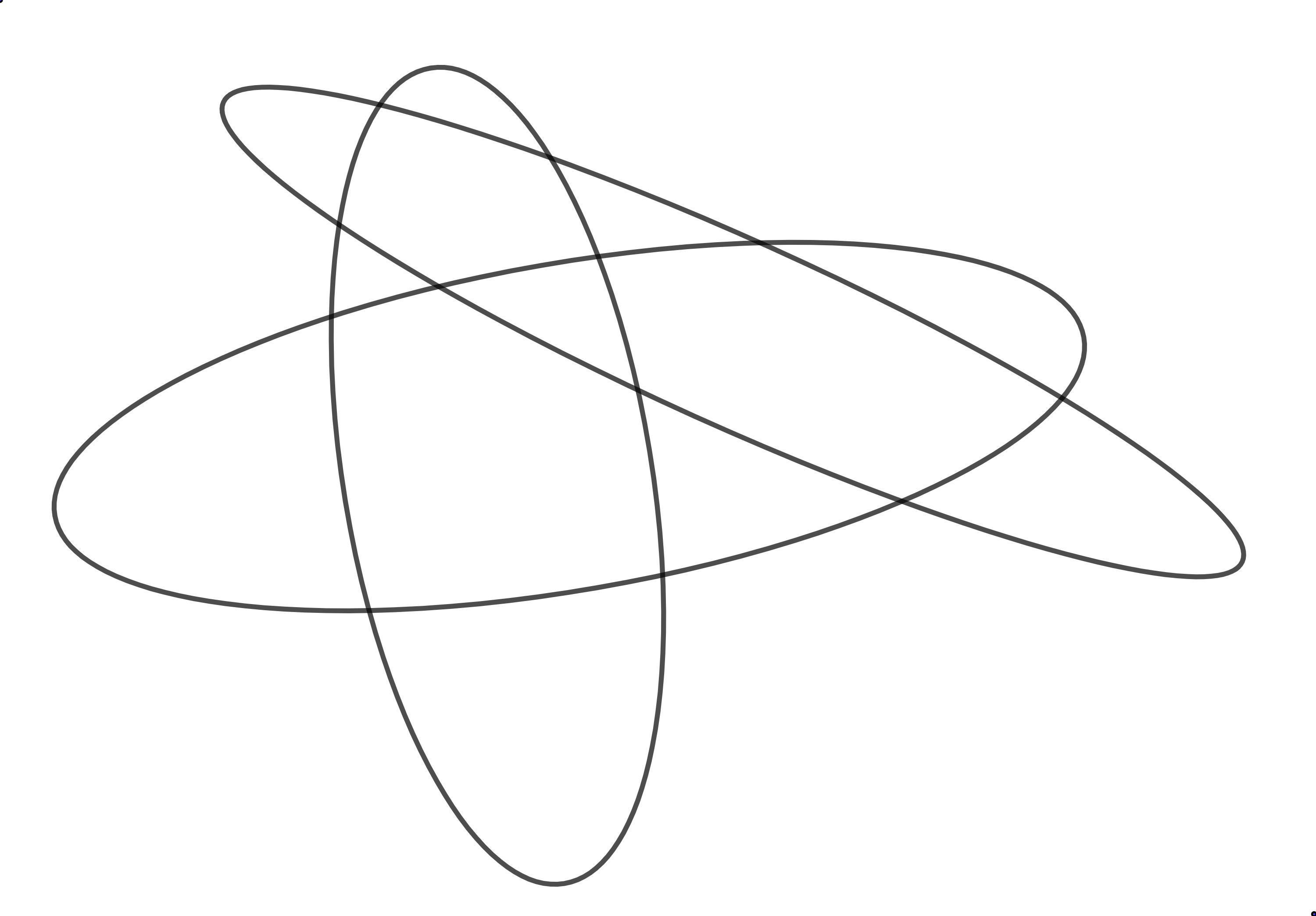}
         \caption{333}
         \label{fig:444_333}
     \end{subfigure}
     \begin{subfigure}[b]{0.18\textwidth}
         \centering
         \includegraphics[width=\textwidth]{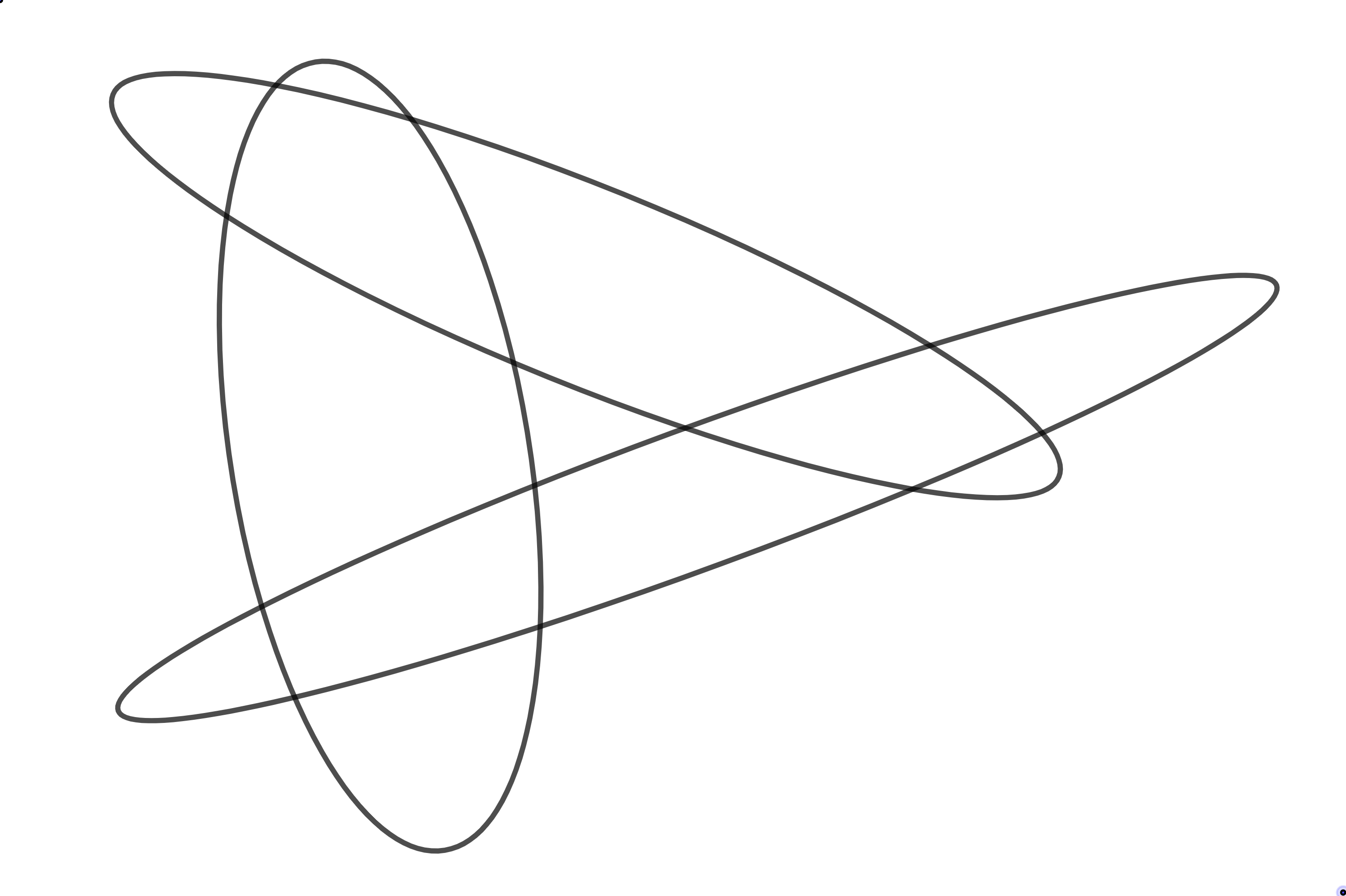}
         \caption{444}
         \label{fig:444_444}
     \end{subfigure}
     \begin{subfigure}[b]{0.48\textwidth}
         \centering
         \includegraphics[width=0.49\textwidth]{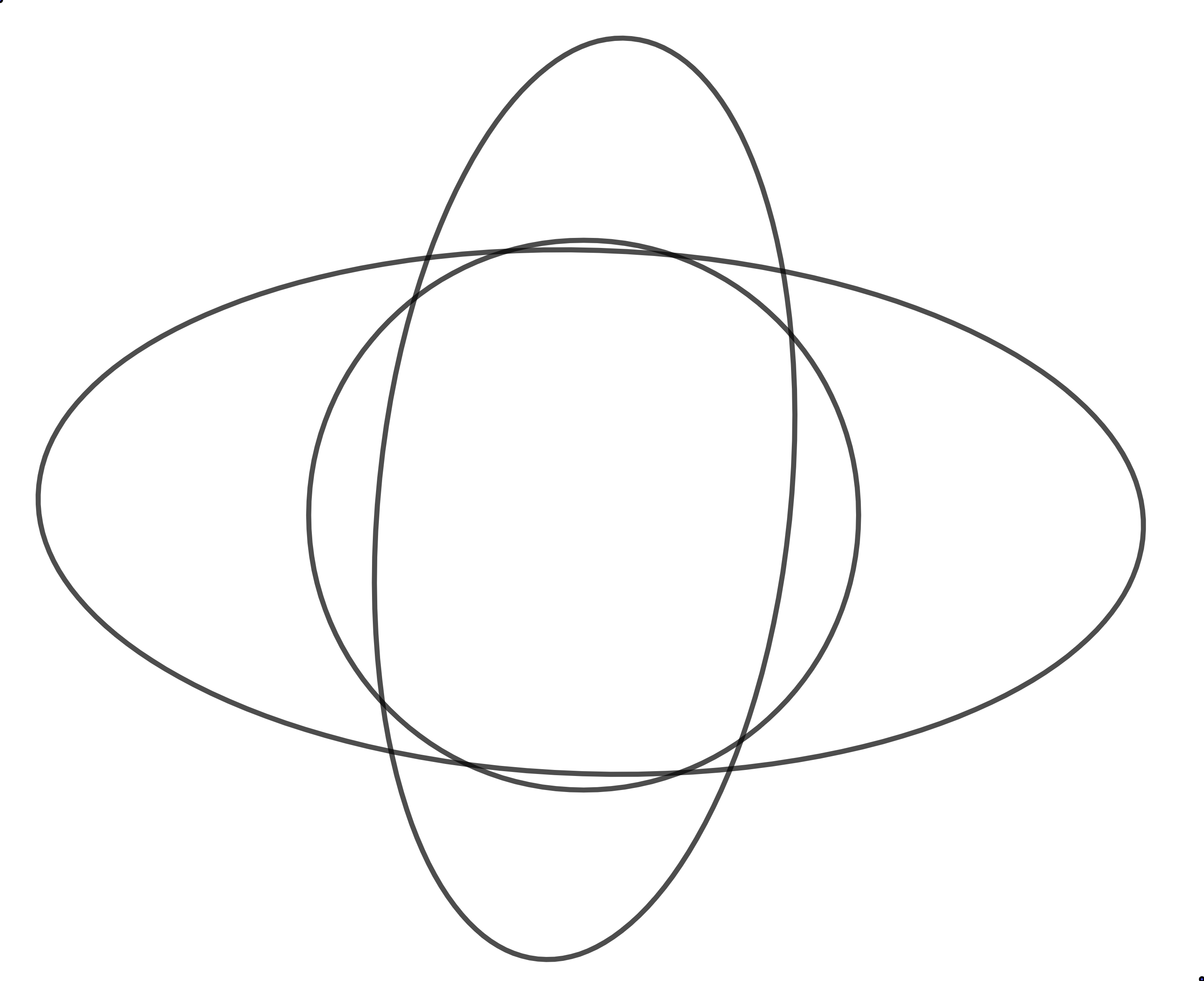}
         \includegraphics[height=2cm]{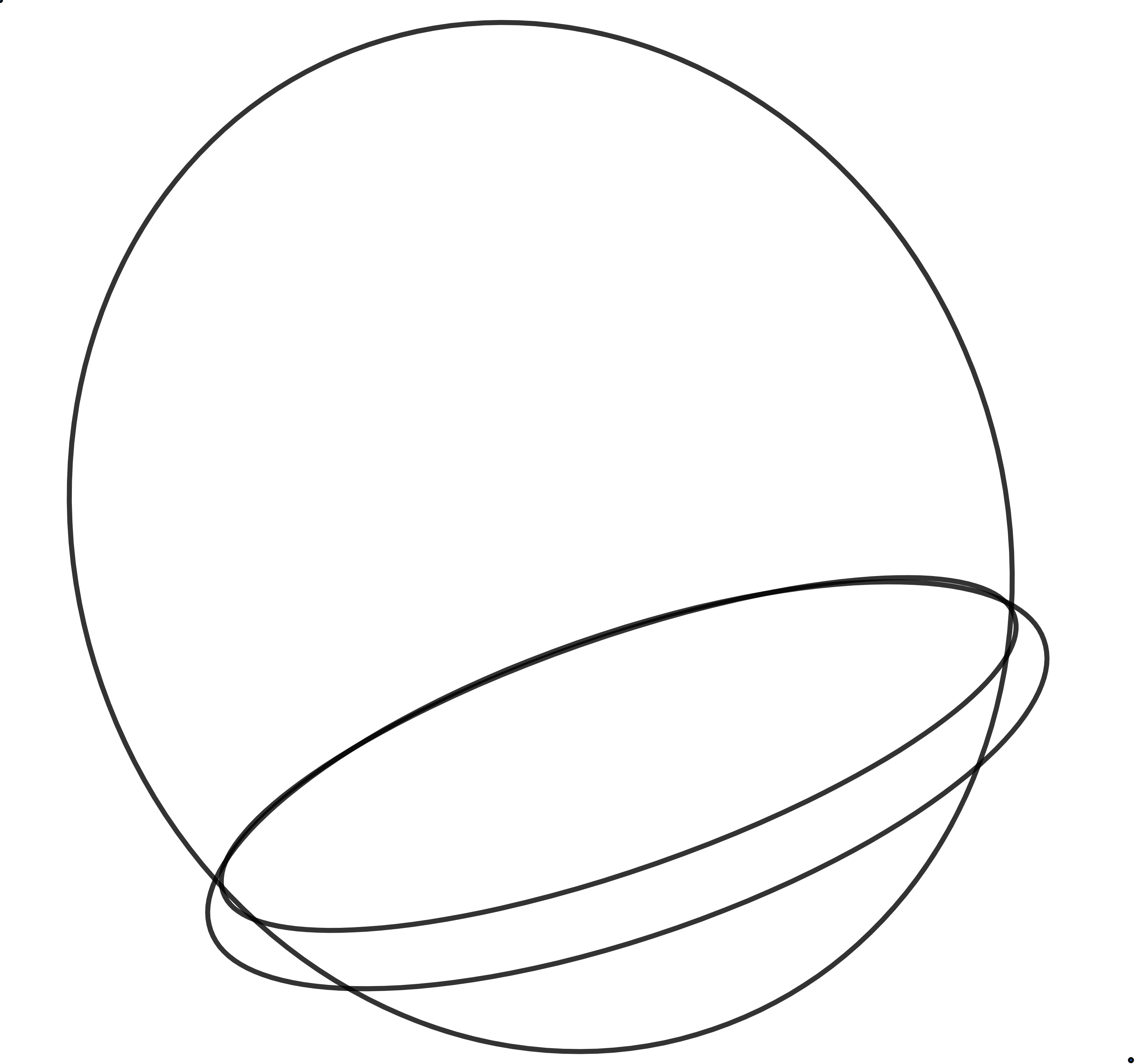}
         \caption{220. Right sketched in Fig. \ref{fig:444_220_top}.}
         \label{fig:444_220}
     \end{subfigure}\hfill
     \begin{subfigure}[b]{0.48\textwidth}
         \centering
         \includegraphics[width=0.49\textwidth]{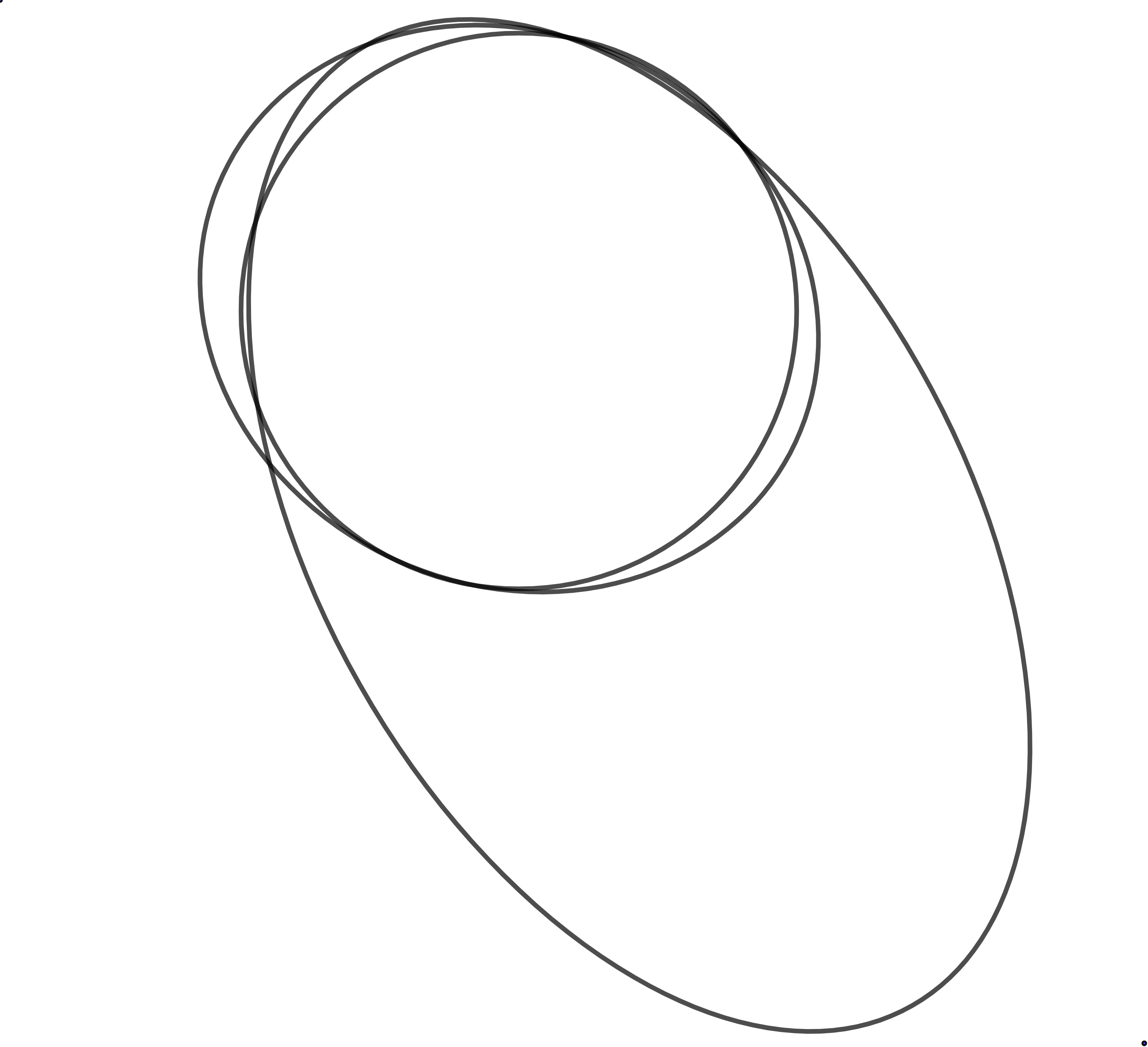}
         \includegraphics[width=0.49\textwidth]{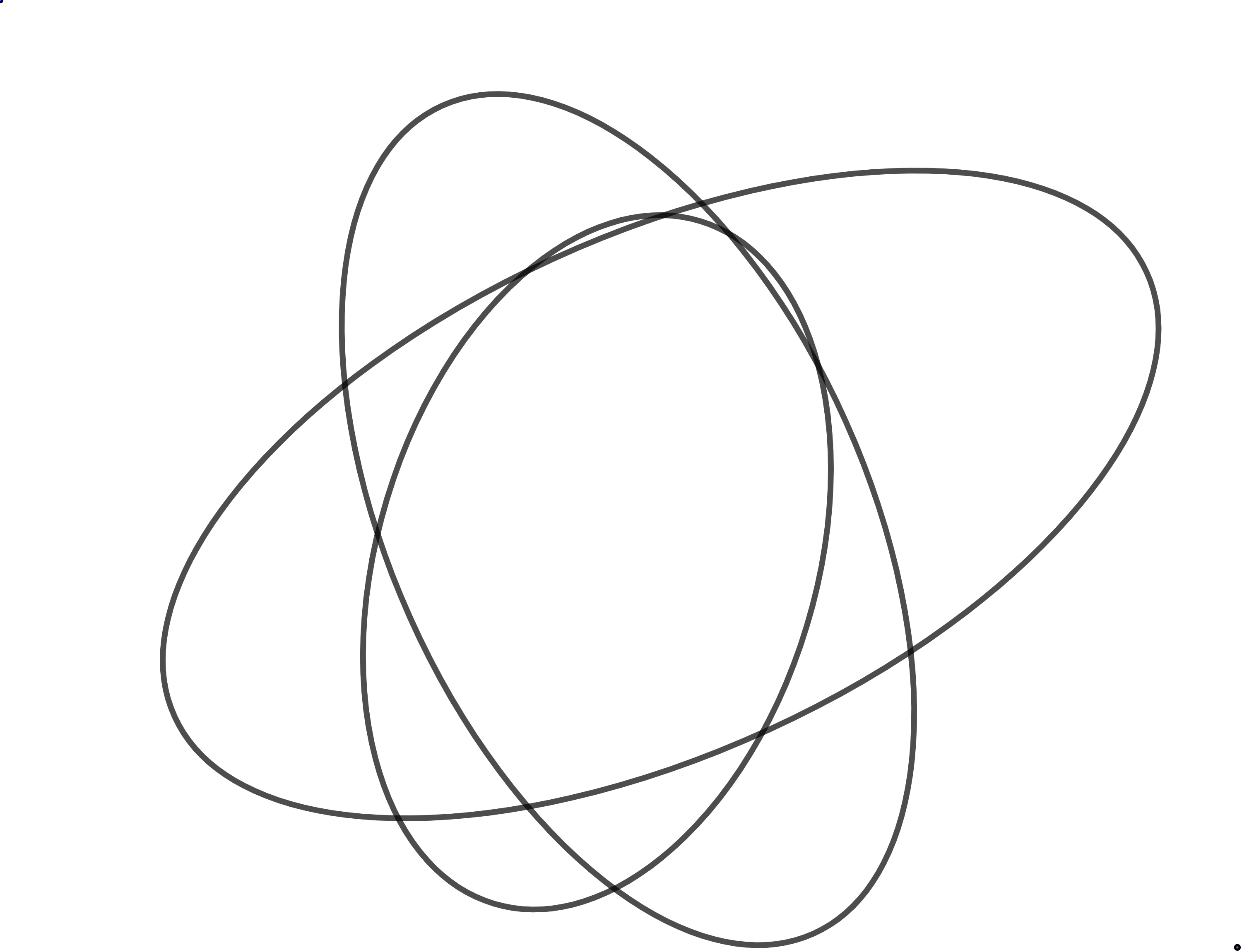}
         \caption{221. Left sketched in Fig. \ref{fig:444_111_top}.}
         \label{fig:444_221}
     \end{subfigure}
     \begin{subfigure}[b]{0.48\textwidth}
         \centering
         \includegraphics[height=2cm]{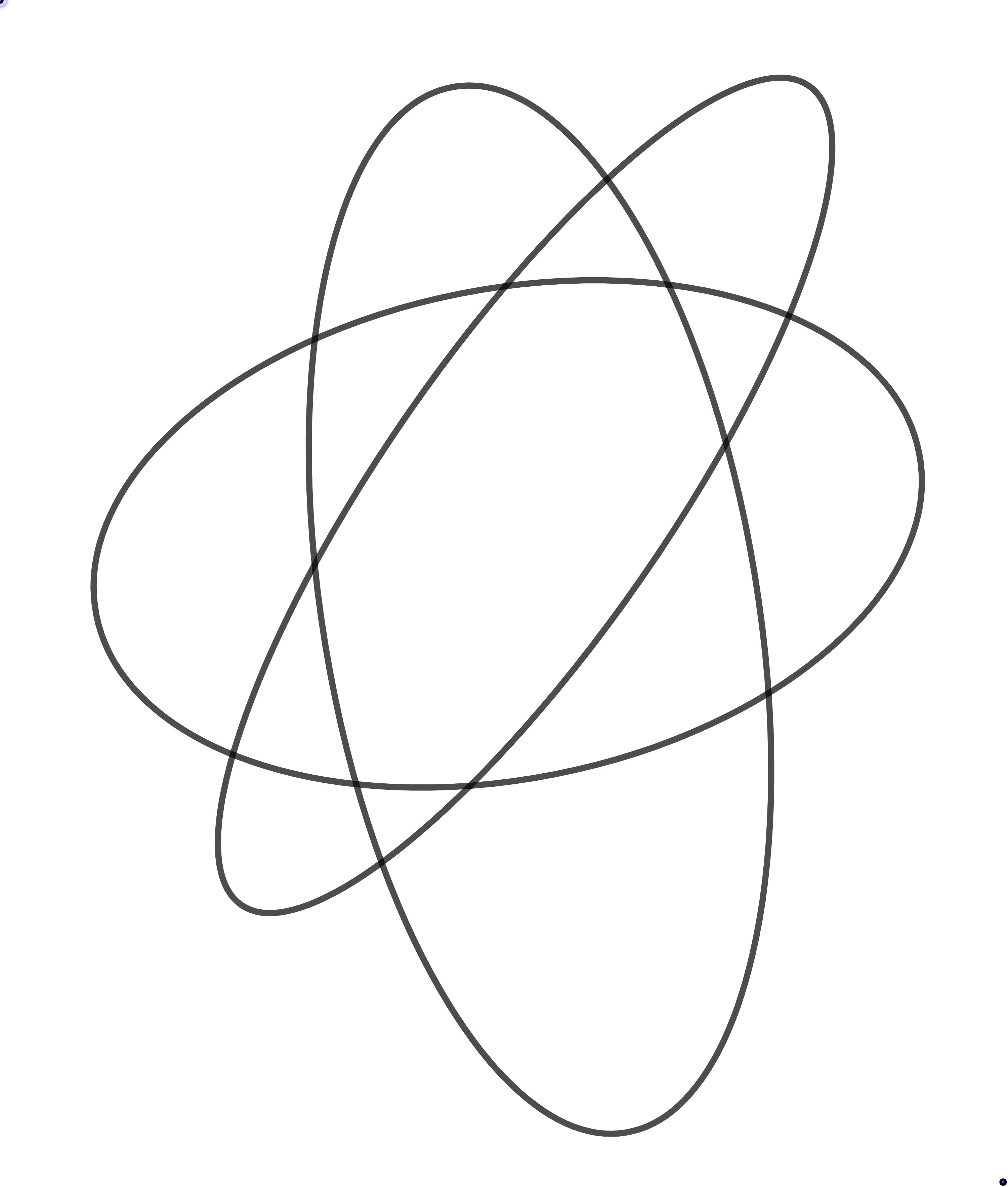}
         \includegraphics[width=0.49\textwidth]{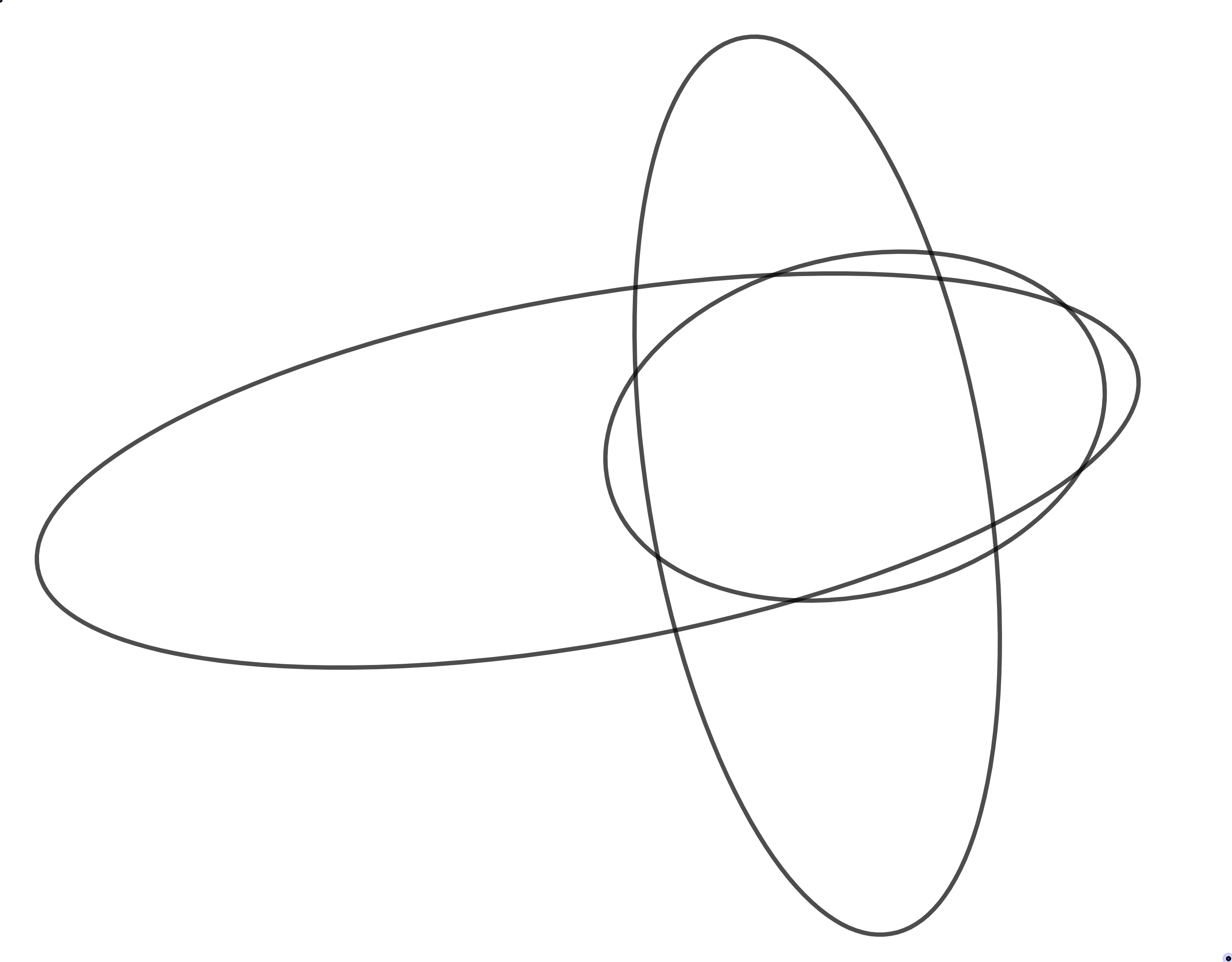}
         \caption{222}
         \label{fig:444_222}
     \end{subfigure}\hfill
     \begin{subfigure}[b]{0.48\textwidth}
         \centering
         \includegraphics[width=0.49\textwidth]{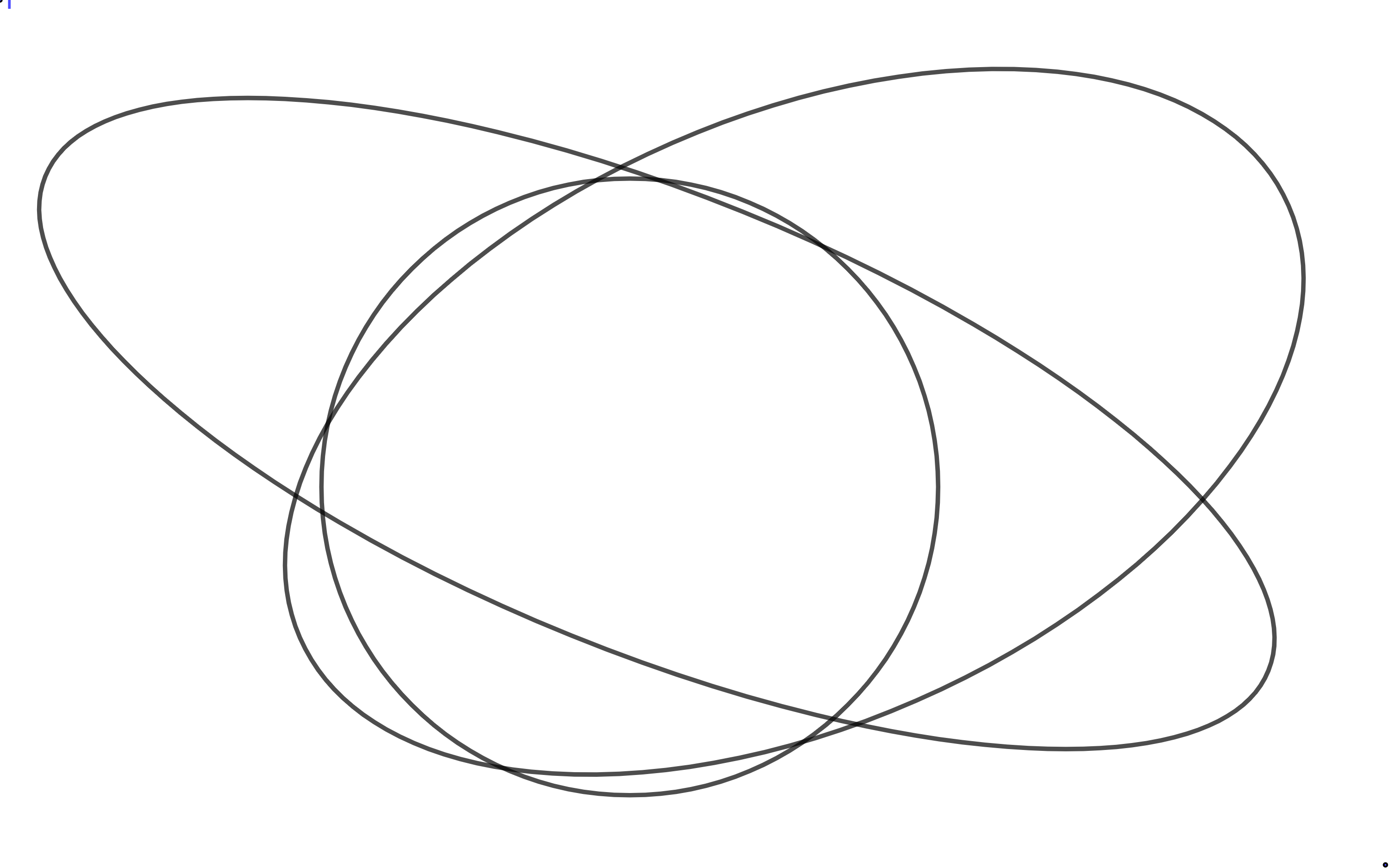}
         \includegraphics[height=2cm]{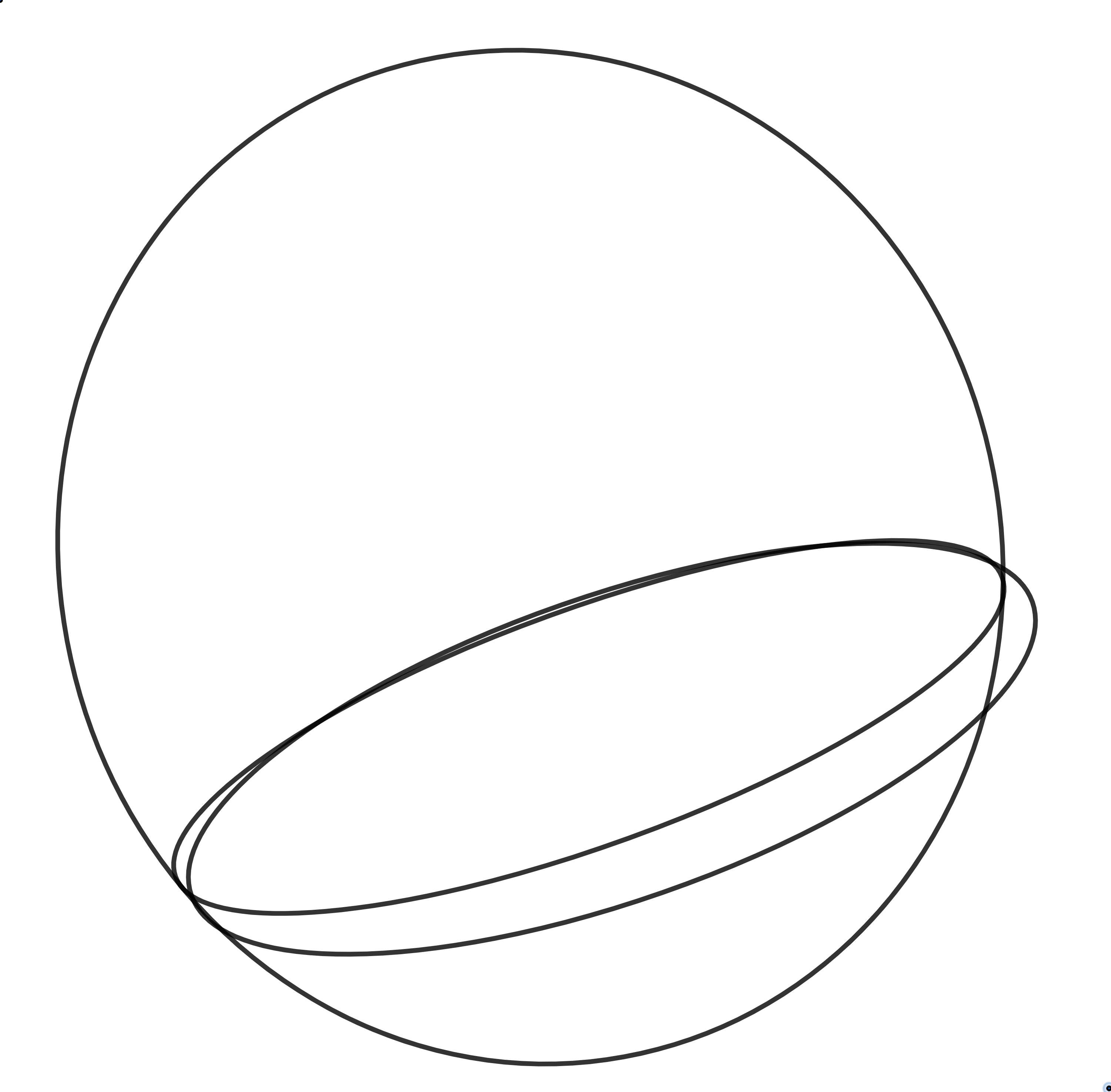}
         \caption{321. Right sketched in Fig. \ref{fig:444_111_top}.}
         \label{fig:444_321}
     \end{subfigure}
     \begin{subfigure}[b]{0.48\textwidth}
         \centering
         \frame{\includegraphics[height=2cm]{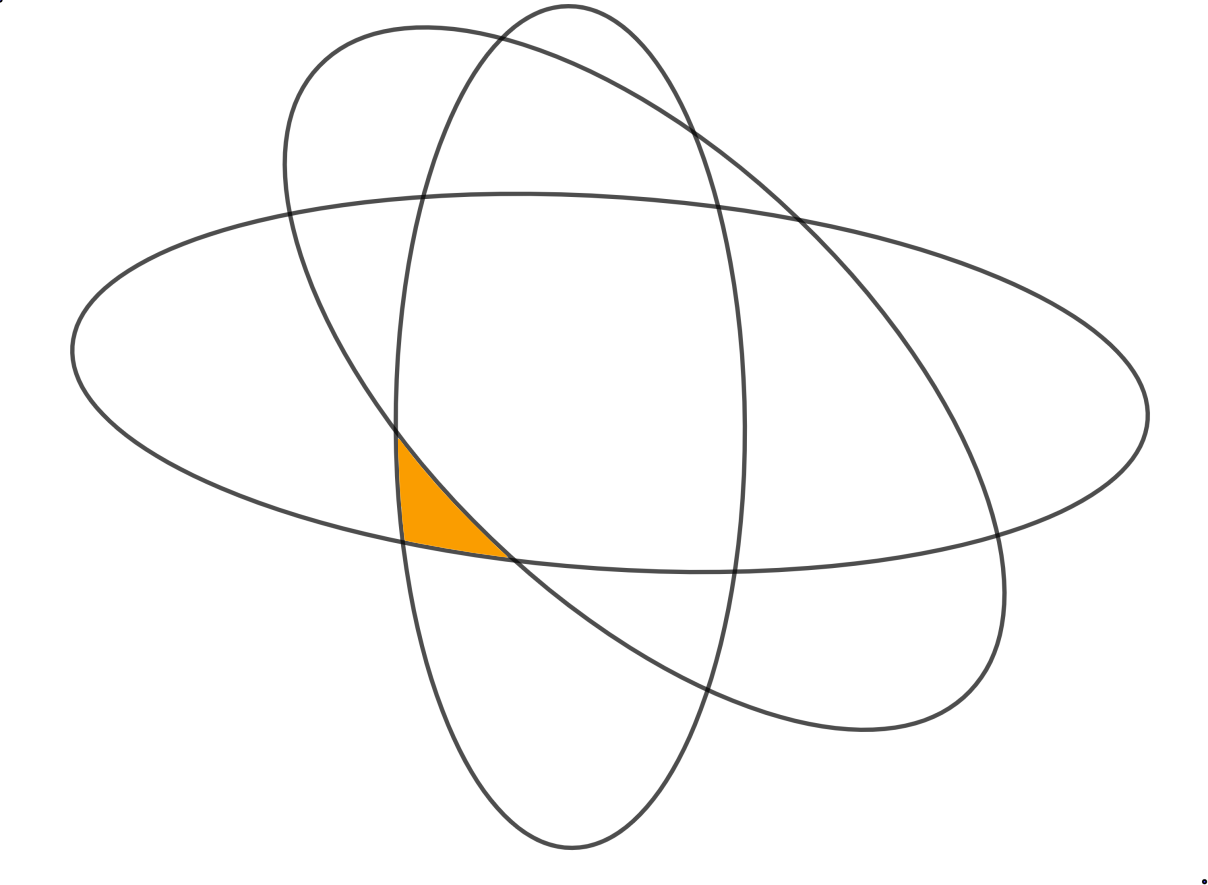}}
         \frame{\includegraphics[height=2cm]{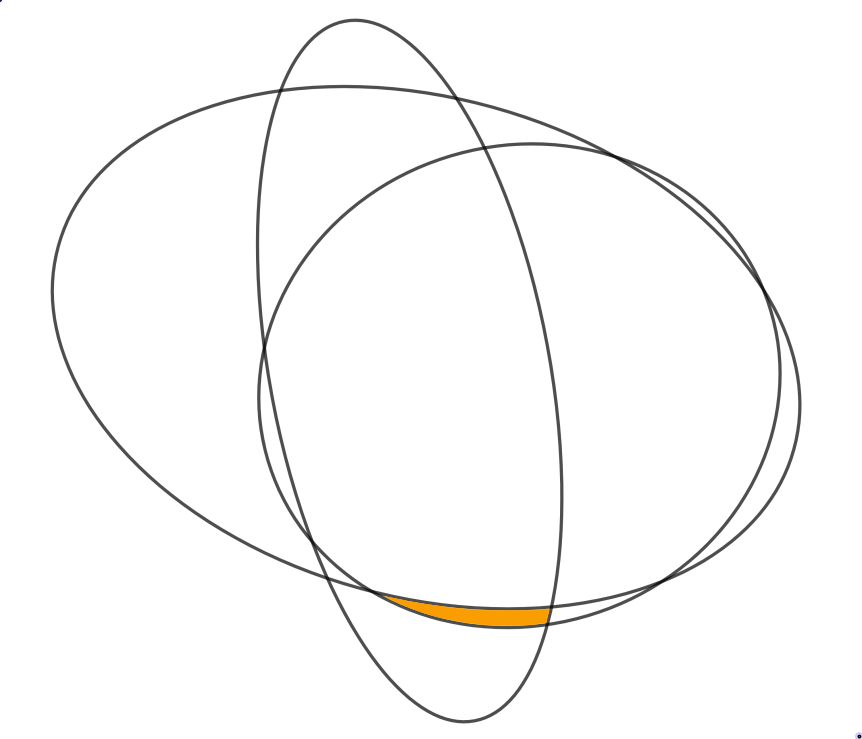}}
         \caption{322}
         \label{fig:444_322}
     \end{subfigure}\hfill
     \begin{subfigure}[b]{0.48\textwidth}
         \centering
         \frame{\includegraphics[height=2cm]{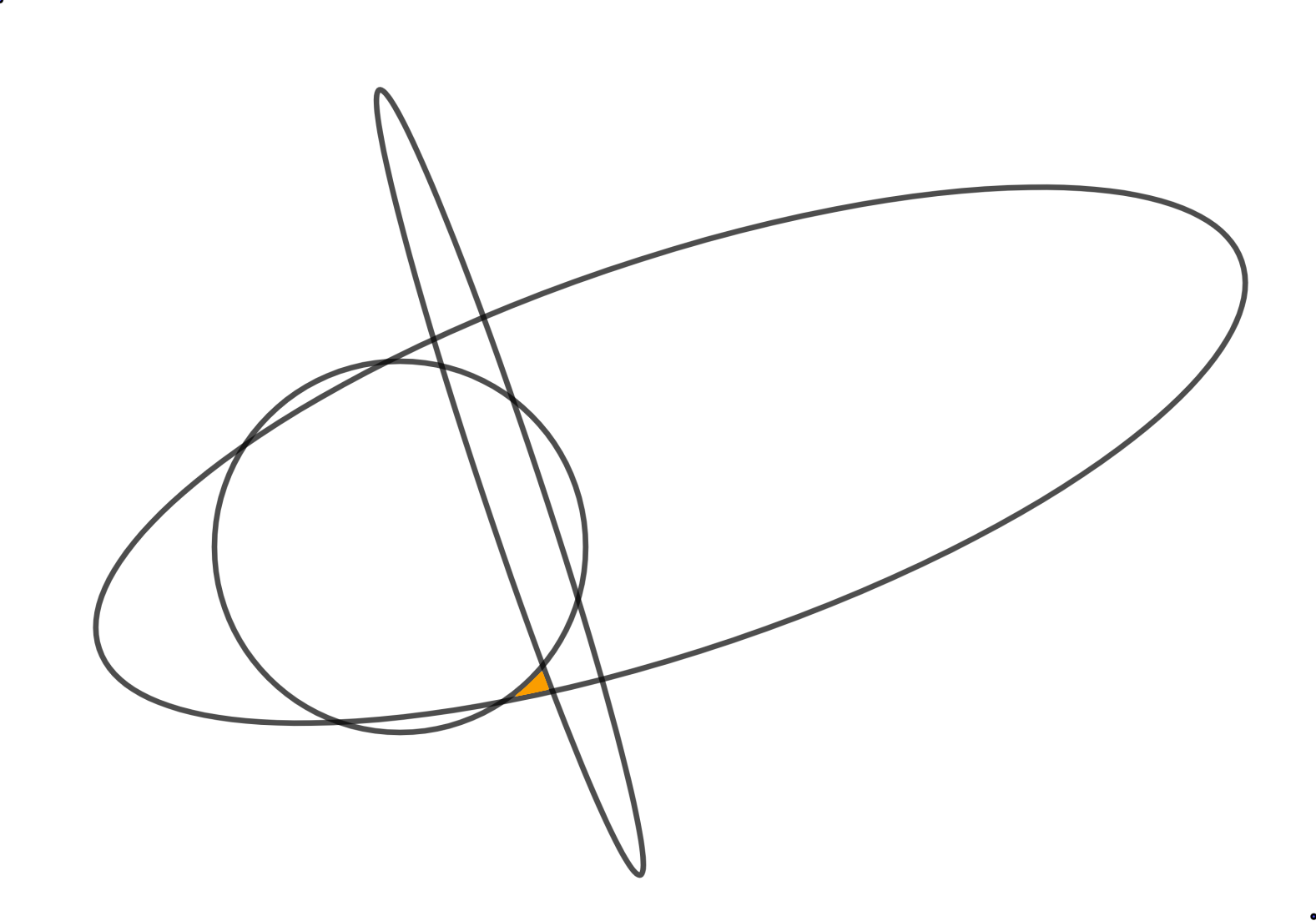}}
         \frame{\includegraphics[height=2cm]{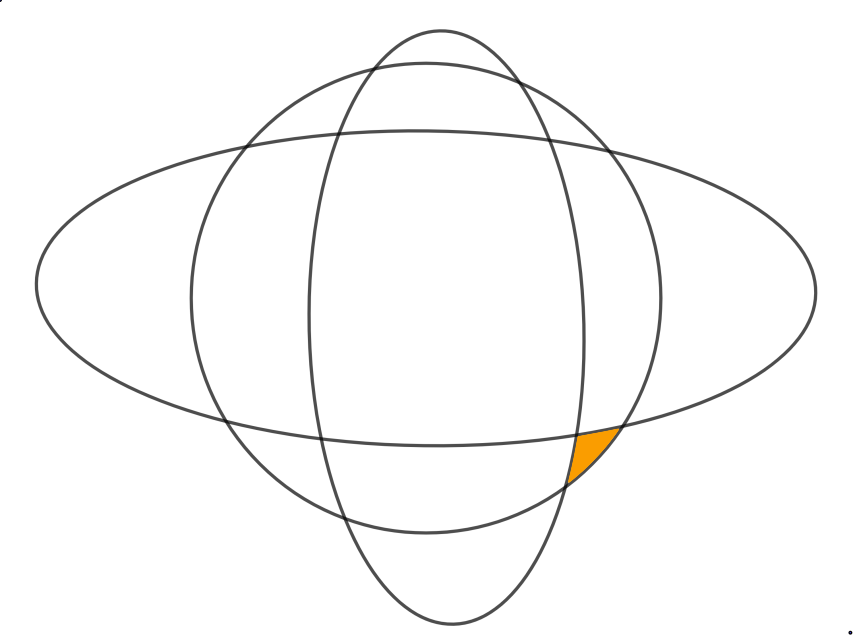}}
         \caption{422}
         \label{fig:444_422}
     \end{subfigure}
        \caption{Intersection type (444). The subcaptions show the outer arc type.
        Problematic configurations are framed: The unique (up to symmetry) problematic polycon is orange.}
        \label{fig:444}
\end{figure}

\begin{figure}[htb]
     \centering
         \begin{subfigure}[b]{0.19\textwidth}
         \centering
         \includegraphics[width=2cm,height=2.8cm]{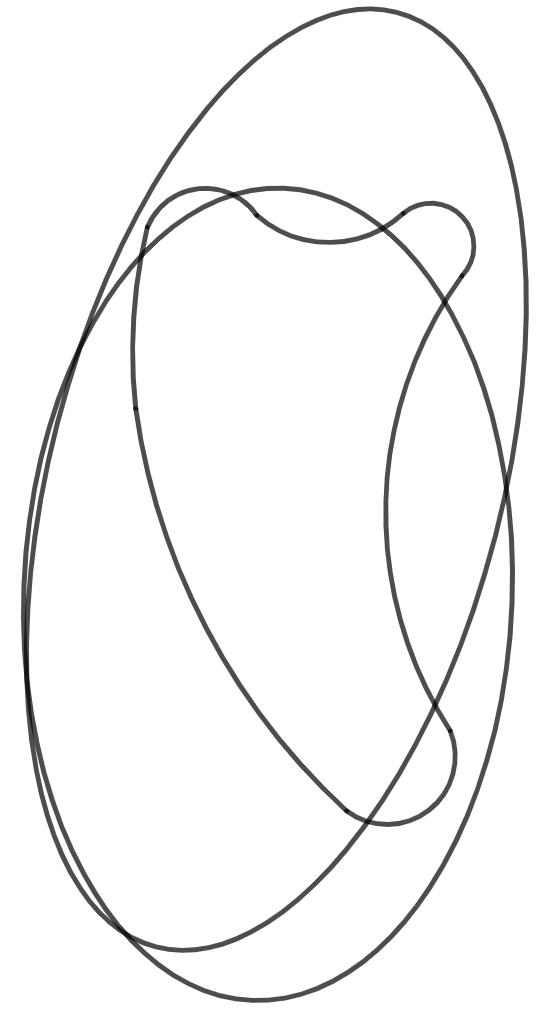}
         \caption{Fig. \!\ref{fig:244_220}, right.}
         \label{fig:244_220_top}
     \end{subfigure} \hfill
     \begin{subfigure}[b]{0.19\textwidth}
         \centering
         \includegraphics[width=2cm,height=2.8cm]{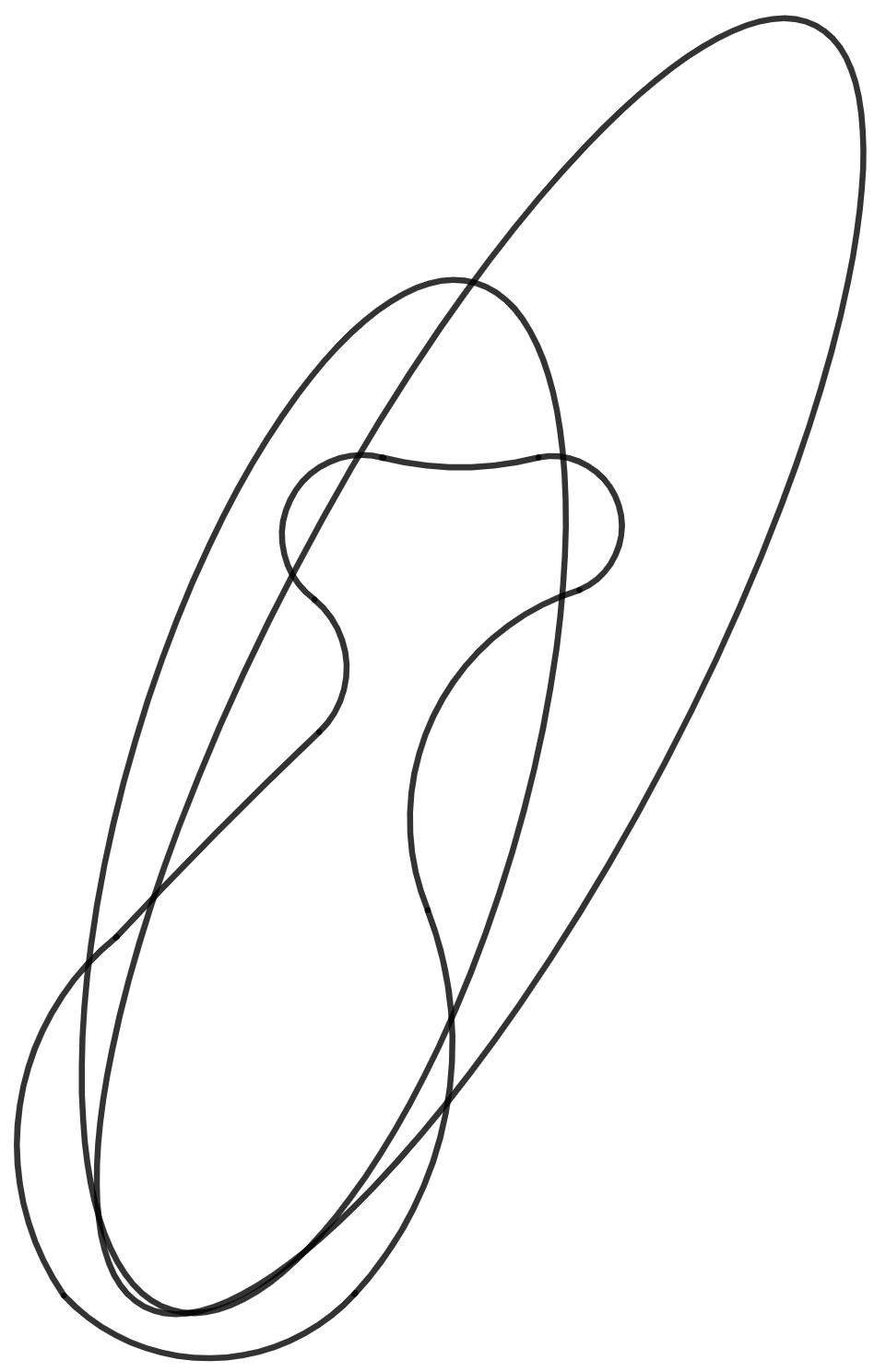}
         \caption{\Cref{fig:444_111}.}
         \label{fig:444_111_top}
     \end{subfigure}
     \hfill
         \begin{subfigure}[b]{0.19\textwidth}
         \centering
         \includegraphics[width=\textwidth]{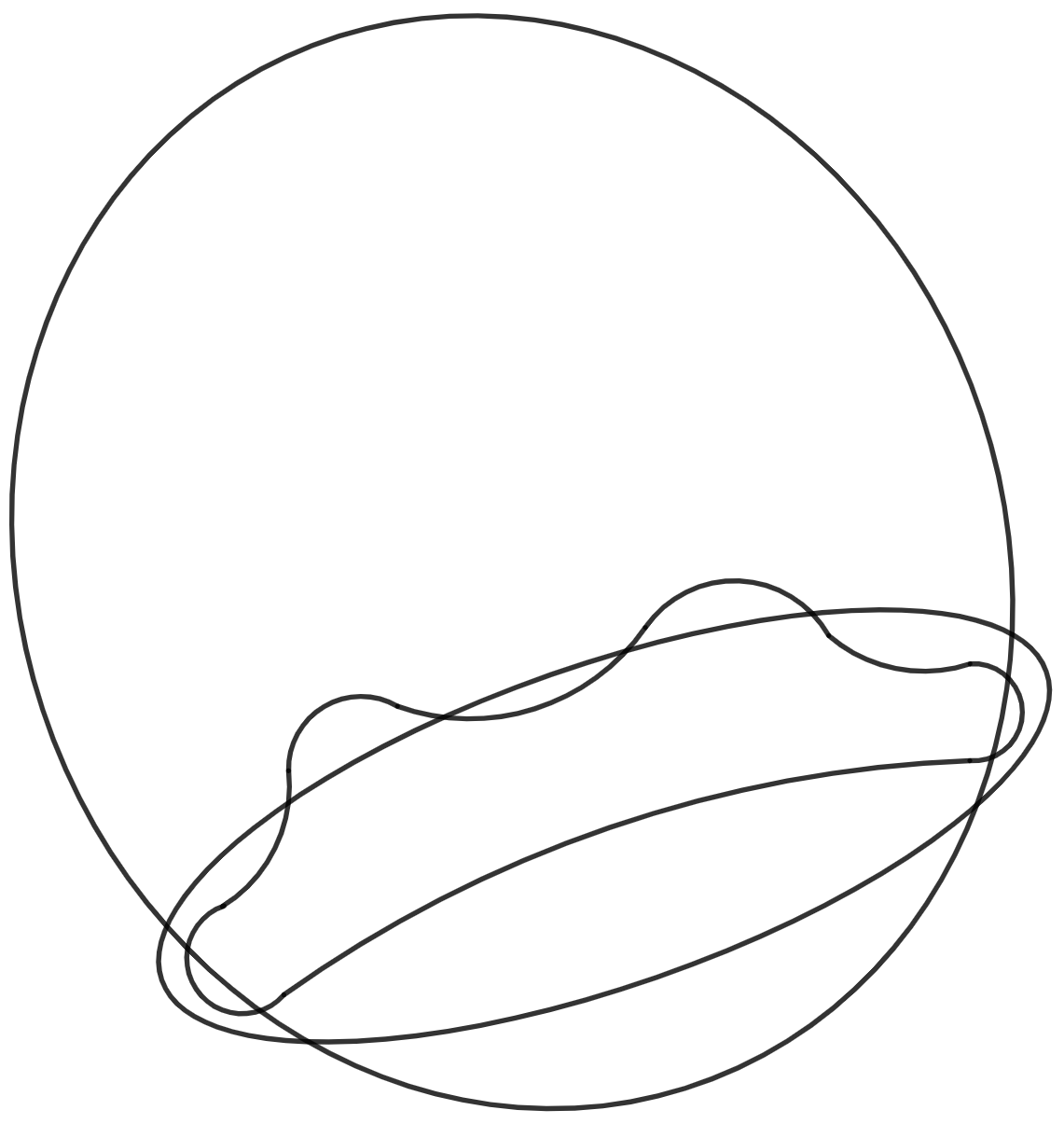}
         \caption{Fig. \ref{fig:444_220}, right.}
         \label{fig:444_220_top}
     \end{subfigure}
     \hfill
         \begin{subfigure}[b]{0.19\textwidth}
         \centering
         \includegraphics[width=\textwidth]{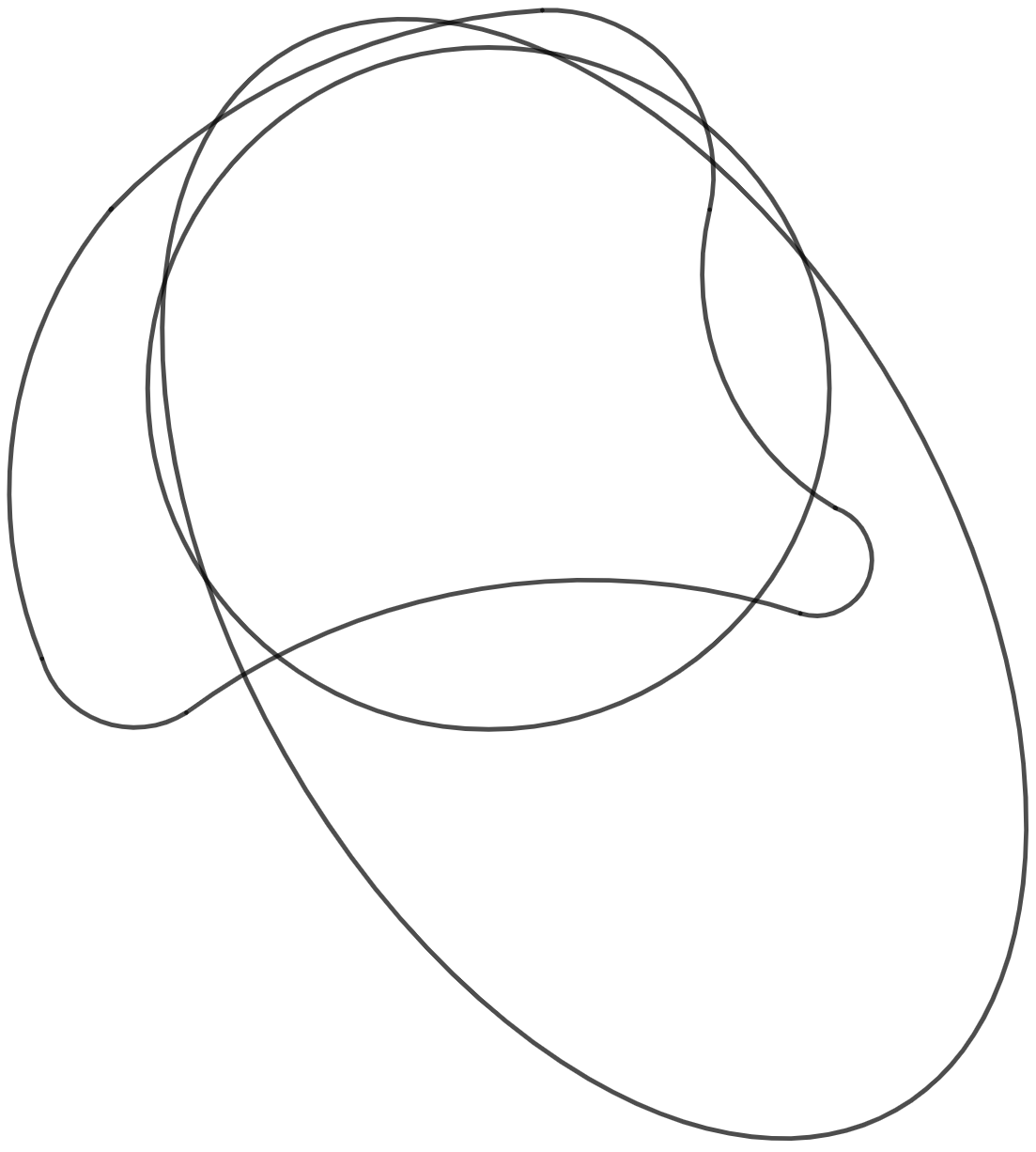}
         \caption{Fig. \ref{fig:444_221}, left.}
         \label{fig:444_221_top}
     \end{subfigure}
    \begin{subfigure}[b]{0.19\textwidth}
         \centering
         \includegraphics[width=\textwidth]{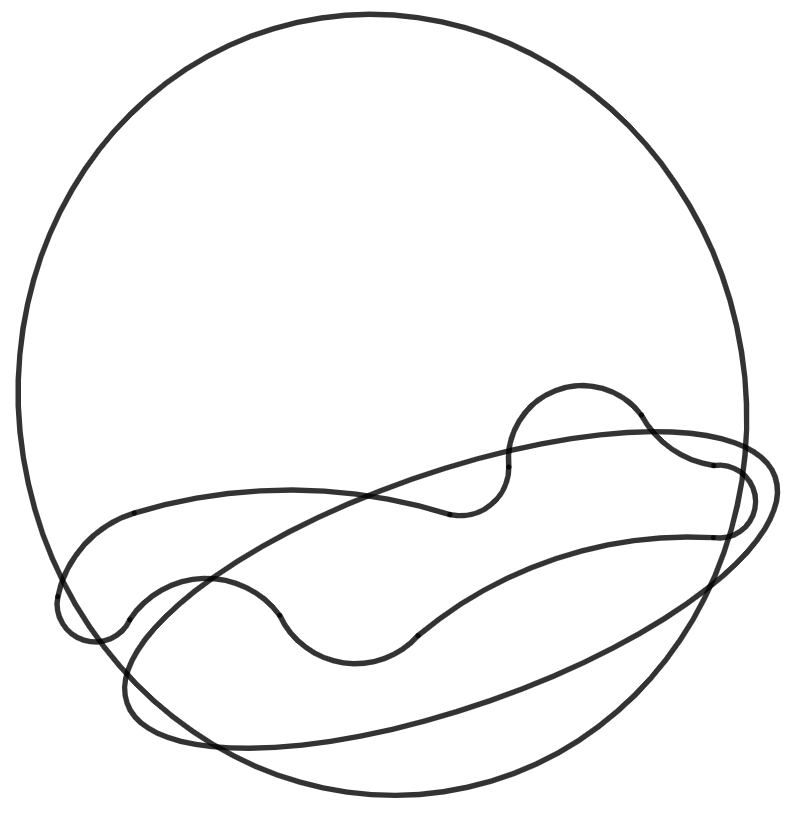}
         \caption{Fig. \ref{fig:444_321}, right.}
         \label{fig:444_321_top}
     \end{subfigure}
        \caption{Topological sketches of some configurations of three ellipses.}
        \label{fig:sketches}
\end{figure}

\subsection{Proving hyperbolicity}

The adjoint curve of a polycon defined by three conics is a cubic curve. So there are only two possible real geometries if the adjoint curve is nonsingular:
\begin{enumerate}
    \item The real points are connected, 
    in which case this connected component is a pseudoline and its complement in $\P^2(\R)$ is connected.
    \item The real part of the adjoint has two connected components, in which case it is a hyperbolic cubic, i.e., it has one connected component that bounds a simply connected region, called the oval, and the other connected component is a pseudoline. 
\end{enumerate}
Since the adjoint curve does not intersect the Euclidean boundary of a regular polycon (cf.~\Cref{lemma:transversalResidual}), the pseudoline component cannot pass through the interior of the polycon.
The essential question is therefore if the adjoint curve has an oval and, in case that it does, if we can determine its location.
In most cases, we can show by an analysis of the signs of the adjoint curve -- similar to the case of polygons -- that the intersection pattern forces the adjoint curve to be hyperbolic and the oval to be outside of  the polycon, which proves \Cref{thm:wachspressellipses} in those cases. 

\begin{proposition}
\label{prop:polyconHyperbolic}
In the 28 configurations of three ellipses depicted without frames in Figures~\ref{fig:222}--\ref{fig:444}, the real part of the adjoint curve of any regular polycon $P$ in the configuration is hyperbolic and does not intersect the interior of $P_{\ge 0}$.
\end{proposition}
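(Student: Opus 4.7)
The plan is to mimic the sign-analysis argument used in the polygon case (\Cref{thm:hyperbolicadjoint}), now applied to the three boundary ellipses $C_1,C_2,C_3$. Fix a regular polycon $P$ in one of the 28 configurations and let $\alpha_P$ be a real defining polynomial of the cubic adjoint $A_P$. By \Cref{lemma:transversalResidual}, $A_P\cap C_i$ consists exactly of the residual points lying on $C_i$, each with multiplicity one, and $A_P\cdot C_i = 2\cdot 3 = 6$; since the boundary conics are smooth, all residual points on $C_i$ come from transverse intersections with $C_{j}$, $j\neq i$. Consequently $\alpha_P|_{C_i(\mathbb R)}$ changes sign at every residual point on $C_i$ and at no other point. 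After rescaling, we may assume $\alpha_P$ is positive on every side of $P$, and from this choice the sign of $\alpha_P$ is determined on each arc of every $C_i(\mathbb R)$ cut out by the residual points.

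Next I would analyze the connected regions of $\mathbb P^2_{\mathbb R}\setminus (C_1\cup C_2\cup C_3)$, exactly as in \Cref{lem:regions}. Each region is bounded by arcs of the three conics, each carrying a determined sign of $\alpha_P$. Whenever a region's boundary shows a sign change, a real branch of $A_P$ must enter the region; and when a region lies between two residual points on the same conic with opposite signs on the adjacent arcs, the adjoint must cross that arc between them. In this way I would identify, for each of the 28 admissible configurations, a collection of disjoint regions in the exterior of $P_{\ge 0}$ that each must contain at least one branch of $A_P(\mathbb R)$, and I would verify that the total picture is consistent with a hyperbolic cubic, i.e.\ with one oval plus a pseudoline both disjoint from $P_{\ge 0}$.

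To promote ``at least one branch per region'' into the precise statement that $A_P$ is hyperbolic with its oval disjoint from $P_{>0}$, I would combine the sign analysis with Bézout's theorem. Any real line meeting the interior of $P_{\ge 0}$ meets $A_P$ in at most $3$ points; moreover, through a generic point $e \in P_{>0}$ one can draw a line crossing all the ``forced'' regions identified above. The arithmetic then shows: either $A_P(\mathbb R)$ is a single pseudoline lying entirely outside $P_{\ge 0}$, or it consists of a pseudoline plus an oval, again forced to lie in the identified outer regions. Strict hyperbolicity (absence of real singularities of $A_P$) follows as in the polygon case: a real singularity, joined by a line to an interior point of $P_{\ge 0}$, would create a line-adjoint intersection number greater than $3$.

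The main obstacle will be the uniformity of the case distinction: one has to go through each of the $28$ admissible configurations (and, for each, each choice of a regular polycon in it) and certify that the induced sign pattern on the exterior regions is rich enough to force the oval into a specific exterior region, never inside $P_{\ge 0}$. The combinatorial ingredients -- intersection type $(222),(224),(244)$ or $(444)$, outer-arc type, and the cyclic arrangement of residual points on each ellipse -- are exactly what the catalog in Figures~\ref{fig:222}--\ref{fig:444} tabulates, so the proof reduces to a finite check guided by the catalog. The $16$ configurations that are framed in those figures are precisely the ones for which this sign analysis is indecisive: there, the forced branches of $A_P$ could in principle close up either inside or outside the problematic polycon, and a more delicate argument (addressed separately in \Cref{prop:problematicPolycons} and the subsequent case-by-case discussion) is required.
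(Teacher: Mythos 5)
The first part of your proof — the sign analysis on each ellipse, showing $\alpha_P$ changes sign exactly at the residual points — matches the paper's argument. Where you diverge is in how hyperbolicity is established. You propose to mimic the line–Bézout argument from the convex polygon case (Lemma~\ref{lemma:countingOvals}): draw a line through a point of $P_{>0}$ crossing several ``forced'' regions and count real intersections. The paper instead uses a purely topological observation: among the regions of $\PP^2_\R \setminus (C_1\cup C_2 \cup C_3)$ there is a specific ``blue triangle'' whose boundary arcs all carry the same sign and which sits inside a simply connected region bounded by red arcs; since the adjoint can only cross the ellipses at residual points, where it must separate red arcs from blue ones, the branches entering this region are trapped and must close up into an oval surrounding the blue triangle, outside $P_{\ge 0}$.

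The topological argument and your Bézout argument are not equivalent here, and the difference is where your proposal has a gap. In the polygon case, the forced regions are arranged in nested ``circles'' around $P_{\ge 0}$, so that \emph{every} generic line through an interior point crosses each circle twice; this is what yields $k-3$ real intersection points for every such line, and hence hyperbolicity with respect to every interior point. For three ellipses, you only argue that \emph{some} line through a generic $e\in P_{>0}$ can be drawn crossing enough forced regions, and you yourself conclude that this only shows ``either $A_P(\R)$ is a single pseudoline outside $P_{\ge 0}$, or a pseudoline plus an oval outside $P_{\ge 0}$.'' That disjunction is strictly weaker than the proposition: the single-pseudoline case is precisely the non-hyperbolic one, and your argument does not rule it out, since for a cubic three real intersections on one line can all lie on the pseudoline. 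To close this gap along your lines you would either have to show that \emph{every} line through some fixed $e\in P_{>0}$ meets $A_P$ in three real points (the analogue of the nested-circles structure, which is not established), or switch to the paper's argument that directly exhibits an oval. As a secondary point, your phrase ``the adjoint must cross that arc between them'' is inconsistent with Lemma~\ref{lemma:transversalResidual}, which says $A_P$ meets $C_i$ only at residual points; what you want is that a branch of $A_P$ must lie in the region, not that it crosses an arc. Finally, both you and the paper defer the verification over all 28 configurations to a finite check, so that part is parity.
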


\begin{proof}  
Each pair of ellipses intersects in four complex points, three of which are residual points. 
The adjoint curve intersects each one of the ellipses only in the six residual points on it by \Cref{lemma:transversalResidual}. 
Since the adjoint is a cubic curve, each one of the six points is a simple root, and we can determine the sign of the adjoint along each conic. 
We illustrate this in \Cref{fig:ellipseadjointexm}, where the signs are marked as red and blue: 
The sign changes at every residual point, so each of them is adjacent to two red arcs and two blue arcs. 
By \Cref{lemma:transversalResidual}, the adjoint has to pass through every residual point, intersecting each of the two conics transversely  and separating the blue from the red arcs. That is to say that every sufficiently small real circle around a residual point intersects the adjoint in two real points, and the two red branches of the conics intersect the circle in one of the intervals created by these intersection points and the blue branches in the other interval.

This local information is enough to determine the location of the oval of the adjoint in \Cref{fig:ellipseadjointexm}:
The blue triangle on the right side of the polycon in the picture is surrounded by an oval of the adjoint because the triangle lies in a simply connected region of the complement of the red arcs of the conics and the adjoint cannot cross the boundary of this region. We leave it as an exercise for the reader that the same argument applies to all regular polycons in the non-framed configurations in Figures~\ref{fig:222}--\ref{fig:444}.
\end{proof}

\begin{figure}[htb]
    \centering
    \includegraphics[width=0.6\textwidth,height=5cm]{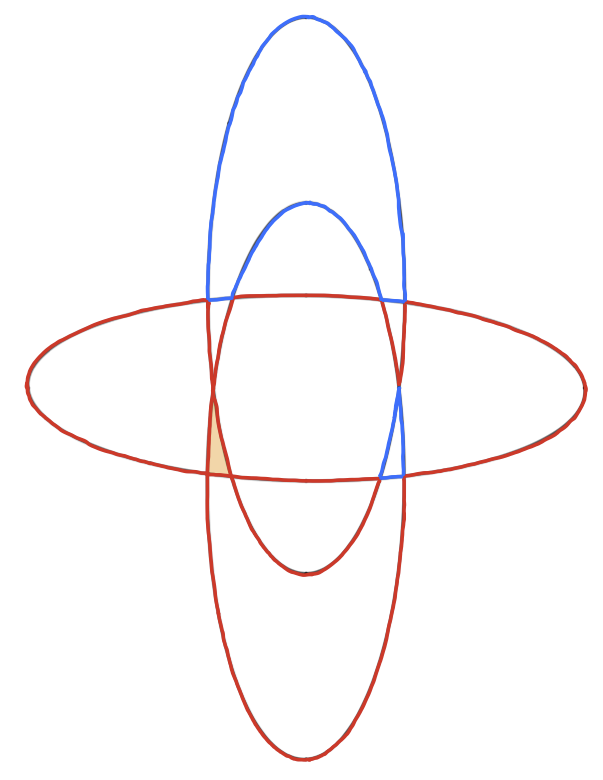}
    \caption{A polycon (orange) bounded by three ellipses with intersection type 244 and outer arc type 211; cf. Figure \ref{fig:244_211}, left. Red and blue distinguish the sign of the adjoint. }
    \label{fig:ellipseadjointexm}
\end{figure}

\subsection{Problematic configurations}

For the 16 framed configurations of three ellipses in Figures \ref{fig:222}--\ref{fig:444}, 
the argument in \Cref{prop:polyconHyperbolic} does not apply to \emph{all} regular polycons in the configuration. 
However, we invite the reader to check that the argument does in fact apply to all but \emph{one} problematic polycon, up to symmetry. The problematic polycon is shaded orange in the figures.
For instance, the configuration in \Cref{fig:224_221} has six regular polycons: Exactly two of those are problematic (i.e., the argument in \Cref{prop:polyconHyperbolic} does not apply), but they are the same up to the symmetry in the configuration.

We now prove Wachspress's  conjecture for five of the problematic polycons.

\begin{proposition} \label{prop:problematicPolycons}
In the 5 problematic configurations of three ellipses in Figures~\ref{fig:244_322} and \ref{fig:444}, the real part of the adjoint curve of any regular polycon $P$ in the configuration does not intersect the interior of $P_{\ge 0}$.
\end{proposition}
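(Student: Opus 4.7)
The plan is to argue by contradiction in each of the $5$ problematic configurations. Suppose $A_P \cap \operatorname{int}(P_{\ge 0}) \ne \emptyset$. By \Cref{lemma:transversalResidual}, $A_P$ does not meet the sides of $P$, so any connected real component of $A_P$ that intersects $\operatorname{int}(P_{\ge 0})$ is entirely contained in it. A smooth real cubic has either a single pseudoline component, or the disjoint union of a pseudoline $\Pi$ and a single oval $O$. The pseudoline is a non-contractible simple closed curve in $\P^2_\R$, hence cannot fit inside the open disk $\operatorname{int}(P_{\ge 0}) \subset \R^2 \subset \P^2_\R$. Therefore $A_P$ is hyperbolic, its oval $O$ is strictly contained in $\operatorname{int}(P_{\ge 0})$, and its pseudoline $\Pi$ lies entirely in $\P^2_\R \setminus \operatorname{int}(P_{\ge 0})$. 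In particular, all $9$ residual points of $P$ lie on $\Pi$.

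The next step is to refine the sign analysis used in the proof of \Cref{prop:polyconHyperbolic}. On each ellipse $C_i$, the $6$ residual points alternate the sign of $\alpha_P$ on consecutive arcs, and, once we fix the global sign by declaring $\alpha_P > 0$ along the sides of $P$, the sign of $\alpha_P$ becomes determined on every arc of every $C_i$ and consequently on the closure of every connected region of $\P^2_\R \setminus (C_1 \cup C_2 \cup C_3)$. At each residual point $r$, the two local branches of $A_P$ through $r$ must separate the $\{\alpha_P > 0\}$ sectors from the $\{\alpha_P < 0\}$ sectors, so the pair of complementary regions into which the branches of $\Pi$ locally enter is pinned down; this yields forced external arcs of $\Pi$ at all $9$ residual points.

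For each of the $5$ configurations we derive a contradiction by a B\'ezout argument. Using hyperbolicity of $A_P$ with respect to interior points of $O$, for every $p \in \operatorname{int}(O)$ and every real line $L$ through $p$ the intersection $L \cap A_P$ consists of exactly $3$ simple real points, of which $2$ lie on $O$ and exactly $1$ lies on $\Pi$. We exhibit a point $p \in \operatorname{int}(P_{\ge 0})$ and a real line $L$ through $p$ such that the forced external arcs of $\Pi$ at the residual points compel $L$ to cross $\Pi$ at least twice outside of $P_{\ge 0}$. Together with the two intersections on $O$, this gives $|L \cap A_P(\R)| \ge 4$, contradicting $\deg A_P \cdot \deg L = 3$. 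Consequently no oval of $A_P$ can lie in $\operatorname{int}(P_{\ge 0})$, which is what we had to prove.

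The main obstacle is a case-by-case geometric verification. Up to the symmetries of the configuration there is only one problematic polycon in each of the $5$ cases, but producing the line $L$ requires tracing the forced direction of $\Pi$ at each of the $9$ residual points through the many complementary regions of the three-ellipse configuration, and then choosing a pencil of lines through $\operatorname{int}(P_{\ge 0})$ for which two forced external arcs are lined up on opposite sides of $P_{\ge 0}$. The configurations in \Cref{fig:244_322} and the four framed configurations in \Cref{fig:444} are precisely the ones in which such a line can be exhibited; for the remaining $11$ problematic configurations the same method fails to produce the needed alignment, and they are therefore treated in Figure \ref{fig:problematicAdjoint} only by explicit computation of the adjoint.
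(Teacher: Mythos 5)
Your high-level strategy is correct and matches a key ingredient of the paper's proof: the paper's Lemma~\ref{le:twointersections} is exactly your B\'ezout count (two crossings of the pseudoline outside $P_{\ge 0}$ plus the oval inside would give $\ge 4$ real intersections with a line, exceeding $\deg A_P = 3$). The sign analysis and the ``forced local branching'' of the pseudoline at the nine residual points is also the same starting point as in the paper's proof of \Cref{prop:polyconHyperbolic}.

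However, there is a genuine gap in your argument: you treat the global topology of the pseudoline $\Pi$ as if it were determined by the local data at the residual points. It is not. The local crossings pin down how $\Pi$ enters each complementary region near a residual point, but how the resulting segments connect inside the configuration (``along the red sides'' or ``along the blue sides'' of each four-sided region), and more importantly how the six \emph{tentacles} of $\Pi$ that leave the three-ellipse configuration connect to one another or to the line at infinity, is ambiguous. The paper must therefore enumerate these global topologies (Cases 1, 2, 3a, 3b(i), 3b(ii)) and treat each separately. Your line-producing argument succeeds only in the analogues of Case 1 and Case 3b(i), where two suitably placed tentacles connect so that a line through the polycon can be forced to hit $\Pi$ twice outside. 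In Case 2 the paper derives a contradiction not from a line count but from the convexity of the ellipses against a ``pizza-slice'' decomposition, and in Cases 3a and 3b(ii) it uses a completely different invariant: a real plane cubic has exactly three real inflection points, while the hypothetical $\Pi$ would need at least four. Your proposal never produces the explicit line $L$ for any of the five configurations, and more fundamentally it does not acknowledge that some of the possible global shapes of $\Pi$ cannot be dispatched by the line argument at all and require the inflection-point or convexity arguments instead. As written, the proof cannot be completed by ``a case-by-case geometric verification'' of the kind you describe.
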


To prove this assertion, we need the following key lemma.

\begin{lemma}\label{le:twointersections} Let $P$ be a regular polycon in $\mathbb{R}^2$ defined by three ellipses. If for every point $p$ in the interior of $P_{\ge 0}$ there is a line passing through $p$ that meets the adjoint curve $A_P$ outside of $P_{\ge0}$ at least twice, then $A_P$ does not intersect $P_{\ge 0}$.
\end{lemma}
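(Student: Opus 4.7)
The plan is to argue by contradiction: assume $A_P$ meets the interior of $P_{\ge 0}$ and combine B\'ezout's theorem with the Jordan curve theorem to violate the hypothesis.

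First, I would observe that since $P$ is regular, Lemma~\ref{lemma:transversalResidual} implies $A_P$ does not meet the sides of $P$ at all. Hence, if $A_P(\R)$ meets $\mathrm{int}(P_{\geq 0})$, then every connected component of $A_P(\R)$ that touches $\mathrm{int}(P_{\geq 0})$ is entirely contained in $\mathrm{int}(P_{\geq 0})$, because it cannot cross $\partial P_{\geq 0}$. Since $P_{\geq 0}$ is bounded, such a component must be a bounded (hence compact) connected subset of $A_P(\R) \subset \R^2$. Using the topological classification of the real locus of a plane cubic in $\PP^2_\R$, the only bounded connected components are ovals (Jordan curves); a pseudoline component is non-contractible in $\PP^2_\R$, so any affine representative is unbounded and cannot lie inside $P_{\geq 0}$.

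Next I would pick such an oval $O \subset \mathrm{int}(P_{\geq 0})$ and select a point $p$ in the bounded planar region enclosed by $O$. Since $p\in\mathrm{int}(P_{\geq 0})$, the hypothesis of the lemma provides a real line $\ell$ through $p$ that meets $A_P$ in at least two points outside $P_{\geq 0}$. On the other hand, by the Jordan curve theorem, $\ell$ must cross $O$ in at least two further real points; these two lie inside $P_{\geq 0}$ and are therefore distinct from the first two. In total, $\ell$ meets $A_P$ in at least four distinct real points. Since $A_P$ is a cubic curve (the total degree of the boundary is $6$, so $\deg A_P = 3$), B\'ezout's theorem forces $\ell$ to be a component of $A_P$.

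Finally, I would rule out this degenerate case. If the real line $\ell$ were a component of $A_P$, then $\ell \subset A_P(\R)$ is a pseudoline in $\PP^2_\R$ passing through the point $p \in \mathrm{int}(P_{\geq 0})$. By Lemma~\ref{lemma:transversalResidual}, $\ell$ cannot intersect the sides of $P$, so $\ell$ would be a connected subset of $A_P(\R)$ containing $p$ and disjoint from $\partial P_{\geq 0}$; as in the argument above, $\ell$ would have to lie entirely in $\mathrm{int}(P_{\geq 0})$. This contradicts the fact that an affine representative of a projective line is unbounded while $P_{\geq 0}$ is bounded. This contradiction completes the proof.

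The main obstacle, and the only genuinely non-routine point, is justifying that a connected component of $A_P(\R)$ intersecting $\mathrm{int}(P_{\geq 0})$ is in fact an oval contained in $\mathrm{int}(P_{\geq 0})$: this relies crucially on the regularity of $P$ (to invoke Lemma~\ref{lemma:transversalResidual}) together with the topology of real plane cubics. Once this is established, the B\'ezout plus Jordan argument is immediate.
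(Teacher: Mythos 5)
Your approach matches the paper's in its main thrust (invoke Lemma~\ref{lemma:transversalResidual} to confine any component of $A_P(\R)$ meeting $\mathrm{int}(P_{\ge 0})$ to lie entirely inside, then pick $p$ inside an oval and use B\'ezout to reach an intersection-count contradiction), and you are actually more careful than the paper in explicitly ruling out the degenerate case that the line $\ell$ is a component of $A_P$. However, there is a genuine gap: you implicitly assume the adjoint cubic is nonsingular. The claim that ``the only bounded connected components are ovals (Jordan curves)'' appeals to the topological classification of \emph{nonsingular} real plane cubics, but the lemma makes no such smoothness assumption, and indeed the adjoint of a polycon bounded by three conics can be singular or reducible (see Remark~\ref{rem:triangleadjoint}). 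A bounded component of $A_P(\R)$ inside $P_{\ge 0}$ could then be an isolated real point (acnode) or a nodal/cuspidal loop; for an acnode there is no enclosed region in which to place $p$, so the Jordan-curve step breaks down entirely, and for a singular loop the Jordan curve theorem does not apply directly.

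The paper's proof handles this by splitting into two cases: when the bounded component is a nonsingular oval, it runs essentially your argument; when it is singular, it instead takes $p$ to be the singular point of $A_P$ inside $P_{\ge 0}$. Then the line through $p$ automatically meets $A_P$ at $p$ with intersection multiplicity at least $2$, and together with the two intersection points outside $P_{\ge 0}$ this again exceeds degree $3$. To complete your proof you would need to add this second case; your final paragraph (ruling out $\ell \subset A_P$) still applies verbatim there.
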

\begin{proof} This is a count of intersection points because the adjoint curve does not intersect the boundary of $P_{\ge 0}$ by \Cref{lemma:transversalResidual}. So a connected component of $A_P(\R)$ inside $P_{\ge0}$ would have to be an oval (possibly singular). In the case of a nonsingular oval, we choose a point $p$ in its interior, and see that the line from the statement now intersects $A_P$ in at least two additional points. This is not possible since the adjoint is of degree 3. In the singular case, we choose $p$ to be the singular point of $A_P$ inside $P_{\ge 0}$. Then the line from the statement passes through $p$ with multiplicity two and we arrive at the same contradiction.   
\end{proof}

The five problematic polycons described in \Cref{prop:problematicPolycons} are shown in Figure~\ref{fig:problematicFive}, together with the sign of the adjoint along each ellipse.
If the real part of the cubic adjoint would have an oval or a singularity outside the polycon, then there could be no  oval or isolated node inside the polycon.
Thus, no real connected component of the adjoint could be strictly contained inside the polycon and Lemma~\ref{lemma:transversalResidual} would imply
Wachspress's conjecture. 
Hence, in the following we assume that all real residual points lie on one connected nonsingular pseudoline of $A_P$. 
Knowing how the adjoint passes through the residual points (separating red and blue arcs), we can connect the adjoint in some regions but not in others. 
This local topological behavior of the adjoint is sketched in Figure~\ref{fig:problematicFive}.
For each region bounded by two chains of red and blue arcs for which we have not yet drawn how the adjoint passes through the interior,  there are two nonsingular possibilities how to connect the adjoint, either ``along the red sides''  (see \Cref{fig:connectAlongRedSides}) or ``along the blue sides'' (as in \Cref{fig:tentaclesClosestPolyconConnected}).

\begin{figure}[htb]
    \centering
    \begin{subfigure}[b]{0.19\textwidth}
         \centering
         \includegraphics[width=\textwidth]{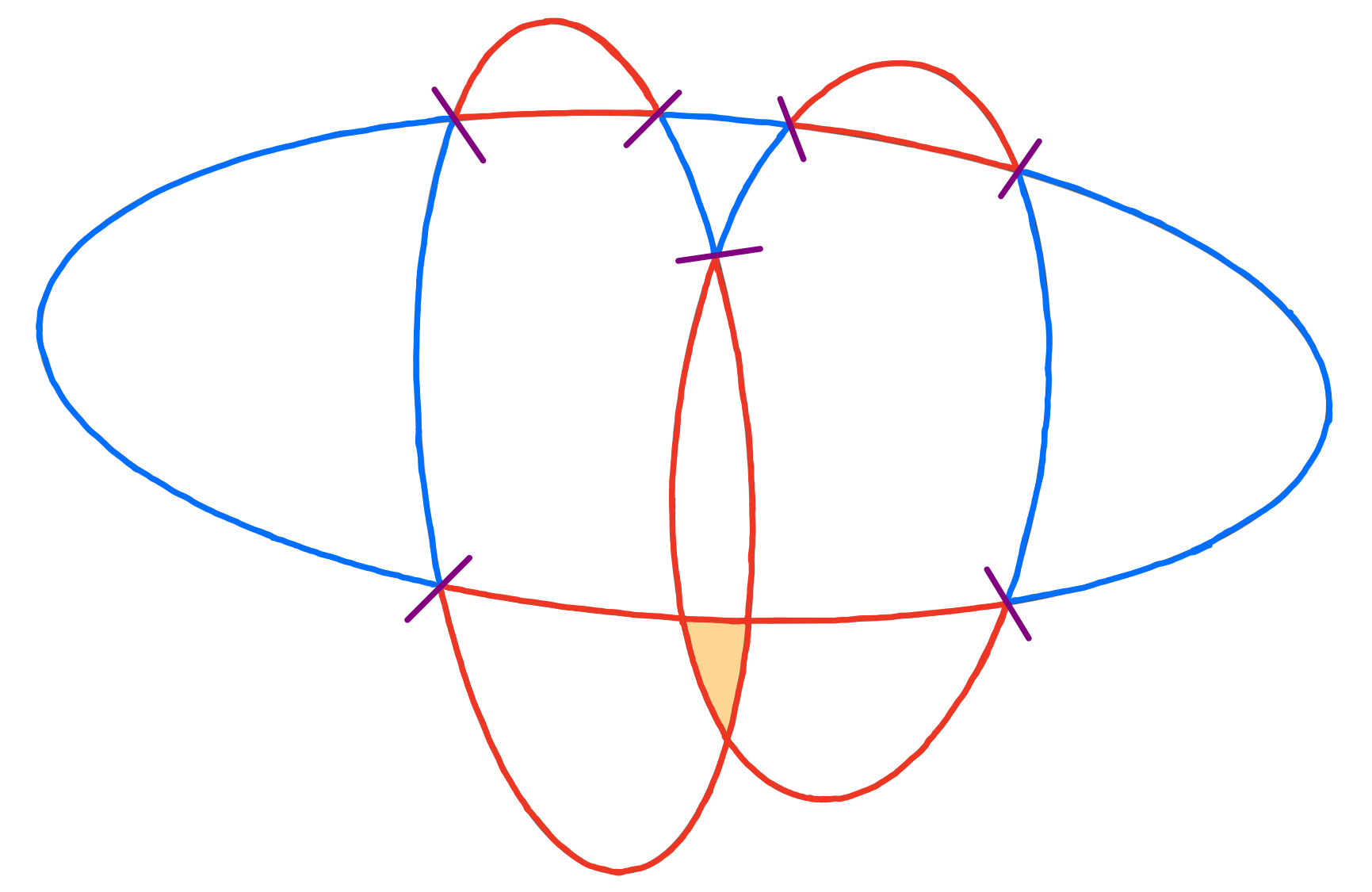}
         \caption{Fig. \ref{fig:244_322}.}
         \label{fig:problem_244_322}
     \end{subfigure}
         \begin{subfigure}[b]{0.19\textwidth}
         \centering
         \includegraphics[width=\textwidth]{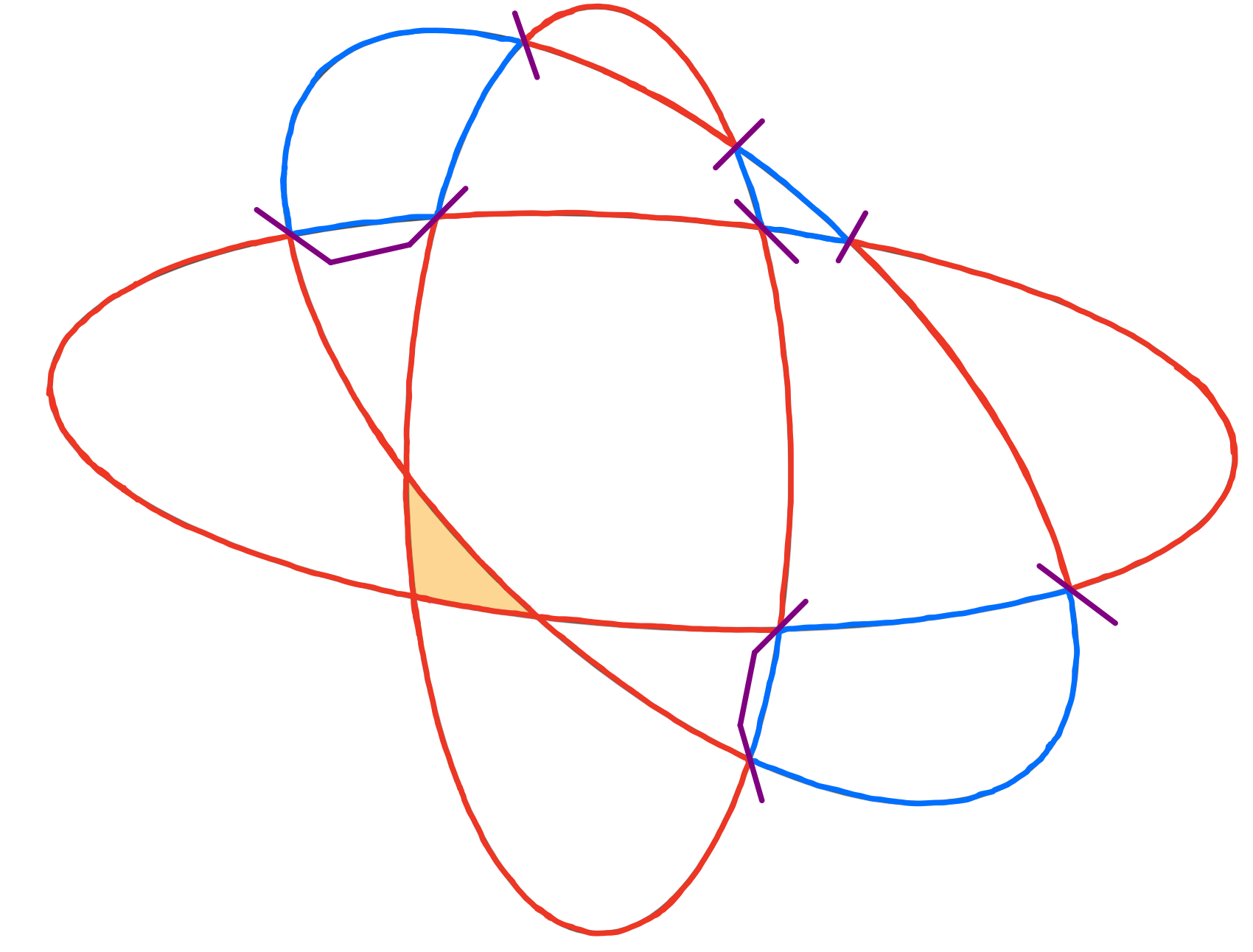}
         \caption{Fig. \ref{fig:444_322}, left.}
         \label{fig:problem_444_322_left}
     \end{subfigure}
     \begin{subfigure}[b]{0.19\textwidth}
         \centering
         \includegraphics[width=\textwidth]{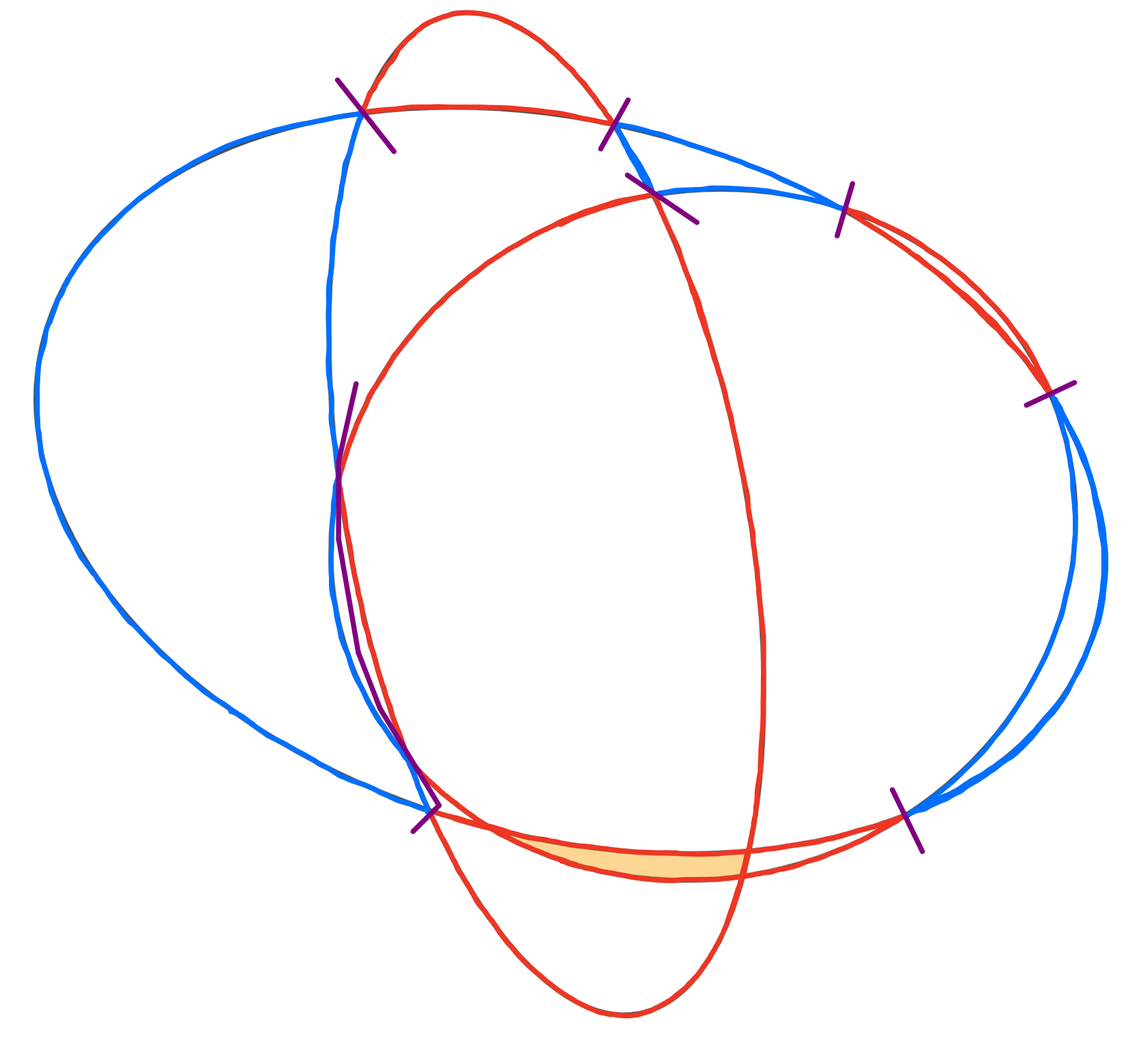}
         \caption{Fig. \ref{fig:444_322}, right.}
         \label{fig:problem_444_322_right}
     \end{subfigure}
              \begin{subfigure}[b]{0.19\textwidth}
         \centering
         \includegraphics[width=\textwidth,height=2.5cm]{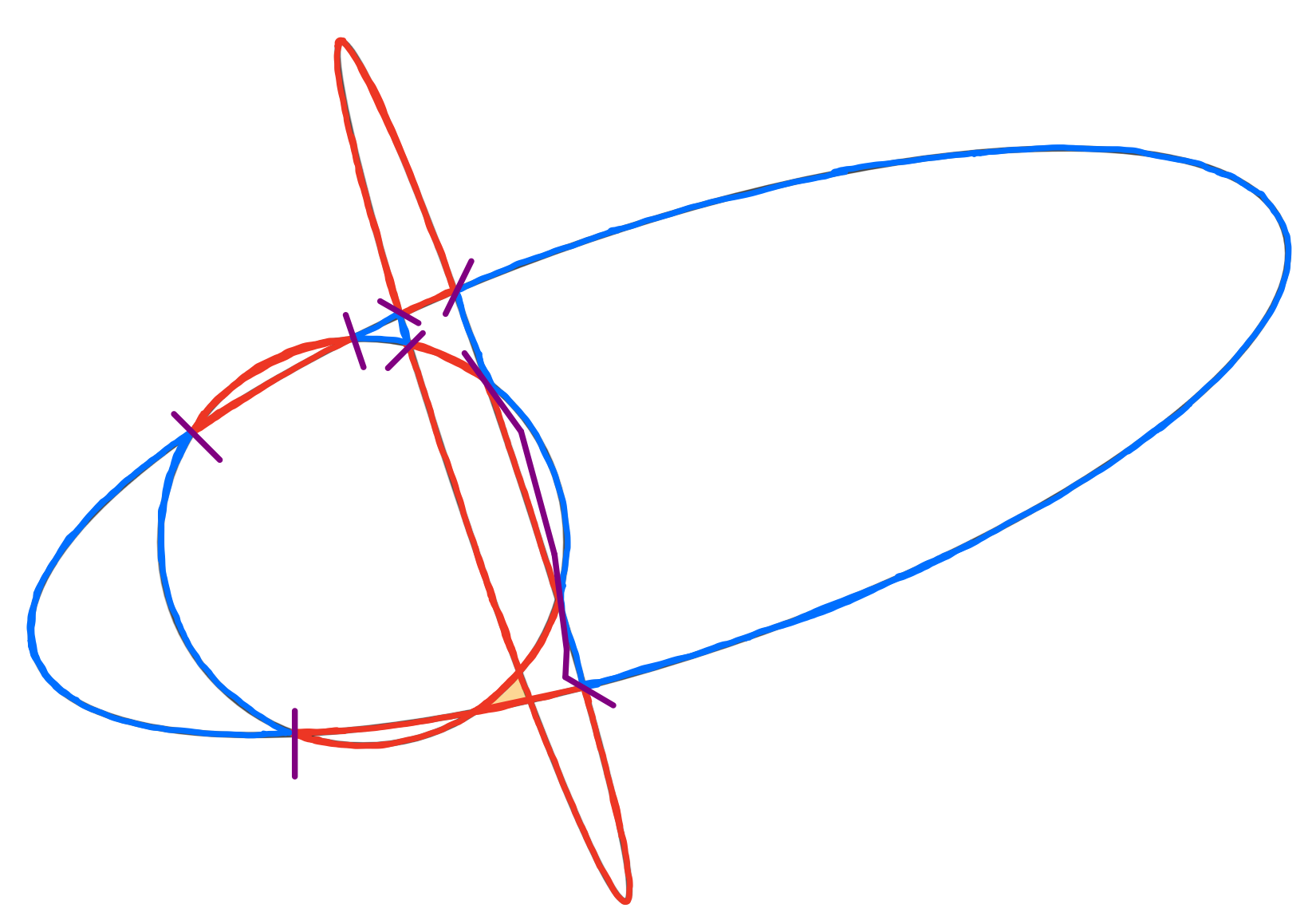}
         \caption{Fig. \ref{fig:444_422}, left.}
         \label{fig:problem_444_422_left}
     \end{subfigure}
     \begin{subfigure}[b]{0.20\textwidth}
         \centering
         \includegraphics[width=\textwidth]{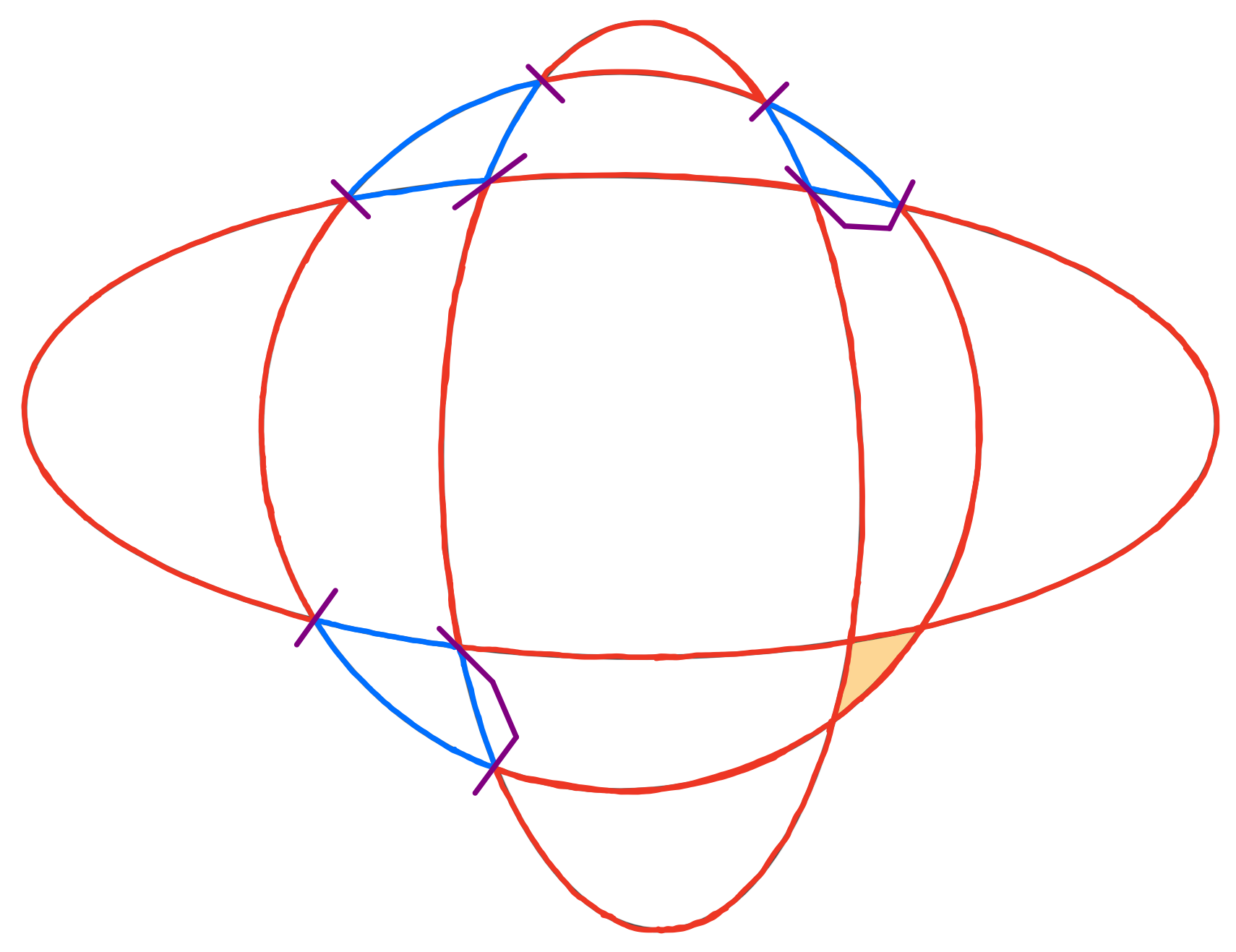}
         \caption{Fig. \ref{fig:444_422}, right.}
         \label{fig:problem_444_422_right}
     \end{subfigure}
    \caption{The five problematic polycons from \Cref{prop:problematicPolycons}. Orange, red and blue are as in Figure \ref{fig:ellipseadjointexm}. The purple curve segments show the topological behaviour of the adjoint.}
    \label{fig:problematicFive}
\end{figure}

\begin{lemma} \label{lem:connectAlongRedSegments} If two adjoint curve segments in \Cref{fig:problematicFive} connect ``along the red sides'', the adjoint curve does not intersect $P_{\ge 0}$.
\end{lemma}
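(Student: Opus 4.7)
The plan is to invoke Lemma \ref{le:twointersections}: it suffices to exhibit, for every interior point $p$ of $P_{\ge 0}$, a line through $p$ that meets $A_P$ in at least two points outside $P_{\ge 0}$. By Lemma \ref{lemma:transversalResidual} the adjoint does not cross the boundary of $P_{\ge 0}$, so any real connected component of $A_P$ lies either entirely inside or entirely outside the polycon, and it is enough to locate sufficiently many adjoint arcs in the exterior region.

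First I would reduce to the standing assumption made just before the lemma, that $A_P(\mathbb{R})$ is a single nonsingular pseudoline threading all real residual points: if an extra oval or real node sat outside $P_{\ge 0}$, then---$A_P$ being a cubic---no further component could lie strictly inside $P_{\ge 0}$, and we would be done by Lemma \ref{lemma:transversalResidual} alone. Then, in each of the five configurations of Figure \ref{fig:problematicFive}, I would use the sign information recorded by the red/blue arcs to localize the pseudoline. The ``red-side'' connection hypothesis forces the adjoint to stay outside $P_{\ge 0}$ through both ambiguous regions, producing two disjoint exterior arcs $\gamma_1, \gamma_2$ of the pseudoline whose endpoints are residual points positioned on opposite sides of the polycon.

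Second I would produce, for each interior point $p$ of $P_{\ge 0}$, a line $L_p$ through $p$ transversal to both $\gamma_1$ and $\gamma_2$. The key observation is that if the endpoints of $\gamma_i$ lie on opposite sides of some line through $p$, then by connectedness of $\gamma_i$ this line must cross $\gamma_i$; a direction realizing this simultaneously for $\gamma_1$ and $\gamma_2$ lies in an open cone determined by the four endpoint residual points. A direct inspection of each of the five configurations shows that this cone is non-empty for every $p$ in $P_{\ge 0}$, so the corresponding $L_p$ meets $A_P$ in at least two points outside $P_{\ge 0}$, which together with Lemma \ref{le:twointersections} completes the argument.

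The main obstacle will be the last verification: ensuring that the two separating cones of directions admit a common direction at every interior point of the polycon. Configurations \ref{fig:problem_444_422_left} and \ref{fig:problem_444_422_right}, which carry four residual points on one side of the polycon, appear the most delicate, and for them one may need to select $\gamma_1, \gamma_2$ from among several candidate exterior arcs, or replace the single-line argument by a secant argument using nearby residual points to rule out degenerate directions. Once this case analysis is dispensed with, the proof reduces uniformly to the application of Lemma \ref{le:twointersections} described above.
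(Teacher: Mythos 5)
Your proposal correctly identifies the governing Lemma~\ref{le:twointersections} and the overall strategy (for each interior point $p$, find a line through $p$ meeting $A_P$ twice outside $P_{\ge 0}$), and the opening reduction to the case of a single nonsingular pseudoline is exactly the paper's standing assumption. But the mechanism you propose for producing the two intersection points is different from the paper's and leaves a real gap. You want to locate two disjoint exterior arcs $\gamma_1,\gamma_2$ of the pseudoline and then choose a direction at $p$ that simultaneously separates the endpoints of $\gamma_1$ and of $\gamma_2$, so that the line through $p$ crosses both. This requires that the two ``separating cones'' of directions at $p$ intersect for every $p\in P_{\ge 0}$ and for each of the five configurations --- you assert that ``a direct inspection'' shows this, but you then concede that in Figures~\ref{fig:problem_444_422_left} and \ref{fig:problem_444_422_right} you are not sure the single-line argument even works and might need to switch to a secant argument. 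That verification is precisely the content of the lemma and cannot be left open.

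The paper sidesteps the whole cone-intersection difficulty by making a much more rigid choice of line. It singles out an ellipse $C$ whose interior contains the polycon, and argues from the ``red-side'' connection pattern that the pseudoline separates the interior of $C$ into components so that some residual point $p'$ of $C$ lies in a different component than $P_{\ge 0}$. The desired line is then simply the line through $p$ and $p'$: the segment $[p,p']$ stays inside $C$ by convexity, it must cross the pseudoline to pass between components, and it also hits $p'$, which is itself a point of $A_P$ outside $P_{\ge 0}$ (regularity). That gives two points outside $P_{\ge 0}$ with no cone analysis at all. So while your plan is not wrong in spirit, the step you flag as the ``main obstacle'' is in fact the crux, and the paper's construction is designed exactly to avoid it; to salvage your version you would need to carry out the cone analysis configuration by configuration, or replace it with the paper's residual-point-and-separated-ellipse argument.
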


\begin{figure}[htb]
    \centering
    \begin{subfigure}[b]{0.49\textwidth}
         \centering
         \includegraphics[width=\textwidth]{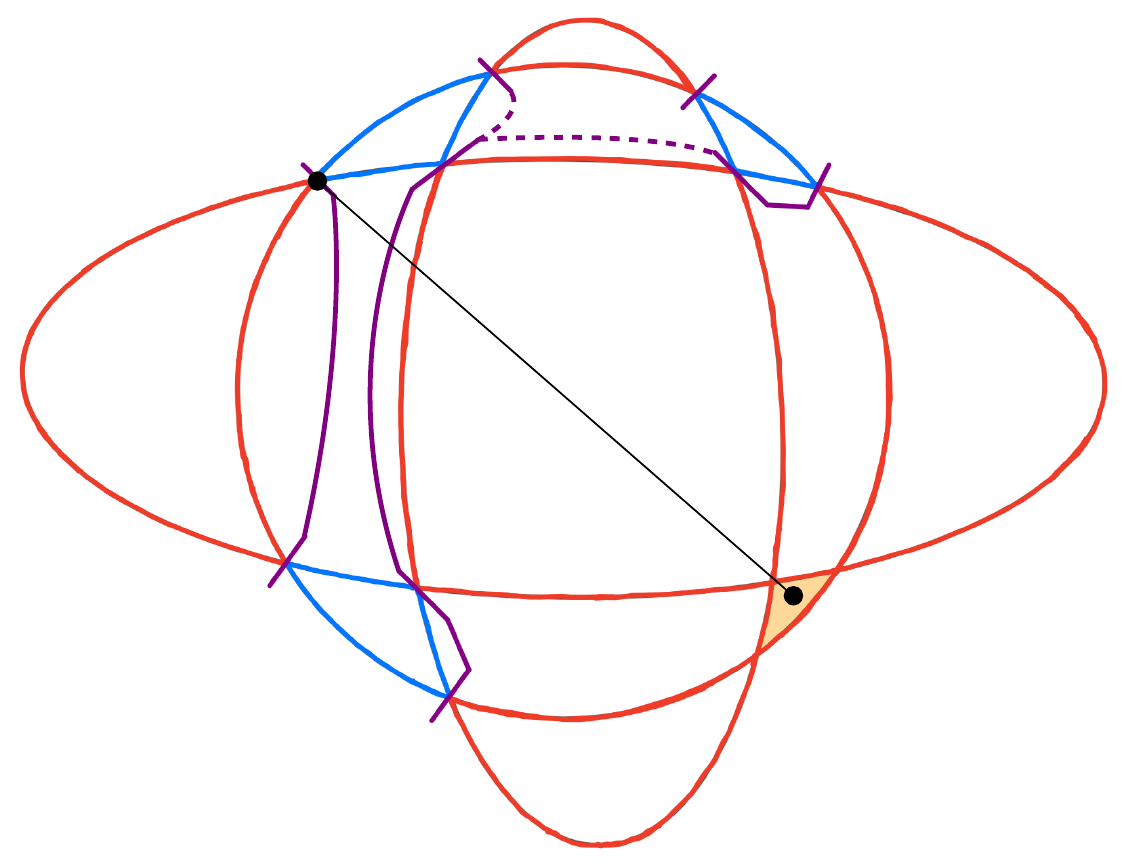}
         \caption{The adjoint curve segments connect along the red sides in the left 4-sided region.}
         \label{fig:connectAlongRedSides}
    \end{subfigure} \hfill
        \begin{subfigure}[b]{0.49\textwidth}
         \centering
         \includegraphics[width=\textwidth]{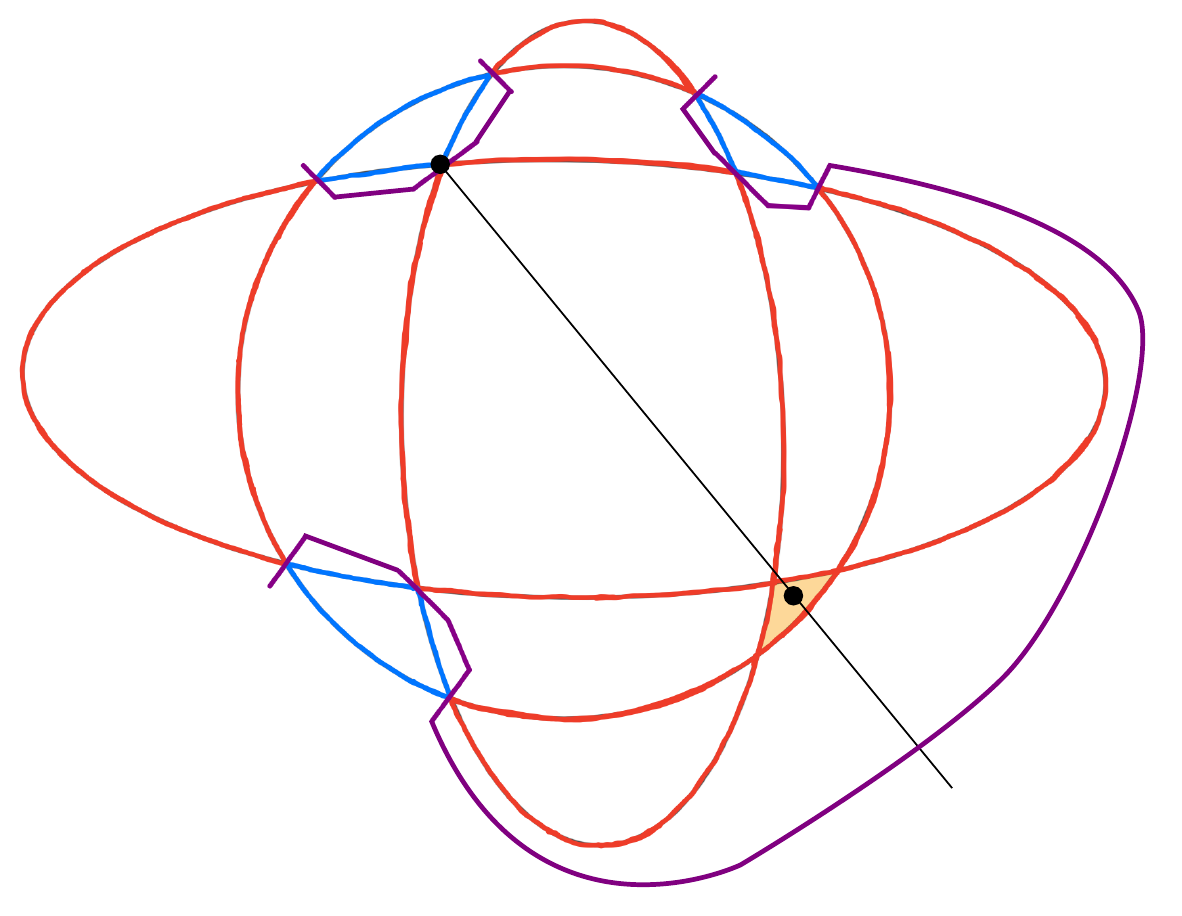}
         \caption{The two tentacles closest to the polycon are directly connected.}
         \label{fig:tentaclesClosestPolyconConnected}
    \end{subfigure}
    \caption{Example illustrations  for the problematic polycon in Figure \ref{fig:problem_444_422_right}. }
    \label{fig:problematicExampleIllustrations}
\end{figure}

\begin{proof} We first observe that if one of the 3 ellipses satisfies that 1) the polycon is in its interior and 2) the pseudoline of the adjoint separates the ellipse into disjoint regions such that one residual point $p'$ on the ellipse lies in a different region than the polycon, then we are done by Lemma \ref{le:twointersections}. This is because for any point $p$ in the polycon, the line segment from $p$ to $p'$ is contained inside the ellipse and must intersect the adjoint at least twice: once at $p'$ and once where the pseudoline separates the ellipse.

We show now for the problematic polycon in Figure \ref{fig:problem_444_422_right} that one of the three ellipses satisfies 1) and 2) above. The other cases are similar. We consider the unique ellipse $C$ that has the polycon in its interior. If the adjoint curve connects along the red sides, say in the left 4-sided region in \Cref{fig:connectAlongRedSides}, then there are two possibilities for how to connect the pseudoline in the upper 4-sided region (marked with the dashed curve segments.) Either way, the interior of the ellipse $C$ is separated into disjoint regions by the pseudoline as in 2).

\end{proof}

For each of the polycons in Figure \ref{fig:problematicFive}, there are 6 branches of the pseudoline of the adjoint that leave the configuration of three ellipses. We call these \textit{tentacles}. We are now ready to prove Proposition \ref{prop:problematicPolycons} and thereby finalize the proof of Theorem \ref{thm:wachspressellipses}. 

\begin{proof}[Proof of Proposition \ref{prop:problematicPolycons}] 
As  previously discussed, we may assume that the adjoint curve segments in Figure \ref{fig:problematicFive} form one connected nonsingular pseudoline.
Due to Lemma \ref{lem:connectAlongRedSegments}, we also assume that the adjoint curve segments connect along the blue sides inside the configuration of the three ellipses. 
Hence, to conclude the topological picture of the pseudoline, it is left to distinguish how the six tentacles connect outside of the configuration and which tentacles go to infinity.  The pseudoline intersects the line at infinity in one or three points, so its intersection with the affine chart (that is the complement of the line at infinity) has one or three components. This means that the number of connected components of the pseudoline in the affine chart that intersect the triple of ellipses is one, two or three. In each case we now argue how the tentacles may connect to form these components.
Since the adjoint curve segments in Figure \ref{fig:problematicFive} form one connected nonsingular pseudoline and cannot intersect the ellipses in any other points, only neighbouring tentacles can be directly connected with each other (i.e., without meeting the line at infinity first). 
In particular, the two tentacles that are closest to the polycon are either directly connected or both meet the line at infinity. They cannot connect to their other direct neighbor because this would create an oval. This leaves us with the following three cases.

\textbf{Case 1: The two tentacles nearest the polycon are directly connected.}
Let $p$ be any point in the interior of the problematic polycon. In Figures \ref{fig:problem_444_422_left} and \ref{fig:problem_444_422_right}, the polycon is outside two of the three ellipses, denoted by $C_1,C_2$, and inside the third ellipse $C_3$. 
Let $p'$ be any of the three residual points on the intersection of $C_1$ with $C_2$. We consider the line spanned by $p$ and $p'$ (depicted in Figure \ref{fig:tentaclesClosestPolyconConnected}) and split it into three line segments: the first goes from the point at infinity to $p'$, the second from $p'$ to $p$, and the third from $p$ to infinity. 
Since $p'$ lies on the boundary of the ellipse $C_1$ (resp. $C_2$) and $p$ lies outside of the ellipse, the line segment from $p$ to infinity intersects neither $C_1$ nor $C_2$. 
Hence, it has to leave the polycon via its side on the ellipse $C_3$ and then intersect the adjoint curve segment  that connects the two tentacles closest to the polycon. 
Thus, the line spanned by $p$ and $p'$ meets the adjoint outside of the polycon in two points and Wachspress's  conjecture holds for the polycons in Figures \ref{fig:problem_444_422_left} and \ref{fig:problem_444_422_right} by \Cref{lemma:transversalResidual}.

For the other problematic polycons in Figure \ref{fig:problematicFive}, we change the argument slightly as follows. 
In these configurations, the polycon is inside two of the ellipses, now denoted $C_1,C_2$, and outside the third ellipse $C_3$. 
Let $p'$ be a real residual point on the intersection of $C_1$ with $C_2$. 
We consider the three segments of the line passing through $p$ and $p'$ as above. 
Since this time both $p$ and $p'$ are inside the ellipse $C_1$ (resp. $C_2$), the line segment from $p$ to infinity must intersect the boundary of $C_1$ (resp. $C_2$) exactly once. 
We see from Figures \ref{fig:problem_244_322}--\ref{fig:problem_444_322_right} that the residual point $p'$ is inside the ellipse $C_3$. 
Since $p$ lies outside $C_3$, the line segment from $p$ to infinity does not meet $C_3$.
Hence, this line segment leaves the configuration of three ellipses via one of the two red arcs that lie between the two residual points on the boundary of the configuration that are closest to the polycon. 
This shows that the line segment from $p$ to infinity intersects the adjoint curve segment that connects the two tentacles closest to the polycon and, as above,  we see from \Cref{lemma:transversalResidual} that Wachspress's  conjecture holds for the problematic polycons in Figures \ref{fig:problem_244_322}--\ref{fig:problem_444_322_right}.

\textbf{Case 2: All six tentacles meet the line at infinity before connecting to any other tentacle.}
In this case, the pseudoline has three branches as illustrated in Figure \ref{fig:6tentaclesMeetInfinity_example}. We assume for contradiction that there is a point $p$ inside the polycon such that every line through $p$ meets the pseudoline in exactly one point. The complement of the three (affine) lines $L_1, L_2, L_3$ that are spanned by $p$ and one of the three points on the pseudoline at infinity in the affine chart $\R^2$ consists of six ``pizza slice'' regions; see Figure \ref{fig:6tentaclesMeetInfinity_sketch} for a sketch. The three branches of the pseudoline have to be contained in every other ``pizza slice''. 
For each of the five problematic polycons in \Cref{fig:problematicFive}, one of its three ellipses, denoted by $C$, satisfies the following two conditions: 1) The polycon is outside of the ellipse, and 2) every blue region has the ellipse on its boundary.
Since the pseudoline segments are connected along the blue sides inside the configuration of the three ellipses, each of the three branches of the pseudoline goes around one of the three blue regions. Thus, the ellipse $C$ has to pass through every ``pizza slice'' containing a branch of the pseudoline.
If we focus for instance on the top right ``pizza slice'' sketched in Figure \ref{fig:6tentaclesMeetInfinity_sketch}, we may assume by symmetry that the ellipse $C$ enters the ``pizza slice'' by crossing $L_1$. 
Since the point $p$ lies outside of $C$, the second intersection point of $L_1$ with the ellipse $C$ also has to lie on the boundary of that same ``pizza slice''.
This implies for the left ``pizza slice'' containing a branch of the pseudoline that $C$ has to meet its boundary half-line on $L_2$ twice.
Similarly, we get two intersection points of the ellipse $C$ at the bottom half of $L_3$.
All in all, we obtain the situation sketched in Figure \ref{fig:6tentaclesMeetInfinity_sketch}, which is a contradiction due to the convexity of the ellipse $C$ because it does not contain $p$.
Hence, by Lemma \ref{le:twointersections}, Wachspress's  conjecture holds in this case.
\begin{figure}[htb]
    \centering
    \begin{subfigure}[b]{0.49\textwidth}
         \centering
         \includegraphics[width=\textwidth]{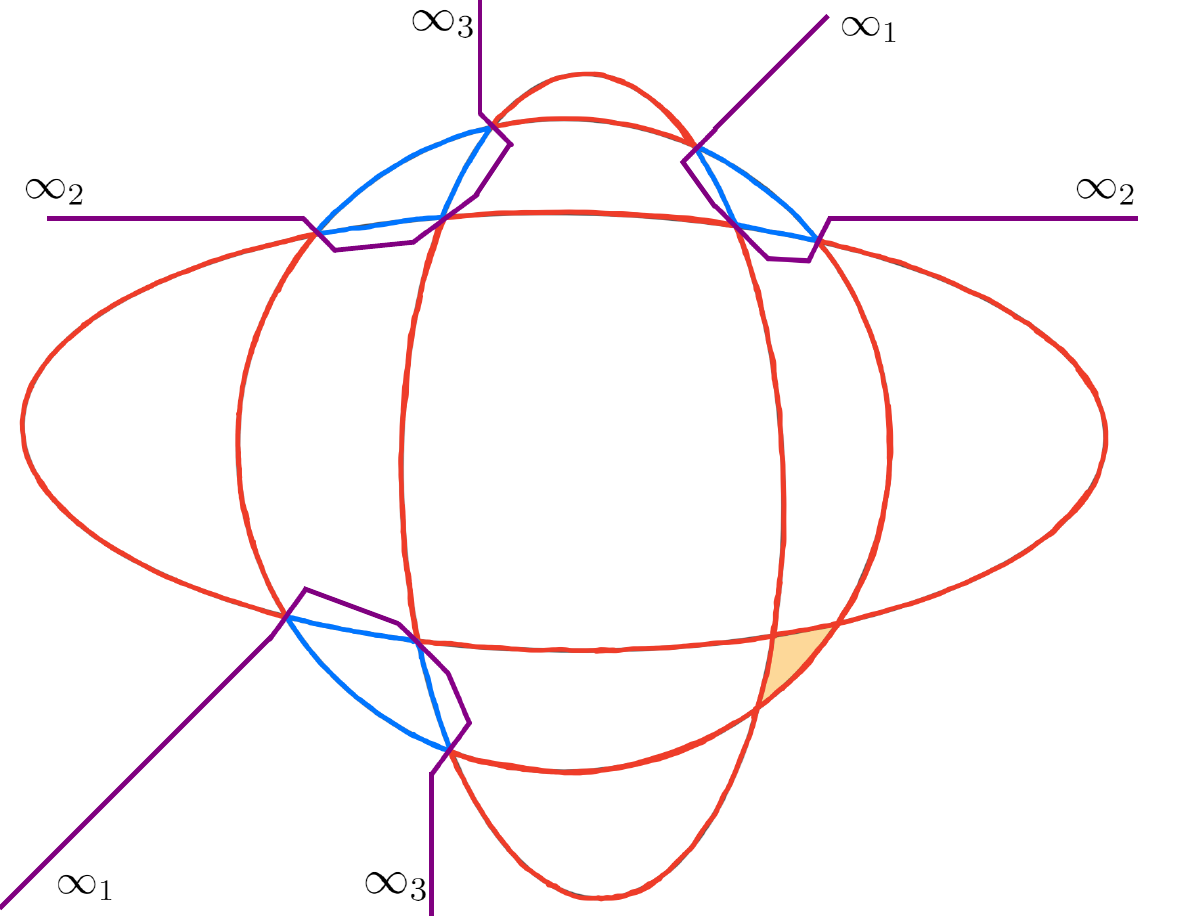}
         \caption{Example illustration for the problematic polycon in Figure \ref{fig:problem_444_422_right}.}
         \label{fig:6tentaclesMeetInfinity_example}
    \end{subfigure} \hfill
        \begin{subfigure}[b]{0.49\textwidth}
         \centering
         \includegraphics[width=0.75\textwidth]{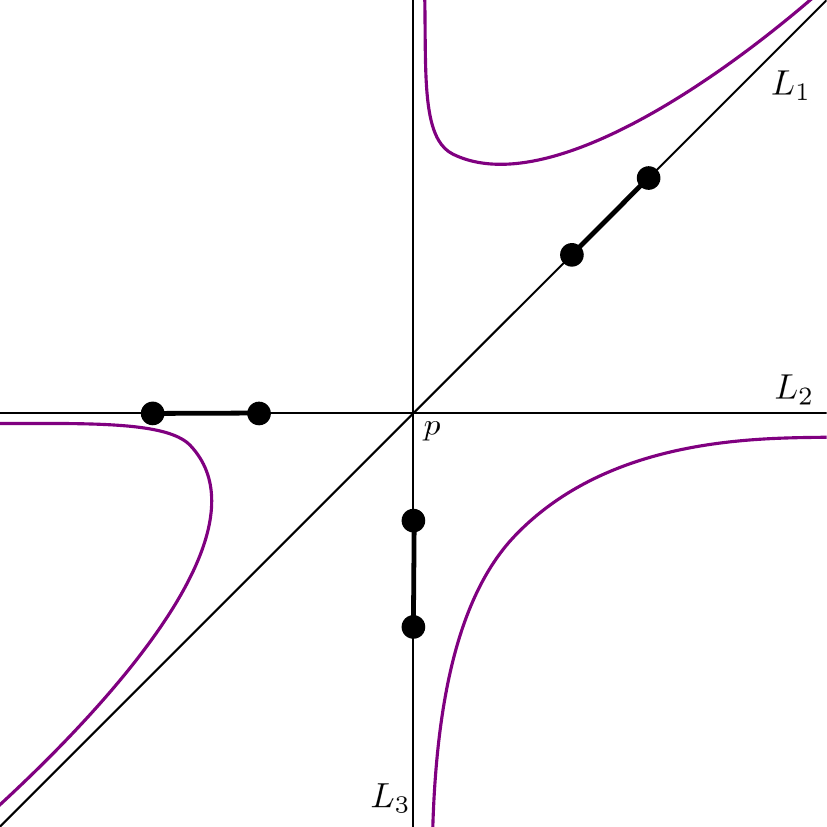}
         \caption{Sketch of the contradiction in Case 2 in the proof of \Cref{prop:problematicPolycons}. The ellipse $C$ would intersect the lines $L_1,L_2,L_3$ at the bold points.}
         \label{fig:6tentaclesMeetInfinity_sketch}
    \end{subfigure}
    \caption{All six tentacles meet the line at infinity.}
    \label{fig:6tentaclesMeetInfinity}
\end{figure}

\textbf{Case 3: The two tentacles nearest the polycon meet the line at infinity, and two other tentacles are directly connected.}
Recall that only neighboring tentacles can be directly connected and that their connection should not create an oval.
For any of the five problematic polycons in \Cref{fig:problematicFive}, this leaves only two pairs of tentacles that can be directly connected.
In the following, we distinguish three more subcases, illustrated in Figure~\ref{fig:tentaclesClosestPolyconInfinity}.
We stress that the following arguments apply to all problematic polycons in \Cref{fig:problematicFive} although Figure~\ref{fig:tentaclesClosestPolyconInfinity} only shows the  polycon from \Cref{fig:problem_444_422_right}.
This is because their essential topological properties are the same: 
Each of the five problematic polycons induces three blue regions and six tentacles of the pseudoline of the adjoint such that every tentacle leaves the configuration of the three ellipses at one of the blue regions.
Moreover, inside the configuration, since the pseudoline segments are connected along the blue sides, the pseudoline encloses each of the three blue regions.
The three subcases are:

\begin{enumerate}
    \item[a)] The remaining pair of tentacles is also directly connected.
    
    This is illustrated in Figure \ref{fig:case3a}. We show that this is impossible by counting real inflection points of the adjoint curve in this case. A real plane cubic curve has exactly three real inflection points (\cite[p.~44]{fischer}). To give a lower bound, we travel along the purple adjoint curve in Figure \ref{fig:case3a} from the bottom left to the top right branch. To enclose a blue region, the curve must bend to the left. To enclose a red region (bounded by two intervals on the ellipses) between two of the blue regions, the curve must bend to the right. In total, we transition four times between these cases and so we must have at least four (real) inflection points in this picture. This is impossible for a plane cubic.

    \begin{figure}[htb]
        \centering
            \begin{subfigure}[b]{0.32\textwidth}
         \centering
         \includegraphics[width=\textwidth]{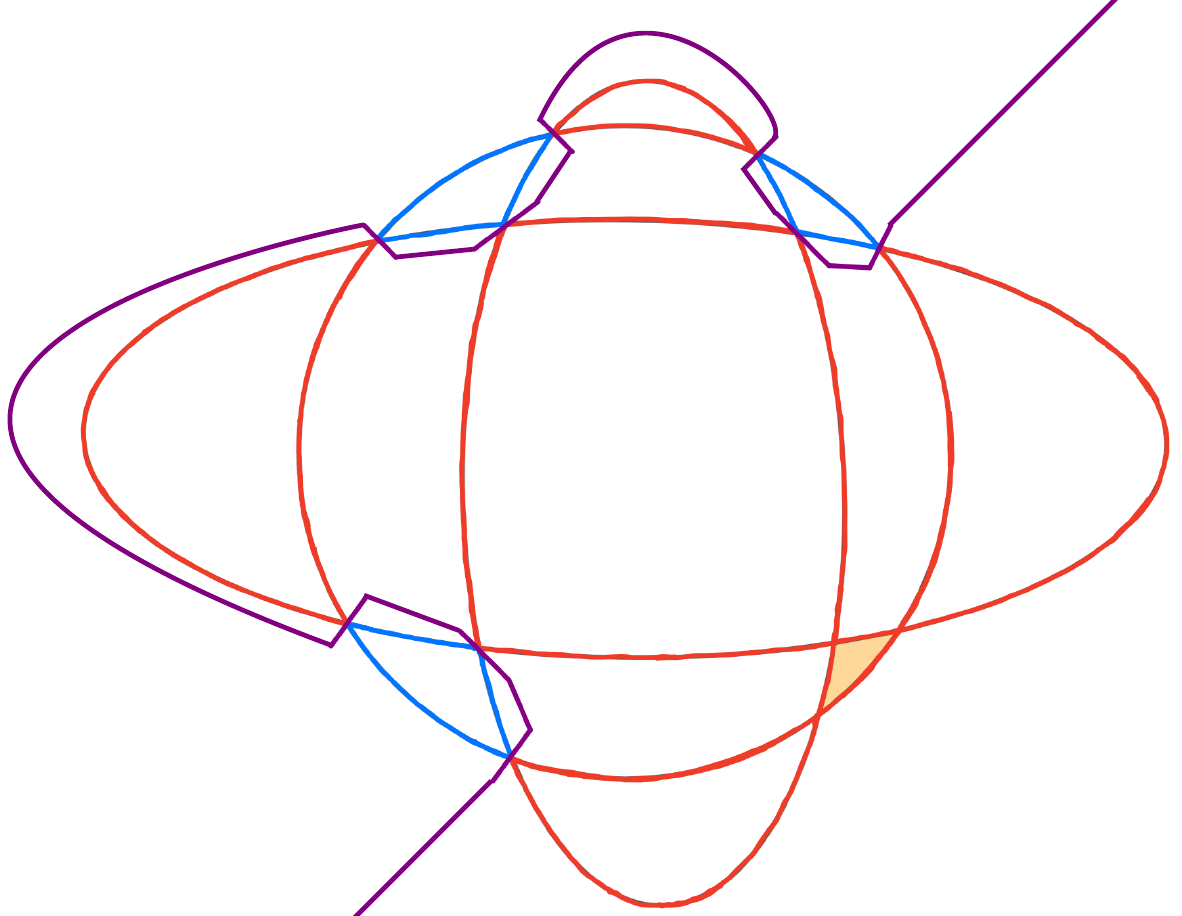}
         \caption{Case 3a).}
         \label{fig:case3a}
    \end{subfigure} \hfill
        \begin{subfigure}[b]{0.32\textwidth}
         \centering
         \includegraphics[width=\textwidth]{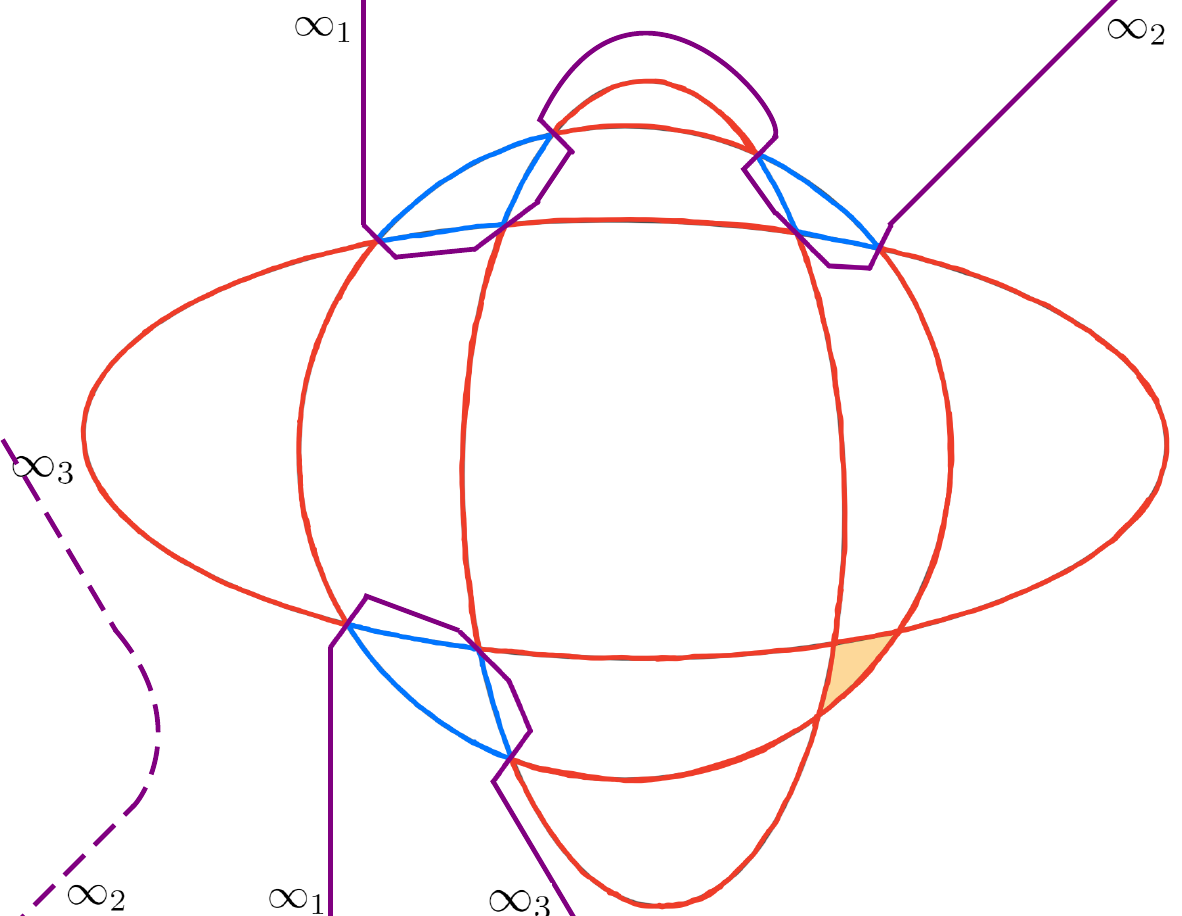}
         \caption{Case 3b) (i).}
         \label{fig:case3bi}
    \end{subfigure}
\hfill
        \begin{subfigure}[b]{0.32\textwidth}
         \centering
         \includegraphics[width=\textwidth]{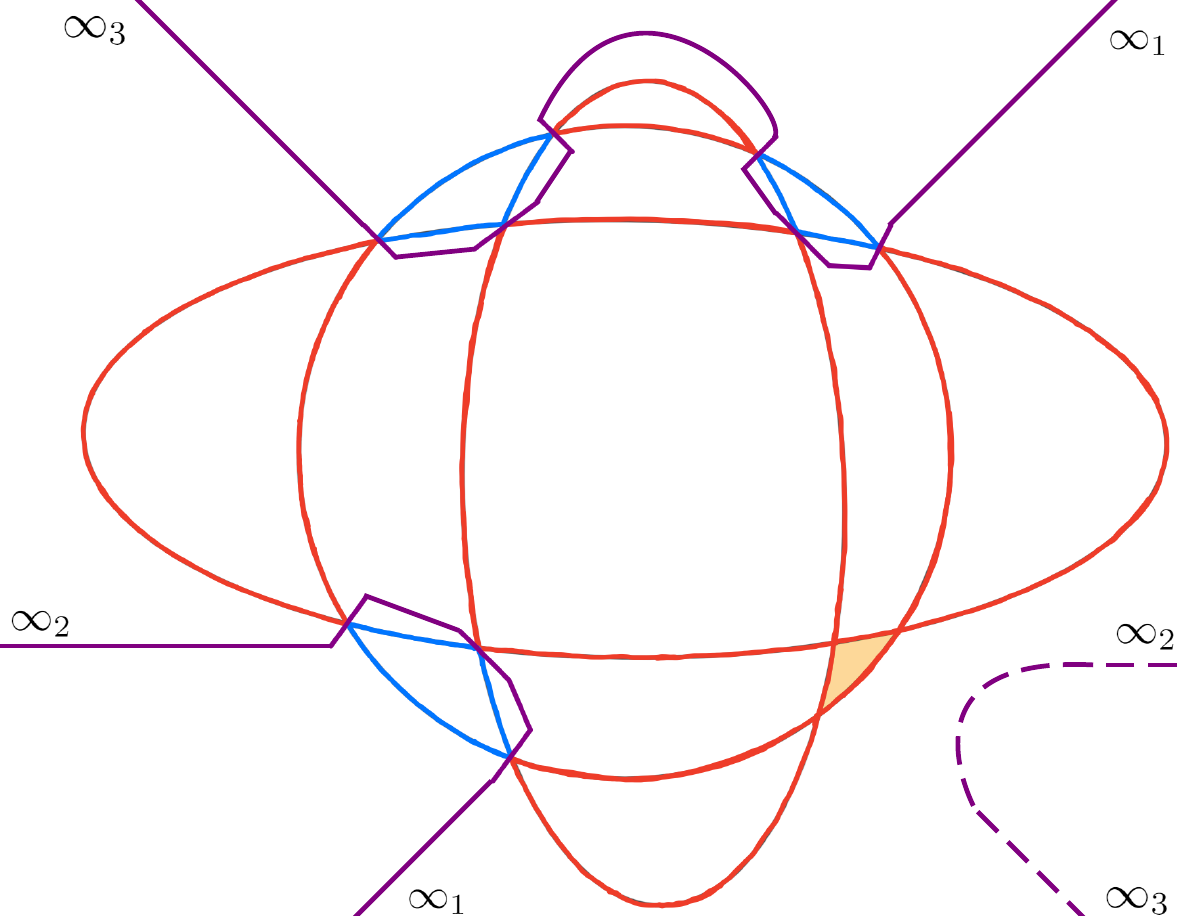}
         \caption{Case 3b) (ii).}
         \label{fig:case3bii}
    \end{subfigure}
        \caption{Example illustrations for the proof of \Cref{prop:problematicPolycons} for the problematic polycon in Figure \ref{fig:problem_444_422_right}.}
        \label{fig:tentaclesClosestPolyconInfinity}
    \end{figure}

    \item[b)] The remaining two tentacles meet the line at infinity.
    \begin{enumerate}
        \item[(i)] The two tentacles closest to the polycon meet the line at infinity in distinct points.
        
        In this case, the line at infinity meets the adjoint curve in three real points.
        Since only four of the six tentacles of the pseudoline segments intersect the line at infinity, there has to be another branch of the adjoint curve with two points at infinity. That branch has to be located as depicted in Figure \ref{fig:case3bi} (dashed). Indeed, due to the nonsingularity of the pseudoline, the branch can only be placed in between two of the four tentacles with points at infinity, and the placement of the branch between any two other tentacles than as shown in the figure would either create an oval or contradict the assumption in Case (i). Now we can apply the same argument as in Case 1, since Figure \ref{fig:case3bi} can be obtained from Figure \ref{fig:tentaclesClosestPolyconConnected} by moving the line at infinity such that it severs the direct connection between the two tentacles closest to the polycon.

        \item[(ii)] The two tentacles closest to the polycon meet the line at infinity in the same point.
        
        As in the previous case, there has to be another branch of the adjoint curve that meets the line at infinity twice. This time the branch has to be located as shown in Figure \ref{fig:case3bii}. Now the same argument (counting inflection points) as in Case 3a) applies, since Figure \ref{fig:case3bii} can be obtained from \Cref{fig:case3a}  by moving the line at infinity. \qedhere
    \end{enumerate}
    \end{enumerate}
    \end{proof}

    \subsection{Examples of adjoint curves for problematic configurations}

Figure \ref{fig:problematicAdjoint} shows the adjoint curve for one instance of each problematic polycon in Figures \ref{fig:222}--\ref{fig:444}.
We see that Wachspress's  conjecture holds in every instance, i.e., the adjoint curve does not intersect the interior of the polycon.
We proved the conjecture for the last five polycons in Figure \ref{fig:problematicAdjoint}.
It it still an open problem to provide a formal proof for the first eleven polycons.

\begin{remark}
\label{rem:nonHyperbolic}
We stress that the adjoint curve of many problematic polycons is \emph{not} hyperbolic. For instance, this is the case for the polycon in \Cref{fig:problematicAdjoint}(b). To show that the adjoint curve is not hyperbolic, we compute critical points for a projection with center $e\in \P^2(\R)$. If the point is not in the interior of an oval of the real locus of the adjoint, such an oval produces at least two critical points for this projection. The critical points for the projection of the curve defined by an equation $\alpha_P$ away from $e$ are the intersection points with its polar curve defined by the directional derivative $D_e \alpha_P$. This is the curve in green in \Cref{fig:adjointNonHyp}. The picture shows that there are no critical points on an oval which therefore cannot exist.
\begin{figure}
    \centering
    \includegraphics[width=0.6\textwidth,height=5cm]{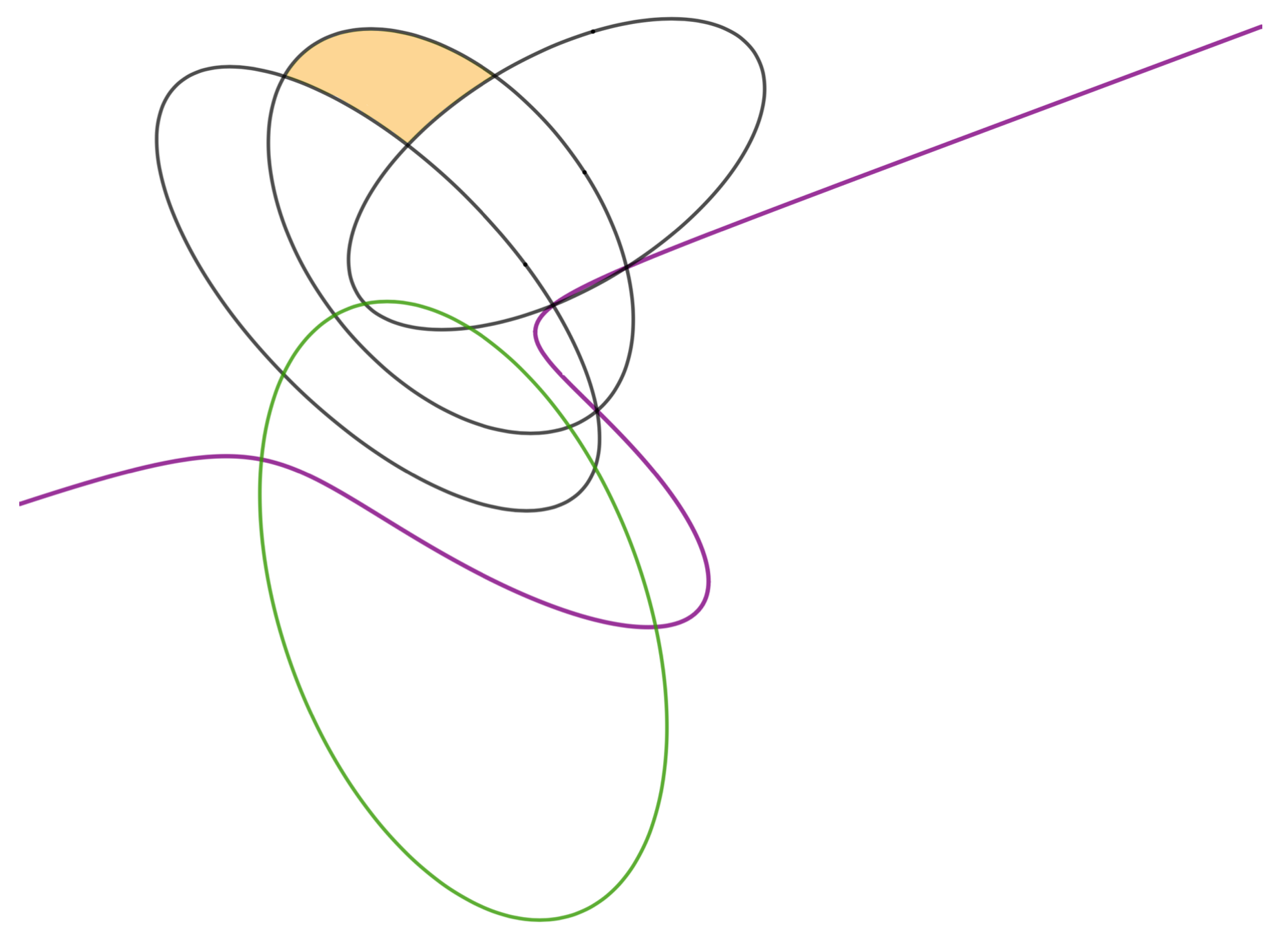}    
    \caption{Case (222) -- 211 with the adjoint in purple and its polar curve in green. The notation (222) -- 211 denotes intersection type (222) and outer-arc type 211.}
    \label{fig:adjointNonHyp}
\end{figure}

\end{remark}

Finally, we note that the adjoint curves of the five problematic polycons addressed in \Cref{prop:problematicPolycons} have different behaviors:
On the one hand, the adjoint curves in Figures \ref{fig:adjoint244_322}, \ref{fig:adjoint444_322} right, and \ref{fig:adjoint444_422} left are ``connected along the red sides'' (cf. \Cref{fig:problematicFive}) and so \Cref{lem:connectAlongRedSegments} implies Wachspress's  conjecture for these polycons.
On the other hand, the adjoint curves in Figures \ref{fig:adjoint444_322} left and \ref{fig:adjoint444_422} right are ``connected along the blue sides'' and all their six tentacles go to infinity, so Wachspress's  conjecture holds by Case 2 in the proof of \Cref{prop:problematicPolycons}.

   \begin{figure}[h!]
          \begin{subfigure}[b]{0.24\textwidth}
         \includegraphics[width=\textwidth]{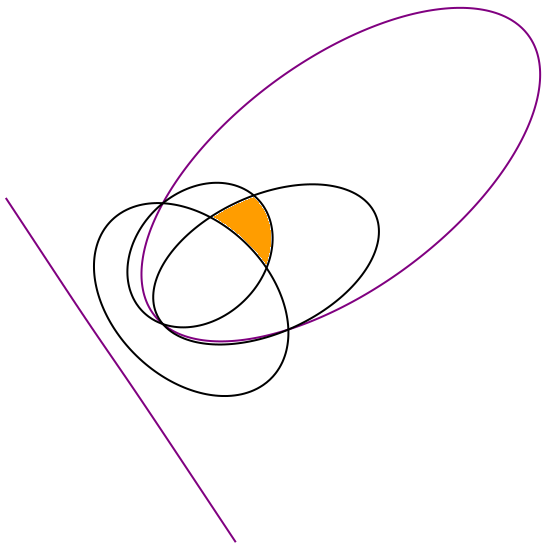}
         \caption{(222) -- 111}
         \label{fig:adjoint222_111}
    \end{subfigure}\hfill
     \begin{subfigure}[b]{0.24\textwidth}
         \includegraphics[width=\textwidth]{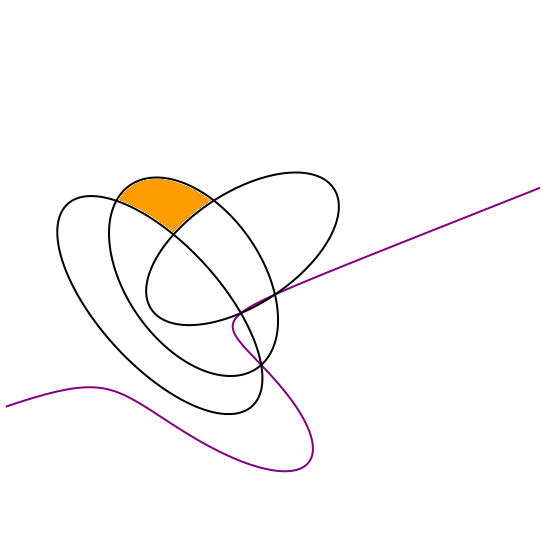}
         \caption{(222) -- 211}
         \label{fig:adjoint222_211}
    \end{subfigure}\hfill
     \begin{subfigure}[b]{0.24\textwidth}
         \includegraphics[width=\textwidth]{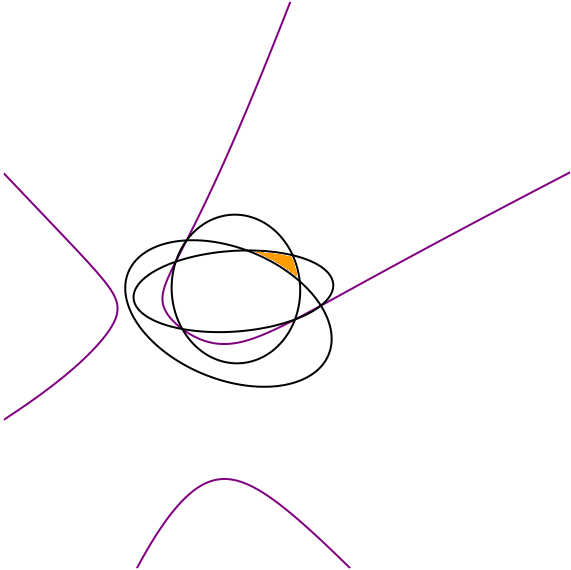}
         \caption{(224) -- 111}
         \label{fig:adjoint224_111}
    \end{subfigure}\hfill
     \begin{subfigure}[b]{0.24\textwidth}
         \includegraphics[width=\textwidth]{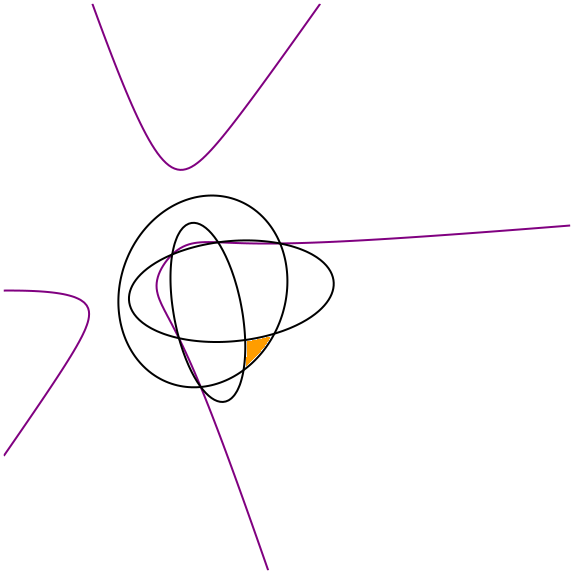}
         \caption{(224) -- 211}
         \label{fig:adjoint224_211}
    \end{subfigure}
             \begin{subfigure}[b]{0.24\textwidth}
         \includegraphics[width=\textwidth]{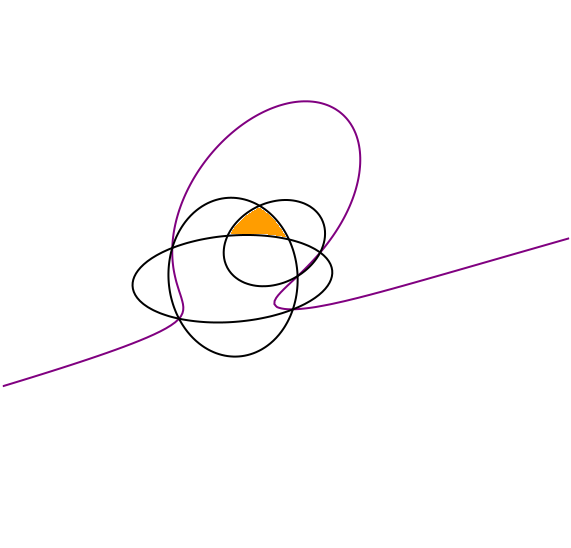}
         \caption{(224) -- 221}
         \label{fig:adjoint224_221}
    \end{subfigure}\hfill
     \begin{subfigure}[b]{0.24\textwidth}
         \includegraphics[width=\textwidth]{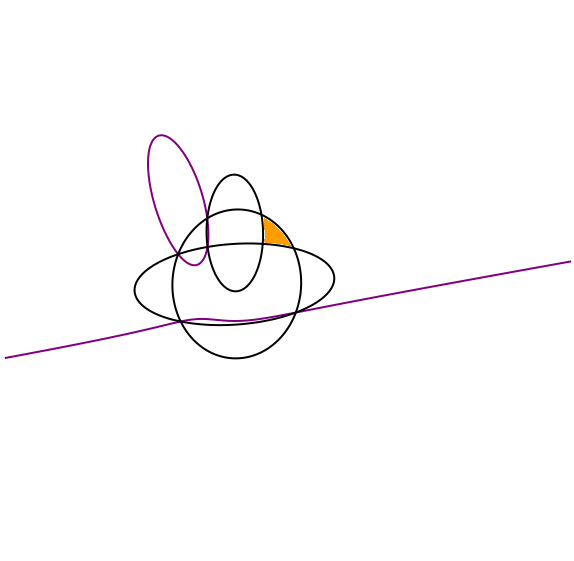}
         \caption{(224) -- 321}
         \label{fig:adjoint224_321}
   \end{subfigure}\hfill
     \begin{subfigure}[b]{0.49\textwidth}
         \includegraphics[width=0.49\textwidth]{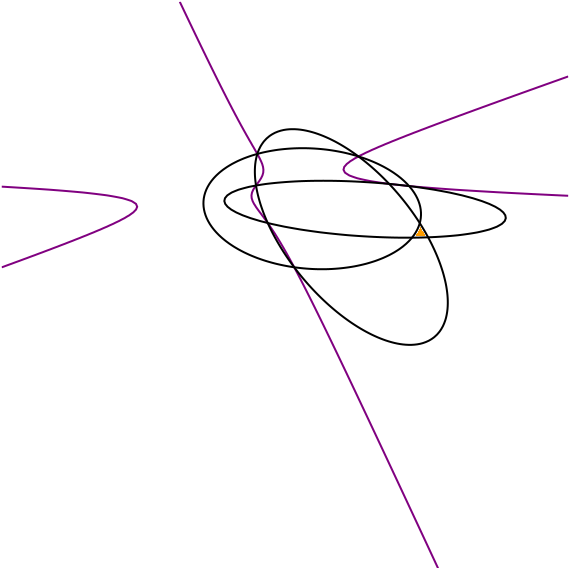}
         \includegraphics[width=0.49\textwidth]{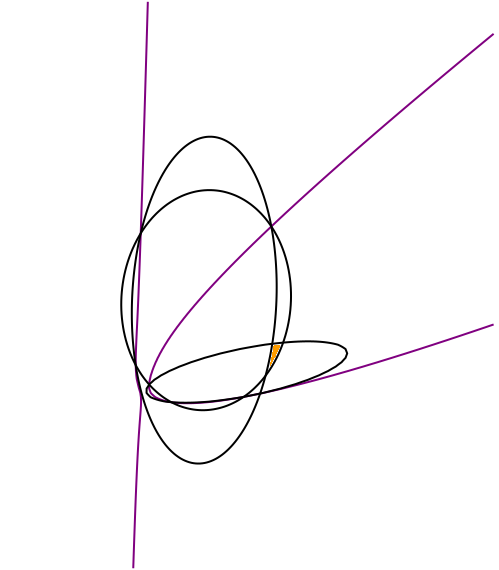}
         \caption{(244) -- 221}
         \label{fig:adjoint244_221}
    \end{subfigure}
     \begin{subfigure}[b]{0.49\textwidth}
         \includegraphics[width=0.49\textwidth]{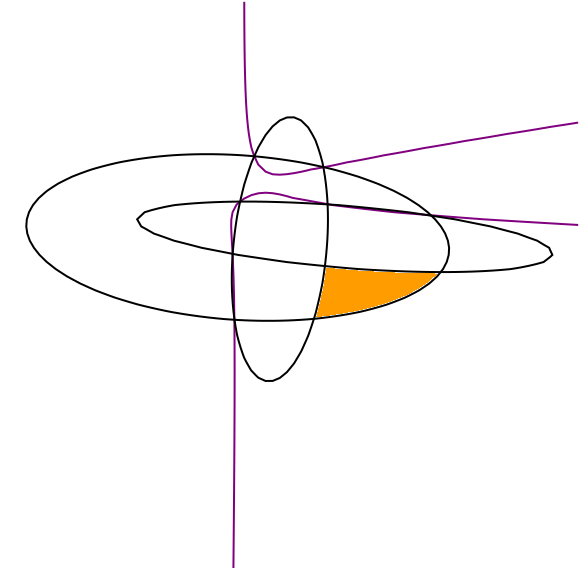}
         \includegraphics[width=0.49\textwidth]{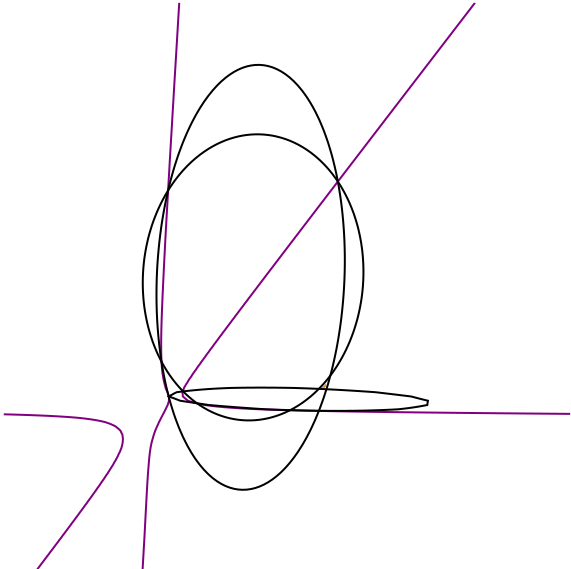}
         \caption{(244) -- 321}
         \label{fig:adjoint244_321}
   \end{subfigure}\hfill
     \begin{subfigure}[b]{0.24\textwidth}
         \includegraphics[width=\textwidth]{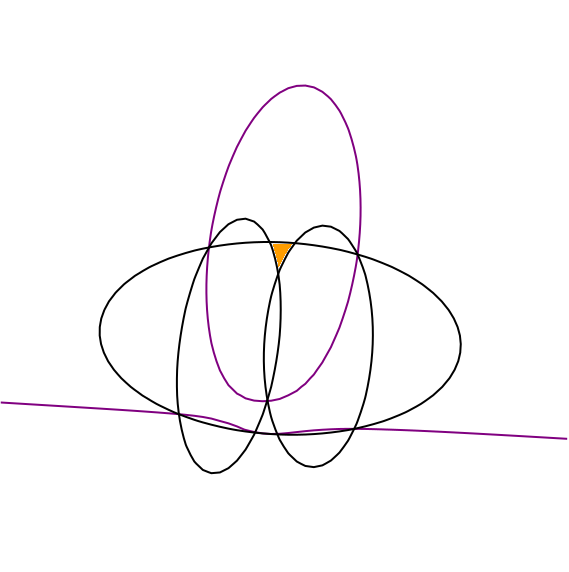}
         \caption{(244) -- 422}
         \label{fig:adjoint244_422}
    \end{subfigure}\hfill
     \begin{subfigure}[b]{0.24\textwidth}
         \includegraphics[width=\textwidth]{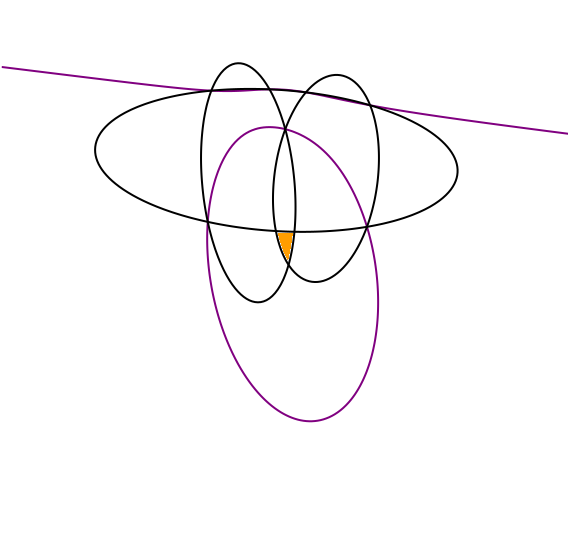}
         \caption{(244) -- 322}
         \label{fig:adjoint244_322}
    \end{subfigure}
     \begin{subfigure}[b]{0.49\textwidth}
         \includegraphics[width=0.49\textwidth]{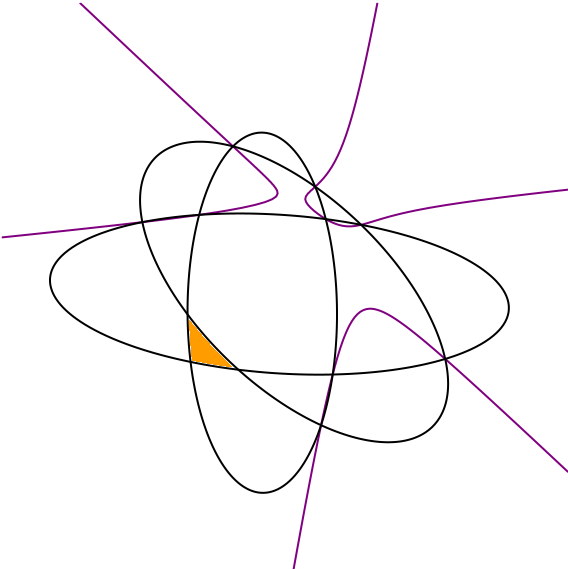}
         \includegraphics[width=0.49\textwidth]{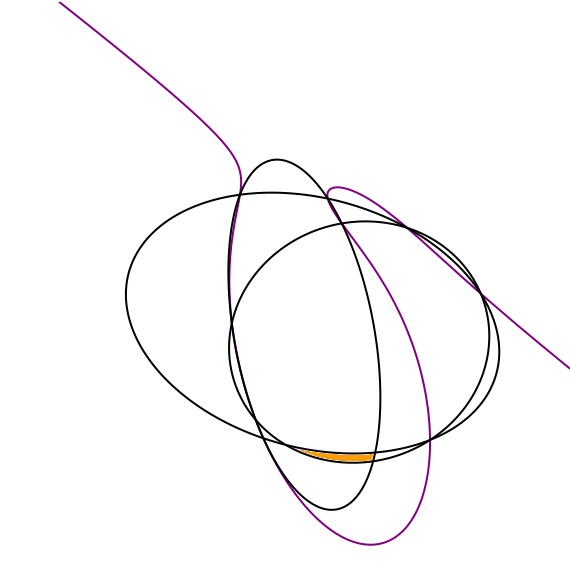}
         \caption{(444) -- 322}
         \label{fig:adjoint444_322}
   \end{subfigure}\hfill
     \begin{subfigure}[b]{0.49\textwidth}
         \includegraphics[width=0.49\textwidth]{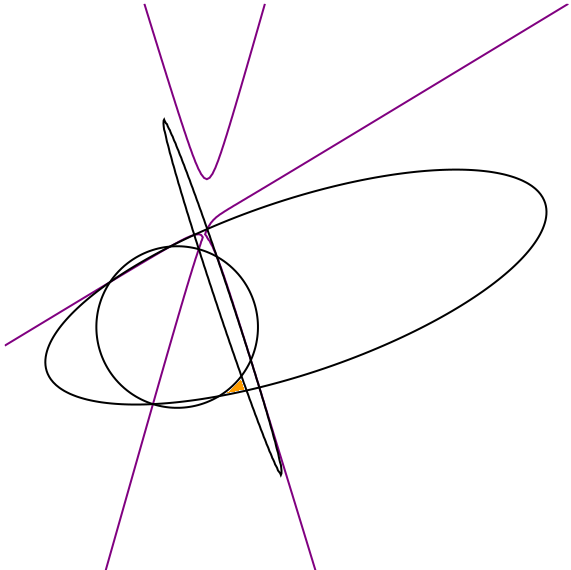}
         \includegraphics[width=0.49\textwidth]{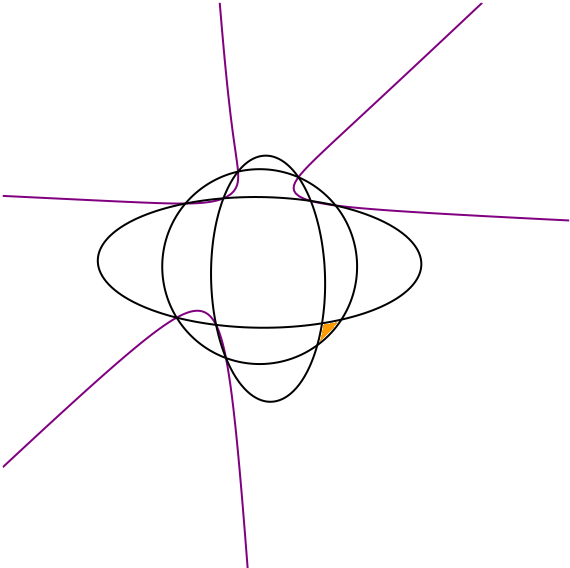}
         \caption{(444) -- 422}
         \label{fig:adjoint444_422}
    \end{subfigure}
        \caption{Problematic polycons and their adjoint curves, using the same notation as in Figure \ref{fig:adjointNonHyp}.} 
        \label{fig:problematicAdjoint}
    \end{figure}

\end{document}